\newcommand{\globalcolor}[1]{%
  \color{#1}\global\let\default@color\current@color
}
\definecolor{blush}{rgb}{0.87, 0.36, 0.51}
	\definecolor{brightcerulean}{rgb}{0.11, 0.67, 0.84}
	\definecolor{greenryb}{rgb}{0.4, 0.69, 0.2}
\newif\ifdark
\definecolor{darkred}{rgb}{0.9,0.2,0.2}
\definecolor{darkblue}{rgb}{0.7,0.3,1}
\definecolor{darkgreen}{rgb}{0.1,0.9,0.1}
\definecolor{franck}{rgb}{0,0.8,1}
\definecolor{pagebackground}{rgb}{.15,.21,.18}
\definecolor{pageforeground}{rgb}{.84,.84,.85}
\definecolor{symbols}{rgb}{0,0.7,1}
\colorlet{connection}{red!80!black}
\colorlet{boxcolor}{blue!50}
\definecolor{darkred}{rgb}{0.7,0.1,0.1}
\definecolor{darkblue}{rgb}{0.4,0.1,0.8}
\definecolor{darkgreen}{rgb}{0.1,0.7,0.1}
\definecolor{franck}{rgb}{0,0,1}
\definecolor{pagebackground}{rgb}{1,1,1}
\definecolor{pageforeground}{rgb}{0,0,0}
\colorlet{symbols}{blue!90!black}
\colorlet{connection}{red!30!black}
\colorlet{boxcolor}{blue!50!black}
\def\slash{\leavevmode\unskip\kern0.18em/\penalty\exhyphenpenalty\kern0.18em}
\def\dash{\leavevmode\unskip\kern0.18em--\penalty\exhyphenpenalty\kern0.18em}
\DeclareMathAlphabet{\mathbbm}{U}{bbm}{m}{n}
\DeclareFontFamily{U}{BOONDOX-calo}{\skewchar\font=45 }
\DeclareFontShape{U}{BOONDOX-calo}{m}{n}{
  <-> s*[1.05] BOONDOX-r-calo}{}
\DeclareFontShape{U}{BOONDOX-calo}{b}{n}{
  <-> s*[1.05] BOONDOX-b-calo}{}
\DeclareMathAlphabet{\mcb}{U}{BOONDOX-calo}{m}{n}
\SetMathAlphabet{\mcb}{bold}{U}{BOONDOX-calo}{b}{n}
\setlist{noitemsep,topsep=4pt,leftmargin=1.5em}
\DeclareMathAlphabet{\mathbbm}{U}{bbm}{m}{n}
\DeclareMathAlphabet{\mcb}{U}{BOONDOX-calo}{m}{n}
\SetMathAlphabet{\mcb}{bold}{U}{BOONDOX-calo}{b}{n}
\DeclareFontFamily{U}{mathx}{\hyphenchar\font45}
\DeclareFontShape{U}{mathx}{m}{n}{
      <5> <6> <7> <8> <9> <10>
      <10.95> <12> <14.4> <17.28> <20.74> <24.88>
      mathx10
      }{}
\DeclareSymbolFont{mathx}{U}{mathx}{m}{n}
\DeclareMathSymbol{\bigtimes}{1}{mathx}{"91}
\def\emptyset{{\centernot\ocircle}}
\providecommand{\figures}{false}
{ \ifthenelse{\equal{\figures}{false}} {#1}{\[ {\rm Figure \ missing !} \]} }{}
\def\id{\mathrm{id}}
\newcommand{\cT}{{\mathcal T}}
\def\CH{\mathcal{H}}
\def\CP{\mathcal{P}}
\def\CA{\mathcal{A}}
\def\CC{\mathcal{C}}
\def\CQ{\mathcal{Q}}
\def\CM{\mathcal{M}}
\def\CT{\mathcal{T}}
\tikzstyle{tinydots}=[dash pattern=on \pgflinewidth off \pgflinewidth]
\tikzstyle{superdense}=[dash pattern=on 4pt off 1pt]
\newcommand{ \black }{\color{black} }
\newcommand{\beq}{\begin{equation}}
\newcommand{\eeq}{\end{equation}}
\newcommand{\nab}{\langle \nabla \rangle_{\frac{1}{\varepsilon}}}
\newcommand*\widefbox[1]{\fbox{\hspace{2em}#1\hspace{2em}}}
\newcommand{\T}{\mathbf{T}}
\def\Labp{\mathfrak{p}}
\def\Labe{\mathfrak{e}}
\def\Labn{\mathfrak{n}}
\def\Labo{\mathfrak{f}}
\def\Labhom{\mathfrak{t}}
\def\Lab{\mathfrak{L}}
\def\Adm{\mathfrak{A}}
\def\Deltap{\Delta^{\!+}}
\def\${|\!|\!|}
\newenvironment{DIFnomarkup}{}{} 
\newtheorem{assumption}{Assumption}
\newfont{\indic}{bbmss12}
\def\Nabla_#1{\nabla_{\!#1}}
    \pgfmathsetlength{\pgf@xb}{\pgfkeysvalueof{/pgf/outer xsep}}%
    \pgfmathsetlength{\pgf@yb}{\pgfkeysvalueof{/pgf/outer ysep}}%
\def\symbol#1{\textcolor{symbols}{#1}}
\def\decorate#1#2{
        \ifnum#2>0
    		\foreach \count in {1,...,#2}{
	       	let
				\p1 = (sourcenode.center),
                \p2 = (sourcenode.east),
				\n1 = {\x2-\x1},
				\n2 = {1mm},
				\n3 = {(1.3+0.6*(\count-1))*\n1},
				\n4 = {0.7*\n1}
			in 
        		node[rectangle,fill=symbols,rotate=30,inner sep=0pt,minimum width=0.2*\n2,minimum height=\n2] at ($(sourcenode.center) + (\n3,\n4)$) {}
				}
		\fi
        \ifnum#1>0
    		\foreach \count in {1,...,#1}{
	       	let
				\p1 = (sourcenode.center),
                \p2 = (sourcenode.east),
				\n1 = {\x2-\x1},
				\n2 = {1mm},
				\n3 = {(1.3+0.6*(\count-1))*\n1},
				\n4 = {0.7*\n1}
			in 
        		node[rectangle,fill=symbols,rotate=-30,inner sep=0pt,minimum width=0.2*\n2,minimum height=\n2] at ($(sourcenode.center) + (-\n3,\n4)$) {}
				}
		\fi
}
\tikzset{
    dectriangle/.style 2 args={
        triangle,
        alias=sourcenode,
        append after command={\decorate{#1}{#2}}
    },
    dectriangle/.default={0}{0},
}
\tikzset{
	cross/.style={path picture={ 
  		\draw[symbols]
			(path picture bounding box.south east) -- (path picture bounding box.north west) (path picture bounding box.south west) -- (path picture bounding box.north east);
		}},
root/.style={circle,fill=green!50!black,inner sep=0pt, minimum size=1.2mm},
        dot/.style={circle,fill=pageforeground,inner sep=0pt, minimum size=1mm},
        dotred/.style={circle,fill=pageforeground!50!pagebackground,inner sep=0pt, minimum size=2mm},
        var/.style={circle,fill=pageforeground!10!pagebackground,draw=pageforeground,inner sep=0pt, minimum size=3mm},
         var1/.style={circle,fill=pageforeground!10!pagebackground,draw=pageforeground,inner sep=0pt, minimum size=4mm},
        kernel/.style={semithick,shorten >=2pt,shorten <=2pt},
        kernels/.style={snake=zigzag,shorten >=2pt,shorten <=2pt,segment amplitude=1pt,segment length=4pt,line before snake=2pt,line after snake=5pt,},
        rho/.style={densely dashed,semithick,shorten >=2pt,shorten <=2pt},
           testfcn/.style={dotted,semithick,shorten >=2pt,shorten <=2pt},
        renorm/.style={shape=circle,fill=pagebackground,inner sep=1pt},
        labl/.style={shape=rectangle,fill=pagebackground,inner sep=1pt},
        xic/.style={very thin,circle,draw=symbols,fill=symbols,inner sep=0pt,minimum size=1.2mm},
        g/.style={very thin,rectangle,draw=symbols,fill=symbols!10!pagebackground,inner sep=0pt,minimum width=2.5mm,minimum height=1.2mm},
        xi/.style={very thin,circle,draw=symbols,fill=symbols!10!pagebackground,inner sep=0pt,minimum size=1.2mm},
	xies/.style={very thin,rectangle,fill=green!50!black!25,draw=symbols,inner sep=0pt,minimum size=1.1mm},
	xiesf/.style={very thin,rectangle,fill=green!50!black,draw=symbols,inner sep=0pt,minimum size=1.1mm},
        xix/.style={very thin,crosscircle,fill=symbols!10!pagebackground,draw=symbols,inner sep=0pt,minimum size=1.2mm},
        X/.style={very thin,cross,rectangle,fill=pagebackground,draw=symbols,inner sep=0pt,minimum size=1.2mm},
	xib/.style={thin,circle,fill=symbols!10!pagebackground,draw=symbols,inner sep=0pt,minimum size=1.6mm},
	xie/.style={thin,circle,fill=green!50!black,draw=symbols,inner sep=0pt,minimum size=1.6mm},
	xid/.style={thin,circle,fill=symbols,draw=symbols,inner sep=0pt,minimum size=1.6mm},
	xibx/.style={thin,crosscircle,fill=symbols!10!pagebackground,draw=symbols,inner sep=0pt,minimum size=1.6mm},
	kernels2/.style={very thick,draw=connection,segment length=12pt},
	keps/.style={thin,draw=symbols,->},
	kepspr/.style={thick,draw=connection,->},
	krho/.style={thin,draw=symbols,superdense,->},
	krhopr/.style={thick,draw=connection,superdense},
	triangle/.style = { regular polygon, regular polygon sides=3},
	not/.style={thin,circle,draw=connection,fill=connection,inner sep=0pt,minimum size=0.5mm},
	diff/.style = {very thin,draw=symbols,triangle,fill=red!50!black,inner sep=0pt,minimum size=1.6mm},
	diff1/.style = {very thin,dectriangle={1}{0},fill=red!50!black,draw=symbols,inner sep=0pt,minimum size=1.6mm},
	diff2/.style = {very thin,dectriangle={1}{1},fill=red!50!black,draw=symbols,inner sep=0pt,minimum size=1.6mm},
		diffmini/.style = {very thin,rectangle,fill=black,draw=black,inner sep=0pt,minimum size=0.75mm},
	 kernelsmod/.style={very thick,draw=connection,segment length=12pt},
	 rec/.style = {very thin,rectangle,fill=black,draw=black,inner sep=0pt,minimum size=2mm},
	cerc/.style={very thin,circle,draw=black,fill=symbols,inner sep=0pt,minimum size=2mm},
	stars/.style={very thin,star,star points=6,star point ratio=0.5, draw=black,fill=red,inner sep=0pt,minimum size=0.7mm},
	>=stealth,
        }
        \tikzset{
root/.style={circle,fill=black!50,inner sep=0pt, minimum size=3mm},
        circ/.style={circle,fill=white,draw=black,very thin,inner sep=.5pt, minimum size=1.2mm},
        round1/.style={fill=white,outer sep = 0,inner sep=2pt,rounded corners=1mm,draw,text=black,thin,minimum size=1.2mm},
          circ1/.style={circle,fill=red!10,draw=red,very thin,inner sep=.5pt, minimum size=1.2mm},
        rect/.style={fill=white,outer sep = 0,inner sep=2pt,rectangle,draw,text=black,thin,minimum size=1.2mm},
        rect1/.style={fill=white,outer sep = 0,inner sep=2pt,rectangle,draw,text=black,thin,minimum size=1.2mm},
        round2/.style={fill=red!10,outer sep = 0,inner sep=2pt,rounded corners=1mm,draw,text=black,thin,minimum size=1.2mm},
       round3/.style={fill=blue!10,outer sep = 0,inner sep=2pt,rounded corners=1mm,draw,text=black,thin,minimum size=1.2mm}, 
        rect2/.style={fill=black!10,outer sep = 0,inner sep=2pt,rectangle,draw,text=black,thin,minimum size=1.2mm},
        dot/.style={circle,fill=black,inner sep=0pt, minimum size=1.2mm},
        dotred/.style={circle,fill=black!50,inner sep=0pt, minimum size=2mm},
        var/.style={circle,fill=black!10,draw=black,inner sep=0pt, minimum size=3mm},
        kernel/.style={semithick,shorten >=2pt,shorten <=2pt},
         diag/.style={thin,shorten >=4pt,shorten <=4pt},
        kernel1/.style={thick},
        kernels/.style={snake=zigzag,shorten >=2pt,shorten <=2pt,segment amplitude=1pt,segment length=4pt,line before snake=2pt,line after snake=5pt,},
		kernels1/.style={snake=zigzag,segment amplitude=0.5pt,segment length=2pt},
		rho1/.style={densely dotted,semithick},
        rho/.style={densely dashed,semithick,shorten >=2pt,shorten <=2pt},
           testfcn/.style={dotted,semithick,shorten >=2pt,shorten <=2pt},
           visible/.style={draw, circle, fill, inner sep=0.25ex},
        renorm/.style={shape=circle,fill=white,inner sep=1pt},
        labl/.style={shape=rectangle,fill=white,inner sep=1pt},
        xic/.style={very thin,circle,fill=symbols,draw=black,inner sep=0pt,minimum size=1.2mm},
        xi/.style={very thin,circle,fill=blue!10,draw=black,inner sep=0pt,minimum size=1.2mm},
	xib/.style={very thin,circle,fill=blue!10,draw=black,inner sep=0pt,minimum size=1.6mm},
	xie/.style={very thin,circle,fill=green!50!black,draw=black,inner sep=0pt,minimum size=1mm},
	xid/.style={very thin,circle,fill=symbols,draw=black,inner sep=0pt,minimum size=1.6mm},
	edgetype/.style={very thin,circle,draw=black,inner sep=0pt,minimum size=5mm},
	nodetype/.style={very thick,circle,draw=black,inner sep=0pt,minimum size=5mm},
	kernels2/.style={very thick,draw=connection,segment length=12pt},
clean/.style={thin,circle,fill=black,inner sep=0pt,minimum size=1mm},	not/.style={thin,circle,fill=symbols,draw=connection,fill=connection,inner sep=0pt,minimum size=0.8mm},
	>=stealth,
        }
\def\DeclareSymbol#1#2#3{%
	\expandafter\gdef\csname MH@symb@#1\endcsname{\tikzsetnextfilename{symbol#1}%
	\tikz[baseline=#2,scale=0.15,draw=symbols,line join=round]{#3}}%
	\expandafter\gdef\csname MH@symb@#1s\endcsname{\scalebox{0.75}{\tikzsetnextfilename{symbol#1}%
	\tikz[baseline=#2,scale=0.15,draw=symbols,line join=round]{#3}}}%
	\expandafter\gdef\csname MH@symb@#1ss\endcsname{\scalebox{0.65}{\tikzsetnextfilename{symbol#1}%
	\tikz[baseline=#2,scale=0.15,draw=symbols,line join=round]{#3}}}%
	}
\def\<#1>{\ifthenelse{\boolean{mmode}}{\mathchoice{\csname MH@symb@#1\endcsname}{\csname MH@symb@#1\endcsname}{\csname MH@symb@#1s\endcsname}{\csname MH@symb@#1ss\endcsname}}{\csname MH@symb@#1\endcsname}}
 \def\1{\mathbf{\symbol{1}}}
\def\one{\mathbf{1}}
\def\eps{\varepsilon}
\DeclareMathAlphabet{\mathpzc}{OT1}{pzc}{m}{it}
\let\eps\varepsilon
\def\eqref#1{(\ref{#1})}
\newcommand*{\bigcdot}{}
\DeclareRobustCommand*{\bigcdot}{%
  \mathbin{\mathpalette\bigcdot@{}}%
}
\newcommand*{\bigcdot@scalefactor}{.5}
\newcommand*{\bigcdot@widthfactor}{1.15}
\newcommand*{\bigcdot@}[2]{%
  \sbox0{$#1\vcenter{}$}
  \sbox2{$#1\cdot\m@th$}%
  \hbox to \bigcdot@widthfactor\wd2{%
    \hfil
    \raise\ht0\hbox{%
      \scalebox{\bigcdot@scalefactor}{%
        \lower\ht0\hbox{$#1\bullet\m@th$}%
      }%
    }%
    \hfil
  }%
}
\def\two{{\<generic>\kern0.05em\<genericb>}}
\def\twoI{{\<Ito>\kern0.05em\<Itob>}}
\def\mail#1{\burlalt{#1}{mailto:#1}}
\declaretheorem[style=definition]{example}
\begin{document}
\renewcommand\thmcontinues[1]{Continued}

\title{Resonance based schemes for dispersive equations via decorated trees}
\author{Yvain Bruned$^1$, Katharina Schratz$^2$}
\institute{University of Edinburgh \and
 LJLL (UMR 7598), Sorbonne Université\\
Email:\ \begin{minipage}[t]{\linewidth}
\mail{Yvain.Bruned@ed.ac.uk}, \mail{schratz@ljll.math.upmc.fr}.
\end{minipage}}

\maketitle

\begin{abstract}
We introduce a numerical framework for dispersive equations embedding their underlying resonance structure  into the discretisation.   This will allow us to resolve  the nonlinear  oscillations of the PDE and to approximate with high order accuracy a large class of equations under lower regularity assumptions than classical techniques require. The   key idea to control the nonlinear frequency interactions in the system up to arbitrary high order thereby lies in  a tailored decorated tree formalism. Our algebraic structures are  close to the ones developed for singular SPDEs with Regularity Structures. We adapt them to the context of dispersive PDEs by using a novel class of decorations {which encode the dominant frequencies}.  The structure proposed in this paper is new and gives a variant of the Butcher-Connes-Kreimer Hopf algebra on decorated trees. 
We observe a similar Birkhoff type factorisation as in SPDEs and perturbative quantum field theory. This factorisation allows us to single out oscillations  and to optimise the local error by mapping it to the particular regularity of the solution. This use of the Birkhoff factorisation seems new in comparison to the literature.  The field of singular SPDEs took advantage of numerical methods and renormalisation in perturbative quantum field theory by extending their structures via the adjunction of decorations and Taylor expansions. Now, through this work, Numerical Analysis is taking advantage of these extended structures and provides a new perspective on them.  
\end{abstract}
\setcounter{tocdepth}{2}
\tableofcontents

\section{Introduction}
We consider nonlinear dispersive equations
\begin{equation}\label{dis}
\begin{aligned}
& i \partial_t u(t,x) +   \mathcal{L}\left(\nabla, \tfrac{1}{\varepsilon}\right) u(t,x) =\vert \nabla\vert^\alpha p\left(u(t,x), \overline u(t,x)\right)\\
& u(0,x) = v(x),
\end{aligned}
\end{equation}
where we assume a polynomial nonlinearity $p$ and that the structure of \eqref{dis} implies at least local wellposedness of the problem on a finite time interval $]0,T]$, $T<\infty$ in an appropriate functional space. Here, $u$ is the complex-valued solution that we want to approximate. Concrete examples are discussed in Section \ref{sec:examples}, including the cubic nonlinear  Schr\"odinger (NLS) equation
\begin{equation}\label{nlsIntro}
i \partial_t u + \mathcal{L}\left(\nabla\right)  u = \vert u\vert^2 u, \quad \mathcal{L}\left(\nabla\right) = \Delta,
\end{equation}
the Korteweg--de Vries (KdV) equation 
\begin{equation}\label{kdvIntro}
 \partial_t u +\mathcal{L}\left(\nabla\right) u = \frac12 \partial_x u^2, \quad \mathcal{L}\left(\nabla\right) = i\partial_x^3,
\end{equation}\black
as well as highly oscillatory Klein--Gordon type  systems
\begin{align}\label{kgrIntro}
 i \partial_t u = -\mathcal{L}\left(\nabla, \tfrac{1}{\varepsilon}\right) u + \frac{1}{\varepsilon^2}\mathcal{L}\left(\nabla, \tfrac{1}{\varepsilon}\right)^{-1} \textstyle p(u,\overline u), \quad \mathcal{L}\left(\nabla, \tfrac{1}{\varepsilon}\right)  = \frac{1}{\varepsilon}\sqrt{\frac{1}{\varepsilon^2}-\Delta}.  
\end{align}

In the last decades,  Strichartz  and Bourgain  space estimates allowed  to establish well-posedness results for dispersive equations in low regularity spaces \cite{Burq-Gerard-Tzvetkov,Bour93a,Keel-Tao,Strichartz,Tao06}.  Numerical theory for dispersive PDEs, on the other hand, is in general still restricted to smooth solutions. This is due to the fact that
most classical approximation techniques were originally developed for linear problems  and thus, in general, neglect  nonlinear frequency interactions  in a system. In the dispersive setting \eqref{dis} the interaction of the    differential operator $\mathcal{L}$ with the nonlinearity $p$, however, triggers  oscillations both in space and in time and, unlike for parabolic problems, no smoothing can be expected.  At  low regularity and high oscillations, these nonlinear frequency interactions  play  an essential role: Note that while the influence of $i\mathcal{L}$ can be small, the influence of the interaction of $+i\mathcal{L}$ and $-i\mathcal{L}$ can be huge, and vice versa.   Classical \emph{linearised} frequency approximations, used, e.g., in splitting methods or exponential integrators, see Table \ref{tab1} below, are therefore restricted to smooth solutions.   The latter is not only a technical formality: The severe order reduction in case of non-smooth solutions is also observed numerically, see, e.g., \cite{JL00,OS18} and Figure \ref{fig:osc},   and only  very little is known on how to overcome this issue.  For an  extensive overview on  numerical methods for Hamiltonian systems, geometric numerical analysis, structure preserving algorithms, and  highly oscillatory problems we refer to  the books Butcher \cite{Butcher}, Engquist et al. \cite{EFHI09}, Faou \cite{Faou12}, E. Hairer et al. \cite{HW,H2Tri}, Holden et al. \cite{HLRS10},  Leimkuhler  $\&$ Reich  \cite{LR04}, McLachlan  $\&$  Quispel \cite{McLacQ02}, Sanz-Serna $\&$ Calvo \cite{SanBook} and the references therein. 

In this work, we establish a new framework of resonance based approximations for dispersive equations which will allow us to approximate with high order accuracy  a   large class of  equations under (much) lower regularity assumptions than classical techniques require.  The key in the construction of the new methods lies in analysing the underlying oscillatory structure of the system~\eqref{dis}. We look at the  corresponding mild solution given by Duhamel's formula
\begin{equation}\label{duh}
u(t) = e^{ it  \mathcal{L}\left(\nabla, \frac{1}{\varepsilon}\right)} v  - ie^{ it  \mathcal{L}\left(\nabla, \frac{1}{\varepsilon}\right)}\vert \nabla\vert^\alpha  \int_0^t e^{ -i\xi  \mathcal{L}\left(\nabla, \frac{1}{\varepsilon}\right)}   p\left(u(\xi), \overline u(\xi)\right) d\xi 
\end{equation}
and its iterations
\begin{equs}\label{It1}
u(t) = e^{ it  \mathcal{L}\left(\nabla, \frac{1}{\varepsilon}\right)} v - ie^{ it  \mathcal{L}\left(\nabla, \frac{1}{\varepsilon}\right)}  \vert \nabla\vert^\alpha\mathcal{I}_1( t, \mathcal{L},v,p)  +\vert \nabla\vert^{2\alpha} \int_0^t \int_0^\xi \ldots d\xi_1 d \xi.
\end{equs}
The principal oscillatory integral $\mathcal{I}_1( t, \mathcal{L},v,p)$ thereby takes the form
\[
\mathcal{I}_1( t, \mathcal{L},v,p) = \int_0^t  \mathcal{Osc}(\xi, \mathcal{L}, v,p) d\xi
\]
with the central oscillations 
\begin{align}\label{osc}
\mathcal{Osc}(\xi, \mathcal{L}, v,p)  = e^{ -i\xi  \mathcal{L}\left(\nabla, \frac{1}{\varepsilon}\right)}
 p\left(e^{ i \xi  \mathcal{L}\left(\nabla, \frac{1}{\varepsilon}\right)} v , 
 e^{ - i \xi  \mathcal{L}\left(\nabla, \frac{1}{\varepsilon}\right)} \overline v \right) 
\end{align}
driven by the nonlinear frequency interactions between the differential operator $\mathcal{L}$ and the nonlinearity $p$. In order to obtain a suitable approximation at low regularity,  it is central to resolve these oscillations -- characterised by the underlying structure of resonances -- numerically.  Classical linearised frequency approximations, however,  neglect the nonlinear interactions in~\eqref{osc}. This linearisation is illustrated in Table \ref{tab1} below for splitting and exponential integrator methods (\cite{H2Tri,HochOst10}). \begin{table}[h!]
\begin{subequations}
\begin{empheq}[box=\widefbox]{align*}
& \text{\em Numerical scheme} &\quad & \text{\em Approximation of nonlinear oscillations}\\
&\text{splitting method }&\quad & \mathcal{Osc}(\xi, \mathcal{L}, v,p)  \approx
 p\left(v , 
  \overline v \right)\\
&\text{exponential method }&\quad & \mathcal{Osc}(\xi, \mathcal{L}, v,p)     \approx 
e^{ -i\xi  \mathcal{L}\left(\nabla, \frac{1}{\varepsilon}\right)} p(v,\overline v)
\end{empheq}
\end{subequations}
\caption{Classical  {frequency approximations} of the   principal oscillations \eqref{osc}.}\label{tab1}
\end{table}

\noindent The aim of this paper is to introduce a  framework which allows us to embed the underlying {nonlinear} oscillations \eqref{osc} and their higher-order counterparts   into the numerical discretisation. The main idea for tackling this problem is to introduce a  decorated tree formalism that optimises the structure of the local error by mapping it to the particular regularity of the solution. 


While  first-order resonance-based discretisations have been presented for particular examples, e.g., the Nonlinear Schrödinger (NLS), Korteweg--de Vries (KdV), Boussinesq, Dirac  and Klein--Gordon equation,  see \cite{HS16,BFS17,BS19,OS18,OstS19,SWZ20}, no general framework could be established so far.   Each and every equation had to be targeted  carefully one at a time  based on  a {sophisticated resonance analysis}. This is due to the fact that the structure of the underlying oscillations \eqref{osc} strongly depends on the form of the leading operator $\mathcal{L}$, the nonlinearity $p$ and in particular their nonlinear interactions. 

In addition, to the lack of a general framework, only very little is known about the higher-order counterpart of resonance based discretisations. Indeed, some attempts  have been made for second-order schemes (see, e.g.,  \cite{HS16} for KdV and \cite{KOS19} for NLS) but they are not optimal. This is due to the fact that  the leading differential operator $\mathcal{L} $ triggers  a full spectrum of frequencies  $k_j \in \Z^{d}$. Up to now it was an unresolved issue on how to control their nonlinear interactions up to higher order, in particular, in higher spatial dimensions where stability poses a key problem.  Even in case of a simple NLS equation it  was an open question so far whether stable low regularity approximations of order higher than one can be achieved in  spatial dimensions $d\geq 2$. In particular, previous works  suggest a severe order reduction (\cite{KOS19}).

To overcome this we introduce a new tailored decorated tree formalism.  Thereby the decorated trees encode the Fourier coefficients in the iteration of  Duhamel's formula, where the  node decoration  encodes the frequencies which is in the spirit close to \cite{Christ,oh1,Gub11}. The main difficulty then lies in controlling the nonlinear frequency interactions within these iterated integrals up to the desired order with the constraint of a given a priori regularity of the solution. The latter is achieved by embedding the underlying  oscillations, and their higher order iterations, via  well-chosen Taylor series expansions into our formalism: The dominant interactions will be embedded exactly whereas only the lower order parts are approximated within the discretisation.


We base our algebraic structures on the ones developed for SPDEs with Regularity Structure \cite{reg} which is a generalisation of Rough Paths \cite{Lyo91,Lyo98,Gub04,Gub10}. Part of the formalism is inspired by \cite{BHZ} and the recentering map used for giving a local description of the solution of {singular SPDEs}. We adapt it to the context of dispersive PDEs by using a new class of decorated trees {encoding the underlying dominant frequencies}.

The framework of decorated trees and the underlying Hopf algebras have allowed the resolution of a large class of singular SPDEs \cite{reg,BHZ,ajay,BCCH} which include 
a natural random dynamic on the space of loops in
a Riemannian manifold in \cite{BGHZ}, see \cite{EMS} for a very brief survey on these developments. With this general framework, one can study properties of singular SPDEs solutions in full subcritical-regimes \cite{CHS,Berglund,support,CMW}. The formalism of decorated trees together with the description of the renormalised equation in this context (see \cite{BCCH}) was directly inspired from numerical analysis of ODEs, more precisely, from the characterisation of Runge-Kutta methods via B-series.
Indeed, B-series are numerical
(multi-)step methods for ODEs represented by a tree expansion, see, e.g.,   \cite{Butcher72,Berland,MR2657947,H2Tri,IQT,MR2803804}. We also refer to  \cite{LieSeries} for a review of  B-series on Lie groups and homogeneous manifolds as well as to   \cite{WordSeries} providing an alternative structure via word series. The field of singular SPDEs took advantage of the B-series formalism and extended their structures via the adjunction of decorations and Taylor expansions. Now, through this work, numerical analysis is taking advantage of these extended structures and enlarges their scope.

 This work proposes a new application of the Butcher-Connes-Kreimer Hopf algebra \cite{Butcher72,CK} to dispersive PDEs. It gives a new light on structures that have been used in various fields such as numerical analysis, renormalisation in quantum field theory, singular SPDEs and 
also dynamical systems for classifying singularities via resurgent functions introduced by Jean Ecalle (see \cite{Ecalle1,Ecalle1,FM}).
This is another testimony of the universality of this structure and adds a new object to this landscape. Our construction is motivated by two main features:
Taylor expansions that are at the foundation of the numerical scheme (added at the level of the algebra as for singular SPDEs) and the frequency interaction (encoded in a tree structure for dispersive PDEs). The combination of the two  together with the Butcher-Connes-Kreimer Hopf algebra allows us to design a novel class of schemes at low regularity. We observe a similar Birkhoff type factorisation as in SPDEs and perturbative quantum field theory. This factorisation allows us to single out oscillations and to perform the local error analysis.

Our main result is the new general resonance based scheme presented in Definition \ref{scheme}  with its error structure given in Theorem \ref{thm:genloc}. Our general framework is illustrated on concrete examples in Section \ref{sec:examples} and simulations show the efficacy of the scheme. The algebraic structure in Section \ref{General framework} has its own interest where the main objective is to understand the frequency interactions. The Birkhoff factorisation given in Section~\ref{sec::Brikhoff} is designed for this purpose and is a good help for  proving  Theorem  \ref{thm:genloc}. This factorisation seems new in comparison to the literature.

{\bf Assumptions.} We impose periodic boundary condition that is $x \in \T^d$. However, our theory can be extended to the full space $\R^d$.  We assume that the   differential operator $\mathcal{L}$ is real and consider two types of structures of the system \eqref{dis} which will allow us to handle dispersive equations at low regularity (such as NLS and KdV) and highly oscillatory Klein--Gordon type systems; see also \eqref{nlsIntro}-\eqref{kgrIntro}.
\begin{itemize}
\item  The differential operators $\mathcal{L}\left(\nabla, \frac{1}{\varepsilon}\right)  = \mathcal{L}\left(\nabla \right) $ and $\vert \nabla\vert^\alpha$ cast in Fourier  space into the form 
\begin{equs}\label{Lldef}
 \mathcal{L}\left(\nabla \right)(k) = k^\sigma + \sum_{\gamma : |\gamma| < \sigma} a_{\gamma} \prod_{j} k_j^{\gamma_j} ,\qquad \vert \nabla\vert^\alpha(k) = \prod_{ \gamma : |\gamma|  {\leq \alpha}} k_j^{\gamma_j}
\end{equs}
for some $ \alpha \in \R $, $ \gamma \in \Z^d $ and $ |\gamma| = \sum_i \gamma_i $,
where  for $k = (k_1,\ldots,k_d)\in \Z^d$ and $m = (m_1, \ldots, m_d)\in \Z^d$ we set \begin{equs}
k^\sigma  = k_1^\sigma + \ldots + k_d^\sigma, \qquad k \cdot m = k_1 m_1 + \ldots + k_d m_d.
\end{equs}
\item We also consider the setting of  a given  high frequency $\frac{1}{\vert \varepsilon \vert} \gg   1$. In this case we assume that the  operators $\mathcal{L}\left(\nabla, \frac{1}{\varepsilon}\right)  $ and $\vert \nabla\vert^\alpha$ take the form
 \begin{equs}\label{Leps} 
\mathcal{L}\left(\nabla, \frac{1}{\varepsilon}\right) =  \frac{1}{\varepsilon^{\sigma}} + \mathcal{B}\left(\nabla,  \frac{1}{\varepsilon}\right), \qquad \vert \nabla\vert^\alpha = \mathcal{C}\left(\nabla,  \frac{1}{\varepsilon}\right)
\end{equs} 
for some differential operators $\mathcal{B}\left(\nabla,  \frac{1}{\varepsilon}\right)$ and $\mathcal{C}\left(\nabla,  \frac{1}{\varepsilon}\right)$ which can be bounded uniformly in $ \vert \varepsilon\vert$ and are relatively bounded by differential operators of degree $\sigma$ and degree $\alpha< \sigma$, respectively.    This allows us to include for instance highly oscillatory Klein--Gordon type equations \eqref{kgrIntro} (see, also Section \ref{sec:kgr}).
\end{itemize}

\begin{figure}[h!]
\begin{subfigure}[c]{0.49\textwidth}
\includegraphics[width=1.\textwidth]{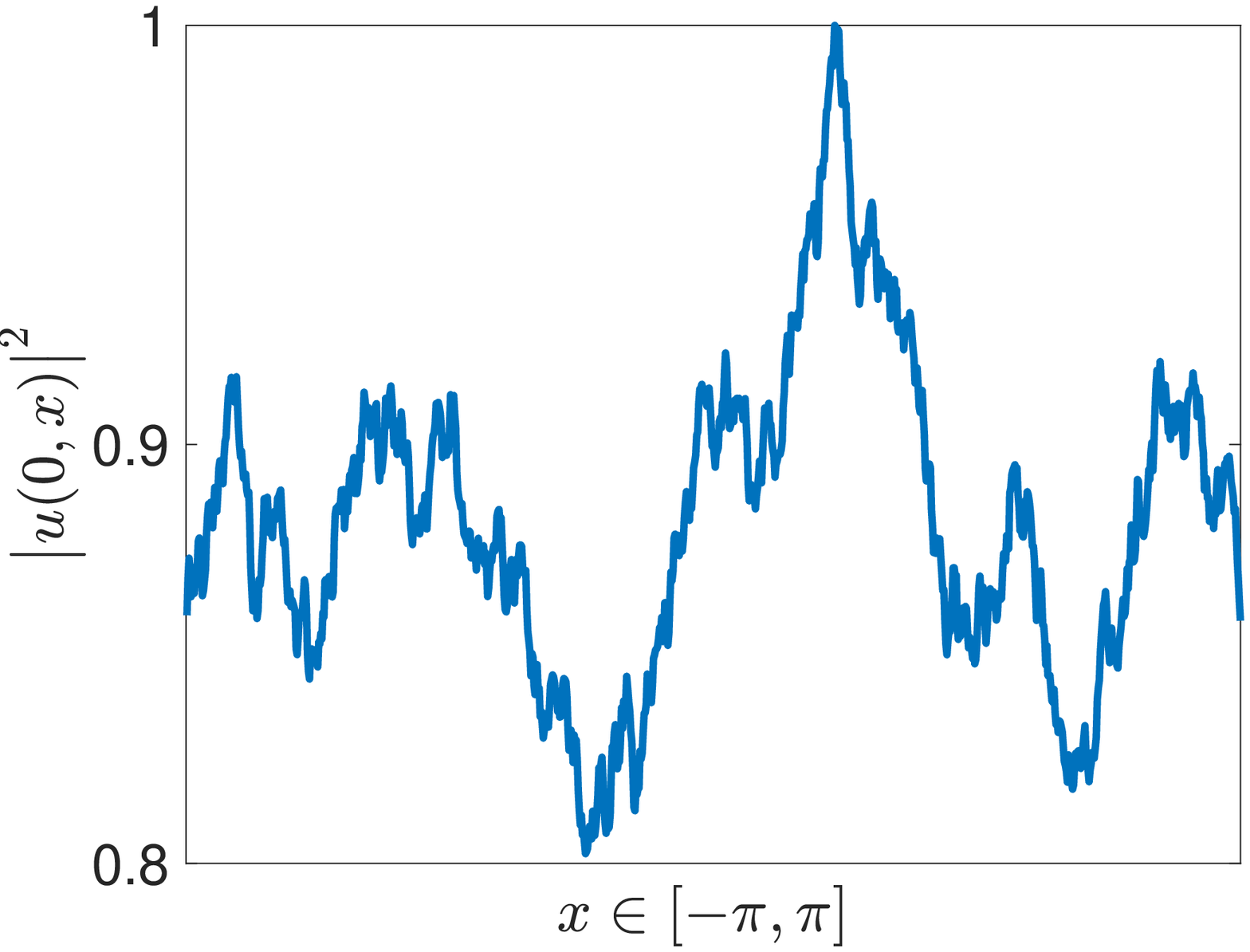}
\subcaption{$H^1$ data}
\end{subfigure}
\begin{subfigure}[c]{0.49\textwidth}
\includegraphics[width=1.\textwidth]{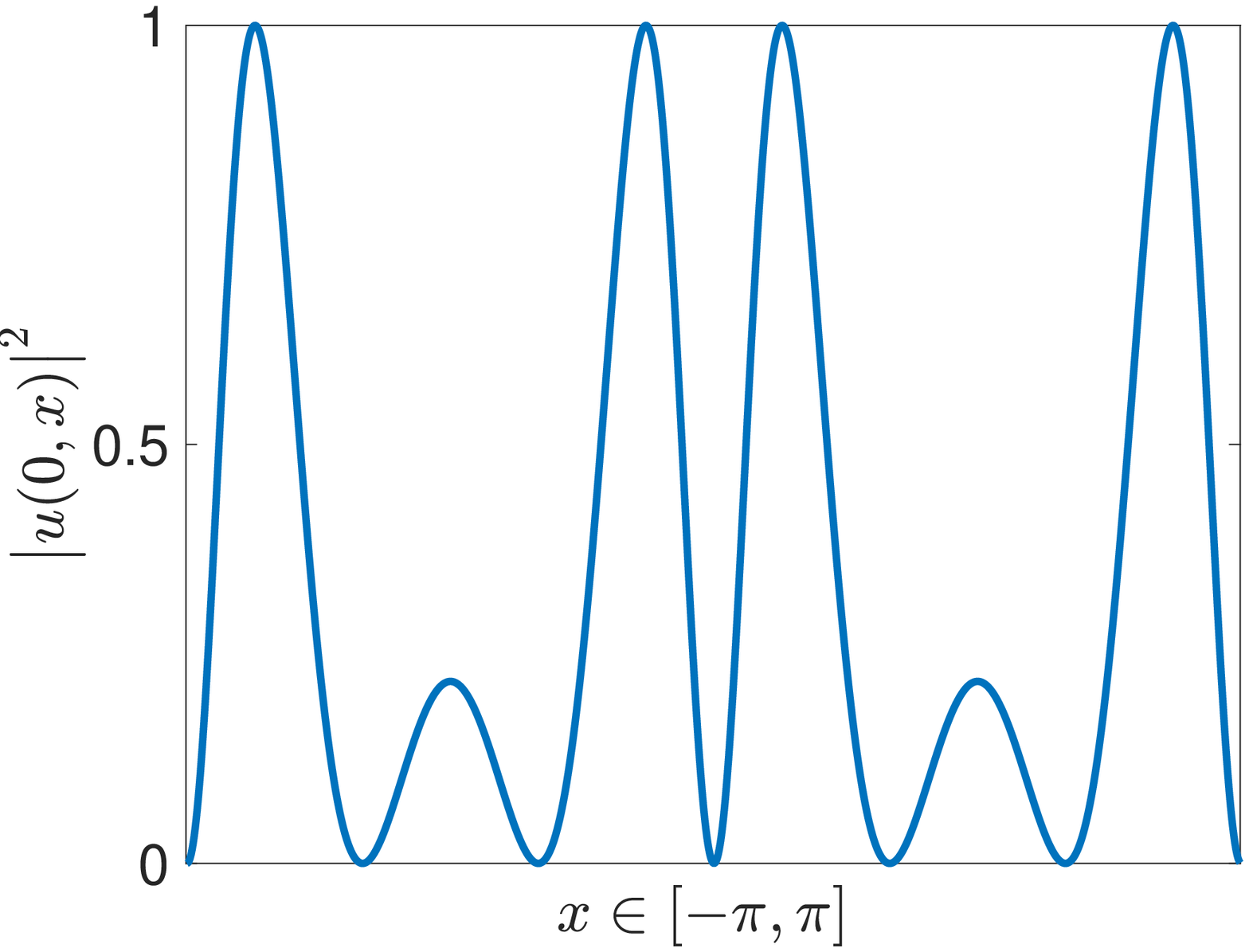}
\subcaption{$\mathcal{C}^\infty$ data}
\end{subfigure}\caption{Initial values for  Figure \ref{fig:osc}: $u_0 \in H^1$ (left) and $u_0 \in \mathcal{C}^\infty$ (right).}\label{fig:ini}
\end{figure}

\begin{figure}[h!]
\begin{subfigure}[c]{0.49\textwidth}
\includegraphics[width=1.\textwidth]{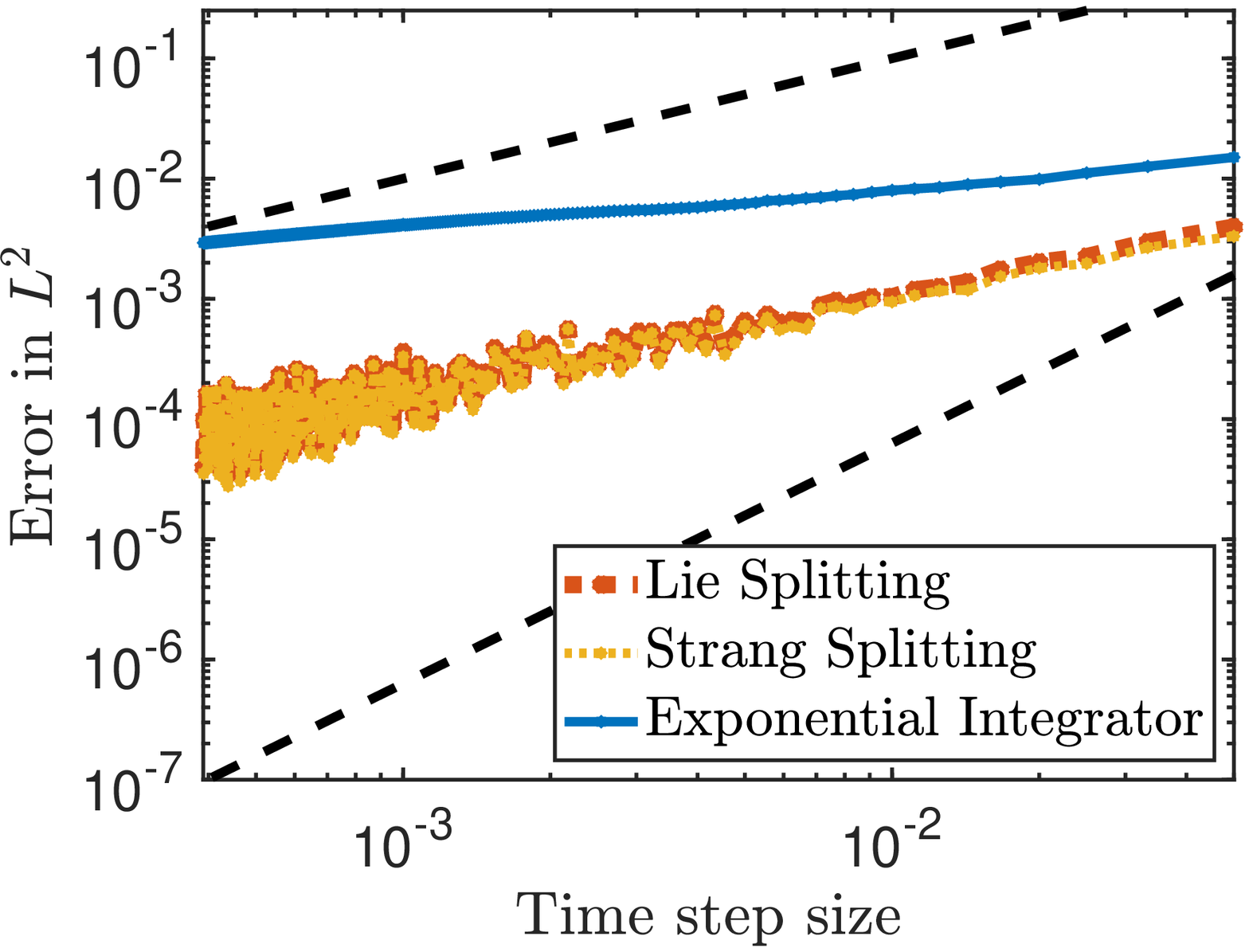}
\subcaption{$H^1$ data}
\end{subfigure}
\begin{subfigure}[c]{0.49\textwidth}
\includegraphics[width=1.\textwidth]{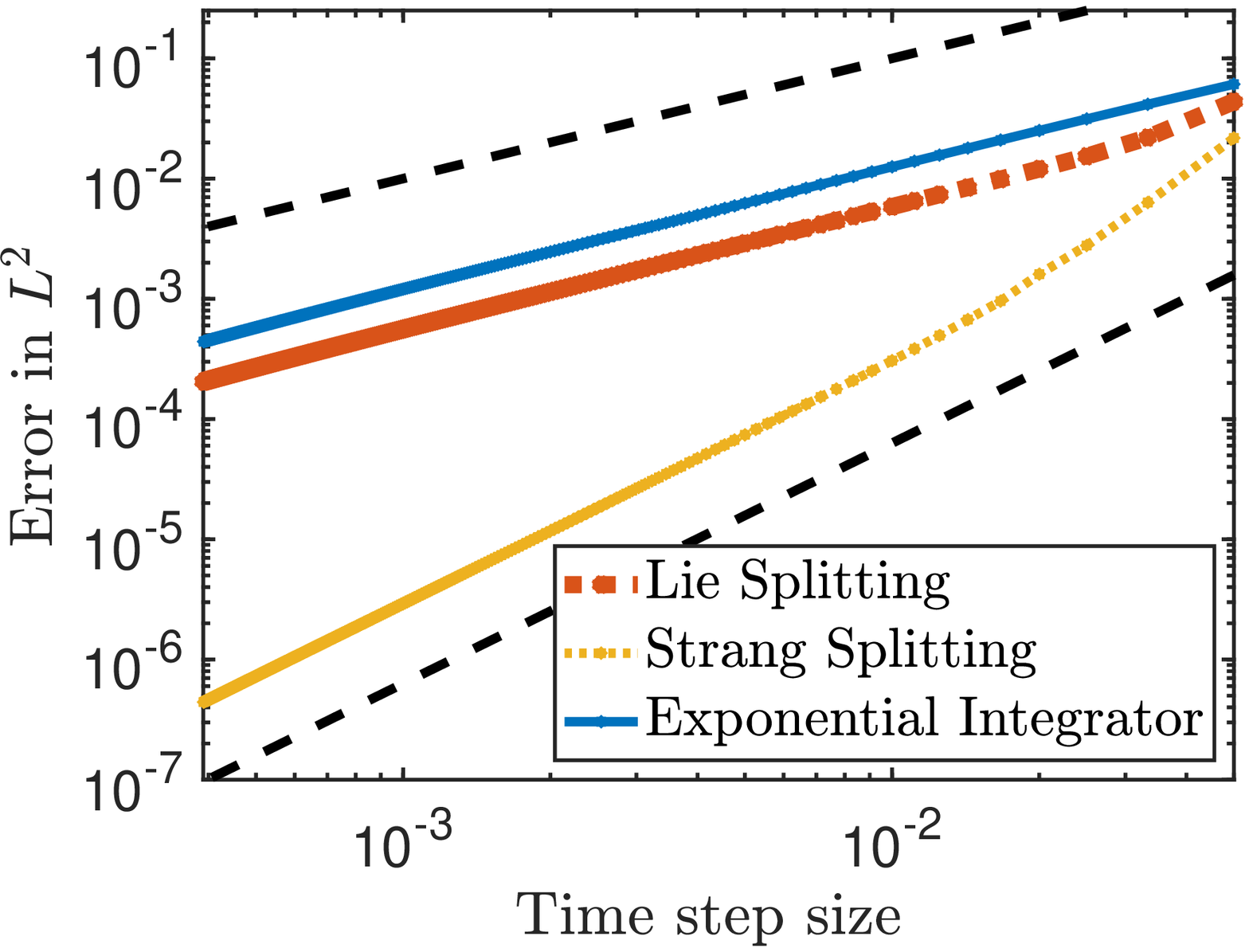}
\subcaption{$\mathcal{C}^\infty$ data}
\end{subfigure}
\caption{Order reduction of classical schemes based on linearised frequency approximations (cf. Table \ref{tab1}) in case of low regularity data (error versus step size for  the cubic Schrödinger equation).  For smooth solutions classical methods reach their full order of convergence (right picture). In contrast, for less smooth solutions they suffer from severe order reduction (left picture). The initial values in $H^1$ and $\mathcal{C}^\infty$ are plotted in Figure \ref{fig:ini}. The slope of the reference solutions (dashed lines) is one and two, respectively.}\label{fig:osc}
\end{figure}
In the next section we introduce the resonance based techniques to solve the dispersive PDE \eqref{dis} and illustrate our approach  on the example of cubic nonlinear Schr\"odinger equation \eqref{nlsIntro}, see Example \ref{ex:introExNLS}. 
\subsection{Resonances as a computational tool}\label{sec:res}
Instead of employing classical linearised frequency approximations (cf. Table~\ref{tab1})  we want to embed  the underlying nonlinear oscillations 
\begin{align}\label{oscii}
\mathcal{Osc}(\xi, \mathcal{L}, v,p)  = e^{ -i\xi  \mathcal{L}\left(\nabla, \frac{1}{\varepsilon}\right)}
p\left(e^{ i \xi  \mathcal{L}\left(\nabla, \frac{1}{\varepsilon}\right)} v , 
 e^{ - i \xi  \mathcal{L}\left(\nabla, \frac{1}{\varepsilon}\right)} \overline v \right)
\end{align}
  (and their higher-order counterparts) into the numerical discretisation.    In case of the one dimensional cubic Schr\"odinger equation \eqref{nlsIntro} the central oscillations~\eqref{oscii}  for instance  take in Fourier the form (see Example  \ref{ex:introExNLS} for details)
  $$
  \mathcal{Osc}(\xi, \Delta, v,\text{cub})  =\sum_{\substack{k_1,k_2,k_3 \in \Z\\-k_1+k_2+k_3 = k} }
 e^{i k  x } \overline{\hat{v}}_{k_1} \hat{v}_{k_2} \hat{v}_{k_3} \int_0^\tau e^{i s \mathscr{F}(k) } ds
  $$
  with the underlying resonance structure
\begin{align}\label{resiNL}
\mathscr{F}(k) = 2 k_1^2 - 2 k_1 (k_2+k_3) + 2 k_2  k_3.
\end{align}
 Ideally we would like to resolve all  nonlinear frequency interactions \eqref{resiNL}  exactly in our scheme.     \black However, these consequent into a generalised convolution (of Coifman--Meyer type \cite{Coif}) which can not be converted as a product into the physical space. Thus  the iteration would need to be carried out fully in Fourier space which does not  yield a scheme which can be  {practically} implemented  in higher spatial dimensions, see also Remark \ref{rem:FFT} below.  The latter in general also holds true in the abstract setting  \eqref{oscii}.\black 
 
In order to obtain an  {efficient and practical} resonance based discretisation we extract the dominant and lower-order parts from the resonance structure \eqref{oscii}. More precisely, we filter out the dominant parts $ \mathcal{L}_{\text{dom}}$  and treat them exactly while only approximating the lower order terms in the spirit of
\begin{equation}\label{oscNew}
\mathcal{Osc}(\xi, \mathcal{L}, v,p) 
 = 
\left[
e^{i \xi  \mathcal{L}_{\text{dom}} \left(\nabla, \frac{1}{\varepsilon}\right)} p_{\text{dom}}\left(v,\overline v\right)
\right] p_{\text{low}}(v,\overline v) + \mathcal{O}\Big(\xi\mathcal{L}_\text{low}\left(\nabla\right)v\Big).
\end{equation}
Here, $\mathcal{L}_{\text{dom}}$ denotes a suitable dominant part of the high frequency interactions and 
\begin{equation}\label{Llow}
\mathcal{L}_\text{low} = \mathcal{L} - \mathcal{L}_{\text{dom}}
\end{equation}
the corresponding non-oscillatory parts (details will be  given in Definition \ref{dom_freq}).  The crucial issue is to determine  $\mathcal{L}_{\text{dom}}$, $p_{\text{dom}}$ and $\mathcal{L}_\text{low}, p_{\text{low}}$ in~\eqref{oscNew} with an interplay between keeping the underlying structure of PDE and allowing a practical implementation at a reasonable cost. We refer to Example \ref{ex:introExNLS} for the concrete characterisation in case of   cubic NLS, where $\mathcal{L}_\text{low} = \nabla$ and  $\mathcal{L}_{\text{dom}} = \Delta$.

Thanks to the resonance based ansatz \eqref{oscNew} the principal oscillatory integral 
\[
\mathcal{I}_1( t, \mathcal{L},v,p)  = \int_0^t \mathcal{Osc}(\xi, \mathcal{L}, v,p)  d\xi
\]
in the expansion of the exact solution \eqref{It1}
\begin{equation}\label{duli}
\begin{aligned}
u(t)   = e^{ it  \mathcal{L}\left(\nabla, \frac{1}{\varepsilon}\right)} v - ie^{ it  \mathcal{L}\left(\nabla, \frac{1}{\varepsilon}\right)}   \vert \nabla\vert^\alpha \mathcal{I}_1( t, \mathcal{L},v,p) 
  + \mathcal{O}\left( t^2\vert \nabla\vert^{2 \alpha}{ q_1( v)} \right)
  \end{aligned}
\end{equation}
(for some polynomial $q_1$) then takes the form
\begin{equation}\label{pri1}
\begin{aligned}
\mathcal{I}_1( t, \mathcal{L},v,p)  
& = \int_0^t \left[
e^{i \xi  \mathcal{L}_{\text{dom}}} p_{\text{dom}}\left(v,\overline v\right)
\right] p_{\text{low}}(v,\overline v)  +  \mathcal{O}\Big(\xi\mathcal{L}_\text{low}\left(\nabla\right){ q_2(v)}\Big)d \xi\\
 & =  t
 p_{\text{low}}(v,\overline v)   \varphi_1\left(i t \mathcal{L}_{\text{dom}} \right) p_{\text{dom}}\left(v,\overline v\right)
  + \mathcal{O}\Big(t^2\mathcal{L}_\text{low}\left(\nabla\right){ q_2(v)}\Big)
\end{aligned}
\end{equation}
(for some polynomial $q_2$) where for shortness we write $\mathcal{L} = \mathcal{L}\left(\nabla, \frac{1}{\varepsilon}\right)$ and define $\varphi_1(\gamma) = \gamma^{-1}\left(e^\gamma -1\right)$ for $\gamma \in \C$. Plugging \eqref{pri1} into \eqref{duli} yields for a small time step $\tau$ that
\begin{multline}\label{ue1}
u(\tau)   = e^{ i\tau  \mathcal{L}} v - \tau ie^{ i\tau  \mathcal{L}}  \vert \nabla\vert^\alpha
 \Big[p_{\text{low}}(v,\overline v)   \varphi_1\left(i \tau \mathcal{L}_{\text{dom}}  \left(\nabla, \frac{1}{\varepsilon}\right)\right) p_{\text{dom}}\left(v,\overline v\right)
\Big] \\
  +   \mathcal{O}\left( \tau^2\vert \nabla\vert^{2 \alpha} q_
  1(v) \right) +  \mathcal{O}\Big(\tau^2\vert \nabla\vert^{ \alpha}\mathcal{L}_\text{low} \left(\nabla\right)q_2({ v})\Big) 
\end{multline}
for some polynomials $q_1, q_2$. The expansion of the exact solution  \eqref{ue1} builds the foundation of the first-order resonance based discretiatzion
\begin{align}\label{GenScheme1}
u^{n+1} = 
 e^{ i\tau  \mathcal{L}} u^n - \tau ie^{ i\tau  \mathcal{L}} 
{ \vert \nabla\vert^\alpha}  \Big[p_{\text{low}}(u^n,\overline u^n)   \varphi_1\left(i \tau\mathcal{L}_{\text{dom}} \left(\nabla, \frac{1}{\varepsilon}\right) \right) p_{\text{dom}}\left(u^n,\overline u^n\right)
\Big] .
\end{align}
Compared to classical linear frequency approximations (cf. Table \ref{tab1})  the main gain of the more involved resonance based approach \eqref{GenScheme1} is the following: All dominant parts $\mathcal{L}_\text{dom}$ are captured exactly in the discretisation, while only the lower order/non-oscillatory parts $\mathcal{L}_\text{low}$ are approximated. Henceforth, within the resonance based approach \eqref{GenScheme1} the local error only depends on the lower order, non-oscillatory operator $\mathcal{L}_\text{low}$, while the local error of classical methods involves the full operator $\mathcal{L}$ and, in particular, its dominant part $\mathcal{L}_\text{dom}$.  
Thus, the resonance based approach \eqref{GenScheme1} allows us to approximate a  more general class of solutions
\begin{multline}\label{DDD}
u \in \underbrace{ \mathcal{D}\left(\vert \nabla\vert^\alpha \mathcal{L}_{\text{low}}\left(\nabla, \frac{1}{\varepsilon}\right)\right) }_{\text{resonance domain}} \cap \mathcal{D}\left(\vert \nabla\vert^{2\alpha}\right)\\ \supset { \mathcal{D}\left(\vert \nabla\vert^\alpha \mathcal{L}\left(\nabla, \frac{1}{\varepsilon}\right)\right) }\cap \mathcal{D}\left(\vert \nabla\vert^{2\alpha}\right)
=\underbrace{ \mathcal{D}\left(\vert \nabla\vert^\alpha \mathcal{L}_{\text{dom}}\left(\nabla, \frac{1}{\varepsilon}\right)\right) }_{\text{classical domain}}\cap \mathcal{D}\left(\vert \nabla\vert^{2\alpha}\right).
\end{multline}
 
\noindent{\bf Higher order resonance based methods.} 
Classical approximation techniques, such as splitting or exponential integrator methods, can easily be extended to higher order, see, e.g., \cite{H2Tri,HochOst10,Ta12}. The step from a first- to  higher-order approximation lies in subsequently employing a higher order Taylor series expansion to the exact solution
\begin{align*}
u(t)  = u(0) + t\partial_t u(0) + \ldots + \frac{t^{r}}{r!} \partial_t^{r} u(0) + \mathcal{O} \left( t^{r+1} \partial_{t}^{r+1} u\right).
\end{align*}
Within this expansion, the higher order iterations of the oscillations \eqref{osc} in the exact solution are, however, not resolved, but subsequently linearised. Therefore,  classical high order methods are restricted to smooth solutions as their local approximation error in general involves high order derivatives \begin{equation}\label{highT}
\mathcal{O} \left( t^{r+1} \partial_{t}^{r+1} u\right)=\mathcal{O} \left( t^{r+1} \mathcal{L}^{r+1}\left(\nabla, \tfrac{1}{\varepsilon}\right) u\right).
\end{equation}  This phenomenon is also illustrated in Figure \ref{fig:osc} where we numerically  observe  the order reduction of the Strang splitting method (of classical order two) down to the order of the Lie splitting method (of classical order one) in case of rough solutions. In particular we observe that classical high order methods do not pay off at low regularity as their  error behaviour reduces to the one of lower order methods.

At first glance our resonance based approach can  also be straightforwardly extended to higher order. Instead of considering only the first order iteration~\eqref{It1} the natural idea is to iterate Duhamel's formula \eqref{duh} up to the desired order $r$, i.e, for initial value $u(0)=v$,
\begin{equation}
\begin{aligned}\label{Itpi}
 u(t)  &= e^{i t \mathcal{L}} v -i  e^{i t \mathcal{L}}\nabla^{\alpha} \int_0^te^{ -i \xi_1 \mathcal{L}}  p\left( e^{ i \xi_1 \mathcal{L}} v,e^{- i \xi_1 \mathcal{L}} \overline v\right) d\xi_1\\&
-e^{i t \mathcal{L}}\nabla^{\alpha}  \int_0^te^{ -i \xi_1 \mathcal{L}}  \Big[D_1 p \left( e^{ i \xi_1 \mathcal{L}} v,e^{- i \xi_1 \mathcal{L}} \overline v\right)\\&\qquad \cdot
e^{ i \xi_1\mathcal{L}}\nabla^{\alpha}  \int_0^{\xi_1} e^{ -i \xi_2 \mathcal{L}} p\left( e^{ i \xi_1 \mathcal{L}} v,e^{- i \xi_1 \mathcal{L}} \overline v\right) d\xi_2
\Big]d\xi_1 \\
&
+e^{i t \mathcal{L}}\nabla^{\alpha}  \int_0^te^{ -i \xi_1 \mathcal{L}}  \Big[D_2p \left( e^{ i \xi_1 \mathcal{L}} v,e^{- i \xi_1 \mathcal{L}} \overline v\right)\\&\qquad \cdot
e^{ -i \xi_1\mathcal{L}}\nabla^{\alpha}  \int_0^{\xi_1} e^{ i \xi_2 \mathcal{L}} \overline{p\left( e^{ i \xi_1 \mathcal{L}} v,e^{- i \xi_1 \mathcal{L}} \overline v\right) }d\xi_2
\Big]d\xi_1 \\
  & + \ldots+\nabla^{\alpha} \int_0^t\nabla^{\alpha} \int_0^\xi \ldots \nabla^{\alpha}\int_0^{\xi_r} d\xi_{r} \ldots d\xi_1 d \xi
\end{aligned}
\end{equation}
 where $ D_1 $ (resp. $ D_2 $) corresponds to the derivative
in the first (resp. second) component of $ p $.
The key idea  will then be the following: Instead of linearising the frequency interactions by a simple Taylor series expansions of  the oscillatory terms $e^{\pm i \xi_\ell \mathcal{L}}$ (as classical methods would do),  we want to embed the dominant frequency interactions  of \eqref{Itpi} exactly into our numerical discretisation.
By neglecting the last term involving the iterated integral of order $r$  we will then  introduce the  desired local error  $\mathcal{O}\Big(\nabla^{(r+1)\alpha} t^{r+1}q(u)\Big)$ for some polynomial $q$.

Compared to the first order approximation \eqref{oscNew} this is, however,  much more involved as  {high order  iterations} of  the nonlinear frequency interactions need to be controlled. The control of these iterated  oscillations is not only a delicate problem on the discrete (numerical) level, concerning accuracy, stability, etc., but already on the continuous level: We have to encode the structure (which strongly depends on the underlying structure of the PDE, i.e., the form of operator $\mathcal{L}$ and the shape of nonlinearity $p$) and at the same time keep track of the regularity assumptions.  In order to achieve this in the general setting \eqref{dis} we will introduce the decorated tree formalism in Section \ref{sec:rho}. Beforehand let us first illustrate the main ideas on the example of the cubic periodic Schr\"odinger equation.
\begin{example}[cubic periodic Schr\"odinger equation]\label{ex:introExNLS}
We consider the one dimensional cubic Schr\"odinger equation
\begin{align}\label{nlsIntro2}
i \partial_t u + \partial_x^2 u = \vert u\vert^2 u    
\end{align}
equipped with periodic boundary conditions, that is $x \in \T$. The latter  casts into the general form \eqref{dis} with
\begin{equation}\label{nlsDo}
\begin{aligned}
\mathcal{L}\left(\nabla, \tfrac{1}{\varepsilon}\right) & = \partial_x^2, \quad \alpha = 0 \quad \text{and}\quad  p(u,\overline u) =u^2 \overline u  .
 \end{aligned}
\end{equation} 
In the case of cubic NLS, the central oscillatory integral (at first order)  takes the form (cf. \eqref{osc})
\begin{equation}\label{I1d}
\mathcal{I}_1(\tau, \partial_x^2,v) = \int_0^\tau  e^{-i s   \partial_x^2}\left[ \left( e^{- i s   \partial_x^2} \overline v \right) \left ( e^{ i s  \partial_x^2} v \right)^2\right] d s.
 \end{equation}
Assuming that $v\in L^2$  the Fourier transform 
$
v(x)  = \sum_{k \in \Z}\hat{v}_k  e^{i k x}
$ 
allows us to express the action of the free Schrödinger group as a Fourier multiplier, i.e., 
\[
e^{\pm i t  \partial_x^2}v(x) = \sum_{k \in \Z} e^{\mp i t k^2} \hat{v}_k  e^{i k x}.
\]
With this at hand we can express the oscillatory integral \eqref{I1d} as follows
\begin{equation}\label{Ia}
\begin{aligned}
\mathcal{I}_1(\tau, \partial_x^2,v) 
 & = \sum_{\substack{k_1,k_2,k_3 \in \Z\\-k_1+k_2+k_3 = k} }
 e^{i k  x } \overline{\hat{v}}_{k_1} \hat{v}_{k_2} \hat{v}_{k_3} \int_0^\tau e^{i s \mathscr{F}(k) } ds
\end{aligned}
\end{equation}
with the underlying resonance structure
\begin{align}\label{resNLS}{
\mathscr{F}(k) = 2 k_1^2 - 2 k_1 (k_2+k_3) + 2 k_2  k_3}.
\end{align}
In the spirit of \eqref{oscNew} we need to extract the dominant and lower-order parts from the resonance structure \eqref{resNLS}. The choice is based on the following  observation. Note that $2k_1^2$ corresponds to a second-order derivative, i.e., with the inverse Fourier transform $\mathcal{F}^{-1}$, we have
\[
\mathcal{F}^{-1}\left(2k_1^2 \overline{\hat v}_{k_1} \hat v_{k_2} \hat v_{k_3}\right)
= \left(- 2\partial_x^2 \overline v\right) v^2
\]
while the terms $k_\ell \cdot k_m$ with $\ell \neq m$ correspond only to first-order derivatives, i.e.,
\[
\mathcal{F}^{-1}\left(k_1 \overline{\hat v}_{k_1} k_2 \hat v_{k_2} \hat v_{k_3}\right)
= -\vert\partial_x v\vert^2 v, \quad \mathcal{F}^{-1}\left( \overline{\hat v}_{k_1} k_2 \hat v_{k_2} k_3 \hat v_{k_3}\right)
= - (\partial_x v)^2\overline v.
\]
This motivates the choice
 \[
 \mathscr{F}(k) = \mathcal{L}_{\text{dom}}(k_1)  +   \mathcal{L}_{\text{low}}(k_1,k_2,k_3) 
 \]
with
\begin{equation}\label{domNLS0}
{
\mathcal{L}_{\text{dom}}(k_1) = 2k_1^2 \quad \text{and}\quad  \mathcal{L}_{\text{low}}(k_1,k_2,k_3) = - 2 k_1 (k_2+k_3) + 2 k_2 k_3}.
\end{equation}
In terms of \eqref{GenScheme1} we thus have
\begin{equation}
\begin{aligned}
\label{domNLS}
& \mathcal{L}_{\text{dom}} = - 2\partial_x^2, 
\quad \quad p_{\text{dom}}(v,\overline v) = \overline v  \quad \text{and}\quad 
p_{\text{low}}(v,\overline v) = v^2
\end{aligned}
\end{equation}
and the first-order NLS resonance based discretisation \eqref{GenScheme1} takes the form
\begin{align}\label{schemeNLSintro}
u^{n+1} = 
 e^{ i\tau  \partial_x^2} u^n - \tau ie^{ i\tau  \partial_x^2} 
 \Big[(u^n)^2 \varphi_1\left(-2 i \tau  \partial_x^2 \right)  \overline u^n
\Big] .
\end{align}
 Thanks to \eqref{ue1}  we readily see by \eqref{domNLS0} that the NLS scheme \eqref{schemeNLSintro} introduces  the approximation error
\begin{equation}\label{natscal}
\mathcal{O}\left(\tau^2 \mathcal{L}_{\text{low}}q(u)\right)= \mathcal{O}\left(\tau^2 \partial_xq(u)\right)
\end{equation}
for some polynomial $q$ in $u$. 
Compared to the error structure of classical discretisation techniques, which involve the full and thus dominant operator $\mathcal{L}_{\text{dom}} = \partial_x^2$, we thus gain one derivative with the resonance based scheme \eqref{schemeNLSintro}. This favorable error at low regularity is underlined in Figure \ref{fig.nlsintro}.

\begin{figure}[h!]\centering
\includegraphics[width=0.65\textwidth]{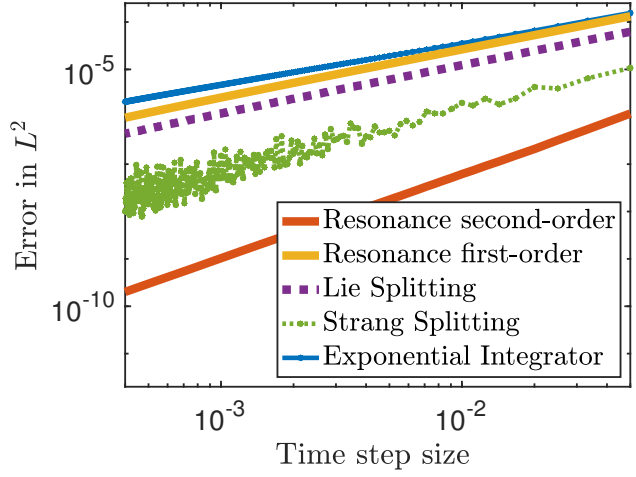}
\caption{Error versus step size (double logarithmic plot). Comparison of classical and resonance based schemes for the cubic Schr\"odinger equation \eqref{nlsIntro2} with $H^2$ initial data.}\label{fig.nlsintro}
\end{figure}

\end{example}

In Example \ref{ex:introExNLS} we illustrated the idea of the resonance based discretisation on the cubic periodic Schr\"odinger equation in one spatial dimension. In order to control frequency interactions in the general setting \eqref{dis} in arbitrary dimensions $d\geq 1$ up to arbitrary high order  we next  introduce our  decorated tree formalism.
\subsection{Main idea of decorated trees for high order resonance based schemes }\label{sec:rho}

The iteration of Duhamel's formulation \eqref{Itpi} can be expressed using decorated trees. We are interested in computing the iterated frequency interactions in \eqref{Itpi}.  This motivates us to express the latter in Fourier space. Let $ r $ be the order of the scheme and let us assume that we truncate \eqref{Itpi} at this order. Its $k-$th Fourier coefficient at order $  r $ is given by 
\begin{equs} \label{decoratedV1}
U_{k}^{r}(\tau, v) & = \sum_{T \in \CV^r_k} \frac{\Upsilon^{p}(T)(v)}{S(T)} \left( \Pi T \right)(\tau)
\end{equs}
where $ \CV^r_k $ is a set of decorated trees which incorporate the frequency $k$, $ S(T) $ is the symmetry factor associated to the tree $ T $, $ \Upsilon^{p}(T) $ is the coefficient appearing in the iteration of Duhamel's formulation and $ (\Pi T)(t) $ represents a Fourier iterated integral. The exponent $r$ in $ \CV^r_k $ means that we consider only 
trees of size $ r +1 $ which are the trees producing an iterated integral with $ r + 1$ integrals. The decorations that need to be put on the trees are illustrated in Example~\ref{ex:introExNLS trees}.

 The main difficulty then lies in developing  for every $T \in \CV^r_k$ a suitable approximation to the iterated integrals $ (\Pi T)(t) $ with the aim of minimising the local error structure (in the sense of regularity). In order to achieve this, the key idea is to embed - in the spirit of \eqref{oscNew} - the underlying resonance structure of the iterated integrals $ (\Pi T)(t) $ into the discretisation.
 
\begin{example}[cubic periodic Schr\"odinger equation with decorated trees]
\label{ex:introExNLS trees} 
When \newline  $r=2$, decorated trees for cubic NLS are given by 
\begin{equs}
T_0 = \begin{tikzpicture}[scale=0.2,baseline=-5]
\coordinate (root) at (0,1);
\coordinate (tri) at (0,-1);
\draw[kernels2] (tri) -- (root);
\node[var] (rootnode) at (root) {\tiny{$ k $}};
\node[not] (trinode) at (tri) {};
\end{tikzpicture} \qquad
T_1 = \begin{tikzpicture}[scale=0.2,baseline=-5]
\coordinate (root) at (0,2);
\coordinate (tri) at (0,0);
\coordinate (trib) at (0,-2);
\coordinate (t1) at (-2,4);
\coordinate (t2) at (2,4);
\coordinate (t3) at (0,5);
\draw[kernels2,tinydots] (t1) -- (root);
\draw[kernels2] (t2) -- (root);
\draw[kernels2] (t3) -- (root);
\draw[kernels2] (trib) -- (tri);
\draw[symbols] (root) -- (tri);
\node[not] (rootnode) at (root) {};
\node[not] (trinode) at (tri) {};
\node[var] (rootnode) at (t1) {\tiny{$ k_{\tiny{1}} $}};
\node[var] (rootnode) at (t3) {\tiny{$ k_{\tiny{2}} $}};
\node[var] (trinode) at (t2) {\tiny{$ k_{\tiny{3}} $}};
\node[not] (trinode) at (trib) {};
\end{tikzpicture} \qquad
 T_2 = \begin{tikzpicture}[scale=0.2,baseline=-5]
 \coordinate (root) at (0,2);
\coordinate (tri) at (0,0);
\coordinate (trib) at (0,-2);
\coordinate (t1) at (-2,4);
\coordinate (t2) at (2,4);
\coordinate (t3) at (0,4);
\coordinate (t4) at (0,6);
\coordinate (t41) at (-2,8);
\coordinate (t42) at (2,8);
\coordinate (t43) at (0,10);
\draw[kernels2,tinydots] (t1) -- (root);
\draw[kernels2] (t2) -- (root);
\draw[kernels2] (t3) -- (root);
\draw[symbols] (root) -- (tri);
\draw[symbols] (t3) -- (t4);
\draw[kernels2,tinydots] (t4) -- (t41);
\draw[kernels2] (t4) -- (t42);
\draw[kernels2] (t4) -- (t43);
\draw[kernels2] (trib) -- (tri);
\node[not] (trinode) at (trib) {};
\node[not] (rootnode) at (root) {};
\node[not] (rootnode) at (t4) {};
\node[not] (rootnode) at (t3) {};
\node[not] (trinode) at (tri) {};
\node[var] (rootnode) at (t1) {\tiny{$ k_{\tiny{4}} $}};
\node[var] (rootnode) at (t41) {\tiny{$ k_{\tiny{1}} $}};
\node[var] (rootnode) at (t42) {\tiny{$ k_{\tiny{3}} $}};
\node[var] (rootnode) at (t43) {\tiny{$ k_{\tiny{2}} $}};
\node[var] (trinode) at (t2) {\tiny{$ k_5 $}};
\end{tikzpicture}  \qquad
 T_3 = \begin{tikzpicture}[scale=0.2,baseline=-5]
\coordinate (root) at (0,2);
\coordinate (tri) at (0,0);
\coordinate (trib) at (0,-2);
\coordinate (t1) at (-2,4);
\coordinate (t2) at (2,4);
\coordinate (t3) at (0,4);
\coordinate (t4) at (0,6);
\coordinate (t41) at (-2,8);
\coordinate (t42) at (2,8);
\coordinate (t43) at (0,10);
\draw[kernels2] (t1) -- (root);
\draw[kernels2] (t2) -- (root);
\draw[kernels2,tinydots] (t3) -- (root);
\draw[symbols] (root) -- (tri);
\draw[symbols,tinydots] (t3) -- (t4);
\draw[kernels2] (t4) -- (t41);
\draw[kernels2,tinydots] (t4) -- (t42);
\draw[kernels2,tinydots] (t4) -- (t43);
\draw[kernels2] (trib) -- (tri);
\node[not] (trinode) at (trib) {};
\node[not] (rootnode) at (root) {};
\node[not] (rootnode) at (t4) {};
\node[not] (rootnode) at (t3) {};
\node[not] (trinode) at (tri) {};
\node[var] (rootnode) at (t1) {\tiny{$ k_{\tiny{4}} $}};
\node[var] (rootnode) at (t41) {\tiny{$ k_{\tiny{1}} $}};
\node[var] (rootnode) at (t42) {\tiny{$ k_{\tiny{3}} $}};
\node[var] (rootnode) at (t43) {\tiny{$ k_{\tiny{2}} $}};
\node[var] (trinode) at (t2) {\tiny{$ k_5 $}};
\end{tikzpicture} \label{treeK}
\end{equs}
where on the nodes we encode the frequencies such that they add up depending on the edge decorations. The root has no decoration. For example, in $T_1$ the two extremities of the blue edge have the same decoration 
given by $ -k_1 + k_2  + k_3 $ where the minus sign comes from the dashed edge. Therefore, $  \CV^r_k  $ contains infinitely many trees
(finitely many shapes, but infinitely many ways of splitting up the frequency $ k $ among the branches).  An edge of type $\<thick>$ encodes a multiplication by $ e^{-i \tau k^2} $ where $k$ is the frequency on the nodes adjacent to this edge. An edge of type $ \<thin> $ encodes an integration in time of the form
\begin{equs}
 \int_0^{\tau}  e^{i s k^2} \cdots d s.
\end{equs}
In fact $r +1$, the truncation parameter corresponds to the maximum number of integration in time that is the number of edges with type $\<thin>$.
The dashed dots on the edges correspond to a conjugate and a multiplication by $(-1)$ applied to the frequency at the top of this edge.
Then, if we apply the map $ \Pi $ (which encodes the oscillatory integrals in Fourier space, see Section~\ref{sec::recursive_pi})  to these trees, we obtain:
\begin{equs} \label{defPiex} \begin{aligned}
 (\Pi T_0)(\tau) & =  e^{-i \tau k^2},  \\ (\Pi T_1)(\tau) & =  - i e^{-i \tau k^2} \int_0^\tau  e^{i s  k^2}\left[ \left( e^{ i s   k_1^2}  \right) \left ( e^{ -i s  k_2^2}  \right) \left ( e^{ -i s  k_3^2}  \right)  \right] d s 
  \\ & = - i  e^{-i \tau k^2} \int_0^{\tau} e^{is \mathscr{F}(k)} ds
  \\ (\Pi T_2)(\tau) & = -i  e^{-i \tau k^2} \int_0^\tau  e^{i s  k^2}\left[ \left( e^{ i s   k_4^2}  \right) \Big( (\Pi T_1)(s)  \Big) \left ( e^{ - i s  k_5^2}  \right)  \right] d s 
  \\ (\Pi T_3)(\tau) & = -i e^{-i \tau k^2} \int_0^\tau  e^{i s  k^2}\left[  \Big( \overline{(\Pi T_1)(s)}  \Big)\left ( e^{ -i s  k_4^2}  \right)  \left ( e^{ -i s  k_5^2}  \right)  \right] d s
  \end{aligned}
\end{equs}
where the resonance structure $\mathscr{F}(k)$ is given in \eqref{resNLS}.  One has the constraints $k= -k_1 +k_2 +k_3$ for $T_1$,  $k= -k_1 + k_2 + k_3 -k_4 + k_5$ for $ T_2 $ and $k= k_1 - k_2 - k_3 +k_4 + k_5$ for $ T_3 $. Using the definitions in Section~\ref{sec:genScheme}, one can compute the following coefficients:
\begin{equs}
\frac{\Upsilon^p(T_0)(v)}{S(T_0)} & = \hat v_k, \quad \frac{\Upsilon^p(T_1)(v)}{S(T_1)} = \bar{\hat{v}}_{k_1} \hat{v}_{k_2} \hat{v}_{k_3} \\
\frac{\Upsilon^p(T_2)(v)}{S(T_2)} & = 2   \overline{\hat{v}}_{k_1} \hat v_{k_2} \hat v_{k_3} \overline{\hat{v}}_{k_4} \hat v_{k_5}, \quad  \frac{\Upsilon^p(T_3)(v)}{S(T_3)} =      \hat v_{k_1} \overline{\hat{v}}_{k_2}  \overline{\hat{v}}_{k_3} \hat v_{k_4}  \hat v_{k_5}
 \end{equs}
  which together with the character $ \Pi$ encode fully the identity {\eqref{decoratedV1}}.\\

\end{example}

Our general scheme is based on the approximation of $(\Pi T)(t)$ for every tree in $\CV_k^r$.  This approximation is given by a new map of decorated trees denoted by $\Pi^{n,r}$ where $r$ is the order of the scheme and $n$ corresponds to the a priori  assumed regularity of the initial value $v$.  This new  character $\Pi^{n,r}$  will embed the dominant frequency interactions and neglect the lower order terms in the spirit of \eqref{oscNew}.
Our general scheme will thus take the form
\begin{equs} \label{decoratedV2}
U_{k}^{n,r}(\tau, v) & = \sum_{T \in \CV^r_k} \frac{\Upsilon^{p}(T)(v)}{S(T)}  \left( \Pi^{n,r} T \right)(\tau)
\end{equs}
where the map $ \Pi^{n,r} T $ is  a low regularity  approximation of order $ r $ of the map $ \Pi T $ in the sense that
\begin{equs}\label{eq:loci}
\left(\Pi T - \Pi^{n,r} T \right)(\tau)  = \mathcal{O}\left( \tau^{r+2} \mathcal{L}^{r}_{\text{\tiny{low}}}(T,n) \right).
\end{equs}
Here $\mathcal{L}^{r}_{\text{\tiny{low}}}(T,n)$ involves all lower order frequency interactions that we neglect in our resonance based discretisation.  At first order this approximation is  illustrated in~\eqref{pri1}. The scheme \eqref{decoratedV2} and the local error approximations~\eqref{eq:loci} are the main results of this work (see Theorem~\ref{thm:genloc}). Let us give the main ideas on how to obtain them.

The approximation $ \Pi^{n,r} $ is constructed from a character $ \Pi^n $ defined on the vector space $ \CH $ spanned by  decorated forests taking values in  a space $ \CC $  which depends on the frequencies of the decorated trees (see, e.g., \eqref{treeK} in case of NLS). However, we will add at the root the  additional decoration $r$ which stresses that this tree will be an approximation of order $r$. 
  For this purpose we will introduce the symbol $\CD^r$  (see, e.g.,    \eqref{exDr} for $T_1$ of NLS). 
Indeed, we disregard trees which have more integrals in time than the order of the scheme.  In particular we note that $ \Pi^n \CD^r(T)  = \Pi^{n,r} T$.

The map $ \Pi^n $ is defined recursively from an operator $ \CK $ which will compute a suitable approximation (matching the regularity of the solution) of the integrals introduced by the iteration of Duhamel's formula. This map $ \CK $ corresponds to the high order counterpart of the approach described in Section~\ref{sec:res}: {It} embeds the idea of singling out the dominant parts and integrating them exactly while only approximating the lower order terms, allowing for an improved local error structure compared to classical approaches. The character $ \Pi^n $ is the main map for computing the numerical scheme in Fourier space.

\begin{example}[cubic periodic Schr\"odinger equation: computation of $\Pi^n$]
\label{ex:introExNLS Pi_n} 
We consider the  decorated trees $\CD^r(\bar T_1)$ and $\bar T_1$  given by
\begin{equs} \label{exDr}
\bar T_1 = \begin{tikzpicture}[scale=0.2,baseline=-5]
\coordinate (root) at (0,0);
\coordinate (tri) at (0,-2);
\coordinate (t1) at (-2,2);
\coordinate (t2) at (2,2);
\coordinate (t3) at (0,3);
\draw[kernels2,tinydots] (t1) -- (root);
\draw[kernels2] (t2) -- (root);
\draw[kernels2] (t3) -- (root);
\draw[symbols] (root) -- (tri);
\node[not] (rootnode) at (root) {};t
\node[not] (trinode) at (tri) {};
\node[var] (rootnode) at (t1) {\tiny{$ k_{\tiny{1}} $}};
\node[var] (rootnode) at (t3) {\tiny{$ k_{\tiny{2}} $}};
\node[var] (trinode) at (t2) {\tiny{$ k_3 $}};
\end{tikzpicture},\, \quad 
\CD^r(\bar T_1) = \begin{tikzpicture}[scale=0.2,baseline=-5]
\coordinate (root) at (0,0);
\coordinate (tri) at (0,-2);
\coordinate (t1) at (-2,2);
\coordinate (t2) at (2,2);
\coordinate (t3) at (0,3);
\draw[kernels2,tinydots] (t1) -- (root);
\draw[kernels2] (t2) -- (root);
\draw[kernels2] (t3) -- (root);
\draw[symbols] (root) -- (tri);
\node[not] (rootnode) at (root) {};t
\node[not,label= {[label distance=-0.2em]below: \scriptsize  $ r $}] (trinode) at (tri) {};
\node[var] (rootnode) at (t1) {\tiny{$ k_{\tiny{1}} $}};
\node[var] (rootnode) at (t3) {\tiny{$ k_{\tiny{2}} $}};
\node[var] (trinode) at (t2) {\tiny{$ k_3 $}};
\end{tikzpicture}.
 \end{equs}
  One can observe that 
$
  (\Pi  T_1)(t)   = e^{-i k^2 t} (\Pi \bar T_1)(t). 
$ We  will define recursively two maps $\mathscr{F}_{\text{\tiny{dom}}} $ and $ \mathscr{F}_{\text{\tiny{low}}} $ (see Definition~\ref{dom_freq} below) on decorated trees that compute the dominant and the lower part of the  nonlinear frequency interactions within the oscillatory integral  $ (\Pi \bar T_1)(t) $.  In this example, one gets back the values already computed in \eqref{domNLS0}; i.e.,
 \begin{equs} 
\mathscr{F}_{\text{\tiny{dom}}}(\bar T_1) =  \mathcal{L}_{\text{\tiny{dom}}}(k_1), \quad  \mathscr{F}_{\text{\tiny{low}}}(\bar T_1)  =\mathcal{L}_{\text{\tiny{low}}}(k_1,k_2,k_3). \end{equs}
   Moreover, the dominant part of $T_1$ is due to the observation  that  $(\Pi  T_1)(t)   = e^{-i k^2 t} (\Pi \bar T_1)(t)$ given by:
   \begin{equs}
\mathscr{F}_{\text{\tiny{dom}}}( T_1) = -k^2 +\mathscr{F}_{\text{\tiny{dom}}}(\bar T_1) ,
    \end{equs}
    because the tree $T_1$ does not start with an intregral in time. Then, one can write:
    \begin{equs}
    (\Pi \bar T_1)(t) = -i \int_0^\tau e^{i s\mathscr{F}_{\text{\tiny{dom}}}(\bar T_1) }  e^{i s\mathscr{F}_{\text{\tiny{low}}}(\bar T_1) } ds
     \end{equs}
     and Taylor-expand around $0$ the lower order term, i.e., the factor containing $  \mathscr{F}_{\text{\tiny{low}}}(\bar T_1)$. The term $\Pi^{n,1} \bar T_1 = \Pi^n \CD^1(\bar T_1)$ is then given by:
     \begin{equs} \label{schemeT1}
    ( \Pi^{n,1} \bar T_1)(t) = - i \int_0^\tau e^{i s\mathscr{F}_{\text{\tiny{dom}}}(\bar T_1) }  ds +  \mathscr{F}_{\text{\tiny{low}}}(\bar T_1)  \int_0^\tau s e^{i s  \mathscr{F}_{\text{\tiny{dom}}}(\bar T_1) }  ds .
     \end{equs}
      One observes that we obtain terms of the form $ \frac{P}{Q} e^{i R} $ where
     $P,Q, R$ are polynomials in the frequencies $ k_1, k_2, k_3 $. Linear combinations of these terms are actually the definition of the space $ \CC $. For the local error, one gets
     \begin{equs}\label{lol}
     ( \Pi^{n,1} \bar T_1)(t) - ( \Pi \bar T_1)(t) = \CO( t^3  \mathscr{F}_{\text{\tiny{low}}} (\bar T_1)^2  ).
     \end{equs}
   Here the term $ \mathscr{F}_{\text{\tiny{low}}} (\bar T_1)^2$  corresponds to the regularity that one has to impose on the solution.  One can check by hand that the expression of $ \Pi^{n,1} \bar T_1$ can be mapped back to the physical space. Such a statement  will in general hold true for the character $ \Pi^n $, see Proposition~\ref{physical_space}.  This will be important for the practical implementation of the new schemes, see also Remark \ref{rem:FFT} below. We have not used $n$ in the description of the scheme yet. In fact, it plays a role in the expression of $ \Pi^{n,1} \bar T_1 $.  One has to compare $ n $ with the regularity required by the local error \eqref{lol} introduced by the polynomial  $ \mathscr{F}_{\text{\tiny{low}}} (\bar T_1)^2 $, but  also with the term $\mathscr{F}_{\text{\tiny{dom}}}(\bar T_1)^2 $. Indeed, if the initial value is regular enough we may want to Taylor expand all the frequencies, i.e., even the dominant parts, in order to get a simpler scheme, see also Remark~\ref{rem:regi} below. 
\end{example}

In order to obtain a better understanding  of the error introduced by the character $ \Pi^n $, one needs to isolate each interaction. Therefore, we  will introduce  two characters $ \hat \Pi^n : \CH \rightarrow \CC $ and $ A^n : \CH \rightarrow \C $ such that  
\begin{equs} \label{Birkhoff1}
\Pi^n = \left( \hat \Pi^n \otimes A^n  \right) \Delta
\end{equs}
where $ \Delta : \CH \rightarrow \CH \otimes \CH_+ $ is a coaction and $ (\CH,\Delta) $ is a right comodule for a Hopf algebra  $ \CH_+ $ equipped with a coproduct $ \Deltap $ and an antipode $ \CA $. 
 In fact, on can show that:
 \begin{equs}\label{Birkhoff2}
 \hat \Pi^n = \left( \Pi^n \otimes \left( \CQ \circ \Pi^n \CA \cdot \right)(0) \right) \Delta,  \quad A^n =  (\CQ \circ \Pi^n \cdot)(0)
 \end{equs}
 where  $ \Pi^n  $ is extended to a character on $ \CH_+ $ and $ \CQ $ is a projection defined on $ \CC $ which keeps only the terms with no oscillations.
The identity \eqref{Birkhoff2} can be understood as a Birkhoff type factorisation of $ \hat \Pi^n $ using the character $ \Pi^n $. This identity is also reminiscent in the main results obtained for singular SPDEs \cite{BHZ} where two twisted antipodes play a fundamental role providing  a variant of the algebraic Birkhoff factorisation.

\begin{example}[cubic periodic Schr\"odinger equation: Birkhoff factorisation] Integrating the first term in \eqref{schemeT1} exactly yields two contributions:
\begin{equs}
\int_0^\tau e^{i s\mathscr{F}_{\text{\tiny{dom}}}(\bar T_1) }  ds = \frac{e^{i\tau\mathscr{F}_{\text{\tiny{dom}}}(\bar T_1)  }}{i\mathscr{F}_{\text{\tiny{dom}}}(\bar T_1) } - \frac{1}{i\mathscr{F}_{\text{\tiny{dom}}}(\bar T_1) }.
\end{equs}
 Plugging these two terms into $(\Pi T_2)(\tau)$ defined in \eqref{defPiex}, we see that we have to control the following two terms:
\begin{equs}
\begin{aligned} \label{firstterm}
    - e^{-i \tau k^2} & \int_0^\tau  e^{i s  k^2}\left[ \left( e^{ i s   k_4^2}  \right) \Big(  \frac{e^{i\tau\mathscr{F}_{\text{\tiny{dom}}}(\bar T_1) - is \bar k^2 } }{i\mathscr{F}_{\text{\tiny{dom}}}(\bar T_1) } \Big) \left ( e^{- i s  k_5^2}  \right)  \right] d s \\
  -  e^{-i \tau k^2} & \int_0^\tau  e^{-i s  k^2}\left[ \left( e^{ i s   k_4^2}  \right) \Big(    - \frac{e^{-i s \bar k^2 }}{i\mathscr{F}_{\text{\tiny{dom}}}(\bar T_1) } \Big) \left ( e^{ -i s  k_5^2}  \right)  \right] d s
  \end{aligned}
\end{equs}
where $ \bar k = -k_1 + k_2 + k_3$.  
The frequency analysis is needed again for approximating the time integral and defining an approximation of $(\Pi T_2)(\tau)  $. One can see that the dominant part of these two terms may differ.
This implies that one can get two different local errors for the approximation of these two terms, the final local error is the maximum between the two. At this point, we need an efficient algebraic structure for dealing with all these frequency interactions in the iterated integrals.
We first consider a character $ \hat \Pi^n $ that keeps only the main
contribution that is the second term of \eqref{firstterm}. For any decorated tree $ T $, one expects $ \hat \Pi^n $ to be of the form:
\begin{equs}
(\hat \Pi^n \CD^r(T))(t)  = B^n(\CD^r(T))(t) e^{it\mathscr{F}_{\text{\tiny{dom}}}( T) }
 \end{equs}
 where $ B^n(\CD^r(T))(t) $ is a polynomial in $ t $ depending on the decorated tree $ T $. The character   $ \hat \Pi^n  $ singles out oscillations by keeping at each iteration only the non-zero one. This  separation between the various oscillations can be encoded via the Butcher-Connes-Kreimer coaction $ \Delta : \CH \rightarrow \CH \otimes \CH_+ $. An example of computation is given below:
 \begin{equs}
\Delta & \begin{tikzpicture}[scale=0.2,baseline=-5]
\coordinate (root) at (0,0);
\coordinate (tri) at (0,-2);
\coordinate (t1) at (-2,2);
\coordinate (t2) at (2,2);
\coordinate (t3) at (0,2);
\coordinate (t4) at (0,4);
\coordinate (t41) at (-2,6);
\coordinate (t42) at (2,6);
\coordinate (t43) at (0,8);
\draw[kernels2,tinydots] (t1) -- (root);
\draw[kernels2] (t2) -- (root);
\draw[kernels2] (t3) -- (root);
\draw[symbols] (root) -- (tri);
\draw[symbols] (t3) -- (t4);
\draw[kernels2,tinydots] (t4) -- (t41);
\draw[kernels2] (t4) -- (t42);
\draw[kernels2] (t4) -- (t43);
\node[not] (rootnode) at (root) {};
\node[not] (rootnode) at (t4) {};
\node[not] (rootnode) at (t3) {};
\node[not,label= {[label distance=-0.2em]below: \scriptsize  $ r $}] (trinode) at (tri) {};
\node[var] (rootnode) at (t1) {\tiny{$ k_{\tiny{4}} $}};
\node[var] (rootnode) at (t41) {\tiny{$ k_{\tiny{1}} $}};
\node[var] (rootnode) at (t42) {\tiny{$ k_{\tiny{3}} $}};
\node[var] (rootnode) at (t43) {\tiny{$ k_{\tiny{2}} $}};
\node[var] (trinode) at (t2) {\tiny{$ k_5 $}};
\end{tikzpicture} =
\begin{tikzpicture}[scale=0.2,baseline=-5]
\coordinate (root) at (0,0);
\coordinate (tri) at (0,-2) ;
\coordinate (t1) at (-2,2);
\coordinate (t2) at (2,2);
\coordinate (t3) at (0,2);
\coordinate (t4) at (0,4);
\coordinate (t41) at (-2,6);
\coordinate (t42) at (2,6);
\coordinate (t43) at (0,8);
\draw[kernels2,tinydots] (t1) -- (root);
\draw[kernels2] (t2) -- (root);
\draw[kernels2] (t3) -- (root);
\draw[symbols] (root) -- (tri);
\draw[symbols] (t3) -- (t4);
\draw[kernels2,tinydots] (t4) -- (t41);
\draw[kernels2] (t4) -- (t42);
\draw[kernels2] (t4) -- (t43);
\node[not] (rootnode) at (root) {};
\node[not] (rootnode) at (t4) {};
\node[not] (rootnode) at (t3) {};
\node[not,label= {[label distance=-0.2em]below: \scriptsize  $ r $} ] (trinode) at (tri) {};
\node[var] (rootnode) at (t1) {\tiny{$ k_{\tiny{4}} $}};
\node[var] (rootnode) at (t41) {\tiny{$ k_{\tiny{1}} $}};
\node[var] (rootnode) at (t42) {\tiny{$ k_{\tiny{3}} $}};
\node[var] (rootnode) at (t43) {\tiny{$ k_{\tiny{2}} $}};
\node[var] (trinode) at (t2) {\tiny{$ k_5 $}};
\end{tikzpicture} \otimes \one 
+ \one \otimes \begin{tikzpicture}[scale=0.2,baseline=-5]
\coordinate (root) at (0,0);
\coordinate (tri) at (0,-2);
\coordinate (t1) at (-2,2);
\coordinate (t2) at (2,2);
\coordinate (t3) at (0,2);
\coordinate (t4) at (0,4);
\coordinate (t41) at (-2,6);
\coordinate (t42) at (2,6);
\coordinate (t43) at (0,8);
\draw[kernels2,tinydots] (t1) -- (root);
\draw[kernels2] (t2) -- (root);
\draw[kernels2] (t3) -- (root);
\draw[symbols] (root) -- (tri);
\draw[symbols] (t3) -- (t4);
\draw[kernels2,tinydots] (t4) -- (t41);
\draw[kernels2] (t4) -- (t42);
\draw[kernels2] (t4) -- (t43);
\node[not] (rootnode) at (root) {};
\node[not] (rootnode) at (t4) {};
\node[not] (rootnode) at (t3) {};
\node[not,label= {[label distance=-0.2em]below: \scriptsize  $ (r,0) $}] (trinode) at (tri) {};
\node[var] (rootnode) at (t1) {\tiny{$ k_{\tiny{4}} $}};
\node[var] (rootnode) at (t41) {\tiny{$ k_{\tiny{1}} $}};
\node[var] (rootnode) at (t42) {\tiny{$ k_{\tiny{3}} $}};
\node[var] (rootnode) at (t43) {\tiny{$ k_{\tiny{2}} $}};
\node[var] (trinode) at (t2) {\tiny{$ k_5 $}};
\end{tikzpicture} + \lambda \otimes \begin{tikzpicture}[scale=0.2,baseline=-5]
\coordinate (root) at (0,0);
\coordinate (tri) at (0,-2);
\coordinate (t1) at (-2,2);
\coordinate (t2) at (2,2);
\coordinate (t3) at (0,2);
\coordinate (t4) at (0,4);
\coordinate (t41) at (-2,6);
\coordinate (t42) at (2,6);
\coordinate (t43) at (0,8);
\draw[kernels2,tinydots] (t1) -- (root);
\draw[kernels2] (t2) -- (root);
\draw[kernels2] (t3) -- (root);
\draw[symbols] (root) -- (tri);
\draw[symbols] (t3) -- (t4);
\draw[kernels2,tinydots] (t4) -- (t41);
\draw[kernels2] (t4) -- (t42);
\draw[kernels2] (t4) -- (t43);
\node[not] (rootnode) at (root) {};
\node[not] (rootnode) at (t4) {};
\node[not] (rootnode) at (t3) {};
\node[not,label= {[label distance=-0.2em]below: \scriptsize  $ (r,1) $}] (trinode) at (tri) {};
\node[var] (rootnode) at (t1) {\tiny{$ k_{\tiny{4}} $}};
\node[var] (rootnode) at (t41) {\tiny{$ k_{\tiny{1}} $}};
\node[var] (rootnode) at (t42) {\tiny{$ k_{\tiny{3}} $}};
\node[var] (rootnode) at (t43) {\tiny{$ k_{\tiny{2}} $}};
\node[var] (trinode) at (t2) {\tiny{$ k_5 $}};
\end{tikzpicture}  + \cdots
\\ & +
\begin{tikzpicture}[scale=0.2,baseline=-5]
\coordinate (root) at (0,0);
\coordinate (tri) at (0,-2);
\coordinate (t1) at (-2,2);
\coordinate (t2) at (2,2);
\coordinate (t3) at (0,3);
\draw[kernels2,tinydots] (t1) -- (root);
\draw[kernels2] (t2) -- (root);
\draw[kernels2] (t3) -- (root);
\draw[symbols] (root) -- (tri);
\node[not] (rootnode) at (root) {};t
\node[not,label= {[label distance=-0.2em]below: \scriptsize  $ r $}] (trinode) at (tri) {};
\node[var] (rootnode) at (t1) {\tiny{$ k_{\tiny{4}} $}};
\node[var] (rootnode) at (t3) {\tiny{$ \ell $}};
\node[var] (trinode) at (t2) {\tiny{$ k_5 $}};
\end{tikzpicture} \otimes \begin{tikzpicture}[scale=0.2,baseline=-5]
\coordinate (root) at (0,0);
\coordinate (tri) at (0,-2);
\coordinate (t1) at (-2,2);
\coordinate (t2) at (2,2);
\coordinate (t3) at (0,3);
\draw[kernels2,tinydots] (t1) -- (root);
\draw[kernels2] (t2) -- (root);
\draw[kernels2] (t3) -- (root);
\draw[symbols] (root) -- (tri);
\node[not] (rootnode) at (root) {};t
\node[not,label= {[label distance=-0.2em]below: \scriptsize  $ (r-1,0) $}] (trinode) at (tri) {};
\node[var] (rootnode) at (t1) {\tiny{$ k_{\tiny{1}} $}};
\node[var] (rootnode) at (t3) {\tiny{$ k_{\tiny{2}} $}};
\node[var] (trinode) at (t2) {\tiny{$ k_3 $}};
\end{tikzpicture}    +  \begin{tikzpicture}[scale=0.2,baseline=-5]
\coordinate (root) at (0,0);
\coordinate (tri) at (0,-2);
\coordinate (t1) at (-2,2);
\coordinate (t2) at (2,2);
\coordinate (t3) at (0,3);
\draw[kernels2,tinydots] (t1) -- (root);
\draw[kernels2] (t2) -- (root);
\draw[kernels2] (t3) -- (root);
\draw[symbols] (root) -- (tri);
\node[not] (rootnode) at (root) {};t
\node[not,label= {[label distance=-0.2em]below: \scriptsize  $ r $}] (trinode) at (tri) {};
\node[var] (rootnode) at (t1) {\tiny{$ k_{\tiny{4}} $}};
\node[var1] (rootnode) at (t3) {\tiny{$ ^1_{\ell} $}};
\node[var] (trinode) at (t2) {\tiny{$ k_5 $}};
\end{tikzpicture} \otimes \begin{tikzpicture}[scale=0.2,baseline=-5]
\coordinate (root) at (0,0);
\coordinate (tri) at (0,-2);
\coordinate (t1) at (-2,2);
\coordinate (t2) at (2,2);
\coordinate (t3) at (0,3);
\draw[kernels2,tinydots] (t1) -- (root);
\draw[kernels2] (t2) -- (root);
\draw[kernels2] (t3) -- (root);
\draw[symbols] (root) -- (tri);
\node[not] (rootnode) at (root) {};t
\node[not,label= {[label distance=-0.2em]below: \scriptsize  $ (r-1,1) $}] (trinode) at (tri) {};
\node[var] (rootnode) at (t1) {\tiny{$ k_{\tiny{1}} $}};
\node[var] (rootnode) at (t3) {\tiny{$ k_{\tiny{2}} $}};
\node[var] (trinode) at (t2) {\tiny{$ k_3 $}};
\end{tikzpicture}   + \cdots,
\quad
\hat \CD^{(r,m)}  \left(\begin{tikzpicture}[scale=0.2,baseline=-5]
\coordinate (root) at (0,0);
\coordinate (tri) at (0,-2);
\coordinate (t1) at (-2,2);
\coordinate (t2) at (2,2);
\coordinate (t3) at (0,3);
\draw[kernels2,tinydots] (t1) -- (root);
\draw[kernels2] (t2) -- (root);
\draw[kernels2] (t3) -- (root);
\draw[symbols] (root) -- (tri);
\node[not] (rootnode) at (root) {};t
\node[not] (trinode) at (tri) {};
\node[var] (rootnode) at (t1) {\tiny{$ k_{\tiny{1}} $}};
\node[var] (rootnode) at (t3) {\tiny{$ k_{\tiny{2}} $}};
\node[var] (trinode) at (t2) {\tiny{$ k_3 $}};
\end{tikzpicture}  \right) = \begin{tikzpicture}[scale=0.2,baseline=-5]
\coordinate (root) at (0,0);
\coordinate (tri) at (0,-2);
\coordinate (t1) at (-2,2);
\coordinate (t2) at (2,2);
\coordinate (t3) at (0,3);
\draw[kernels2,tinydots] (t1) -- (root);
\draw[kernels2] (t2) -- (root);
\draw[kernels2] (t3) -- (root);
\draw[symbols] (root) -- (tri);
\node[not] (rootnode) at (root) {};t
\node[not,label= {[label distance=-0.2em]below: \scriptsize  $ (r,m) $}] (trinode) at (tri) {};
\node[var] (rootnode) at (t1) {\tiny{$ k_{\tiny{1}} $}};
\node[var] (rootnode) at (t3) {\tiny{$ k_{\tiny{2}} $}};
\node[var] (trinode) at (t2) {\tiny{$ k_3 $}};
\end{tikzpicture}  
\end{equs}
where $ \ell = - k_1 + k_2 + k_3$. The space $\CH_+$ corresponds to the forest of planted trees where for each planted tree the edge connecting the root to the rest of the tree must be blue (an integration in time). Only blue edges are cut (located on the right hand side of the tensor product) and the trunk is on the left hand side. The extra terms missing  in the computation correspond to the higher order terms  introduced by the Taylor approximation. Indeed, one plays with decorations introducing a $m$th derivative on the blue edges cut denoted by $ \hat \CD^{(r,m)} $ and decorations on the nodes where the edges were previously attached.  A node of the form $ \begin{tikzpicture}[scale=0.2,baseline=-5]
\coordinate (root) at (0,0);
\node[var1] (rootnode) at (root) {\tiny{$ _\ell^1 $}};
\end{tikzpicture} $ in the example above corresponds to the frequency $ \ell $ and the monomial $ \lambda $. The length of the Taylor extension is dictated by the order of the scheme $ r $. The operator $ \hat \CD^{(r,m)} $ is non-zero only if $ m \leq r+1 $. The formulae \eqref{Birkhoff1} and \eqref{Birkhoff} give the relation between $ \Pi^n $ and $ \hat \Pi^n $ which can be interpreted as a Birkhoff factorisation with the explicit formula \eqref{Birkhoff} for $ A^n $.
Such a factorisation is new and does not seem to have an equivalent in the literature. It is natural to observe this factorisation in this context: The integration in time $\int^{t}_0 ... ds$ gives two different frequency interactions that can be controlled via a projection $ \CQ $ which needs to be iterated deeper in the tree. This will be the equivalent of the Rota-Baxter map used  for such a type of factorisation.
\end{example}

The coproduct $ \Deltap $ and the coaction $ \Delta $ are extremely close in spirit to the ones defined for the recentering in  \cite{reg,BHZ}. Indeed, for designing a numerical scheme, we need to perform Taylor expansions and these two maps are performing them at the level of the algebra. The main difference with the tools used for singular SPDEs \cite{BHZ} is the length of the Taylor expansion which is now dictated by the order of the scheme. 

The structure, we propose in Section~\ref{sec:genframe} is new and reveals the universality of deformed Butcher-Connes-Kreimer coproducts which appear in \cite{BHZ}. The non-deformed version of this map is coming from the analysis of B-series in \cite{Butcher72,MR2657947,MR2803804}, which is itself an extension of the Connes-Kreimer Hopf algebra
of rooted trees \cite{CK,CKI} arising in perturbative
QFT and noncommutative geometry.

One can notice that our approximation $ \Pi^{n,r} $ depends on $ n $ which has to be understood as the regularity we assume a priori on the solution. We design our framework such that for smooth solutions the numerical schemes are simplified, recovering in the limit classical linearised approximations as in Table \ref{tab1}. 


\begin{remark}\label{rem:regi}
The term $ \mathcal{L}^{r}_{\text{\tiny{low}}}(T,n)  $  in the approximation \eqref{eq:loci} is obtained by performing several Taylor expansions. Depending on the value $ n $, we get different numerical schemes (see also the applications in Section \ref{sec:examples}). In the sequel, we focus on two specific values of $ n $ associated to two particular schemes. We consider $ n^{r}_{\text{\tiny{low}}}(T) $ and $ n^{r}_{\text{\tiny{full}}}(T) $ given by:
\begin{equs}
n^{r}_{\text{\tiny{low}}}(T) = \deg(\mathcal{L}^{r}_{\text{\tiny{low}}}(T)), \quad n^{r}_{\text{\tiny{full}}}(T) = \deg(\mathcal{L}^{r}_{\text{\tiny{full}}}(T)),
\end{equs}
where $ \mathcal{L}^{r}_{\text{\tiny{low}}}(T) $ corresponds to the error obtained when we integrate exactly the dominant part $ \mathcal{L}_{\text{\tiny{dom}}}(T)$ and Taylor expand only the lower order part $ \mathcal{L}_{\text{\tiny{low}}}(T) $, while the term $\mathcal{L}^{r}_{\text{\tiny{full}}}(T)$ corresponds to the error one obtains when we  Taylor expand the full operator $\mathcal{L}(T) =  \mathcal{L}_{\text{\tiny{dom}}}(T) +  \mathcal{L}_{\text{\tiny{low}}}(T) $. One has
\begin{equ}[e:deg]
 \deg(\mathcal{L}^{r}_{\text{\tiny{low}}}(T,n)) = \left\{ \begin{aligned}
  &  n^{r}_{\text{\tiny{low}}}(T), & \quad  \,
  \text{if } & \, n \leq n^{r}_{\text{\tiny{low}}}(T) , \\
  & n, & \quad  \,
  \text{if } & \, n^{r}_{\text{\tiny{low}}}(T) \leq  n \leq n^{r}_{\text{\tiny{full}}}(T) ,
  \\
   &  n^{r}_{\text{\tiny{full}}}(T), & \quad  \,
  \text{if } & \, n \geq  n^{r}_{\text{\tiny{full}}}(T) . \\
  \end{aligned} \right.
\end{equ}
At the level of the scheme, we get
\begin{equ}[e:scheme]
 \Pi^{n,r} T = \left\{ \begin{aligned}
  &  \Pi_{\text{\tiny{low}}}^{r} T, & \quad  \,
  \text{if } & \, n \leq n^{r}_{\text{\tiny{low}}}(T) , \\
  &  \Pi^{n,r} T , & \quad  \,
  \text{if } & \, n^{r}_{\text{\tiny{low}}}(T) \leq  n \leq n^{r}_{\text{\tiny{full}}}(T) ,
  \\
   & \Pi_{\text{\tiny{full}}}^{r} T , & \quad  \,
  \text{if } & \, n \geq  n^{r}_{\text{\tiny{full}}}(T)  \\
  \end{aligned} \right.
\end{equ}
where we call $ \Pi_{\text{\tiny{low}}}^{r} T $  the minimum regularity resonance based scheme. This scheme corresponds to the minimisation of the local-error and we can observe a plateau. Indeed, if $ n $ is too small then by convention we get this scheme. This could be the case if one does not compute the minimum regularity needed a priori.  

The other scheme $ \Pi_{\text{\tiny{full}}}^{r} T $ corresponds to a classical exponential  type discretisation, where enough regularity is assumed such that also the dominant  components of the iterated integrals can be expanded into a Taylor series as in \eqref{highT}. Then, we observe a second plateau: indeed assuming more regularity will not change the scheme as we have already Taylor-expanded all the components. 

Compared to $ \Pi_{\text{\tiny{low}}}^{r} T $  the scheme $ \Pi_{\text{\tiny{full}}}^{r} T $ is in general much simpler as no nonlinear frequency interactions are taken into account. This comes at the cost that a smaller class of equations can be solved as much higher regularity assumptions are imposed.

Between these two schemes, lies a large class of intermediate schemes  $ \Pi^{n,r} T $ which we call {\em low regularity resonance based schemes}. They take advantage of Taylor-expanding a bit more when more regularity is assumed.
Therefore, the complexity of the schemes is decreasing as $n $ increases see also Section  \ref{sec:examples}.
We can represent these different regimes through the diagram below. 
\begin{equs}
\begin{tikzpicture}[scale=1,baseline=2cm]
    \fill [blush, domain=0:2, variable=\x]
      (0, 0)
      -- plot ({\x }, 2)
      -- (2, 0)
      -- cycle;
      \fill [greenryb, domain=2:4, variable=\x]
      (2, 0)
      -- plot ({\x }, {\x})
      -- (4, 0)
      -- cycle;
 \fill [brightcerulean, domain=4:6, variable=\x]
      (4, 0)
      -- plot ({\x }, 4)
      -- (6, 0)
      -- cycle;
    \draw [thick] [->] (0,0)--(6,0) node[right, below] {$n$};
     \draw[xshift=2 cm, thick] (-1pt,0pt)--(1pt,0pt) node[below] {$ n^{r}_{\text{\tiny{low}}}(T) $};
     \draw[xshift=3 cm,yshift=1 cm]  node[below] {$\substack{ \text{\tiny{Low Regularity }} \\
   \text{\tiny{Resonance scheme }  } }$};
   \draw[xshift=1 cm,yshift=1 cm]  node[below] {$\substack{ \text{\tiny{Minimum Regularity }} \\
   \text{\tiny{Resonance scheme }  } }$};
     \draw[xshift=5 cm,yshift=1 cm]  node[below] {$\substack{ \text{\tiny{Classical Exponential  }} \\
   \text{\tiny{Integrator type scheme }  } }$};
       \draw[xshift=4 cm, thick] (-1pt,0pt)--(1pt,0pt) node[below] {$ n^{r}_{\text{\tiny{full}}}(T) $};
    \draw [thick] [->] (0,0)--(0,5) node[above, left] {$\deg(\mathcal{L}^{r}_{\text{\tiny{low}}}(T,n))$};
       \draw[yshift=2 cm, thick] (-1pt,0pt)--(1pt,0pt) node[left] {$ n^{r}_{\text{\tiny{low}}}(T) $};
       \draw[yshift=4 cm, thick] (-1pt,0pt)--(1pt,0pt) node[left] {$ n^{r}_{\text{\tiny{full}}}(T) $};
\draw [domain=0:2, variable=\x]
      plot ({\x}, {2}) node[right] at (1.5,2) {};
    \draw [domain=2:4, variable=\x]
      plot ({\x}, {\x}) node[right] at (1.5,2) {};
      \draw [domain=4:6, variable=\x]
      plot ({\x}, {4}) node[right] at (1.5,2) {};
  \end{tikzpicture}
  \end{equs}
  \end{remark}
  \begin{remark}
  Within our framework we propose a  stabilisation technique. This will allow us to improve previous higher order attempts   breaking formerly imposed order barriers of higher order resonance based schemes, such as the order reduction down to $3/2$ suggested for Schrödinger equations in dimensions $d\geq 2$  in  \cite{KOS19}. Details are given in Remark \ref{rem:stab} as well as Section \ref{sec:examples}.
  \end{remark}
  \begin{remark}\label{rem:FFT}
 The aim is to choose the central approximation $\Pi^{n,r} T$ as an interplay between optimising the local error  in the sense of regularity while allowing for a practical implementation.  We design our schemes in such a way that  products of functions can always be mapped  back to physical space.  In practical computations, this  will  allow us to benefit from the Fast Fourier Transform (FFT) with computational effort of order $\mathcal{O}\left(\vert K\vert^d \text{log}\vert K\vert ^d\right)$ in dimension $d$, where $K$ denotes the highest frequency in the discretisation. However, it comes at the cost that the approximation error  \eqref{eq:loci} involves lower order derivatives.
 If, on the other hand, we would  embed all nonlinear frequency interactions into the discretisation the resulting schemes would need to be carried out fully in Fourier space   causing large memory and computational efforts of order $\mathcal{O}\left(K^{d \cdot \text{deg}{p}}\right)$, where $\text{deg}(p)$ denotes the degree of the nonlinearity $p$.
 \end{remark}
 
 {\begin{remark}
For notational simplicity, we focus on equations with polynomial nonlinearities (cf.  \eqref{dis}). Nevertheless, our scheme \eqref{decoratedV2} allows for a generalisation to non-polynomial nonlinearities of type $$f(u) g(\overline u)$$ for smooth functions $f$ and $g$. In the latter case  the iteration of Duhamel's formula boils down to a two steps algorithm. More precisely,  imagine that we got a first expansion of the form $ e^{i s\mathcal{L}} v + A(v,s) $ where $ A(v,s) $ is a linear combination of iterated integrals. Then, when iterating Duhamel's formula we need to plug this expansion into the nonlinearity and  perform a Taylor expansion around the point $ e^{is \mathcal{L}}v $:
\begin{equs}
f(e^{i s\mathcal{L}} v + A(v,s)) = \sum_{m \leq r} \frac{A(v,s)^m}{m!} f^{(m)}(e^{i s\mathcal{L}} v ) + \mathcal{O}(A(v,s)^{r+1}).
\end{equs}
Carrying out the same manipulation for $ g(\overline{e^{i s\mathcal{L}} v + A(v,s)}) $ we  end up with terms of type
$$
\frac{A(v,s)^m}{m!} f^{(m)}(e^{i s\mathcal{L}} v ) 
\frac{\overline{A(v,s)}^n}{n!} g^{(n)}({e^{- i s\mathcal{L}}\overline v}) .
$$
At this point we can not  directly write down our  resonance based scheme due to the fact that the oscillations are still encapsulated inside $ f $ and $g$. In order to control these oscillations and their nonlinear interactions, we need to pull the oscillatory phases $ e^{\pm is\mathcal{L}} $ out of $f$ and $g$. This is achieved via expansions of the form
 \begin{equs}
f(e^{is \mathcal{L}}v) = \sum_{\ell \leq r} \frac{s^{\ell}}{\ell!} e^{is \mathcal{L}} \mathcal{C}^{\ell}[f,\mathcal{L}](v) + \mathcal{O}(s^{r+1} \mathcal{C}^{r+1}[f,\mathcal{L}](v))
\end{equs}
where $\mathcal{C}^{\ell}[f,\mathcal{L}]$ denote nested commutators which in general require (much) less regularity than powers of the full operator $\mathcal{L}^\ell$. After these two linearisation steps, we are able to use the same machinery that leads to the construction of our scheme~\eqref{decoratedV2}.

Such commutators were also recently exploited in \cite{RS} for second-order methods.
\end{remark}}

\subsection{Outline of the paper}  

Let us give a short review of the content of this paper.
In Section~\ref{sec:genframe}, we introduce the general algebraic framework by first defining a suitable vector space of decorated forests $ \hat \CH $. Next we define the dominant frequencies of a decorated forest (see Definition~\ref{dom_freq}) and show that one can map  them back into physical space   (see Corollary~\ref{physical_map}) which will be  important  for the efficiency of the numerical schemes (cf. Remark~\ref{rem:FFT}).
 Then, we introduced two spaces of decorated forests $ \CH_+ $ and $\CH$. The latter $ \CH $ is used for describing approximated iterated integrals. The main difference with the previous space is that now we project along the order $ r $ of the method. We define the maps for the coaction $ \Delta : \CH \rightarrow \CH \otimes \CH_+  $ and the coproduct $ \Deltap : \CH_+ \rightarrow \CH_+ \otimes \CH_+ $  in   \eqref{eq:co-action_plus} and \eqref{eq:coproduct_plus}. In addition we provide a recursive definition for them in  \eqref{def_deltas}.  We prove in Proposition~\ref{Hopf_algebras} that these maps give a right-comodule structure for $ \CH $ over the Hopf algebra $ \CH_+ $. Moreover, we get a simple expression for the antipode $ \CA $ in Proposition~\ref{antipode_rec}.

In Section~\ref{sec::Iterated integrals}, we construct the approximation of the iterated integrals given by the character $ \Pi : \hat \CH \rightarrow \CC  $ (see \eqref{Pi}) through the character $ \Pi^n : \CH \rightarrow \CC $ (see \eqref{recursive_pi_r}). The main operator used for the recursive construction is $ \CK $ given in Definition~\ref{Taylor_exp}. We introduce a new character $ \hat \Pi^n : \CH  \rightarrow \CC $ through a Birkhoff type factorisation obtained from the character $ \Pi^n $ (see Proposition~\ref{Birkhoff}). Thanks to $ \hat \Pi^n $, we are able to conduct the local error analysis and show one of the main results of the paper: the error estimate on the difference  between $ \Pi  $ and its approximation $ \Pi^n $ (see Theorem~\ref{approxima_tree}). 
In Section~\ref{sec:genScheme}, we introduce  decorated trees stemming from Duhamel's formula via the rules formalism (see Definition~\ref{rules}). Then, we are able to introduce the general scheme (see Definition~\ref{genscheme}) and conclude on its local error structure (see Theorem~\ref{thm:genloc}).

In Section~\ref{sec:examples} we illustrate the general framework on  various applications and conclude in Section~\ref{sec:num} with numerical experiments underlying the  favourable error behaviour of the new resonance based schemes for non-smooth, and in certain cases even for smooth, solutions.

\subsection*{Acknowledgements}

{\small
We  wish to thank the anonymous referee for her/his extremely valuable remarks. First discussions on this work
were initiated while the authors participated in the workshop "Algebraic and geometric aspects of numerical methods for differential equations"
 held at the  Institut Mittag-Leffler in July 2018.
  The authors thank the organisers of this workshop for putting together a stimulating program  bringing different communities together, and the members of the institute for providing a friendly working atmosphere. This project has received funding from the European Research Council (ERC) under the European Union's Horizon 2020 research and innovation programme  (grant agreement No. 850941).} 

\section{General framework}\label{sec:genframe}
\label{General framework}

In this section, we present the main algebraic framework   which will allow us to develop and analyse our general numerical scheme. {We start by introducing decorated trees that encode the oscillatory integrals.} Decorations on the edges represent  integrals in time and some operators  stemming from   Duhamel's formula.  In addition, we impose decorations on the nodes {for the frequencies and}  potential monomials. We will compute the   corresponding dominant and  lower order frequency interactions   associated to these trees via the recursive maps  $\mathscr{F}_{\text{\tiny{dom}}} $ and $ \mathscr{F}_{\text{\tiny{low}}} $ given in  Definition~\ref{dom_freq}. These maps are chosen such that the solution is approximated at low regularity in Fourier space with the additional property that the approximation can be mapped back to physical space. The latter will allow for an efficient practical implementation of the new scheme, see Remark \ref{rem:FFT}.

The second part of this section focuses on a different space of decorated trees that we  name \emph{approximated decorated trees}. The main difference with the trees previously introduced is the additional root decoration by some integer $ r $. The  approximated trees have to be understood as an abstract version of an approximation of order $ r $ of the corresponding oscillatory integral. In order to construct our low regularity scheme we want to carry out an abstract Taylor expansions of the time integrals at the level of these approximated trees  in the spirit of \eqref{oscNew}: We will Taylor expand only the lower parts of the frequency interactions while integrating the dominant part exactly. For these operations, we need a deformed Butcher-Connes-Kreimer coproduct in the spirit of the one which has been introduced for singular SPDEs. We consider a coproduct $\Deltap $ and a coaction $ \Delta $ with a non-recursive (see \eqref{eq:co-action_plus} and \eqref{eq:coproduct_plus}) and a recursive definition (see \eqref{def_deltas}). We show the usual coassociativity/compatibility properties in Proposition~\ref{bialgebra}. In the end we get a Hopf algebra and a comodule structures on these new spaces of approximated decorated trees. The antipode comes for free in this context since the Hopf algebra is connected, see Proposition~\ref{Hopf_algebras}.
The main novelty of this general algebraic framework is the merging of two different structures that appear in dispersive PDEs (frequency interactions) and singular SPDEs (abstract Taylor expansion).  They form the central part of the scheme - controlling the underlying oscillations and performing Taylor approximations.  Within this construction we need to introduce new objects that were not considered before in such generality.

\subsection{Decorated trees and frequency interactions}
We consider a set of decorated trees following the formalism developed in \cite{BHZ}. These trees will encode the Fourier coefficients of the  numerical scheme.

We assume   a finite set $  \Lab$ and  frequencies $ k_1,...,k_n \in \Z^{d} $. The set $ \Lab$ parametrizes a set of differential
operators with constant coefficients, whose symbols are given by the polynomials  $ (P_{\Labhom})_{\Labhom \in \Lab} $.
We define the set of decorated trees $ \hat \CT  $ 
as elements of the form  $ 
T_{\Labe}^{\Labn, \Labo} =  (T,\Labn,\Labo,\Labe) $ where 
\begin{itemize}
\item $ T $ is a non-planar rooted tree with root $ \varrho_T $, node set $N_T$ and edge set $E_T$. We denote the leaves of $ T $ by $ L_T $. $ T $ must also be a planted tree which means that there is only one edge outgoing the root.
\item the map $ \Labe : E_T \rightarrow \Lab \times \lbrace 0,1\rbrace$ are edge decorations. 
\item the map $ \Labn : N_T \setminus \lbrace\varrho_T \rbrace \rightarrow \N $ are node decorations. For every inner node $ v$, this map encodes a monomial of the form $ \xi^{\Labn(v)} $ where $ \xi $ is a time variable.
\item the map $ \Labo : N_T \setminus \lbrace\varrho_T \rbrace \rightarrow \Z^{d}$ are node decorations. These decorations are frequencies that satisfy for every inner node $ u $:
\begin{equs} \label{innerdecoration}
(-1)^{\mathfrak{p}(e_u)}\Labo(u)  = \sum_{e=(u,v) \in E_T} (-1)^{\mathfrak{p}(e)} \Labo(v)
\end{equs}
where $ \Labe(e) = (\Labhom(e),\mathfrak{p}(e)) $ and  $ e_u $ is the edge outgoing $ u $ of the form $ (v,u) $ . From this definition, one can see that the node decorations $ (\Labo(u))_{u \in L_T} $ determine the decoration of the inner nodes. We assume that the node decorations at the leaves are linear combinations of
the $ k_i $ with coefficients in $ \lbrace -1,0,1 \rbrace $.
\item we assume that the root of $ T $ has no decoration.
\end{itemize}

When the node decoration $ \Labn $ is zero, we will denote the decorated trees $ T_{\Labe}^{\Labn,\Labo} $ as
 $ T_{\Labe}^{\Labo} = (T,\Labo,\Labe)  $. The set of decorated trees satisfying such a condition is denoted by $ \hat \CT_0 $. We say that  $ \bar T_{\bar \Labe}^{\bar \Labo} $ is a decorated subtree of $ T_{\Labe}^{\Labo} \in \hat \CT_0 $ if  $ \bar T $ is a subtree of $ T $ and the restriction of the decorations $ \Labo, \Labe $ of $ T $ to $ \bar T $ are given by $ \bar \Labo $ and $ \bar \Labe $. Notice that because trees considered in this framework are always planted, we look only at subtrees that are planted. A planted subtree of $ T $ is of the form  $ T_e $ where $ e \in E_T $ and $ T_e $ corresponds to the tree above $ e $.  The nodes of $ T_e $ are given by all the nodes whose path to the root contains $ e $. 
 
\begin{example}[label=exa:cont2] \label{example1}
Below, we give an example of a decorated tree $ T_{\Labe}^{\Labn, \Labo} $ where the edges are labelled with numbers from  $ 1 $ to $ 7 $ and the set $  N_T \setminus \lbrace\varrho_T\rbrace  $ is labelled by  $\lbrace a,b,c,d,e,f,g \rbrace$:
\begin{equs}   
  \begin{tikzpicture}[scale=0.19,baseline=2cm]
        \node at (0,10)  (a) {}; 
         \node at (4,20)  (f) {}; 
          \node at (0,30)  (k) {}; 
           \node at (8,30) (l) {}; 
           \node at (20,20)  (g) {}; 
            \node at (16,30) (m) {};  
        \node at (12,15) (c) {}; 
         \node at (24,30) (p) {}; %
    \draw[kernel] (a) -- node [round1] {\tiny $\Labhom(1),\Labp(1)$}  (c) ; 
     \draw[kernel1] (c) -- node [round1] {\tiny $\Labhom(2),\Labp(2)$}  (f) ; 
     \draw[kernel1] (c) -- node [round1] {\tiny 
     $\Labhom(3),\Labp(3)$}  (g) ; 
     \draw[kernel1,black] (f) -- node [near end, round1] {\tiny $\Labhom(4),\Labp(4)$}  (k) ; 
     \draw[kernel1] (f) -- node [round1] {\tiny $\Labhom(5),\Labp(5)$}  (l) ; 
     \draw[kernel1] (g) -- node [round1] {\tiny $\Labhom(6),\Labp(6)$}  (m) ; 
     \draw[kernel1] (g) -- node [near end, round1] {\tiny $\Labhom(7),\Labp(7)$}  (p) ;
     
     \draw (p) node [rect2] {\tiny $ \Labn({g}), \mathfrak{f}({g}) $}  ;
    \draw (m) node [rect2] {\tiny $ \Labn({f}), \Labo({f}) $}  ;
    \draw (l) node [rect2] {\tiny $ \Labn({e}), \Labo({e}) $}  ;
     \draw (k) node [rect2] {\tiny $ \Labn(d), \Labo(d) $}  ;
    \draw (g) node [rect2] {\tiny $ \Labn(c),\Labo(c) $}  ;
    \draw (f) node [rect2] {\tiny $ \Labn(b),\Labo(b) $}  ;
    \draw (c) node [rect2] {\tiny $ \Labn(a),\Labo(a) $}  ;
\end{tikzpicture} 
\end{equs}
\end{example}

\begin{remark} The structure imposed on the node decorations \eqref{innerdecoration} is close to the one used in \cite{Christ,Gub11,oh1}. But in these works, the trees were designed only for one particular equation. In our framework, we cover a general class of dispersive equations  by having more decorations on the edges given by $ \Lab \times \lbrace 0,1 \rbrace $. The set $ \Lab $ keeps track of the differential operators in Duhamel's formulation.  The second edge decoration allows us to compute an abstract conjugate on the trees given in \eqref{bar}. 
\end{remark}

We  denote by $\hat H $ (resp. $ \hat H_0 $) the (unordered) forests composed of trees in $ \hat \CT $ (resp. $ \hat \CT_0 $) (including the empty forest denoted by $\one$). Their linear spans are denoted by $\hat \CH $ and  $ \hat \CH_0 $. We extend the definition of decorated  subtrees to forests by saying that $ T $ is a decorated subtree of the decorated forest $ F $ if their exists a decorated tree $  \bar T$ in $ F $ such that $ T $ is a decorated subtree of $ \bar T $.
The forest product is denoted by $ \cdot $ and the counit is $ \one^{\star} $ which is non-zero only on the empty forest.

In order to represent these decorated trees, we introduce a symbolic notation. An edge decorated by  $  o =  (\Labhom,\Labp) $ is denoted by $ \CI_{o} $. The symbol $  \CI_{o}(\lambda_{k}^{\ell} \cdot) : \hat \CH \rightarrow \hat \CH $ is viewed as  the operation that merges all the roots of the trees composing the forest into one node decorated by $(\ell,k) \in \N \times \Z^{d} $.  We obtain a decorated tree which is then grafted onto a new root with no decoration. If the condition $ \eqref{innerdecoration} $ is not satisfied on the argument then $\CI_{o}( \lambda_{k}^{\ell} \cdot)$ gives zero.
If $ \ell = 0 $, then the term $  \lambda_{k}^{\ell} $ is denoted by $  \lambda_{k} $ as a short hand notation for $  \lambda_{k}^{0} $. When $ \ell = 1 $, it will be denoted by 
$  \lambda_{k}^{1} $.
The forest product between $ \CI_{o_1}(  \lambda^{\ell_1}_{k_1}F_1) $ and $ \CI_{o_2}(  \lambda^{\ell_2}_{k_2}F_2) $ is given by:
\begin{equs}
 \CI_{o_1}(  \lambda^{\ell_1}_{k_1} F_1) \CI_{o_2}(  \lambda^{\ell_2}_{k_2} F_2) := \CI_{o_1}( \lambda^{\ell_1}_{k_1} F_1) \cdot \CI_{o_2}( \lambda^{\ell_2}_{k_2} F_2).  
\end{equs}
\begin{example}[label=exa:cont]\label{ex2}
The following symbol
\begin{equs}
  \CI_{(\Labhom(1),\Labp(1))}(  \lambda^{\Labn(a)}_{\Labo(a)}\CI_{(\Labhom(2),\Labp(2))}(  \lambda^{\Labn(b)}_{\Labo(b)})\CI_{(\Labhom(3),\Labp(3))}(  \lambda^{\Labn(c)}_{\Labo(c)}))
\end{equs}
encodes the tree
\begin{equs} \label{example2a}
 \begin{tikzpicture}[scale=0.19,baseline=2cm]
        \node at (0,10)  (a) {}; 
         \node at (4,20)  (f) {}; 
           \node at (20,20)  (g) {}; 
        \node at (12,15) (c) {}; 
    \draw[kernel] (a) -- node [round1] {\tiny $\Labhom(1),\Labp(1)$}  (c) ; 
     \draw[kernel1] (c) -- node [round1] {\tiny $\Labhom(2),\Labp(2)$}  (f) ; 
     \draw[kernel1] (c) -- node [round1] {\tiny 
     $\Labhom(3),\Labp(3)$}  (g) ; 
    \draw (g) node [rect2] {\tiny $ \Labn(c),\mathfrak{f}(c) $}  ;
    \draw (f) node [rect2] {\tiny $ \Labn(b),\Labo(b) $}  ;
    \draw (c) node [rect2] {\tiny $ \Labn(a),\Labo(a) $}  ;
\end{tikzpicture} 
\end{equs}
We will see later (in Example \ref{tex:kdv})
 that the above tree with suitabl{y} chosen decorations describes the first iterated integral of the Kortweg--de Vries equation \eqref{kdvIntro}.
 \end{example}

We are interested in the following quantity which represents the frequencies associated to this tree:
\begin{equs} \label{frenquency}
\mathscr{F}( T_{\Labe}^{\Labo} ) = \sum_{u \in N_T} 
P_{(\Labhom(e_u),\Labp(e_u))}(\Labo(u))
\end{equs}
where $ e_u $ is the edge outgoing $ u $ of the form $ (v,u) $ and 
\begin{equs}\label{Palpha}
 P_{(\Labhom(e_u),\Labp(e_u))}(\Labo(u)){ \, := \,} (-1)^{\Labp(e_u)}P_{\Labhom(e_u)}((-1)^{\Labp(e_u)}\Labo(u)) .
\end{equs} The term $ \mathscr{F}( T_{\Labe}^{\Labo})  $ has to be understood as a polynomial in multiple variables given by the $ k_i $. 

In the numerical scheme what matters  are the terms with maximal degree of frequency which are here the monomials of higher degree, cf. $\mathcal{L}_\text{\tiny{dom}}$. We compute them using the symbolic notation in the next section. 
We assume fixed $ \Lab_{+} \subset \Lab $.  This subset encodes integrals in time that are of the form $ \int_0^{\tau} e^{s P_{(\mathfrak{t},\mathfrak{p})}(\cdot)}  \cdots ds $ for $ (\mathfrak{t},\mathfrak{p}) \in \Lab_+ \times \lbrace 0,1 \rbrace$, see also its interpretation given in \eqref{Pi} below.
\subsection{Dominant parts of trees and physical space maps}
\begin{definition} \label{Dom_projection} Let $ P(k_1,...,k_n) $ a polynomial in the $ k_i $. If the higher monomials of $ P $ are of the form
\begin{equs}
a \sum_{i=1}^{n} (a_i k_i)^{m}, \quad a_i \in { \lbrace  0,1 \rbrace}, \, a \in \Z,
\end{equs}
 then  we define $ \CP_{\text{\tiny{dom}}}(P) $ as
\begin{equs} \label{physical map}
\CP_{\text{\tiny{dom}}}(P) = a \left(\sum_{i=1}^{n} a_i k_i\right)^{m}.
\end{equs}
Otherwise, it is zero.
\end{definition}
\begin{remark}
Given a polynomial $ P $, one can compute its dominant part $ \mathcal{L}_{\text{dom}} $ and its  lower part $\mathcal{L}_{\text{low}}$
\begin{equs}
\mathcal{L}_{\text{\tiny{dom}}} = \CP_{\text{\tiny{dom}}}(P), \quad \mathcal{L}_{\text{\tiny{low}}} = \left(\id  - \CP_{\text{\tiny{dom}}} \right)(P).
\end{equs}
In our discretisation we will treat the dominant parts of the frequency interactions $\mathcal{L}_{\text{\tiny{dom}}}$ exactly, while approximating the lower order parts $\mathcal{L}_{\text{\tiny{low}}} $ by Taylor series expansions (cf. also \eqref{oscNew}). This will be achieved by applying recursively the operator $ \CP_{\text{\tiny{dom}}} $ introduced in Definition~\ref{dom_freq}. 

Note that in  the special case that   $\CP_{\text{\tiny{dom}}}(P)  = 0$, we have to expand all frequency interactions into a Taylor series. The latter  for instance arises in context of quadratic Schr\"odinger equations
\begin{equs}\label{quadNLS}
i \partial_t u = -\Delta u + u^2, \quad u(0,x) = v(x)
\end{equs}
for which we face oscillations of type (cf. \eqref{oscii})
\begin{equs}
 \int_0^\tau e^{i \xi  \Delta } (e^{-i \xi \ \Delta}  v)^2 d\xi  & = 
\sum_{k_{1},k_{2}\in \Z^d}\hat{v}_{k_1}\hat{v}_{k_2}
e^{i (k_1+k_2) x}  \int_0^\tau  e^{ -i \big(k_1+k_2\big)^2 \xi} 
 e^{ i \big(k_1^2 + k_2^2  \big)\xi} d\xi
\\ &= 
\sum_{k_{1},k_{2}\in \Z^d}\hat{v}_{k_1}\hat{v}_{k_2}
e^{i (k_1+k_2) x}  \int_0^\tau  e^{ -2 i k_1 k_2 \xi} d\xi.
\end{equs}Here  we recall the notation 
 $
k \ell= k_1 \ell_1 + \ldots + k_d \ell_d
$ 
 for $k, \ell \in \Z^d$.
In contrast to  the cubic NLS  \eqref{nlsIntro2} where we have that  (cf. \eqref{domNLS0}) 
\begin{equation*}
{
\mathcal{L}_{\text{dom}}(k_1) = 2k_1^2 \quad \text{and}\quad  \mathcal{L}_{\text{low}}(k_1,k_2,k_3) = - 2 k_1 (k_2+k_3) + 2 k_2 k_3}
\end{equation*}
 we   observe for the quadratic NLS \eqref{quadNLS} with the map $ \CP_{\text{\tiny{dom}}} $ given in  \eqref{physical map} that
\begin{equs}
P(k_1,k_2) & = - (k_1 + k_2)^2 + (k_1^2 + k_2^2) = - 2k_1 k_2, \quad \CP_{\text{\tiny{dom}}}(P) = 0,
\\ \mathcal{L}_{\text{\tiny{dom}}} & = \CP_{\text{\tiny{dom}}}(P) =0, \quad \mathcal{L}_{\text{\tiny{low}}} = P - \CP_{\text{\tiny{dom}}}(P) = - 2k_1 k_2.
\end{equs}
Hence,  although $\mathcal{L}= -\Delta$ and  $\mathcal{L}_{\text{dom}}= 0  $ (which means that no oscillations are integrated exactly), we ``only''  loose one derivative in the local approximation error  (cf.  \eqref{ue1}) as 
\begin{equation*}
\mathcal{O}\left(\tau^2 \mathcal{L}_{\text{low}}v\right)= \mathcal{O}\left(\tau^2 \vert \nabla \vert v\right).
\end{equation*}
\end{remark}

%
%
%
%
%
%

\begin{remark}
Terms of type \eqref{physical map} will naturally arise when filtering out the dominant nonlinear frequency interactions in the PDE. We have to embed integrals over their exponentials into our discretisation. For their practical implementation it will  be therefore essential to map fractions of \eqref{physical map} back to physical space.

Indeed, if we apply the inverse   Fourier transform $ \mathcal{F}^{-1} $ one gets:
\begin{equs} \label{nice physical terms}
\mathcal{F}^{-1} & \left( \sum_{\substack{0 \neq k=k_1 +...+k_n\\k_\ell \neq 0} }     \frac{1}{(k_1 +...+k_n)^m}        \frac{1}{k_1^{m_1}} ... \frac{1}{k_n^{m_n}}v^1_{k_1}...v^n_{k_n} e^{i kx} \right)
\\ & =  (-\Delta)^{- m/2} \prod_{\ell = 1}^n   \left( (-\Delta)^{-m_\ell/2} v^\ell(x)\right)
\end{equs}
where by abuse of notation we define the operator $(-\Delta)^{-1}$ in Fourier space as
$
(-\Delta)^{-1} f(x) = \sum_{ k \neq 0} \frac{ \hat{f}_k }{k^2}e^{i k x}.
$
\end{remark}
 In the next proposition, we elaborate on \eqref{nice physical terms} and give a nice class of functions depending on the $k_i$ that we can map back to the physical space.
\begin{proposition} \label{Fourierproduct} Assume that we have polynomials $ Q $ in   $ k_1, \ldots, k_n $ and $ k $ is a linear combination of the $k_i$ such that:
\begin{equs}
Q & = \prod_j (\sum_{u \in V_j}  a_{u,V_j} k_u )^{m_i}, \quad V_j \subset \lbrace 1,...,n \rbrace, \quad  a_{u,V_j} \in  \lbrace-1,1\rbrace, \\
k & = \sum_{u=1}^{n} a_u k_u, \quad a_u \in \lbrace -1,1 \rbrace,
\end{equs}
where the $ V_i $ are either disjoint or if $ V_j \subset V_i $ we assume that there exist $ p_{i,j} $ such that
\begin{equs}
a_{u,V_i} = (-1)^{p_{i,j}} a_{u,V_j}, \quad u \in V_j.
\end{equs}
We also suppose that the $ V_i $ are included in $ k $ in the sense that there exist $ p_{V_i} $ such that
\begin{equs}
a_u = (-1)^{p_{V_i}} a_{u,V_i}, \quad u \in V_i.
\end{equs}
Then, one gets
\begin{equs}
\mathcal{F}^{-1} & \left( \sum_{ \substack{0 \neq k= a_1 k_1 +...+ a_n k_n\\ Q(k_1,...,k_n) \neq 0}}     \frac{1}{Q} v^{1,a_1}_{k_1}...v^{n,a_n}_{k_n} e^{i kx} \right)
\\ &  =   \left(\prod_{V_i \subset V_j} (-1)^{p_{V_i}}  (-\Delta)^{- m_i/2}_{V_i} \right)  v^{1,a_1}... v^{n,a_n}
\end{equs}
where $ v^{i,1} = v^{i} $ and $ v^{i,-1} =  \overline{v^{i}} $.
 The operator $ (-\Delta)^{- m_i/2}_{V_i} $ acts only on the functions $\prod_{u \in V_i} v^{u,a_u} $ and the product starts by the smaller elements for the inclusion order.
\end{proposition}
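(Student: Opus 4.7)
The plan is to compute the inverse Fourier transform directly and to recognise the result as the prescribed nested application of inverse Laplacians. Using that $v^{u,a_u}(x)=\sum_{k_u}v^{u,a_u}_{k_u}e^{i a_u k_u x}$, the product $\prod_{u=1}^n v^{u,a_u}(x)$ has a Fourier expansion in which the total frequency of the $V_i$-th block equals $\sum_{u\in V_i}a_u k_u$. The second inclusion hypothesis then gives
\[
\sum_{u\in V_i} a_{u,V_i}\, k_u \;=\; (-1)^{p_{V_i}}\sum_{u\in V_i} a_u\, k_u,
\]
so each factor $\bigl(\sum_{u\in V_i} a_{u,V_i} k_u\bigr)^{m_i}$ appearing in $Q$ coincides, up to the sign $(-1)^{p_{V_i}}$ (in the convention of \eqref{nice physical terms}), with the Fourier symbol of $(-\Delta)^{m_i/2}_{V_i}$ acting on the block indexed by $V_i$. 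This is the mechanism by which $1/Q$ translates, block by block, into an operator acting on the product in physical space.

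The second ingredient is the laminar structure of the family $(V_i)_i$: by hypothesis any two blocks are either disjoint or nested. The proof proceeds by induction on this inclusion poset, processing the blocks from smallest to largest, which is precisely the ordering alluded to in the statement. For an innermost $V_j$, the factor $(\sum_{u\in V_j}a_{u,V_j} k_u)^{-m_j}$ converts into $(-1)^{p_{V_j}}(-\Delta)^{-m_j/2}_{V_j}$ applied to the partial product $\prod_{u\in V_j} v^{u,a_u}$. One then treats the resulting function, which is supported on the $V_j$-variables and carries total frequency $\sum_{u\in V_j}a_u k_u$, as a new factor, multiplies by the remaining $v^{u,a_u}$ for $u\in V_i\setminus V_j$, and applies $(-\Delta)^{-m_i/2}_{V_i}$. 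The compatibility $a_{u,V_i}=(-1)^{p_{i,j}}a_{u,V_j}$ for $u\in V_j$, combined with the two inclusion-in-$k$ relations for $V_j$ and $V_i$, forces $p_{V_i}\equiv p_{V_j}+p_{i,j}\pmod 2$, so the signs produced by the inner step are exactly those required by the outer step. Iterating through the laminar family yields the right-hand side. Disjoint blocks are treated in parallel: their Fourier multipliers commute and act on disjoint groups of variables, so the ordering of the corresponding $(-\Delta)^{-m_i/2}_{V_i}$'s is immaterial.

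The main technical obstacle is the careful bookkeeping of signs together with the zero-mode restriction. Since each $(-\Delta)^{-m_i/2}_{V_i}$ is defined with the zero Fourier mode excluded, the correspondence is valid only on those frequency configurations where every intermediate partial sum $\sum_{u\in V_i}a_u k_u$ is non-zero; this is exactly the restriction $Q(k_1,\dots,k_n)\neq 0$ imposed on the left-hand side, while the outer condition $0\neq k=\sum_u a_u k_u$ matches the zero-mode removal for the outermost block. Once these conventions are fixed and the signs $(-1)^{p_{V_i}}$ are collected through the induction, the two sides are seen to be expansions of the same object and the identity follows term by term.
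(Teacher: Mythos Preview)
Your argument is correct and rests on the same mechanism as the paper's proof: exploit the laminar (disjoint-or-nested) structure of the $V_i$ to peel off one factor of $Q$ at a time, turning each $\bigl(\sum_{u\in V_i}a_{u,V_i}k_u\bigr)^{-m_i}$ into an inverse Laplacian on the corresponding block. The difference is purely in the direction of the induction. The paper removes a \emph{maximal} block $V_{\max}$ first: this splits $Q=R\,S\,\ell^{m_{\max}}$ cleanly into a factor over the variables inside $V_{\max}$ (giving $S$) and one over the variables outside (giving $R$), and the induction hypothesis applies independently to the two resulting sums. You instead process an \emph{innermost} block first and replace $\prod_{u\in V_j}v^{u,a_u}$ by a single new function before moving outward. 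Your organisation mirrors more directly the nested operator on the right-hand side and makes the sign compatibility $p_{V_j}\equiv p_{V_i}+p_{i,j}\pmod 2$ explicit, which the paper leaves implicit; the paper's top-down induction, on the other hand, avoids having to re-parametrise the problem with the intermediate ``new factor'' and keeps each inductive sub-instance in exactly the original form. Both routes are short and neither offers a real advantage over the other.
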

\begin{proof} We proceed by induction on the number of $ V_i $. Let $ V_{\max} $ an element among the $ V_i $ maximum for the inclusion order. Then, we get
\begin{equs}
\sum_{ \substack{0 \neq k= a_1 k_1 +...+ a_n k_n\\ Q(k_1,...,k_n) \neq 0}} &    \frac{1}{Q} v^{1,a_1}_{k_1}...v^{n,a_n}_{k_n}  e^{i kx} = \sum_{\substack{0 \neq k= r + \ell \\ \ell \neq 0}} \frac{(-1)^{p_{V_{\max}}}}{\ell^{m_{\max}}}\sum_{ \substack{0 \neq r=   \sum_{u \notin V_{\max}} a_u k_u \\ R \neq 0}}     \frac{1}{R} \\ & \left( \prod_{j \notin V_{\max}} v^{j,a_j}_{k_j} \right) e^{i r x}
 \times \sum_{ \substack{0 \neq \ell=  \sum_{u \in V_{\max}} a_u k_u \\ S \neq 0}}     \frac{1}{S} \left( \prod_{j \in V_{\max}} v^{j,a_j}_{k_j} \right) e^{i \ell x}
\end{equs}
where 
\begin{equs}
  S  = \prod_{V_j \varsubsetneq V_{\max}} (\sum_{u \in V_j}  a_{u,V_j} k_u )^{m_i}, \quad
R  = \prod_{V_j \cap V_{\max} = \emptyset} (\sum_{u \in V_j}  a_{u,V_j} k_u )^{m_i}, \quad Q = R S \ell^{m_{max}}.
\end{equs}
Thus, by applying the inverse Fourier transform, we get the term $ (-1)^{p_{V_{\max}}} (-\Delta)^{- m_{\max}/2}_{V_{\max}} $ from $  \frac{(-1)^{p_{V_{\max}}}}{\ell^{m_{\max}}} $. We conclude from the induction hypothesis on the two remaining sums.
\end{proof}
 The next definition will allow us to compute the dominant part of the frequency interactions of a given decorated forest in $ \hat H_0 $. The idea is to filtered out the dominant part using the operator $ \CP_{\text{\tiny{dom}}} $ which selects the  frequencies of highest order. The operator $ \CP_{\text{\tiny{dom}}} $ only appears if we face an edge in $ \Lab_+ $ which corresponds to an integral in time that we have to approximate.

\begin{definition} \label{dom_freq} We recursively define $\mathscr{F}_{\text{\tiny{dom}}}, \mathscr{F}_{\text{\tiny{low}}} : \hat H_{0} \rightarrow \mathbb{R}[\Z^d]$ as:
\begin{equs}
 \mathscr{F}_{\text{\tiny{dom}}}(\one)  = 0 \quad
 \mathscr{F}_{\text{\tiny{dom}}}(F \cdot \bar F) & =\mathscr{F}_{\text{\tiny{dom}}}(F) + \mathscr{F}_{\text{\tiny{dom}}}(\bar F) \\
 \mathscr{F}_{\text{\tiny{dom}}}\left( \CI_{(\Labhom,\Labp)}(  \lambda_{k}F) \right)  & = \left\{ \begin{aligned}
  &   \CP_{\text{\tiny{dom}}}\left( P_{(\Labhom,\Labp)}(k) +\mathscr{F}_{\text{\tiny{dom}}}(F) \right), \,
  \text{if } \Labhom \in \Lab_+  , \\
  &  P_{(\Labhom,\Labp)}(k) +\mathscr{F}_{\text{\tiny{dom}}}(F), \quad \text{otherwise}
  \\
  \end{aligned} \right.  
 \\
 \mathscr{F}_{\text{\tiny{low}}} \left( \CI_{(\Labhom,\Labp)}(  \lambda_{k}F) \right)  & =   
\left( \id - \CP_{\text{\tiny{dom}}} \right) \left( P_{(\Labhom,\Labp)}(k) +\mathscr{F}_{\text{\tiny{dom}}}(F) \right).
\end{equs}
 We extend these two maps to $ \hat H $ by ignoring the node decorations $ \Labn $.
\end{definition}

\begin{remark}\label{rem:epsi}
The definition of $ \CP_{\text{\tiny{dom}}}  $ can be adapted depending on what is considered to be the dominant part. For example, if for $ \Labhom_2  \in \Lab_+$, one has (cf. \eqref{Leps}):
\begin{equs}
P_{(\Labhom_2,p)}( \lambda) = \frac{1}{\varepsilon^{\sigma}} +  F_{(\Labhom_2,p)}( \lambda).
\end{equs}
Then we can define the dominant part only depending on $ \varepsilon $ (see Example \ref{sec:kgr})
\begin{equs}
\CP_{\text{\tiny{dom}}}\left(  P_{(\Labhom_2,p)}(k) \right)= \frac{1}{\varepsilon^{\sigma}}. 
\end{equs}
The definition considers only the case where one does not have terms of  the form  $ 1/\eps^{\sigma} $. It is sufficient for covering many interesting examples.
\end{remark}

In the following we compute the dominant part $ \mathscr{F}_\text{dom}$ for the underlying trees of the cubic Schrödinger~\eqref{nlsIntro} and KdV \eqref{kdvIntro}  equation.
\begin{example}[KdV]\label{tex:kdv} We consider the decorated tree $ T $ given in Example  \ref{example2a}, where we fix the following decorations:
\begin{equs}
\Labp(1) & = \Labp(2) = \Labp(3) = 0, \quad \Labhom(2) = \Labhom(3) = \Labhom_1, \quad \Labhom(1) = \Labhom_2,
\\
\Labo(b) & = k_1, \quad  \Labo(c) = k_2, \quad \Labo(a) = k_1 + k_2, \quad P_{\Labhom_2}( \lambda) =  \lambda^3 , \quad P_{\Labhom_1}( \lambda) = -  \lambda^3.
\end{equs}
Now, we suppose $ \Lab_+ = \lbrace \Labhom_2 \rbrace $ and $ \Lab = \lbrace \Labhom_1, \Labhom_2  \rbrace $. Then the tree \eqref{example2a} takes the form
\begin{equs} \label{example2}
 \begin{tikzpicture}[scale=0.22,baseline=2cm]
        \node at (0,10)  (a) {}; 
         \node at (4,20)  (f) {}; 
           \node at (20,20)  (g) {}; 
        \node at (12,15) (c) {}; 
    \draw[kernel] (a) -- node [round1] {\tiny $\mathfrak{t}_2, 0 $}  (c) ; 
     \draw[kernel1] (c) -- node [round1] {\tiny $ \mathfrak{t}_1, 0$}  (f) ; 
     \draw[kernel1] (c) -- node [round1] {\tiny  $ \mathfrak{t}_1, 0 $}  (g) ; 
    \draw (g) node [rect2] {\tiny $ k_2 $}  ;
    \draw (f) node [rect2] {\tiny $ k_1$}  ;
    \draw (c) node [rect2] {\tiny $k_1+k_2 $}  ;
\end{tikzpicture} 
\end{equs}
This tree corresponds to the first iterated integral for the
KdV equation \eqref{kdvIntro}.  In more formal notation, we denote this tree by: 
\begin{equs}\label{kdvTK}
  \CI_{(\Labhom_2,0)}( \lambda_{k} \CI_{(\Labhom_1,0)}( \lambda_{k_1}) \CI_{(\Labhom_1,0)}( \lambda_{k_2}))= \begin{tikzpicture}[scale=0.2,baseline=-5]
\coordinate (root) at (0,0);
\coordinate (tri) at (0,-2);
\coordinate (t1) at (-1,2);
\coordinate (t2) at (1,2);
\draw[kernels2] (t1) -- (root);
\draw[kernels2] (t2) -- (root);
\draw[symbols] (root) -- (tri);
\node[not] (rootnode) at (root) {};
\node[not] (trinode) at (tri) {};
\node[var] (rootnode) at (t1) {\tiny{$ k_{\tiny{1}} $}};
\node[var] (trinode) at (t2) {\tiny{$ k_2 $}};
\end{tikzpicture} , \quad k = k_1 + k_2
\end{equs}
where a blue edge encodes $ (\mathfrak{t}_2,0) $ and a brown edge is used for $(\mathfrak{t}_1,0)$. The frequencies are given on the leaves. The ones on the inner nodes are determined by those on the leaves. On the left hand side, we have given the symbolic notation. Together with Definition \ref{dom_freq} one gets
\begin{equs}
 \mathscr{F}_{\text{\tiny{dom}}}(T) & = \CP_{\text{\tiny{dom}}} \left( (k_1+k_2)^{3} - k_1^{3} - k_2^{3} \right) = 0
\\
 \mathscr{F}_{\text{\tiny{low}}} (T) & =   (k_1+k_2)^{3} - k_1^{3} - k_2^{3} 
 =   3k_1 k_2 (k_1+k_2).
\end{equs}
The fact that $ \mathscr{F}_{\text{\tiny{dom}}}(T)  $ is zero comes from the fundamental choice of definition of the operator $ \CP_{\text{\tiny{dom}}}  $
 in Definition~\ref{Dom_projection}.
\end{example}
\begin{example}[Cubic Schrödinger]\label{exRDoNLS}
Next we consider the Symbol
\begin{equs}
  \CI_{(\Labhom_2,0)}(  \lambda_{-k_1+k_2+k_3}\CI_{(\Labhom_2,1)}(  \lambda_{k_1})\CI_{(\Labhom_2,0)}(  \lambda_{k_2})\CI_{(\Labhom_2,0)}(  \lambda_{k_3}))
\end{equs}
with $P_{\Labhom_2}( \lambda)=   \lambda^2$, $P_{\Labhom_1}( \lambda) = -  \lambda^2$, $ \Lab_+ = \lbrace \Labhom_2 \rbrace $ and $ \Lab = \lbrace \Labhom_1, \Labhom_2 \rbrace $ which encodes the tree 
\begin{equs} \label{example2}
 \begin{tikzpicture}[scale=0.22,baseline=2cm]
        \node at (0,10)  (a) {}; 
         \node at (0,20)  (f) {}; 
           \node at (10,20)  (g) {}; 
            \node at (20,20)  (e) {}; 
        \node at (10,15) (c) {}; 
    \draw[kernel] (a) -- node [round1] {\tiny $\mathfrak{t}_2, 0$}  (c) ; 
     \draw[kernel1] (c) -- node [round1] {\tiny  $\mathfrak{t}_1,1$}  (f) ; 
     \draw[kernel1] (c) -- node [round1] {\tiny  $ \mathfrak{t}_1,0$}  (g) ; 
          \draw[kernel1] (c) -- node [round1] {\tiny  $ \mathfrak{t}_1,0$}  (e) ; 
       \draw (f) node [rect2] {\tiny $ k_1$}  ;
    \draw (g) node [rect2] {\tiny $ k_2 $}  ;
      \draw (e) node [rect2] {\tiny $ k_3$}  ;
    \draw (c) node [rect2] {\tiny $-k_1+k_2 +k_3$}  ;
\end{tikzpicture} 
\end{equs}
This tree corresponds to the frequency interaction of the first iterated integral for the cubic Schrödinger equation \eqref{nlsIntro}.   In a more formal notation, we denote this tree by: 
\begin{equs} \label{treeSE}
 \begin{tikzpicture}[scale=0.2,baseline=-5]
\coordinate (root) at (0,0);
\coordinate (tri) at (0,-2);
\coordinate (t1) at (-2,2);
\coordinate (t2) at (2,2);
\coordinate (t3) at (0,3);
\draw[kernels2,tinydots] (t1) -- (root);
\draw[kernels2] (t2) -- (root);
\draw[kernels2] (t3) -- (root);
\draw[symbols] (root) -- (tri);
\node[not] (rootnode) at (root) {};t
\node[not] (trinode) at (tri) {};
\node[var] (rootnode) at (t1) {\tiny{$ k_{\tiny{1}} $}};
\node[var] (rootnode) at (t3) {\tiny{$ k_{\tiny{2}} $}};
\node[var] (trinode) at (t2) {\tiny{$ k_3 $}};
\end{tikzpicture}
\end{equs}
where a blue edge encodes $ (\mathfrak{t}_2,0) $, a brown edge is used for $ (\mathfrak{t}_1,0) $ and a dashed brown edge is for $ (\mathfrak{t}_2,1) $. With Definition \ref{dom_freq}, we get
\begin{equs}
 \mathscr{F}_{\text{\tiny{dom}}}(T) & = \CP_{\text{\tiny{dom}}} \left( (-k_1+k_2+k_3)^{2} + (-k_1)^{2} - k_2^{2} - k_3^2 \right) \\&
=   \CP_{\text{\tiny{dom}}} \left( 2k_1^2 - 2k_1(k_2+k_3) + 2 k_2 k_3\right) = 2 k_1^2
\\
 \mathscr{F}_{\text{\tiny{low}}} (T) 
& = (-k_1+k_2+k_3)^{2} + (-k_1)^{2} - k_2^{2} - k_3^2 - 2k_1^2\\
& = - 2k_1(k_2+k_3) + 2 k_2 k_3.
\end{equs}

\end{example}

The map $ \mathscr{F}_{\text{\tiny{dom}}}$ has a nice property regarding the tree inclusions given in the next proposition. This inclusive property will be important in practical computations, see also Remark \ref{rem:FFT} and the examples in Section \ref{sec:examples}. 

 For Proposition \ref{nasty_trees} below, we need some additional assumption.
\begin{assumption} \label{assumption_physical_space} 
 We consider decorated forests whose decorations at the leaves form a partition of the $ k_1,...,k_n $ ; in the sense that for two leaves $ u $ and $ v $, $ \Labo(u) $ (resp. $ \Labo(v) $) is a linear combination of $ (k_i)_{i \in I} $ (resp. $ (k_i)_{i \in J} $) with $ I,J \subset \lbrace 1,...,n \rbrace $ and $ I \cap J = \emptyset $. This will be the case in the examples given in Section \ref{sec:examples}.
\end{assumption}
 With this assumption, for a decorated forest $ F = \prod_i T_i  $ 
such that $ \CP_{\text{\tiny{dom}}}\left(\mathscr{F}_{\text{\tiny{dom}}}(F) \right) \neq 0 $ one has the nice identity:
\begin{equs} \label{splitting_identity}
\CP_{\text{\tiny{dom}}}\left(\mathscr{F}_{\text{\tiny{dom}}}(F) \right) = \CP_{\text{\tiny{dom}}}\left(\sum_i  \CP_{\text{\tiny{dom}}}\left(\mathscr{F}_{\text{\tiny{dom}}}(T_i) \right)  \right).
\end{equs}

 We will illustrate this property and give a counterexample in an example below.
\begin{example} We give a counter-example in the setting of the Schrödinger equation to \eqref{splitting_identity} when $\CP_{\text{\tiny{dom}}}\left(\mathscr{F}_{\text{\tiny{dom}}}(F) \right) = 0  $.  We consider the following forest $ F = \prod_{i=1}^3 T_i$ where the decorated trees $ T_i $ are given by:
\begin{equs}
 T_1 = \begin{tikzpicture}[scale=0.2,baseline=-5]
\coordinate (root) at (0,1);
\coordinate (tri) at (0,-1);
\draw[kernels2] (tri) -- (root);
\node[var] (rootnode) at (root) {\tiny{$ k_4 $}};
\node[not] (trinode) at (tri) {};
\end{tikzpicture} , \quad T_2 = \begin{tikzpicture}[scale=0.2,baseline=-5]
\coordinate (root) at (0,1);
\coordinate (tri) at (0,-1);
\draw[kernels2] (tri) -- (root);
\node[var] (rootnode) at (root) {\tiny{$ k_5 $}};
\node[not] (trinode) at (tri) {};
\end{tikzpicture},  \quad T_3 =  \begin{tikzpicture}[scale=0.2,baseline=-5]
\coordinate (root) at (0,-1);
\coordinate (t3) at (0,1);
\coordinate (t4) at (0,3);
\coordinate (t41) at (-2,5);
\coordinate (t42) at (2,5);
\coordinate (t43) at (0,7);
\draw[kernels2] (t3) -- (root);
\draw[symbols] (t3) -- (t4);
\draw[kernels2,tinydots] (t4) -- (t41);
\draw[kernels2] (t4) -- (t42);
\draw[kernels2] (t4) -- (t43);
\node[not] (rootnode) at (root) {};
\node[not] (rootnode) at (t4) {};
\node[not] (rootnode) at (t3) {};
\node[var] (rootnode) at (t41) {\tiny{$ k_{\tiny{1}} $}};
\node[var] (rootnode) at (t42) {\tiny{$ k_{\tiny{3}} $}};
\node[var] (rootnode) at (t43) {\tiny{$ k_{\tiny{2}} $}};
\end{tikzpicture}   , \quad F  = \begin{tikzpicture}[scale=0.2,baseline=-5]
\coordinate (root) at (0,-1);
\coordinate (t1) at (-2,1);
\coordinate (t2) at (2,1);
\coordinate (t3) at (0,1);
\coordinate (t4) at (0,3);
\coordinate (t41) at (-2,5);
\coordinate (t42) at (2,5);
\coordinate (t43) at (0,7);
\draw[kernels2] (t1) -- (root);
\draw[kernels2] (t2) -- (root);
\draw[kernels2] (t3) -- (root);
\draw[symbols] (t3) -- (t4);
\draw[kernels2,tinydots] (t4) -- (t41);
\draw[kernels2] (t4) -- (t42);
\draw[kernels2] (t4) -- (t43);
\node[not] (rootnode) at (root) {};
\node[not] (rootnode) at (t4) {};
\node[not] (rootnode) at (t3) {};
\node[var] (rootnode) at (t1) {\tiny{$ k_{\tiny{4}} $}};
\node[var] (rootnode) at (t41) {\tiny{$ k_{\tiny{1}} $}};
\node[var] (rootnode) at (t42) {\tiny{$ k_{\tiny{3}} $}};
\node[var] (rootnode) at (t43) {\tiny{$ k_{\tiny{2}} $}};
\node[var] (trinode) at (t2) {\tiny{$ k_5 $}};
\end{tikzpicture}.
\end{equs}
Then, we can check  Assumption {\ref{assumption_physical_space}} for $ F $. One has
\begin{equs}
 \mathscr{F}_{\text{\tiny{dom}}}(T_1) & = - k_4^2, \quad\mathscr{F}_{\text{\tiny{dom}}}(T_2) = - k_5^2,
\quad\mathscr{F}_{\text{\tiny{dom}}}(T_3) = - (-k_1 + k_2 + k_3)^2 + 2 k_1^2, \\
 \mathscr{F}_{\text{\tiny{dom}}}(F) & =  - k_4^2 - k_5^2 - (-k_1 + k_2 + k_3)^2 + 2 k_1^2
 \end{equs}
 and
 \begin{equs}
 \CP_{\text{\tiny{dom}}} \left( \mathscr{F}_{\text{\tiny{dom}}}(T_1)  \right) = - k_4^2, \quad  \CP_{\text{\tiny{dom}}} \left(\mathscr{F}_{\text{\tiny{dom}}}(T_2) \right) = - k_5^2,
\quad \CP_{\text{\tiny{dom}}} \left(\mathscr{F}_{\text{\tiny{dom}}}(T_3) \right) = 0.
  \end{equs}
  Therefore, one obtains
  \begin{equs}
 \CP_{\text{\tiny{dom}}} \left( \sum_{i=1}^3 \CP_{\text{\tiny{dom}}} \left(\mathscr{F}_{\text{\tiny{dom}}}(T_i) \right)  \right) = - (k_4 + k_5)^2.
   \end{equs}
 But on the other hand,
 \begin{equs}
 \CP_{\text{\tiny{dom}}} \left(\mathscr{F}_{\text{\tiny{dom}}}(F) \right) = 0.
  \end{equs}
\end{example}

\begin{proposition} \label{nasty_trees}
 Let $   T_{\Labe}^{\Labo} $ be a decorated tree in $ \hat H_0 $  and $ e \in E_T $. We recall that $ T_e $ corresponds to the subtree of $ T $ above $ e $.  The nodes of $ T_e $ are given by all the nodes whose path to the root contains $ e $. Under Assumption~\ref{assumption_physical_space} one has 
\begin{equs} \label{subtreek}
\begin{aligned}
\CP_{\text{\tiny{dom}}} \left(\mathscr{F}_{\text{\tiny{dom}}}(T_{\Labe}^{\Labo}) \right) & = 
a \left( \sum_{u \in V } a_u k_u \right)^{m}, \quad m \in \N, a \in \Z, \, V \subset L_T, \, a_u \in \lbrace -1,1 \rbrace \\
\CP_{\text{\tiny{dom}}} \left( \mathscr{F}_{\text{\tiny{dom}}}((T_e)_{\Labe}^{\Labo}) \right) & = 
b \left( \sum_{u \in \bar V } b_u k_u \right)^{m_e}, \quad m_e \in \N, b \in \Z, \, \bar V \subset L_T, \, b_u \in \lbrace -1,1 \rbrace
\end{aligned}
\end{equs}
and $\bar V \subset V \text{ or } \bar V \cap V = \emptyset $. If $ \bar V \subset V  $, then there exists $ \bar p \in \lbrace 0,1 \rbrace $ such that $ a_u = (-1)^{\bar p} b_u $ for every $ u \in \bar V $.
\end{proposition}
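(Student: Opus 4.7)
The plan is to proceed by induction on the number of edges of $T$, exploiting the recursive form of $\mathscr{F}_{\text{\tiny{dom}}}$ from Definition~\ref{dom_freq} together with the partition property of Assumption~\ref{assumption_physical_space}. Write $T = \CI_{(\Labhom,\Labp)}(\lambda_{k_\ast} F)$ with $F = T_1 \cdots T_j$, where $k_\ast$ is the frequency at the internal node just above the root edge; by \eqref{innerdecoration}, $k_\ast$ is a signed sum of leaf frequencies. The base case (single-edge tree) reduces to a pure power of $\pm k_u$ for the unique leaf $u$, and both sides of the claim are evidently of the prescribed form.

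For the inductive step I would distinguish two cases depending on the edge $e$. If $e$ is the root edge of $T$, then $T_e = T$ and it suffices to verify that $\CP_{\text{\tiny{dom}}}(\mathscr{F}_{\text{\tiny{dom}}}(T))$ itself has the required monomial form. By the inductive hypothesis each $\CP_{\text{\tiny{dom}}}(\mathscr{F}_{\text{\tiny{dom}}}(T_i))$ is already a monomial of shape $a_i(\sum_{u\in V_i} \pm k_u)^{m_i}$, while $P_{(\Labhom,\Labp)}(k_\ast)$ is a signed power of the full leaf-sum by \eqref{Palpha}. Under $\CP_{\text{\tiny{dom}}}$, which retains only the highest-degree summands of the precise monomial shape, at most one such summand survives, giving the claimed form with $V$ equal to a single $V_i$ or, when $P_{(\Labhom,\Labp)}(k_\ast)$ dominates, the full set $\bigcup_i L_{T_i}$. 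If instead $e$ lies strictly inside a subtree $T_{i_0}$, the inductive hypothesis applied to $(T_{i_0},e)$ yields leaf sets $V_{i_0}, \bar V \subseteq L_{T_{i_0}}$ satisfying the dichotomy; combining this with the first case gives that $V$ is either one of the $V_{i'}$ or $\bigcup_i L_{T_i}$. A direct inspection then shows that $\bar V$ is either disjoint from $V$ (e.g.\ when $V=V_{i'}$ for some $i'\neq i_0$, or when $V=V_{i_0}$ and $\bar V\cap V_{i_0}=\emptyset$) or entirely contained in $V$ (when $V\supseteq L_{T_{i_0}}\supseteq\bar V$, or $V=V_{i_0}\supseteq\bar V$).

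The main obstacle is the sign bookkeeping needed for the uniform relation $a_u=(-1)^{\bar p}b_u$ on $\bar V$. Each time a frequency crosses an edge $e'$ toward the root, \eqref{innerdecoration} introduces a factor $(-1)^{\Labp(e')}$, and \eqref{Palpha} contributes its own sign. The induction must therefore carry the strengthened claim that, whenever $\CP_{\text{\tiny{dom}}}(\mathscr{F}_{\text{\tiny{dom}}}(T))$ is nonzero, the sign $a_u$ attached to $u\in V$ is determined by the parity of $\sum\Labp(e')$ along the unique path from $u$ to the edge on which the decisive $\CP_{\text{\tiny{dom}}}$ has acted. The partition property of Assumption~\ref{assumption_physical_space} ensures these paths are well-defined and the leaf frequencies independent, so the path statistics cannot be confused across branches. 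When $\bar V\subseteq V$, the path from $u\in\bar V$ to the decisive edge for $V$ factors through the corresponding decisive edge for $\bar V$; the additional segment between these two edges is common to all $u\in\bar V$ and contributes precisely the uniform global sign $(-1)^{\bar p}$.
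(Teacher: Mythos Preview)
Your inductive skeleton matches the paper's, but there is a genuine gap in your description of what the set $V$ for $T$ can look like. You assert that under $\CP_{\text{\tiny{dom}}}$ ``at most one such summand survives, giving the claimed form with $V$ equal to a single $V_i$ or, when $P_{(\Labhom,\Labp)}(k_\ast)$ dominates, the full set $\bigcup_i L_{T_i}$.'' Neither clause is correct. The operator $\CP_{\text{\tiny{dom}}}$ does not select a single summand: by Definition~\ref{Dom_projection} it merges \emph{all} top-degree diagonal monomials $a\sum_i (a_i k_i)^m$ into the single power $a(\sum_i a_i k_i)^m$, and returns $0$ if the top-degree part is not of this shape. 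More importantly, when $P_{(\Labhom,\Labp)}(k)$ and $\mathscr{F}_{\text{\tiny{dom}}}(\bar T)$ have the \emph{same} degree $m$, the coefficients in front of $((-1)^{\Labp}c_u k_u)^m$ differ between $u\in V$ and $u\in L_T\setminus V$ (they are $a(-1)^{p+m+\Labp}+(-1)^{\Labp}$ and $(-1)^{\Labp}$ respectively, in the paper's notation). A cancellation on $V$ is possible, and then the surviving set is $\tilde V = L_T\setminus V$, the \emph{complement} of the subtree's set. Your dichotomy ``a single $V_i$ or the full leaf set'' misses this case entirely, and the subsequent verification that $\bar V$ is either contained in or disjoint from $V$ breaks down without it.

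The paper's proof confronts exactly this point in its case (iii) (the single-child case $F=\CI_{(\Labhom_1,\Labp_1)}(\lambda_{\bar k}T_1)$), computing the two coefficients explicitly and concluding that $\tilde V\in\{L_T,\,L_T\setminus V\}$; it then checks separately that if $\bar V\subset V$ one gets $\bar V\cap\tilde V=\emptyset$ when $\tilde V=L_T\setminus V$, and if $\bar V\cap V=\emptyset$ one gets $\bar V\subset L_T\setminus V=\tilde V$. For the multi-child case the paper does not argue as you do but instead invokes the splitting identity~\eqref{splitting_identity} (which is where Assumption~\ref{assumption_physical_space} enters) to reduce to single-child trees $\tilde T_i=\CI_{(\Labhom,\Labp)}(\lambda_{k^{(i)}}T_i)$. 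You never call on~\eqref{splitting_identity}, and without it your treatment of the branching case is not justified either. Your sign argument via path parities is in the right spirit, but it too relies on knowing the correct form of $V$, so it inherits the gap.
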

\begin{proof}
 We proceed by induction  over the size of the decorated trees. We consider $T= \CI_{(\Labhom,\Labp)}(  \lambda_{k} F) $ where $ F = \prod_{i=1}^m T_i $. 

(i) If $F =\one$ then one has:
\begin{equs}
\CP_{\text{\tiny{dom}}} \left(\mathscr{F}_{\text{\tiny{dom}}}(T) \right) =
\CP_{\text{\tiny{dom}}} \left( P_{(\Labhom,\Labp)}(k) \right).
\end{equs}
We conclude from the definition of $ \CP_{\text{\tiny{dom}}}  $.

(ii) If $ m > 2 $, then one gets: 
\begin{equs}
\CP_{\text{\tiny{dom}}} \left(\mathscr{F}_{\text{\tiny{dom}}}(T) \right) =
\CP_{\text{\tiny{dom}}} \left( P_{(\Labhom,\Labp)}(k)  + \sum_{i=1}^m \mathscr{F}_{\text{\tiny{dom}}}(T_i)  \right).
\end{equs}
Using Assumption~\ref{assumption_physical_space}, one obtains:
\begin{equs}
 \CP_{\text{\tiny{dom}}} \left(\mathscr{F}_{\text{\tiny{dom}}}(T) \right) =
\CP_{\text{\tiny{dom}}} \left(  \sum_{i=1}^m    (P_{(\Labhom,\Labp)}(k^{(i)}) +\mathscr{F}_{\text{\tiny{dom}}}(T_i)) \right)
\end{equs}
where $ k^{(i)} $ corresponds to the frequency attached to the node connected to the root of $ T_i $. If $  \CP_{\text{\tiny{dom}}} \left(\mathscr{F}_{\text{\tiny{dom}}}(T) \right) \neq 0 $ then from \eqref{splitting_identity} we have
\begin{equs}
  \CP_{\text{\tiny{dom}}} \left(\mathscr{F}_{\text{\tiny{dom}}}(T) \right) =
\CP_{\text{\tiny{dom}}} \left(  \sum_{i=1}^m    \CP_{\text{\tiny{dom}}} \left(  P_{(\Labhom,\Labp)}(k^{(i)}) +\mathscr{F}_{\text{\tiny{dom}}}(T_i) \right) \right).
\end{equs}
We apply the induction hypothesis to each decorated tree $ \tilde T_i = \CI_{(\Labhom,\Labp)}(  \lambda_{k^{(i)}}  T_i) $ and we recombine the various terms in order to conclude.

(iii) If $ m=1 $, then $ F = \CI_{(\Labhom_1,\Labp_1)}( \lambda_{ \bar k} T_1) $ where $ \bar k  $ is equal to $ k $ up to a minus sign. We can assume without loss of generality that
\begin{equs} \label{case1}
 \CP_{\text{\tiny{dom}}} \left(\mathscr{F}_{\text{\tiny{dom}}}(F) \right)  = 
 \mathscr{F}_{\text{\tiny{dom}}}(F). 
\end{equs}
Indeed, otherwise
\begin{equs}
 \CP_{\text{\tiny{dom}}} \left( \mathscr{F}_{\text{\tiny{dom}}}(T)\right) = \CP_{\text{\tiny{dom}}} \left(  P_{(\Labhom,\Labp)}(k) + P_{(\Labhom_1,\Labp_1)}(\bar k) + \mathscr{F}_{\text{\tiny{dom}}}(T_1) \right) .
\end{equs}
We can see $ P_{(\Labhom,\Labp)}(k) + P_{(\Labhom_1,\Labp_1)}(\bar k)  $ as a polynomial and then apply the induction hypothesis. We are down to the case \eqref{case1} and we consider:
\begin{equs}
 \CP_{\text{\tiny{dom}}}\left(\mathscr{F}_{\text{\tiny{dom}}}(T) \right) 
  & = \CP_{\text{\tiny{dom}}}\left( P_{(\Labhom,\Labp)}(k) + \CP_{\text{\tiny{dom}}}\left(\mathscr{F}_{\text{\tiny{dom}}}(\bar T) \right) \right)
\end{equs}
where now  $  F = \bar T$ is just a decorated tree.
We apply the induction hypothesis on $ \bar T $  and we get
\begin{equs}\label{PT}
\CP_{\text{\tiny{dom}}} \left(\mathscr{F}_{\text{\tiny{dom}}}(\bar T) \right) & = 
a \left( \sum_{u \in V } a_u k_u \right)^{m}, \quad a \in \Z, \, V \subset L_T, \, a_u \in \lbrace -1,1 \rbrace \\
 k & = \sum_{u \in L_T } (c_u k_u), \quad c_{ u} = (-1)^{ p} a_{u} , \quad  u \in  V.
\end{equs}
If the degree of $P_{(\Labhom,\Labp)}(k)$ is higher than the degree of $\mathscr{F}_{\text{\tiny{dom}}}(\bar T)$ we obtain that
\begin{equs}
\CP_{\text{\tiny{dom}}}  \left( P_{(\Labhom,\Labp)}(k) +\mathscr{F}_{\text{\tiny{dom}}}(\bar T) \right) 
= \CP_{\text{\tiny{dom}}}  \left( P_{(\Labhom,\Labp)}(k) \right).
\end{equs}
 On the other hand,  if the degree of $P_{(\Labhom,\Labp)}(k)$ is lower than the degree of $\mathscr{F}_{\text{\tiny{dom}}}(\bar T)$ 
\begin{equs}
\CP_{\text{\tiny{dom}}}  \left( P_{(\Labhom,\Labp)}(k) +\mathscr{F}_{\text{\tiny{dom}}}(\bar T) \right) 
= \CP_{\text{\tiny{dom}}}  \left( \mathscr{F}_{\text{\tiny{dom}}}(\bar T)  \right).
\end{equs}
If $ P_{(\Labhom,\Labp)}(k)$ and $\mathscr{F}_{\text{\tiny{dom}}}(\bar T)$ have the same degree $ m $, we get using the definition of $ P_{(\Labhom,\Labp)}(k)$ in \eqref{Palpha} as well as the induction hypothesis on $\bar T$ given in \eqref{PT} that
\begin{equs}
   P_{(\Labhom,\Labp)}(k) + \CP_{\text{\tiny{dom}}}\left(\mathscr{F}_{\text{\tiny{dom}}}(\bar T) \right)  & =  \sum_{u \in V } \left( a (-1)^{p + m + \Labp} + (- 1)^{\Labp} \right)( (-1)^{\Labp}c_u k_u)^{m} \\ & + \sum_{u \in L_T \setminus V } (- 1)^{\Labp}  ((-1)^{\Labp}  c_u k_u)^{m} + R
\end{equs}
where $ R $ are terms of lower order and contains non-zero  

By applying  the map $ \CP_{\text{\tiny{dom}}} $ defined in \eqref{physical map}, we thus obtain an expression of the form 
\begin{equs} \label{resultb}
\CP_{\text{\tiny{dom}}} \left(\mathscr{F}_{\text{\tiny{dom}}}(T) \right) 
 = b \left( \sum_{u \in \tilde{V} } (-1)^{\Labp}  c_u k_u \right)^{m} \,\text{for some }  b \in \Z
\end{equs} 
where $ \tilde V $ could be either $ L_T $ or $ L_T \setminus V $. 

 Let $ e \in E_T  $, $ T_e \neq T $, then $ T_e $  is a subtree of $ \bar T $. By the induction hypothesis, one obtains \eqref{subtreek}, meaning that  if we denote by $ V $ (resp. $ \bar V $) the set associated to $ \bar T $ (resp. $ T_e $), we get: $ \bar V \subset V $ or $ \bar V \cap V = \emptyset $.

In the first case $\bar V \subset V  $, the assertion follows as $ V \subset \tilde{V} $ or $ V \cap \tilde{V} = \emptyset $ such that necessarily $\bar V \subset \tilde V$ or $\bar V \cap \tilde V = \emptyset$.
 
In the second case $ \bar V \cap V = \emptyset $, we apply the induction hypothesis on $ T_e $. Then, for $ v $ of $ T_e $, there exists $ p $ such that the decoration $ \Labo(v) $ is given by:
 \begin{equs}
 \Labo(v) = \sum_{u \in L_{T_v} } (d_u k_u), \quad d_{ u} = (-1)^{ p} b_{u} , \quad  u \in  \bar V.
 \end{equs}
 As $  \Labo(v) $ appears as a subfactor in $ k $, one has $ \bar V \subset L_T $. Then, $ \bar {V} \cap V = \emptyset $ gives also that $ \bar V \subset L_T \setminus V $. Therefore, we have $ \bar V \subset \tilde{V} $ which concludes the proof.
\end{proof}

\begin{corollary} \label{physical_map} Let $ T_{\Labe}^{\Labo} $ a decorated tree in $ \hat \CT_0 $. We assume  that Assumption~\ref{assumption_physical_space} holds true for a set $ A $ of decorated subtrees of $   T_{\Labe}^{\Labo} $ such that $\CP_{\text{\tiny{dom}}} \left(\mathscr{F}_{\text{\tiny{dom}}}(\bar T) \right) =\mathscr{F}_{\text{\tiny{dom}}}(\bar T)  \neq 0$ for $ \bar T \in A $. Moreover, we assume that the $ \bar T $ are of the form $ (T_e)_{\Labe}^{\Labo} $ where $ e \in E_T $. Then, the following product 
\begin{equs}
\prod_{\bar T \in A} \frac{1}{\left(\mathscr{F}_{\text{\tiny{dom}}}(\bar T) \right)^{m_T}}
\end{equs}
can be mapped back to Physical space using operators of the form $(- \Delta)^{-m/2}_V $ as defined in Proposition~\ref{Fourierproduct}.
\end{corollary}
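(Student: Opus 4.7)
The plan is to reduce the corollary directly to Proposition~\ref{Fourierproduct} by verifying that the product $\prod_{\bar T \in A} \left(\mathscr{F}_{\text{\tiny{dom}}}(\bar T)\right)^{m_T}$ has exactly the structural form of the denominator $Q$ appearing there. The engine of this reduction is Proposition~\ref{nasty_trees}, which already encodes the tree-theoretic content in the precise form we need.

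First, for every $\bar T = (T_e)_{\Labe}^{\Labo}$ in $A$, Proposition~\ref{nasty_trees} (applied to the ambient tree $T$, with the subtree $T_e$) and the hypothesis $\CP_{\text{\tiny{dom}}}(\mathscr{F}_{\text{\tiny{dom}}}(\bar T))=\mathscr{F}_{\text{\tiny{dom}}}(\bar T)$ give a set $V_{\bar T}\subset L_T$, an integer $b_{\bar T}\in\Z$, signs $b_{u,\bar T}\in\{-1,1\}$ and $m_{\bar T}\in\N$ such that
\[
\mathscr{F}_{\text{\tiny{dom}}}(\bar T) \;=\; b_{\bar T}\Bigl(\sum_{u\in V_{\bar T}} b_{u,\bar T}\,k_u\Bigr)^{m_{\bar T}}.
\]
Taking the product over $\bar T\in A$ produces a denominator of exactly the form of $Q$ in Proposition~\ref{Fourierproduct}, namely a product of powers of integer linear combinations of the $k_u$ with $\pm 1$ coefficients, up to the overall constant $\prod_{\bar T\in A} b_{\bar T}$.

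Next I would verify the two compatibility hypotheses of Proposition~\ref{Fourierproduct}. For any pair $\bar T_1,\bar T_2\in A$, the underlying subtrees $T_{e_1}, T_{e_2}$ of $T$ are either nested or edge-disjoint (this is a purely combinatorial property of rooted trees), and by Assumption~\ref{assumption_physical_space} edge-disjointness of subtrees implies $V_{\bar T_1}\cap V_{\bar T_2}=\emptyset$. In the nested case, say $T_{e_2}\subset T_{e_1}$, the second clause of Proposition~\ref{nasty_trees} yields $V_{\bar T_2}\subset V_{\bar T_1}$ together with the existence of $p_{1,2}\in\{0,1\}$ such that $b_{u,\bar T_1}=(-1)^{p_{1,2}}b_{u,\bar T_2}$ for $u\in V_{\bar T_2}$, which is precisely the sign compatibility required by Proposition~\ref{Fourierproduct}. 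Finally, the role of $k$ in Proposition~\ref{Fourierproduct} will be played by the frequency $\Labo$ attached to the node adjacent to the root of $T$, which by \eqref{innerdecoration} is an integer linear combination of the leaf frequencies; the inclusion condition (existence of the sign $p_{V_{\bar T}}$) again follows from the nested/disjoint dichotomy applied to the maximal subtree $T$ itself.

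With all hypotheses of Proposition~\ref{Fourierproduct} met, applying it directly expresses the inverse Fourier transform of the product as a nested application of operators of the form $(-\Delta)^{-m_{\bar T}/2}_{V_{\bar T}}$ acting on products of the individual factors, ordered from the innermost $V_{\bar T}$ outward, which is exactly the assertion of the corollary. The main subtlety—and the step I expect to be the only nontrivial one—is the bookkeeping of the sign conventions: one must confirm that the signs $b_{u,\bar T}$ produced recursively by $\mathscr{F}_{\text{\tiny{dom}}}$ via the edge decorations $(\Labhom,\Labp)$ line up with the signs $a_{u,V_j}$ required by Proposition~\ref{Fourierproduct}, both between nested $V_{\bar T}$'s and between each $V_{\bar T}$ and the global frequency $k$. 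Once this is matched, no further estimates are needed.
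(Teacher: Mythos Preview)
Your proposal is correct and follows exactly the same route as the paper: use Proposition~\ref{nasty_trees} to place each $\mathscr{F}_{\text{\tiny{dom}}}(\bar T)$ in the form required by Proposition~\ref{Fourierproduct}, then invoke the latter. The paper's own proof is in fact just the one-line statement that Proposition~\ref{nasty_trees} supplies the structure needed to apply Proposition~\ref{Fourierproduct}; your write-up simply unpacks this by making explicit the nested/disjoint dichotomy for the subtrees $T_e$ and the resulting sign compatibilities, and the residual ``subtlety'' you flag is already absorbed into the conclusion of Proposition~\ref{nasty_trees}.
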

\begin{proof}
Proposition~\ref{nasty_trees} gives us the structure needed for applying Proposition~\ref{Fourierproduct} which allows us to conclude.
\end{proof}

 \begin{example}
 We illustrate Corollary~\ref{physical_map} via an example extracted from the cubic Schrödinger equation~\eqref{nlsIntro}.We consider the following decorated trees:
\begin{equs}\label{nlsTK}
 T_1  = \begin{tikzpicture}[scale=0.2,baseline=-5]
\coordinate (root) at (0,0);
\coordinate (tri) at (0,-2);
\coordinate (t1) at (-2,2);
\coordinate (t2) at (2,2);
\coordinate (t3) at (0,3);
\draw[kernels2,tinydots] (t1) -- (root);
\draw[kernels2] (t2) -- (root);
\draw[kernels2] (t3) -- (root);
\draw[symbols] (root) -- (tri);
\node[not] (rootnode) at (root) {};t
\node[not] (trinode) at (tri) {};
\node[var] (rootnode) at (t1) {\tiny{$ k_{\tiny{1}} $}};
\node[var] (rootnode) at (t3) {\tiny{$ k_{\tiny{2}} $}};
\node[var] (trinode) at (t2) {\tiny{$ k_3 $}};
\end{tikzpicture}, \quad T_2  = \begin{tikzpicture}[scale=0.2,baseline=-5]
\coordinate (root) at (0,0);
\coordinate (tri) at (0,-2);
\coordinate (t1) at (-2,2);
\coordinate (t2) at (2,2);
\coordinate (t3) at (0,2);
\coordinate (t4) at (0,4);
\coordinate (t41) at (-2,6);
\coordinate (t42) at (2,6);
\coordinate (t43) at (0,8);
\draw[kernels2,tinydots] (t1) -- (root);
\draw[kernels2] (t2) -- (root);
\draw[kernels2] (t3) -- (root);
\draw[symbols] (root) -- (tri);
\draw[symbols] (t3) -- (t4);
\draw[kernels2,tinydots] (t4) -- (t41);
\draw[kernels2] (t4) -- (t42);
\draw[kernels2] (t4) -- (t43);
\node[not] (rootnode) at (root) {};
\node[not] (rootnode) at (t4) {};
\node[not] (rootnode) at (t3) {};
\node[not] (trinode) at (tri) {};
\node[var] (rootnode) at (t1) {\tiny{$ k_{\tiny{4}} $}};
\node[var] (rootnode) at (t41) {\tiny{$ k_{\tiny{1}} $}};
\node[var] (rootnode) at (t42) {\tiny{$ k_{\tiny{3}} $}};
\node[var] (rootnode) at (t43) {\tiny{$ k_{\tiny{2}} $}};
\node[var] (trinode) at (t2) {\tiny{$ k_5 $}};
\end{tikzpicture}.
\end{equs}
We observe that these trees satisfy Assumption~\ref{assumption_physical_space} and that $ T_1 $ is a subtree of $ T_2 $. One has that:
\begin{equs}
\mathscr{F}_{\text{\tiny{dom}}}( T_1) =  2 k_1^2 , \quad\mathscr{F}_{\text{\tiny{dom}}}( T_2) = 2 (k_1 + k_4)^2
\end{equs}
and the following quantity can be mapped back into physical space:
\begin{equs}
\sum_{\substack{k= -k_1-k_4+k_2+k_3+k_5\\ k_1\neq 0, k_1+k_4\neq 0 \\k_1,k_2,k_3,k_4,k_5\in \Z^d}}\frac{1}{ \mathscr{F}_{\text{\tiny{dom}}}( T_1)} \frac{1}{ \mathscr{F}_{\text{\tiny{dom}}}( T_2)} \overline{\hat{v}_{k_1}} \overline{\hat{v}_{k_4}}\hat{v}_{k_2}\hat{v}_{k_3}\hat{v}_{k_5} e^{i k x}\\= \sum_{\substack{k= -k_1-k_4+k_2+k_3+k_5\\ k_1\neq 0, k_1+k_4\neq 0 \\k_1,k_2,k_3,k_4,k_5\in \Z^d}} \frac{1}{4 (k_1^2) (k_1 + k_4)^2}\overline{\hat{v}_{k_1}} \overline{\hat{v}_{k_4}}\hat{v}_{k_2}\hat{v}_{k_3}\hat{v}_{k_5} e^{i k x}
\\=
\frac14 v(x)^3(-\Delta)^{-1}\left(\overline v(x)  (-\Delta)^{-1} \overline v(x)\right)
.
 \end{equs}
\end{example}

\subsection{Approximated decorated trees}

 We denote by $ \CT $ the set of decorated trees $ T_{\Labe,r}^{\Labn,\Labo} = (T,\Labn,\Labo,\Labe,r)  $ where
\begin{itemize}
\item $ T_{\Labe}^{\Labn,\Labo} \in \hat \CT $.
\item The decoration of the root is given by $ r \in \Z $, $ r \geq -1 $ such that
\begin{equs} \label{condition_trees}
 r +1 \geq  \deg(T_{\Labe}^{\Labn,\Labo})
\end{equs}
 where $ \deg $ is defined recursively by 
 \begin{equs}
\deg(\one) & = 0, \quad \deg(T_1 \cdot T_2 )  = \max(\deg(T_1),\deg(T_2)),  \\
\deg(\CI_{(\Labhom,\Labp)}(  \lambda^{\ell}_{k}T_1)   ) & = \ell +   \one_{\lbrace\Labhom \in \CL_+\rbrace}   +  \deg(T_1)
\end{equs}
where $ \one  $ is the empty forest and  $ T_1, T_2  $ are forests composed of trees in $ \CT $. The quantity $\deg(T_{\Labe}^{\Labn,\Labo})$ is the maximum number of edges with type in $ \Lab_+ $ and node decorations $ \Labn $ lying on the same path from one leaf to the root.
\end{itemize}
We call decorated trees in $ \CT $ approximated decorated trees. The main difference with decorated trees introduced before is the adjunction of the decoration $ r $ at the root. The idea is that these trees correspond to different analytical objects. We summarise this below:
\begin{equs}
T_{\Labe}^{\Labn,\Labo} & \in  \hat{\CT} \equiv \text{Iterated integral} \\
T_{\Labe,r}^{\Labn,\Labo} & \in  \hat{\CT} \equiv \text{Approximation of an iterated integral of order $r$}.
\end{equs}
The interpretation \eqref{recursive_pi_r} of approximated decorated trees gives the numerical scheme see Definition~\ref{scheme}.

\begin{remark}
The condition \eqref{condition_trees} encodes the fact that the order of the scheme must be higher than the maximum number of iterated integrals and monomials lying on the same path from one leaf to the root. Moreover, we can only have monomials of degree less than $ r+1 $ at  order $ r+2 $.
\end{remark}

\begin{example}
We continue Example \ref{ex2} with the decorated tree $  T_{\Labe}^{\Labn,\mathfrak{f}} $ given in \eqref{example2a}. We  suppose that $ \Labhom(1) $ is in $ \mathfrak{L}_+ $ but not $ \Labhom(2) $ and  $\Labhom(3) $. We are now in the context of Example~\ref{tex:kdv}. Then, one has
\begin{equs}
\deg(T_{\Labe}^{\Labn,\Labo}) =  \Labn(a) + 1+  \max(\Labn(b),\Labn(c)).
\end{equs}
\end{example}

We denote by $ \CH $ the vector space spanned by  forests composed of trees in $ \CT$ and $  \lambda^n $, $ n \in \N $ where $  \lambda^n $ is the tree with one node decorated by $ n $. When the decoration $ n $  is equal to zero we identify this tree with the empty forest: $  \lambda^{0}  =\one $. Using the symbolic notation, one has: 
\begin{equs}
\CH = \langle  \lbrace  \prod_j  \lambda^{m_j} \prod_i \CI^{r_i}_{o_i}( \lambda_{k_i}^{\ell_i} F_i), \, 
\CI_{o_i}( \lambda_{k_i}^{\ell_i} F_i) \in \hat \CT \rbrace \rangle
\end{equs}
where the product used is the forest product. We call decorated forests in $ \CH $ approximated decorated forests. The map $    \CI^{r}_{o}( \lambda_{k}^{\ell} \cdot) :  \hat \CH \rightarrow    \CH   $ is defined as the same as for $ \CI_{o}( \lambda_{k}^{\ell} \cdot) $ except now the root is decorated by $ r $ and it could be zero if the inequality \eqref{condition_trees} is not satisfied.
We extend this map to $ \CH $ by:
\begin{equs}
\CI^{r}_{o}( \lambda_{k}^{\ell} (\prod_j  \lambda^{m_j}  \prod_i \CI^{r_i}_{o_i}( \lambda_{k_i}^{\ell_i} F_i))) { \, := }\CI^{r}_{o}( \lambda_{k}^{\ell+\sum_j m_j} (\prod_i \CI_{o_i}( \lambda_{k_i}^{\ell_i} F_i))).
\end{equs}
In the extension, we remove the decorations $ r_i $ and we add up the decorations $ m_j $ with $ \ell $. In the sequel, we will use a recursive formulation and move from $ \hat \CH $ to $ \CH $. Therefore, we define the map $ \CD^{r} : \hat \CH \rightarrow \CH $ which  replaces the root decoration of a decorated tree by $ r $ and performs the projection along the identity \eqref{condition_trees}. It is given by
\begin{equs}\label{DR}
\CD^{r}(\one)= \one_{\lbrace 0 \leq  r+1\rbrace} , \quad \CD^r\left( \CI_{o}( \lambda_{k}^{\ell} F) \right) =  \CI^{r}_{o}( \lambda_{k}^{\ell} F)
\end{equs}
and we extend it multiplicatively to any forest in $ \hat \CH $. The map $ \CD^r $ projects according to the order of the scheme $ r $. We disregard decorated trees having a degree bigger than $ r $.

 We denote by $ \CT_{+} $ the set of decorated forests composed of trees of the form $ (T,\Labn,\Labo,\Labe,(r,m))  $ where
\begin{itemize}
\item $ T_{\Labe,r}^{\Labn,\Labo} \in  \CT $.
\item The edge connecting the root has a decoration of the form $ (\Labhom,\Labp) $ where $ \Labhom \in \Lab_+ $. 
\item The decoration $ (r,m) $ is at the root of $ T $ and $ m \in \N $ is such that $ m \leq r +1 $.
\end{itemize}
The linear span of $ \CT_+ $ is denoted by $ \CH_+ $. One can observe that the main difference with $\CH$ is that $ \lambda^m \notin \CT_+ $. We can define the same grafting operator as before $ \CI^{(r,m)}_{o}(  \lambda_{k}^{\ell} \cdot)  : \CH \rightarrow \CH_+ $ as the same as $ \CI^{r}_{o}(  \lambda_{k}^{\ell} \cdot)  : \CH \rightarrow \CH $  but now we add the decoration $ (r,m) $ at the root where $ m \leq r+1 $. 
We also define $  \hat \CD^{(r,m)}:  \hat\CH \rightarrow \CH_+ $   as the same as $  \CD^{r}  $.  It is given by 
\begin{equs}\label{DRhat}
\hat \CD^{(r,m)}(\one)= \one, \quad \hat \CD^{(r,m)}\left( \CI_{o}( \lambda_{k}^{\ell} F) \right) =  \CI^{(r,m)}_{o}( \lambda_{k}^{\ell} F).
\end{equs}

\begin{example}
For the tree  \eqref{example2a} we obtain  when applying $ \CD^r $ and $ \hat \CD^{(r,m)} $
\begin{equs} \label{example3}
 \begin{tikzpicture}[scale=0.19,baseline=2cm]
        \node at (0,10)  (a) {}; 
         \node at (4,20)  (f) {}; 
           \node at (20,20)  (g) {}; 
        \node at (12,15) (c) {}; 
    \draw[kernel1] (a) -- node [round1] {\tiny $\Labhom(1),\Labp(1)$}  (c) ; 
     \draw[kernel1] (c) -- node [round1] {\tiny $\Labhom(2),\Labp(2)$}  (f) ; 
     \draw[kernel1] (c) -- node [round1] {\tiny 
     $\Labhom(3),\Labp(3)$}  (g) ; 
    \draw (g) node [rect2] {\tiny $ \Labn(c),\mathfrak{f}(c)  $}  ;
    \draw (f) node [rect2] {\tiny $ \Labn(b),\Labo(b) $}  ;
    \draw (c) node [rect2] {\tiny $ \Labn(a),\Labo(a) $}  ;
     \draw (a) node [rect2] {\tiny $ r $}  ;
\end{tikzpicture}, \quad  \begin{tikzpicture}[scale=0.19,baseline=2cm]
        \node at (0,10)  (a) {}; 
         \node at (4,20)  (f) {}; 
           \node at (20,20)  (g) {}; 
        \node at (12,15) (c) {}; 
    \draw[kernel1] (a) -- node [round1] {\tiny $\Labhom(1),\Labp(1)$}  (c) ; 
     \draw[kernel1] (c) -- node [round1] {\tiny $\Labhom(2),\Labp(2)$}  (f) ; 
     \draw[kernel1] (c) -- node [round1] {\tiny 
     $\Labhom(3),\Labp(3)$}  (g) ; 
    \draw (g) node [rect2] {\tiny $ \Labn(c),\mathfrak{f}(c) $}  ;
    \draw (f) node [rect2] {\tiny $ \Labn(b),\Labo(b) $}  ;
    \draw (c) node [rect2] {\tiny $ \Labn(a),\Labo(a) $}  ;
     \draw (a) node [rect2] {\tiny $ (r,m) $}  ;
\end{tikzpicture}
\end{equs}

 For the decorated tree in \eqref{treeSE}, one obtains
\begin{equs} \label{DRnot}
   \CD^{r}  \left(\begin{tikzpicture}[scale=0.2,baseline=-5]
\coordinate (root) at (0,0);
\coordinate (tri) at (0,-2);
\coordinate (t1) at (-2,2);
\coordinate (t2) at (2,2);
\coordinate (t3) at (0,3);
\draw[kernels2,tinydots] (t1) -- (root);
\draw[kernels2] (t2) -- (root);
\draw[kernels2] (t3) -- (root);
\draw[symbols] (root) -- (tri);
\node[not] (rootnode) at (root) {};t
\node[not] (trinode) at (tri) {};
\node[var] (rootnode) at (t1) {\tiny{$ k_{\tiny{1}} $}};
\node[var] (rootnode) at (t3) {\tiny{$ k_{\tiny{2}} $}};
\node[var] (trinode) at (t2) {\tiny{$ k_3 $}};
\end{tikzpicture}  \right) = \begin{tikzpicture}[scale=0.2,baseline=-5]
\coordinate (root) at (0,0);
\coordinate (tri) at (0,-2);
\coordinate (t1) at (-2,2);
\coordinate (t2) at (2,2);
\coordinate (t3) at (0,3);
\draw[kernels2,tinydots] (t1) -- (root);
\draw[kernels2] (t2) -- (root);
\draw[kernels2] (t3) -- (root);
\draw[symbols] (root) -- (tri);
\node[not] (rootnode) at (root) {};t
\node[not,label= {[label distance=-0.2em]below: \scriptsize  $ r $}] (trinode) at (tri) {};
\node[var] (rootnode) at (t1) {\tiny{$ k_{\tiny{1}} $}};
\node[var] (rootnode) at (t3) {\tiny{$ k_{\tiny{2}} $}};
\node[var] (trinode) at (t2) {\tiny{$ k_3 $}};
\end{tikzpicture}, \quad \hat \CD^{(r,m)}  \left(\begin{tikzpicture}[scale=0.2,baseline=-5]
\coordinate (root) at (0,0);
\coordinate (tri) at (0,-2);
\coordinate (t1) at (-2,2);
\coordinate (t2) at (2,2);
\coordinate (t3) at (0,3);
\draw[kernels2,tinydots] (t1) -- (root);
\draw[kernels2] (t2) -- (root);
\draw[kernels2] (t3) -- (root);
\draw[symbols] (root) -- (tri);
\node[not] (rootnode) at (root) {};t
\node[not] (trinode) at (tri) {};
\node[var] (rootnode) at (t1) {\tiny{$ k_{\tiny{1}} $}};
\node[var] (rootnode) at (t3) {\tiny{$ k_{\tiny{2}} $}};
\node[var] (trinode) at (t2) {\tiny{$ k_3 $}};
\end{tikzpicture}  \right) = \begin{tikzpicture}[scale=0.2,baseline=-5]
\coordinate (root) at (0,0);
\coordinate (tri) at (0,-2);
\coordinate (t1) at (-2,2);
\coordinate (t2) at (2,2);
\coordinate (t3) at (0,3);
\draw[kernels2,tinydots] (t1) -- (root);
\draw[kernels2] (t2) -- (root);
\draw[kernels2] (t3) -- (root);
\draw[symbols] (root) -- (tri);
\node[not] (rootnode) at (root) {};t
\node[not,label= {[label distance=-0.2em]below: \scriptsize  $ (r,m) $}] (trinode) at (tri) {};
\node[var] (rootnode) at (t1) {\tiny{$ k_{\tiny{1}} $}};
\node[var] (rootnode) at (t3) {\tiny{$ k_{\tiny{2}} $}};
\node[var] (trinode) at (t2) {\tiny{$ k_3 $}};
\end{tikzpicture}.  
\end{equs}
\end{example}

\subsection{Operators on approximated decorated forests}
In the subsection, we introduce two maps $ \Delta $ and $ \Delta^{\!+}  $ that will act on approximated decorated forests splitting them into two parts by the use of the tensor product. They act at two levels. First, on the shapes of the trees,  they extract a subtree at the root. Then, they induce  subtle changes in the decorations that could be interpreted as abstract Taylor expansions.
 
 We define a map $\Delta : \CH 
\rightarrow \CH \otimes \CH_+$   
for a given $T^{\Labn,\Labo}_{\Labe,r} \in \cT$ by 
\begin{equs}
\label{eq:co-action_plus}
 \Delta  T^{\Labn,\Labo}_{\Labe,r} & = 
 \sum_{A \in \Adm(T) } \sum_{\Labe_A}  \frac1{\Labe_A!}
 (A,\Labn + \pi\Labe_A, \Labo, \Labe,r) 
   \\ & \otimes \prod_{e \in \partial(A,T)}(  T_{e}, \Labn , \Labo, \Labe, (r-\deg(e),\Labe_A(e)))\;, 
   \\ & =  \sum_{A \in \Adm(T) } \sum_{\Labe_A}  \frac1{\Labe_A!}
 A^{\Labn +\pi\Labe_A, \Labo }_{\Labe,r} 
   \otimes \prod_{e \in \partial(A,T)}  (T_{e})^{\Labn , \Labo}_{\Labe, (r-\deg(e),\Labe_A(e))}
\end{equs}
where we use the following notations
\begin{itemize}
\item  	We write $T_e $ as the planted tree above the edge $ e $ in $ T $. For $g : E_T  
\rightarrow \N$, we define for every $x \in N_T$, $(\pi g)(x) = 
\sum_{e=(x,y) \in E_T} g(e)$.
\item 	In $ A^{\Labn +\pi\Labe_A, \Labo }_{\Labe,r}  $, the maps $ \Labn, \Labo $ and $ \Labe $ are restricted to $ N_A $ and $ E_A $. The same is valid for $ (T_{e})^{\Labn , \Labo}_{\Labe, (r-\deg(e),\Labe_A(e))} $ where the restriction is on $ N_{T_e} \setminus \lbrace \varrho_{T_e}\rbrace $ and $ E_{T_e} $, $ \varrho_{T_e} $ is the root of $ T_e $. When $ A $ is reduced to a single node, we set $ A^{\Labn +\pi\Labe_A, \Labo }_{\Labe,r} = \lambda^{\Labn +\pi\Labe_A} $.
\item 	The first sum runs over $\Adm(T)$, the set of all subtrees $A$ of $T$ containing the root $ \varrho $ of $ T $. The second sum runs $\Labe_{A} : \partial(A,T) \rightarrow \N$ where 
$\partial(A,T)$ denotes the edges  in $E_T \setminus E_A$ of type in $ \Lab_+ $ that are adjacent to 
$N_A$.
\item 	Factorial coefficients are understood in multiindex 
notation.
\item We define $ \deg(e) $ for $ e \in E_T $ as the number of edges having $ \Labhom(e) \in \Lab_+ $ lying on the path from $ e  $ to the root in the decorated tree $ T_{\Labe}^{\Labn, \Labo} $. We also add up the decoration $ \Labn $ on this path.
\end{itemize}

Let us comment briefly that the play on decorations can be interpreted as asbract Taylor expansions. We can use the following dictionary:
\begin{equs}
(T_e)_{\Labe}^{\Labn,\Labo} \equiv \int_{0}^{\tau} e^{i\xi P(k)} f(k_1,...,k_n,\xi) d\xi
\end{equs}
where $ k_1,...,k_n $ are the frequencies appearing on the leaves of $  (T_e)_{\Labe}^{\Labn,\Labo}$, $ P $ is the polynomial associated to the decoration of the edge $ e $ and $ k $ is the frequency on the node connected to the root.
This iterated integral appears inside the iterated integral associated to $ T_{\Labe}^{\Labn,\Labo} $. For our numerical approximation, we need to approximate this integral by giving a scheme of the form:
\begin{equs}
(T_e)_{\Labe, \tilde{r}}^{\Labn,\Labo} \equiv \sum_{\ell \leq \tilde{r}} \frac{\tau^{\ell}}{\ell!} f_{\tilde{r},\ell}(k_1,...,k_n), \quad (T_{e})^{\Labn , \Labo}_{\Labe, (\tilde{r},\ell)} \equiv f_{\tilde{r},\ell}(k_1,...,k_n)
\end{equs}
where the order of the expansion is given by $ \tilde{r} = r  - \deg(e) $. 
Then, the $ \tau^{\ell} $ are part of the original iterated integrals and cannot be detached as the $ f_{\tilde{r},\ell} $. That's why we increase the polynomial decorations where the tree $ T_e $ was originally attached via the term $ \Labn +\pi\Labe_A $. The choice for $ \tilde{r} $ is motivated by the fact that we do not need to go too far for the approximation of $ (T_e)_{\Labe}^{\Labn,\Labo} $. We take into account all the time integrals and polynomials which lie on the path connecting the root of $ T_e $ to the root of $ T $.

The map $ \Delta $ is compatible with the projection induced by the decoration $ r $. Indeed, one has
\begin{equs}
\deg(T_{\Labe}^{\Labn, \Labo}) = \max_{e \in E_T} \left( \deg( (T_e)_{\Labe}^{\Labn, \Labo} ) + \deg(e) \right).
\end{equs}
Therefore, if $ \deg(T_{\Labe}^{\Labn, \Labo}) < r $ then for every $ e \in E_T $
\begin{equs}
\deg( (T_e)_{\Labe}^{\Labn, \Labo} ) + \deg(e) < r.
\end{equs}
We deduce that if $ T^{\Labn,\Labo}_{\Labe,r} $ is zero then the $  (T_e)_{\Labe,(r-\deg(e),\Labe_A(e))}^{\Labn, \Labo} $ are zero too.

We define a map $\Deltap : \CH_+ 
\rightarrow \CH_+ \otimes \CH_+$  given 
for $T^{\Labn, \Labo}_{\Labe,(r,m)} \in \cT_+$ by 
\begin{equs}
\label{eq:coproduct_plus}
 \Deltap  T^{\Labn,\Labo}_{\Labe,(r,m)} & =  \sum_{A \in \Adm(T) } \sum_{\Labe_A}  \frac1{\Labe_A!}
 A^{\Labn +\pi\Labe_A, \Labo }_{\Labe,(r,m)} 
   \otimes \prod_{e \in \partial(A,T)}  (T_{e})^{\Labn , \Labo}_{\Labe, (r-\deg(e),\Labe_A(e))}.
\end{equs}
We require that $ A^{\Labn +\pi\Labe_A, \Labo }_{\Labe,(r,m)} \in \CH_+ $, so we implicitly have a projection on zero when $ A $ happens to be a single node with $ \Labn +\pi\Labe_A \neq 0 $.
We illustrate this coproduct on a well-chosen example.
\begin{example}
\label{example3}
We continue with the tree in Example \ref{example1}. We suppose that $ \Lab_+ = \lbrace \Labhom(2),  \Labhom(3), \Labhom(4), \Labhom(5) \rbrace $. Below, the subtree $ A \in \Adm(T)$ is colored in blue. We have $ N_A = \lbrace  \varrho, a,b\rbrace $, $ E_A = \lbrace 1,2 \rbrace $ and $ \partial(A,T) = \lbrace 3, 4,5 \rbrace $ .
\begin{equs}   
  \begin{tikzpicture}[scale=0.19,baseline=2cm]
        \node at (0,10)  (a) {}; 
         \node at (4,20)  (f) {}; 
          \node at (0,30)  (k) {}; 
           \node at (8,30) (l) {}; 
           \node at (20,20)  (g) {}; 
            \node at (16,30) (m) {};  
        \node at (12,15) (c) {}; 
         \node at (24,30) (p) {}; %
    \draw[kernel1,blue] (a) -- node [round3] {\tiny $\Labhom(1),\Labp(1)$}  (c) ; 
     \draw[kernel1,blue] (c) -- node [round3] {\tiny $\Labhom(2),\Labp(2)$}  (f) ; 
     \draw[kernel1] (c) -- node [round1] {\tiny 
     $\Labhom(3),\Labp(3)$}  (g) ; 
     \draw[kernel1,black] (f) -- node [near end, round1] {\tiny $\Labhom(4),\Labp(4)$}  (k) ; 
     \draw[kernel1] (f) -- node [round1] {\tiny $\Labhom(5),\Labp(5)$}  (l) ; 
     \draw[kernel1] (g) -- node [round1] {\tiny $\Labhom(6),\Labp(6)$}  (m) ; 
     \draw[kernel1] (g) -- node [near end, round1] {\tiny $\Labhom(7),\Labp(7)$}  (p) ;
     
     \draw (p) node [rect2] {\tiny $ \Labn({g}), \mathfrak{f}({g}) $}  ;
    \draw (m) node [rect2] {\tiny $ \Labn({f}), \Labo({f}) $}  ;
    \draw (l) node [rect2] {\tiny $ \Labn({e}), \Labo({e}) $}  ;
     \draw (k) node [rect2] {\tiny $ \Labn(d), \Labo(d) $}  ;
    \draw (g) node [rect2] {\tiny $ \Labn(c),\Labo(c) $}  ;
    \draw[blue] (f) node [round3] {\tiny $ \Labn(b),\mathfrak{f}(b) $}  ;
    \draw[blue] (c) node [round3] {\tiny $ \Labn(a),\Labo(a) $}  ;
    \draw[blue] (a) node [round3] {\tiny $ r $}  ;
\end{tikzpicture} 
\end{equs}
 We also get:
\begin{equs}
\deg(4) = \deg(5) = 1 + \Labn(a) + \Labn(b), \quad \deg(3) = \Labn(a). 
\end{equs}
We have for a fix $ \Labe_A :  \partial(A,T) \rightarrow \N $:
\begin{equs}   
  \begin{tikzpicture}[scale=0.19,baseline=2cm]
        \node at (0,10)  (a) {}; 
         \node at (4,20)  (f) {}; 
        \node at (12,15) (c) {}; 
    \draw[kernel1,blue] (a) -- node [round3] {\tiny $\Labhom(1),\Labp(1)$}  (c) ; 
     \draw[kernel1,blue] (c) -- node [round3] {\tiny $\Labhom(2),\Labp(2)$}  (f) ; 
    \draw[blue] (f) node [round3] {\tiny $ \Labn(b) + \Labe_A(4) + \Labe_A(5),\mathfrak{f}(b) $}  ;
    \draw[blue] (c) node [round3] {\tiny $ \Labn(a) + \Labe_A(3),\Labo(a) $}  ;
    \draw[blue] (a) node [round3] {\tiny $ r $}  ;
\end{tikzpicture} \otimes \quad
\begin{tikzpicture}[scale=0.19,baseline=2cm]
        \node at (0,10)  (a) {}; 
         \node at (4,20)  (f) {}; 
          \node at (0,30)  (k) {}; 
           \node at (8,30) (l) {}; 
           \node at (0,20)  (g) {}; 
            \node at (-6,30) (m) {};  
        \node at (0,10) (c) {}; 
         \node at (6,30) (p) {}; %
     \draw[kernel1] (c) -- node [round1] {\tiny 
     $\Labhom(3),\Labp(3)$}  (g) ; 
     \draw[kernel1] (g) -- node [round1] {\tiny $\Labhom(6),\Labp(6)$}  (m) ; 
     \draw[kernel1] (g) -- node [near end, round1] {\tiny $\Labhom(7),\Labp(7)$}  (p) ;
     
     \draw (p) node [rect2] {\tiny $ \Labn({g}), \mathfrak{f}({g}) $}  ;
    \draw (m) node [rect2] {\tiny $ \Labn({f}), \Labo({f}) $}  ;
    \draw (g) node [rect2] {\tiny $ \Labn(c),\mathfrak{f}(c) $}  ;
    \draw (c) node [rect1] {\tiny $ r - \Labn(a),\Labe_A(3)$}  ;
\end{tikzpicture} 
 \begin{tikzpicture}[scale=0.19,baseline=2cm]
           \node at (0,20)  (g) {}; 
        \node at (0,10) (c) {}; 
     \draw[kernel1] (c) -- node [round1] {\tiny 
     $\Labhom(4),\Labp(4)$}  (g) ;    
    \draw (g) node [rect2] {\tiny $ \Labn(d),\mathfrak{f}(d) $};
    \draw (c) node [rect1] {\tiny $ \bar r,\Labe_A(4)$}  ;
\end{tikzpicture} 
\qquad 
\begin{tikzpicture}[scale=0.19,baseline=2cm]
           \node at (0,20)  (g) {}; 
        \node at (0,10) (c) {}; 
     \draw[kernel1] (c) -- node [round1] {\tiny 
     $\Labhom(5),\Labp(5)$}  (g) ;    
    \draw (g) node [rect2] {\tiny $ \Labn(e),\mathfrak{f}(e) $};
    \draw (c) node [rect1] {\tiny $ \bar r,\Labe_A(5)$}  ;
\end{tikzpicture} 
\end{equs}
where $ \bar r = r - \Labn(a) - \Labn(b) -1  $.
Now if $ A $ is just equal to the root of $ T $, then one gets $ N_A = \lbrace  \varrho \rbrace $, $ E_A = \emptyset$ and $ \partial(A,T) = \lbrace 1\rbrace $ as illustrated below
\begin{equs} \label{example_root}   
 \begin{tikzpicture}[scale=0.19,baseline=2cm]
        \node at (0,10) (c) {}; 
    \draw (c) node [rect1] {\tiny $ \Labe_A(1)$}  ;
\end{tikzpicture} \qquad \otimes \qquad \begin{tikzpicture}[scale=0.19,baseline=2cm]
        \node at (0,10)  (a) {}; 
         \node at (4,20)  (f) {}; 
          \node at (0,30)  (k) {}; 
           \node at (8,30) (l) {}; 
           \node at (20,20)  (g) {}; 
            \node at (16,30) (m) {};  
        \node at (12,15) (c) {}; 
         \node at (24,30) (p) {}; %
    \draw[kernel1] (a) -- node [round1] {\tiny $\Labhom(1),\Labp(1)$}  (c) ; 
     \draw[kernel1] (c) -- node [round1] {\tiny $\Labhom(2),\Labp(2)$}  (f) ; 
     \draw[kernel1] (c) -- node [round1] {\tiny 
     $\Labhom(3),\Labp(3)$}  (g) ; 
     \draw[kernel1,black] (f) -- node [near end, round1] {\tiny $\Labhom(4),\Labp(4)$}  (k) ; 
     \draw[kernel1] (f) -- node [round1] {\tiny $\Labhom(5),\Labp(5)$}  (l) ; 
     \draw[kernel1] (g) -- node [round1] {\tiny $\Labhom(6),\Labp(6)$}  (m) ; 
     \draw[kernel1] (g) -- node [near end, round1] {\tiny $\Labhom(7),\Labp(7)$}  (p) ;
     
     \draw (p) node [rect2] {\tiny $ \Labn({g}), \mathfrak{f}({g}) $}  ;
    \draw (m) node [rect2] {\tiny $ \Labn({f}), \Labo({f}) $}  ;
    \draw (l) node [rect2] {\tiny $ \Labn({e}), \Labo({e}) $}  ;
     \draw (k) node [rect2] {\tiny $ \Labn(d), \Labo(d) $}  ;
    \draw (g) node [rect2] {\tiny $ \Labn(c),\Labo(c) $}  ;
    \draw (f) node [rect2] {\tiny $ \Labn(b),\Labo(b) $}  ;
    \draw (c) node [rect2] {\tiny $ \Labn(a),\Labo(a) $}  ;
    \draw (a) node [rect1] {\tiny $ r,\Labe_A(1) $}  ;
\end{tikzpicture}
\end{equs}
The map $ \Deltap $ behaves the same way except that we start with a tree decorated by $ (r,m) $ at the root and that we exclude the case described in \eqref{example_root}. 
\end{example}

\begin{example}
Next we provide a more explicit example of the computations for the maps $ \Deltap $ and $ \Delta $ on the tree
\begin{equs}\label{kdvTK2}
 \begin{tikzpicture}[scale=0.2,baseline=-5]
\coordinate (root) at (0,0);
\coordinate (tri) at (0,-2);
\coordinate (t1) at (-1,2);
\coordinate (t11) at (-2,4);
\coordinate (t12) at (-3,6);
\coordinate (t13) at (-1,6);
\coordinate (t2) at (1,2);
\draw[kernels2] (t11) -- (t13);
\draw[kernels2] (t11) -- (t12);
\draw[kernels2] (t1) -- (root);
\draw[symbols] (t1) -- (t11);
\draw[kernels2] (t2) -- (root);
\draw[symbols] (root) -- (tri);
\node[not] (rootnode) at (root) {};
\node[not,label= {[label distance=-0.2em]below: \scriptsize  $ r $}] (trinode) at (tri) {};
\node[not] (trinode) at (t1) {};
\node[var] (rootnode) at (t12) {\tiny{$ k_{\tiny{1}} $}};
\node[var] (rootnode) at (t13) {\tiny{$ k_{\tiny{2}} $}};
\node[var] (trinode) at (t2) {\tiny{$ k_3 $}};
\end{tikzpicture} 
\end{equs}
 that appears for the KdV equation \eqref{kdvIntro}. We have that
\begin{equs}
 \Delta \begin{tikzpicture}[scale=0.2,baseline=-5]
\coordinate (root) at (0,0);
\coordinate (tri) at (0,-2);
\coordinate (t1) at (-1,2);
\coordinate (t11) at (-2,4);
\coordinate (t12) at (-3,6);
\coordinate (t13) at (-1,6);
\coordinate (t2) at (1,2);
\draw[kernels2] (t11) -- (t13);
\draw[kernels2] (t11) -- (t12);
\draw[kernels2] (t1) -- (root);
\draw[symbols] (t1) -- (t11);
\draw[kernels2] (t2) -- (root);
\draw[symbols] (root) -- (tri);
\node[not] (rootnode) at (root) {};
\node[not,label= {[label distance=-0.2em]below: \scriptsize  $ r $}] (trinode) at (tri) {};
\node[not] (trinode) at (t1) {};
\node[var] (rootnode) at (t12) {\tiny{$ k_{\tiny{1}} $}};
\node[var] (rootnode) at (t13) {\tiny{$ k_{\tiny{2}} $}};
\node[var] (trinode) at (t2) {\tiny{$ k_3 $}};
\end{tikzpicture}  = \begin{tikzpicture}[scale=0.2,baseline=-5]
\coordinate (root) at (0,0);
\coordinate (tri) at (0,-2);
\coordinate (t1) at (-1,2);
\coordinate (t11) at (-2,4);
\coordinate (t12) at (-3,6);
\coordinate (t13) at (-1,6);
\coordinate (t2) at (1,2);
\draw[kernels2] (t11) -- (t13);
\draw[kernels2] (t11) -- (t12);
\draw[kernels2] (t1) -- (root);
\draw[symbols] (t1) -- (t11);
\draw[kernels2] (t2) -- (root);
\draw[symbols] (root) -- (tri);
\node[not] (rootnode) at (root) {};
\node[not,label= {[label distance=-0.2em]below: \scriptsize  $ r $}] (trinode) at (tri) {};
\node[not] (trinode) at (t1) {};
\node[var] (rootnode) at (t12) {\tiny{$ k_{\tiny{1}} $}};
\node[var] (rootnode) at (t13) {\tiny{$ k_{\tiny{2}} $}};
\node[var] (trinode) at (t2) {\tiny{$ k_3 $}};
\end{tikzpicture} \otimes \one + \sum_{m  \leq r+1} \frac{ \lambda^m}{m!} \otimes \begin{tikzpicture}[scale=0.2,baseline=-5]
\coordinate (root) at (0,0);
\coordinate (tri) at (0,-2);
\coordinate (t1) at (-1,2);
\coordinate (t11) at (-2,4);
\coordinate (t12) at (-3,6);
\coordinate (t13) at (-1,6);
\coordinate (t2) at (1,2);
\draw[kernels2] (t11) -- (t13);
\draw[kernels2] (t11) -- (t12);
\draw[kernels2] (t1) -- (root);
\draw[symbols] (t1) -- (t11);
\draw[kernels2] (t2) -- (root);
\draw[symbols] (root) -- (tri);
\node[not] (rootnode) at (root) {};
\node[not,label= {[label distance=-0.2em]below: \scriptsize  $ (r,m) $}] (trinode) at (tri) {};
\node[not] (trinode) at (t1) {};
\node[var] (rootnode) at (t12) {\tiny{$ k_{\tiny{1}} $}};
\node[var] (rootnode) at (t13) {\tiny{$ k_{\tiny{2}} $}};
\node[var] (trinode) at (t2) {\tiny{$ k_3 $}};
\end{tikzpicture} 
 + \sum_{m \leq r} \frac{1}{m!}  \begin{tikzpicture}[scale=0.2,baseline=-5]
\coordinate (root) at (0,0);
\coordinate (tri) at (0,-2);
\coordinate (t1) at (-1,2);
\coordinate (t2) at (1,2);
\draw[kernels2] (t1) -- (root);
\draw[kernels2] (t2) -- (root);
\draw[symbols] (root) -- (tri);
\node[not] (rootnode) at (root) {};
\node[not,label= {[label distance=-0.2em]below: \scriptsize  $ r $}] (trinode) at (tri) {};
\node[var] (rootnode) at (t1) {\tiny{$ _\ell^m $}};
\node[var] (trinode) at (t2) {\tiny{$ k_2 $}};
\end{tikzpicture}   \otimes \begin{tikzpicture}[scale=0.2,baseline=-5]
\coordinate (root) at (0,0);
\coordinate (tri) at (0,-2);
\coordinate (t1) at (-1,2);
\coordinate (t2) at (1,2);
\draw[kernels2] (t1) -- (root);
\draw[kernels2] (t2) -- (root);
\draw[symbols] (root) -- (tri);
\node[not] (rootnode) at (root) {};
\node[not,label= {[label distance=-0.2em]below: \scriptsize  $ (r-1,m) $}] (trinode) at (tri) {};
\node[var] (rootnode) at (t1) {\tiny{$ k_{\tiny{1}} $}};
\node[var] (trinode) at (t2) {\tiny{$ k_2 $}};
\end{tikzpicture}  
\end{equs}
where $ \ell = k_1 + k_2 $ and we have used similar notations as in Example~\ref{tex:kdv} and in \eqref{DRnot}. We have introduced a new graphical notation for a node decorated
by the decoration $(m,\ell)$ when $ m \neq 0 $: 
$ \begin{tikzpicture}[scale=0.2,baseline=-5]
\coordinate (root) at (0,0);
\node[var] (rootnode) at (root) {\tiny{$ _\ell^m $}};
\end{tikzpicture} $. One can notice that the second abstract Taylor expansion is shorter. This is due to the fact that there was one blue edge (in $ \Lab_+ $) on the path connecting the cutting tree to the root. In addition we have
\begin{equs}
 \Deltap \begin{tikzpicture}[scale=0.2,baseline=-5]
\coordinate (root) at (0,0);
\coordinate (tri) at (0,-2);
\coordinate (t1) at (-1,2);
\coordinate (t11) at (-2,4);
\coordinate (t12) at (-3,6);
\coordinate (t13) at (-1,6);
\coordinate (t2) at (1,2);
\draw[kernels2] (t11) -- (t13);
\draw[kernels2] (t11) -- (t12);
\draw[kernels2] (t1) -- (root);
\draw[symbols] (t1) -- (t11);
\draw[kernels2] (t2) -- (root);
\draw[symbols] (root) -- (tri);
\node[not] (rootnode) at (root) {};
\node[not,label= {[label distance=-0.2em]below: \scriptsize  $ (r,m) $}] (trinode) at (tri) {};
\node[not] (trinode) at (t1) {};
\node[var] (rootnode) at (t12) {\tiny{$ k_{\tiny{1}} $}};
\node[var] (rootnode) at (t13) {\tiny{$ k_{\tiny{2}} $}};
\node[var] (trinode) at (t2) {\tiny{$ k_3 $}};
\end{tikzpicture}  = \begin{tikzpicture}[scale=0.2,baseline=-5]
\coordinate (root) at (0,0);
\coordinate (tri) at (0,-2);
\coordinate (t1) at (-1,2);
\coordinate (t11) at (-2,4);
\coordinate (t12) at (-3,6);
\coordinate (t13) at (-1,6);
\coordinate (t2) at (1,2);
\draw[kernels2] (t11) -- (t13);
\draw[kernels2] (t11) -- (t12);
\draw[kernels2] (t1) -- (root);
\draw[symbols] (t1) -- (t11);
\draw[kernels2] (t2) -- (root);
\draw[symbols] (root) -- (tri);
\node[not] (rootnode) at (root) {};
\node[not,label= {[label distance=-0.2em]below: \scriptsize  $ (r,m) $}] (trinode) at (tri) {};
\node[not] (trinode) at (t1) {};
\node[var] (rootnode) at (t12) {\tiny{$ k_{\tiny{1}} $}};
\node[var] (rootnode) at (t13) {\tiny{$ k_{\tiny{2}} $}};
\node[var] (trinode) at (t2) {\tiny{$ k_3 $}};
\end{tikzpicture} \otimes \one + \one \otimes \begin{tikzpicture}[scale=0.2,baseline=-5]
\coordinate (root) at (0,0);
\coordinate (tri) at (0,-2);
\coordinate (t1) at (-1,2);
\coordinate (t11) at (-2,4);
\coordinate (t12) at (-3,6);
\coordinate (t13) at (-1,6);
\coordinate (t2) at (1,2);
\draw[kernels2] (t11) -- (t13);
\draw[kernels2] (t11) -- (t12);
\draw[kernels2] (t1) -- (root);
\draw[symbols] (t1) -- (t11);
\draw[kernels2] (t2) -- (root);
\draw[symbols] (root) -- (tri);
\node[not] (rootnode) at (root) {};
\node[not,label= {[label distance=-0.2em]below: \scriptsize  $ (r,m) $}] (trinode) at (tri) {};
\node[not] (trinode) at (t1) {};
\node[var] (rootnode) at (t12) {\tiny{$ k_{\tiny{1}} $}};
\node[var] (rootnode) at (t13) {\tiny{$ k_{\tiny{2}} $}};
\node[var] (trinode) at (t2) {\tiny{$ k_3 $}};
\end{tikzpicture} 
 + \sum_{n \leq r} \frac{1}{n!}  \begin{tikzpicture}[scale=0.2,baseline=-5]
\coordinate (root) at (0,0);
\coordinate (tri) at (0,-2);
\coordinate (t1) at (-1,2);
\coordinate (t2) at (1,2);
\draw[kernels2] (t1) -- (root);
\draw[kernels2] (t2) -- (root);
\draw[symbols] (root) -- (tri);
\node[not] (rootnode) at (root) {};
\node[not,label= {[label distance=-0.2em]below: \scriptsize  $ (r,m) $}] (trinode) at (tri) {};
\node[var] (rootnode) at (t1) {\tiny{$ _\ell^n $}};
\node[var] (trinode) at (t2) {\tiny{$ k_2 $}};
\end{tikzpicture}   \otimes \begin{tikzpicture}[scale=0.2,baseline=-5]
\coordinate (root) at (0,0);
\coordinate (tri) at (0,-2);
\coordinate (t1) at (-1,2);
\coordinate (t2) at (1,2);
\draw[kernels2] (t1) -- (root);
\draw[kernels2] (t2) -- (root);
\draw[symbols] (root) -- (tri);
\node[not] (rootnode) at (root) {};
\node[not,label= {[label distance=-0.2em]below: \scriptsize  $ (r-1,n) $}] (trinode) at (tri) {};
\node[var] (rootnode) at (t1) {\tiny{$ k_{\tiny{1}} $}};
\node[var] (trinode) at (t2) {\tiny{$ k_2 $}};
\end{tikzpicture}  
\end{equs}
One of the main difference between $ \Delta $ and $ \Deltap $ is that we do not have a higher Taylor expansion on the edge connecting the root for $ \Deltap $ because the elements $  \lambda^k $ are not in $ \CH_+ $. With this property, $ \CH_+ $ will be a connected Hopf algebra.
\end{example}

We use the symbolic notation to provide an alternative,  recursive definition of the two maps $  \Delta : \CH \rightarrow \CH \otimes \CH_+ $ and $ \Deltap : \CH_+ \rightarrow \CH_+ \otimes \CH_+ $

\begin{equation} \label{def_deltas}
\begin{aligned}
\Delta \one & = \one \otimes \one, \quad \Delta  \lambda^{\ell} =  \lambda^{\ell} \otimes \one\\
\Delta \CI^{r}_{o_1}( \lambda_{k}^{\ell} F)  & = \left(  \CI^{r}_{o_1}( \lambda_{k}^{\ell}\cdot)  \otimes \id  \right) \Delta \CD^{r-\ell}(F) \\
\Delta \CI^{r}_{o_2}( \lambda_{k}^{\ell} F)  & = \left( \CI^{r}_{o_2}(  \lambda_{k}^{\ell} \cdot)  \otimes \id  \right) \Delta \CD^{r-\ell-1}(F) + \sum_{m \leq r +1} \frac{ \lambda^{m}}{m!} \otimes  \CI^{(r,m)}_{o_2}( \lambda_{k}^{\ell} F) \\
\Deltap \CI^{(r,m)}_{o_2}( \lambda_{k}^{\ell} F)  & = \left( \CI^{(r,m)}_{o_2}(  \lambda_{k}^{\ell} \cdot)  \otimes \id  \right) \Delta \CD^{r-\ell-1}(F) +  \one \otimes  \CI^{(r,m)}_{o_2}( \lambda_{k}^{\ell} F)
\end{aligned}
\end{equation}
where $ o_1 = (\Labhom_1,p_1) $, $ \Labhom_1 \notin \Lab_+ $ and  $ o_2 = (\Labhom_2,p_2) $, $ \Labhom_2 \in \Lab_+ $.
\begin{remark} \label{comparisonalgebra}
The maps $ \Deltap $ and $ \Delta $ are a variant of the maps used in \cite{reg,BHZ} for the recentering of iterated integrals in the context of singular SPDEs. They could be understood as a deformed Butcher-Connes-Kreimer coproduct.
We present the ideas behind their construction:
\begin{itemize}
\item the decoration $ \Labo $ are inert and behave nicely toward the extraction/cutting operation. 
\item The recursive definition \eqref{def_deltas} is close to the definition of the Connes-Kreimer coproduct with an operator which grafts a forest onto a new root (see \cite{CK}).
\item The deformation is given by the sum over $ m $ where root decorations are increased. The number of terms in the sum is finite bounded by $ r+1 $. This deformation corresponds to the one used for SPDEs but with a different projection. In Numerical Analysis, the length of the Taylor expansion is governed by the path connecting the root to the edge we are considering, whereas for SPDEs, it depends on the tree above the edge. Therefore, the structure proposed here is new in comparison to the literature and shows the universality of the deformation observed for singular SPDEs.
\item There are some interesting simplifications in the definition of $ \Delta $ and $ \Deltap $ in comparison to \cite{reg,BHZ}. Indeed, one has 
\begin{equs}
\Delta  \lambda^{\ell} =  \lambda^{\ell} \otimes \one
\end{equs}
instead of the expected definition for the polynomial coproduct
\begin{equs}
\Delta  \lambda =  \lambda \otimes \one + \one \otimes  \lambda, \quad \Delta  \lambda^{\ell} = \sum_{m \leq \ell} 
\binom{\ell}{m}  \lambda^{m} \otimes  \lambda^{\ell - m}
\end{equs}
and $ \Deltap  $ is not defined on polynomials. This comes from our numerical scheme: We are only interested in recentering around $ 0 $. Therefore, all the right part of the tensor product will be evaluated at zero and all these terms can be omitted at the level of the algebra.
Such simplifications can also be used in the context of SPDEs where one considers random objects of the form $ \Pi_x T $ which are recentered iterated integrals around the point $ x $. When one wants to construct these stochastic objects, the interest lies in their law and it turns out  that their law is invariant by translation. Then, one can only consider  the term $ \Pi_0 T $ which corresponds to the Numerical Analysis framework.
With this simplification, we obtain an easier formulation for the antipode given in \eqref{antipode_rec}. 
\end{itemize}
\end{remark}

\begin{remark}
For the sequel, we will use mainly the symbolic notation \eqref{def_deltas} which is very useful for carrying out recursive proofs. We will also develop a recursive formulation of the general numerical scheme. This approach is also crucial in \cite{reg} and has been pushed forward in \cite{BR18} for singular SPDEs.
\end{remark}

In the next proposition, we prove the equivalence between the recursive and non-recursive definitions.
\begin{proposition}
The definitions \eqref{eq:co-action_plus} and \eqref{eq:coproduct_plus} coincide with \eqref{def_deltas}.
\end{proposition}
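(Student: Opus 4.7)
The plan is to argue by induction on the size $|E_T|$ of the tree (i.e.\ the total number of edges). Since both $\Delta$ and $\Deltap$ are extended multiplicatively to forests, and the recursive formulas \eqref{def_deltas} treat the grafting operator $\CI^{r}_{o}(\lambda_k^\ell \cdot)$ one edge at a time, it suffices to verify that peeling off the edge adjacent to the root in the non-recursive definition \eqref{eq:co-action_plus}--\eqref{eq:coproduct_plus} reproduces the right-hand side of \eqref{def_deltas}.

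\textbf{Base case.} For $T = \one$ or $T = \lambda^{\ell}$ (no edges), $\Adm(T)$ is a singleton, $\partial(A,T) = \emptyset$, and \eqref{eq:co-action_plus} collapses to $T \otimes \one$, matching the first line of \eqref{def_deltas}.

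\textbf{Inductive step.} Write $T = \CI^{r}_{o}(\lambda_k^{\ell} F)$ with $o = (\Labhom,p)$, and call $e_{\varrho}$ the unique edge adjacent to the root. Split the sum in \eqref{eq:co-action_plus} according to whether $e_{\varrho} \in E_A$ or not.

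\emph{Case $\Labhom \notin \Lab_+$.} Since $\partial(A,T)$ only contains edges of type in $\Lab_+$, the edge $e_\varrho$ is never cut, so it must lie in $E_A$. Admissible subtrees of $T$ containing $e_\varrho$ are in bijection with admissible subtrees of $F$, the decorations $\Labn + \pi\Labe_A$ on the $\varrho$-adjacent node being untouched because $e_\varrho \notin \partial(A,T)$. For any edge $e \in \partial(A,T) \subset E_F$, the quantity $\deg(e)$ computed inside $T$ equals $\deg(e)$ inside $F$ plus $\ell$ (the polynomial decoration at the $\varrho$-adjacent node), since $\Labhom \notin \Lab_+$ contributes no extra unit. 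Hence the left factors assemble to $\CI^{r}_{o}(\lambda_k^{\ell} \cdot)$ applied to the left factor of $\Delta$ applied to $F$, truncated at order $r-\ell$; this is precisely $\bigl(\CI^{r}_{o_1}(\lambda_k^{\ell}\cdot) \otimes \id\bigr)\Delta \CD^{r-\ell}(F)$.

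\emph{Case $\Labhom \in \Lab_+$.} Now $e_{\varrho}$ may or may not belong to $E_A$. If $e_\varrho \in E_A$, the same analysis as above applies, except that $\deg(e)$ for $e$ in the subtree above $e_\varrho$ gains an extra $+1$ from $e_\varrho$ being of type in $\Lab_+$, so the truncation becomes $\CD^{r-\ell-1}(F)$. This yields the first summand in the $o_2$-line of \eqref{def_deltas}. If $e_\varrho \notin E_A$, then $A = \{\varrho\}$ and $\partial(A,T) = \{e_\varrho\}$, so the sum over $\Labe_A$ reduces to a sum over a single integer $m = \Labe_A(e_\varrho) \in \mathbb{N}$. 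The resulting summand is $\frac{1}{m!}(\lambda^{\pi \Labe_A}, \ldots) \otimes (T_{e_\varrho})^{\Labn,\Labo}_{\Labe,(r,m)} = \frac{\lambda^m}{m!} \otimes \CI^{(r,m)}_{o_2}(\lambda_k^{\ell} F)$, and the constraint $m \leq r+1$ is exactly the projection condition \eqref{condition_trees} built into $\CI^{(r,m)}_{o_2}$. This reproduces the second summand, and summing the two cases gives the third line of \eqref{def_deltas}.

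\textbf{The coproduct $\Deltap$.} The computation for $\Deltap$ on $\CI^{(r,m)}_{o_2}(\lambda_k^\ell F) \in \CT_+$ is identical, with the single difference that the case $A = \{\varrho\}$ is forbidden (since it would produce a single decorated node $\lambda^{m'}$ with $m' \neq 0$, which is projected to zero in $\CH_+$); instead only the term $\one \otimes \CI^{(r,m)}_{o_2}(\lambda_k^\ell F)$ coming from the empty cut survives on that side, matching the last line of \eqref{def_deltas}.

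The main bookkeeping obstacle is verifying that the degree shift $r \mapsto r - \ell - \mathbf{1}_{\Labhom \in \Lab_+}$ in the recursive formula is exactly the adjustment to $\deg(e)$ needed in \eqref{eq:co-action_plus} when one climbs past the root edge; this is a direct unfolding of the definition of $\deg(e)$ as the number of $\Lab_+$-edges plus $\Labn$-contributions on the path to the root.
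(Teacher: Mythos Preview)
Your sketch is correct and follows the same inductive strategy as the paper's proof: split $\Adm(T)$ into $\{\varrho\}$ and subtrees containing the root edge, then match each piece with the corresponding term of \eqref{def_deltas}; your explicit tracking of the degree shift $r\mapsto r-\ell-\one_{\{\Labhom\in\Lab_+\}}$ is exactly the bookkeeping the paper leaves implicit. One small wording issue in your $\Deltap$ paragraph: the term $\one\otimes\CI^{(r,m)}_{o_2}(\lambda_k^\ell F)$ arises from $A=\{\varrho\}$ with $\Labe_A(e_\varrho)=0$ (the unique value surviving the projection onto $\CH_+$), not from a separate ``empty cut.''
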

\begin{proof} 
The operator $\Delta$ is multiplicative on $\hat \CH$.
It remains to verify that the recursive identities hold as well. 
We consider $\Delta \sigma$ with $ \sigma = \CI^{r}_{(\Labhom_2,p)}( \lambda^{\ell}_{k} \tau)$ and $ \Labhom_2 \in \Lab_+ $. We write $  \tau =  F_{\Labe}^{\Labn,\Labo}$, $ \bar \tau = F_{\Labe, r-\ell-1}^{\Labn,\Labo} $ where $ F = \prod_i T_i $ is a forest formed of the trees $ T_i $. One has $ \bar \tau = \prod_i (T_i)_{\Labe_i, r-\ell-1}^{\Labn_i,\Labo_i} $ and the maps $ \Labn, \Labo, \Labe $ are obtained as disjoint sums of the $ \Labn_i, \Labo_i, \Labe_i $. We write  $\sigma = T^{\bar \Labn,\bar \Labo}_{\bar \Labe,r}$ where
\begin{equs}
\bar \Labe = \Labe + \one_e (\Labhom_2,p), \quad \bar \Labo(u) = \Labo + \one_u k, \quad \bar \Labn(u) = \Labn + \one_u \ell
\end{equs} 
and $e$ denotes a
trunk of type $\Labhom$ created by $\CI_{(\Labhom_2,p)}$, $\rho$ is the root of $T$ and $ u  $ is such that $ e=(\rho,u) $.
It follows from these definitions that
\begin{equs}
\Adm(T) = \{\{\rho\}\} \cup \{ A \cup \{\rho,e\}\,:\, A \in \Adm(F)\}\;
\end{equs}
where $ \Adm(F) = \sqcup_i \Adm(T_i)$ and the $ A $ are forests. One can actually rewrite \eqref{eq:co-action_plus}   exactly as the same for forests.
Then, we have the identity
\begin{equs}
 \Delta \sigma &= (\CI^{r}_{(\Labhom_2,p)}( \lambda^{\ell}_k \cdot) \otimes \id) \Delta \bar \tau
+ \sum_{\Labe_{\bullet}} {1\over \Labe_{\bullet}!}
(\bullet, \pi \Labe_{\bullet},0,0 ,0)  \otimes (T, \bar{\Labn},\bar{\Labo},\bar{\Labe},(r,\Labe_{\bullet}))     
 \\ 
& = \left( \CI^{r}_{(\Labhom_2,p)}(  \lambda_{k}^{\ell} \cdot)  \otimes \id  \right) \Delta \CD^{r-\ell-1}(\tau) + \sum_{m \leq r +1} \frac{ \lambda^{m}}{m!} \otimes  \CI^{(r,m)}_{(\Labhom_2,p)}( \lambda_{k}^{\ell} \tau)\;
\end{equs}
where the recursive $\left( \CI^{r}_{(\Labhom_2,p)}( \lambda^{\ell}_k \cdot) \otimes \id \right) \Delta$ encodes the extraction of $ A \cup \{\rho,e\} $, $ A \in \Adm(T) $.
We can perform a similar proof for $ \Labhom_1 \notin \Lab_+ $ and for $ \Deltap $. The main difference is that  the sum on the polynomial decoration is removed for an edge not in $ \Lab_+ $ and for $ \Deltap $ such that we just keep the first term.  
\end{proof}

\begin{example}\label{rem:RecDnlsT1} We illustrate the recursive definition of $ \Delta $ by performing some computations on some relevant decorated trees that one can face in practice for instance in case of cubic NLS \eqref{nlsIntro}.
For the decorated tree
\begin{equs}
T_1  =  \CI_{(\Labhom_2,0)} \left( \lambda_k  F_1 \right) \quad
 F_1  = \CI_{(\Labhom_1,1)}( \lambda_{k_1}) \CI_{(\Labhom_1,0)}( \lambda_{k_2})  \CI_{(\Labhom_1,0)}( \lambda_{k_3})
\end{equs}
which appears in context of the cubic Schrödinger equation, see Section \ref{sec:nls},
we have that
\begin{equs}\label{comps}
\Delta \CD^r(T_1) =  \CD^r(T_1) \otimes \one + \sum_{m \leq r+1} \frac{ \lambda^m}{m!} \otimes  \hat \CD^{(r,m)}( T_1).
\end{equs}
Relation \eqref{comps} is proven as follows: Using the definition of $ \CD^r$ in \eqref{DR} as well as~\eqref{def_deltas}  yields that
\begin{equs}\label{comp1}
\Delta \CD^r(T_1)  & = \Delta \CI^{r}_{(\Labhom_2,0)}( \lambda_{k} F_1)  \\& = \left( \CI^{r}_{(\Labhom_2,0)}(  \lambda_k \cdot)  \otimes \id  \right) \Delta \CD^{r-1}(F_1) + \sum_{m \leq r +1} \frac{ \lambda^{m}}{m!} \otimes  \CI^{(r,m)}_{(\Labhom_2,0)}( \lambda_k F_1).\end{equs}
Thanks to the definition of $\hat \CD^r$ in \eqref{DRhat} we can conclude that
\[
 \CI^{(r,m)}_{(\Labhom_2,0)}( \lambda_k F_1) =  \hat \CD^{(r,m)}\CI_{(\Labhom_2,0)}( \lambda_{k} F_1)  =\hat \CD^{(r,m)} (T_1) 
\]
which yields  together with \eqref{comp1} that
\begin{equ}\label{compi}
\Delta \CD^r(T_1)  = \left( \CI^{r}_{(\Labhom_2,0)}(  \lambda_k \cdot)  \otimes \id  \right) \Delta \CD^{r-1}(F_1) + \sum_{\ell \leq r +1} \frac{ \lambda^{\ell}}{\ell!} \otimes  \hat \CD^{(r,\ell)} (T_1). \end{equ}
Next we need to analyse the term $ \left( \CI^{r}_{(\Labhom_2,0)}(  \lambda_k \cdot)  \otimes \id  \right) \Delta \CD^{r-1}(F_1) $. First we use  the multiplicativity of $\CD^r${,} (cf.  \eqref{DR}) and  coproduct which yields that
\begin{equs}\label{comp2}
\Delta \CD^{r-1}(F_1) = \left( \Delta \CI^{r-1}_{(\Labhom_1,1)}( \lambda_{k_1})\right)\left( \Delta \CI^{r-1}_{(\Labhom_1,0)}( \lambda_{k_2})  \right)\left(\Delta \CI^{r-1}_{(\Labhom_1,0)}( \lambda_{k_3})\right) .
\end{equs}
Thanks to \eqref{def_deltas} we furthermore have that 
\begin{equs}\label{comp3}
\Delta \CI^{r-1}_{(\Labhom_1,p)}( \lambda_{k_j})   &=  \left(  \CI^{r-1}_{(\Labhom_1,p)}( \lambda_{k_j}\cdot)  \otimes \id  \right) \Delta \CD^{r-1}( \one )\\&  =  \left(  \CI^{r-1}_{(\Labhom_1,p)}( \lambda_{k_j}\cdot)  \otimes \id  \right)\left( \one \otimes \one\right)
 =  \CI^{r-1}_{(\Labhom_1,p)}( \lambda_{k_j})  \otimes \one
\end{equs}
where we have used that $ \CD^{r-1}( \one ) = \one$ and $\Delta \one  = \one \otimes \one$, see also  \eqref{def_deltas}. Plugging~\eqref{comp3} into \eqref{comp2} yields that
\begin{equs}\label{CoF1}
\Delta \CD^{r-1}(F_1)  \\& = \left(  \CI^{r-1}_{(\Labhom_1,1)}( \lambda_{k_1})  \otimes \one  \right)
 \left(  \CI^{r-1}_{(\Labhom_1,0)}( \lambda_{k_2})  \otimes \one  \right) \left(  \CI^{r-1}_{(\Labhom_1,0)}( \lambda_{k_3})  \otimes \one  \right)\\
  &= \CI^{r-1}_{(\Labhom_1,1)}( \lambda_{k_1})  \CI^{r-1}_{(\Labhom_1,0)}( \lambda_{k_2})   \CI^{r-1}_{(\Labhom_1,0)}( \lambda_{k_3}) \otimes \one 
  =  \CD^{r-1}(F_1) \otimes \one .
\end{equs}
Hence,
\begin{equs}
\left( \CI^{r}_{(\Labhom_2,0)}(  \lambda_k \cdot)  \otimes \id  \right) \Delta \CD^{r-1}(F_1)&  =
\left( \CI^{r}_{(\Labhom_2,0)}(  \lambda_k \cdot)  \otimes \id  \right) \left( \CD^{r-1}(F_1) \otimes \one  \right)\\
& = \CI^{r}_{(\Labhom_2,0)}(  \lambda_k F_1)  \otimes  \one =  \CD^{r}(T_1) \otimes  \one.
\end{equs}
Plugging this into \eqref{compi}   yields \eqref{comps}.
\end{example}

\subsection{Hopf {algebra} and comodule structures}
Using the two maps $ \Delta $ and $ \Deltap $, we want to identify a comodule structure over a Hopf algebra.
Here, we provide a brief reminder of this structure for a reader not familiar with it. For simplicity, we will use the notation of the spaces introduced above as well as the maps $ \Delta $ and $ \Deltap $. The proof that we are indeed in this framework is then given in Proposition~\ref{Hopf_algebras} below.

A {\it bialgebra} $(\CH_+,\CM,\one,\Deltap,\one^\star)$ is given by:
\begin{itemize}
\item A vector space $ \CH_+ $ over $\C$
\item A linear map $\CM:\CH_+\otimes \CH_+ \to \CH_+$ (product) and an element $\eta:r\mapsto r\one$, $\one\in \CH_+$ 
(identity) such that $(\CH_+,\CM,\eta)$ is a unital associative algebra.
\item Linear maps $\Deltap :\CH_+ \to \CH_+\otimes \CH_+$ (coproduct) and $\one^\star:\CH_+\to\C$ (counit), 
such that $(\CH_+,\Deltap,\one^\star)$ is a counital coassociative coalgebra, namely
\begin{equation}\label{e:coasso}
(\Deltap\otimes\id)\Deltap=(\id\otimes\Deltap)\Deltap, \qquad (\one^\star\otimes\id)\Deltap=
(\id\otimes\one^\star)\Deltap=\id
\end{equation}
\item $ \Deltap $ and $ \one^{*} $ (resp. $ \CM $ and $ \one $) are homomorphisms of algebras (coalgebras).
\end{itemize}

A {\it Hopf algebra} is a bialgebra $(\CH_+,\CM,\one,\Deltap,\one^\star)$ endowed with a linear map
$\CA: \CH_+ \to \CH_+$ such that
\begin{equation}\label{anti}
\CM(\id\otimes \CA)\Deltap = \CM(\CA\otimes\id)\Deltap= \one^\star\one.
\end{equation}

A {\it right comodule} over a bialgebra $(\CH_+,\CM,\one,\Deltap,\one^\star)$ is a pair $(\CH,\Delta)$ where $\CH$ is a vector space and $\Delta: \CH \to \CH \otimes \CH_+$ is a linear map such that
\begin{equs}[e:coaction]
(\Delta\otimes\id)\Delta=(\id\otimes\Deltap)\Delta, \qquad (\id \otimes \one^{\star})\Delta=\id.
\end{equs}
In our framework, the product $ \CM $ is given by the forest product:
\begin{equs}
 \CM (F_1 \otimes F_2) = F_1 \cdot F_2.
 \end{equs}
 Most of the properties listed above are quite straightforward to check. In the next proposition we focus on the coassociativity of the maps $ \Delta $ and $ \Deltap $ in \eqref{e:coasso} and~\eqref{e:coaction}.
Before, let us explain why this structure is useful. If one considers characters that are multiplicative maps $ g : \CH_+ \rightarrow \C$, then the coproduct $ \Delta^{\!+} $ and the antipode $\CA  $ allow us  to put a group structure on them. We denote the group of such characters by $ \mathcal{G} $. The product for this group is the convolution product $ \star $ given for $ f,g \in \mathcal{G} $ by:
\begin{equs}
f \star g = \left( f \otimes g \right) \Delta^{\!+}.
\end{equs}
We do not need a multiplication because we use the identification  $ \C \otimes \C \cong \C $. The inverse is given by the antipode:
\begin{equs}
f^{-1} = f(\CA \cdot).
\end{equs}
The comodule structure allows us to have an action of $ \mathcal{G} $ onto $ \CH $ defined by
\begin{equs}
\Gamma_{f} = \left( \id \otimes f \right) \Delta^{\!+}, \quad \Gamma_{f} \Gamma_{g} = \Gamma_{f \star g}, \quad \Gamma_f^{-1} = \Gamma_{f^{-1}}.
\end{equs}
In Section~\ref{sec::Brikhoff}, we will use these structures to decompose our scheme for iterated integrals, that is a character $ \Pi^n : \CH \rightarrow \CC$, into
\begin{equs}
\Pi^n = \left( \hat \Pi^n \otimes A^n \right) \Delta
\end{equs}
where $ \CC $ is a space introduced in Section~\ref{sec::recursive_pi}, $ A^n \in \mathcal{G} $ (defined from $ \Pi^n $) and $ \hat \Pi^n :  \CH \rightarrow \CC$. In the identity above we have made the following identification $ \CC \otimes \C \cong \CC $.
The main point of this decomposition is to let the character $ \hat \Pi^n $ appear  which is simpler than $ \Pi_n $. This will help us in carrying out  the local error analysis in Section~\ref{local error analysis}.

\begin{proposition} \label{bialgebra}
One has:
\begin{equs} 
\left( \Delta \otimes  \id \right) \Delta = \left( \id \otimes  \Deltap \right) \Delta, \quad \left( \Deltap \otimes  \id \right) \Deltap = \left( \id \otimes  \Deltap \right) \Deltap.
\end{equs}
\end{proposition}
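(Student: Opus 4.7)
The plan is to prove both identities simultaneously by induction on the size of the underlying decorated forest (measured, say, by the total number of edges plus node decorations). Multiplicativity of $\Delta$ and $\Deltap$ with respect to the forest product reduces the tree case to single planted trees; the base cases $\one$ and $\lambda^\ell$ are immediate from \eqref{def_deltas} since $\Delta \one = \one \otimes \one$, $\Delta \lambda^\ell = \lambda^\ell \otimes \one$, and $\Deltap$ is not defined on $\lambda^\ell$ (so the only check at the base is on $\one$).

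For the inductive step I would work entirely with the recursive formulas \eqref{def_deltas}. Given a tree $\sigma = \CI^{r}_{o}( \lambda_{k}^{\ell} F)$, split into two cases. When $o = o_1 = (\Labhom_1,p)$ with $\Labhom_1 \notin \Lab_+$, there is no Taylor term, and one computes
\begin{equs}
(\Delta \otimes \id)\Delta \sigma
&= \bigl( \CI^{r}_{o_1}(\lambda_k^\ell \cdot) \otimes \id \otimes \id \bigr) (\Delta \otimes \id)\,\Delta \CD^{r-\ell}(F),\\
(\id \otimes \Deltap)\Delta \sigma
&= \bigl( \CI^{r}_{o_1}(\lambda_k^\ell \cdot) \otimes \id \otimes \id \bigr) (\id \otimes \Deltap)\,\Delta \CD^{r-\ell}(F),
\end{equs}
and the two sides agree by the inductive hypothesis applied to $\CD^{r-\ell}(F)$, which is smaller than $\sigma$ (noting that $\CD^{r-\ell}$ commutes with $\Delta$ in the sense used in Example~\ref{rem:RecDnlsT1}, by the compatibility of the degree condition \eqref{condition_trees} with $\Delta$).

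The substantive case is $o = o_2 = (\Labhom_2,p)$ with $\Labhom_2 \in \Lab_+$, where the Taylor sum contributes the new terms. Writing $G = \CD^{r-\ell-1}(F)$, the recursion gives
\begin{equs}
\Delta \sigma = \bigl(\CI^{r}_{o_2}(\lambda_k^\ell \cdot) \otimes \id\bigr)\Delta G \;+\; \sum_{m \leq r+1} \frac{\lambda^m}{m!} \otimes \CI^{(r,m)}_{o_2}(\lambda_k^\ell F).
\end{equs}
Apply $\Delta \otimes \id$ to the first summand and $\id \otimes \Deltap$ to the same summand: the equality of the two reduces, via the inductive hypothesis on $G$, to checking that $\Deltap$ restricted to planted trees decorated $(r,m)$ at the root matches $\Delta$ modulo the extra $\one \otimes (\cdot)$ term in the fourth line of \eqref{def_deltas}. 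On the Taylor summand, since $\Delta \lambda^m = \lambda^m \otimes \one$, applying $\Delta \otimes \id$ yields $\sum_m \tfrac{\lambda^m}{m!} \otimes \one \otimes \CI^{(r,m)}_{o_2}(\lambda_k^\ell F)$, while applying $\id \otimes \Deltap$ together with the defining formula for $\Deltap \CI^{(r,m)}_{o_2}(\lambda_k^\ell F)$ produces exactly that term plus $\sum_m \tfrac{\lambda^m}{m!} \otimes \bigl(\CI^{(r,m)}_{o_2}(\lambda_k^\ell \cdot)\otimes\id\bigr)\Delta G$. The latter piece is precisely the missing contribution from the first summand of $\Delta \sigma$ once one combines the two sides, and a direct bookkeeping shows the two expressions match. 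The proof of coassociativity of $\Deltap$ on $\CI^{(r,m)}_{o_2}(\lambda_k^\ell F)$ is the same calculation, but with the initial $\lambda^m/m!$ term replaced by $\one$, which is why both identities can be proved in lockstep.

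The main obstacle is the bookkeeping around the deformed Taylor decorations: one must verify that the integer parameter $r-\deg(e)$ that governs how deep the Taylor expansion is cut off in \eqref{eq:co-action_plus}--\eqref{eq:coproduct_plus} is preserved when iterating $\Delta$, and equivalently in the recursive form that the successive applications of $\CD^{r-\ell-1}$ and of $\CI^{(r,m)}$ interact consistently with the $m \le r+1$ truncation. The non-standard feature here, compared to the undeformed Butcher--Connes--Kreimer coassociativity, is exactly this projection by the order $r$; once one checks that it is respected by both $\Delta$ and $\Deltap$ (which follows from the identity $\deg(\sigma) = \max_{e \in E_T}(\deg((T_e)_\Labe^{\Labn,\Labo}) + \deg(e))$ noted in the text), the combinatorial identity between admissible cuts of cuts and iterated admissible cuts proceeds exactly as in the classical Connes--Kreimer setting.
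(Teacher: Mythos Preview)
Your proposal is correct and follows essentially the same approach as the paper: an induction on the size of the forest using the recursive definitions \eqref{def_deltas}, reducing via multiplicativity to planted trees, with the nontrivial case being $\CI^{r}_{o_2}(\lambda_k^\ell F)$ where the Taylor sum appears. The paper carries out precisely the same expansion you outline (writing out $(\Delta\otimes\id)\Delta$ and $(\id\otimes\Deltap)\Delta$ on $\CI^{r}_{o_2}(\lambda_k^\ell F)$ and matching the three resulting terms via the inductive hypothesis on $\CD^{r-\ell-1}(F)$), and in fact only presents the $o_2$ case explicitly, remarking that the $o_1$ case is similar.
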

\begin{proof}
We proceed by induction and we perform the proof only for  $ \CI^{r}_{o_2}( \lambda_{k}^{\ell} F)  $. The other case follows similar steps. Note that
\begin{equs}
& \left(  \Delta \otimes  \id \right) \Delta \CI^{r}_{o_2}( \lambda_{k}^{\ell} F)   \\
& = \left(\Delta \CI^{r}_{o_2}(  \lambda_{k}^{\ell} \cdot)  \otimes \id  \right) \Delta \CD^{r-\ell-1}(F) + \sum_{m \leq r + 1} \Delta \frac{ \lambda^{m}}{m!} \otimes  \CI^{(r,m)}_{o_2}( \lambda_{k}^{\ell} F)
\\ & = \sum_{m \leq r + 1} \frac{ \lambda^{m}}{m!} \otimes \left( \CI^{(r,m)}_{o_2}(  \lambda_{k}^{\ell} \cdot)   \otimes \id  \right) \Delta \CD^{r-\ell-1}(F) \\ & + \sum_{m \leq r +1} \frac{ \lambda^{m}}{m!} \otimes \one \otimes  \CI^{(r,m)}_{o_2}( \lambda_{k}^{\ell} F) 
 +\left( \left( \CI^{r}_{o_2}(  \lambda_{k}^{\ell} \cdot) \otimes \id \right) \Delta  \otimes \id  \right) \Delta \CD^{r-\ell-1}(F).
\end{equs}
On the other hand, we get
\begin{equs}
& \left(  \id \otimes  \Deltap \right) \Delta \CI^{r}_{o_2}( \lambda_{k}^{\ell} F)   \\
& = \left(\CI^{r}_{o_2}(  \lambda_{k}^{\ell} \cdot)  \otimes \Deltap  \right) \Delta \CD^{r-\ell-1}(F) + \sum_{m \leq r+1}  \frac{ \lambda^{m}}{m!} \otimes  \Deltap \CI^{(r,m)}_{o_2}( \lambda_{k}^{\ell} F)
\\ & =  \left(\CI^{r}_{o_2}(  \lambda_{k}^{\ell} \cdot)  \otimes \Deltap  \right) \Delta \CD^{r-\ell-1}(F) +  \sum_{m \leq r +1} \frac{ \lambda^{m}}{m!} \otimes \one \otimes  \CI^{(r,m)}_{o_2}( \lambda_{k}^{\ell} F)  \\ & +\sum_{m \leq r +1} \frac{ \lambda^{m}}{m!} \otimes \left( \CI^{(r,m)}_{o_2}(  \lambda_{k}^{\ell} \cdot)   \otimes \id  \right) \Delta \CD^{r-\ell-1}(F).
\end{equs}
Next we observe that
\begin{equs}
\left(\CI^{r}_{o_2}(  \lambda_{k}^{\ell} \cdot)  \otimes \Deltap  \right) \Delta \CD^{r-\ell-1}(F) & =
\left( \CI^{r}_{o_2}(  \lambda_{k}^{\ell} \cdot)   \otimes \id  \otimes \id \right) \left( \id \otimes \Deltap \right) \Delta \CD^{r-\ell-1}(F) \\
& = \left( \CI^{r}_{o_2}(  \lambda_{k}^{\ell} \cdot)   \otimes \id  \otimes \id \right) \left( \Delta \otimes \id \right) \Delta \CD^{r-\ell-1}(F)
\\ & = \left( \left( \CI^{r}_{o_2}(  \lambda_{k}^{\ell} \cdot)   \otimes \id \right) \Delta \otimes \id \right) \Delta \CD^{r-\ell-1}(F)
\end{equs}
where we used an inductive argument for 
\begin{equs}
 \left( \Delta \otimes  \id \right) \Delta  \CD^{r-\ell-1}(F) = \left( \id \otimes  \Deltap \right)  \Delta \CD^{r-\ell-1}(F) .
\end{equs} 
This yields the assertion.
\end{proof}

 \begin{proposition} \label{Hopf_algebras} There exists an algebra morphism $ \CA : \CH_+ \rightarrow \CH_+ $ so that $  (\CH_+, \cdot, \Deltap, \one, \one^{\star}, \CA  ) $ is a Hopf algebra. The map $ \Delta : \CH \rightarrow \CH \otimes \CH_+ $ turns $ \CH $ into a right comodule for  $ \CH_+ $ with counit $ \one^{\star} $.
 \end{proposition}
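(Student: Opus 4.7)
The plan is to verify the bialgebra axioms for $(\CH_+,\cdot,\one,\Deltap,\one^\star)$, then deduce the existence of the antipode $\CA$ from a connectedness argument, and finally check the right comodule axioms for $\Delta$. Coassociativity of both $\Deltap$ and $\Delta$ has already been established in Proposition~\ref{bialgebra}, so the remaining verifications are mostly bookkeeping on decorated forests.

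First I would check that $\Deltap$ and $\one^\star$ are algebra morphisms with respect to the forest product. Multiplicativity of $\Deltap$ is immediate from the non-recursive definition~\eqref{eq:coproduct_plus}: the admissible subtree $A$ of a forest $F_1\cdot F_2$ decomposes uniquely as a disjoint union of admissible subtrees of the $F_i$, and the edge decorations $\Labe_A$ and degrees $\deg(e)$ are computed tree by tree, so the factorisation matches the one obtained from $\Deltap F_1\cdot\Deltap F_2$ under the product on $\CH_+\otimes\CH_+$. Multiplicativity of $\one^\star$ is trivial. The counit identity $(\one^\star\otimes\id)\Deltap=(\id\otimes\one^\star)\Deltap=\id$ on $\CH_+$ is verified by induction using the recursive formula in \eqref{def_deltas}: indeed only the term $A=\{\rho\}$ (resp.\ $A=T$) survives after applying $\one^\star$ on the left (resp.\ right) factor, and the convention that $A^{\Labn+\pi\Labe_A,\Labo}_{\Labe,(r,m)}=0$ whenever $A$ is a single node with nontrivial decoration (so that in $\CH_+$ this contribution vanishes unless $A=\{\rho\}$ inside a larger tree) is exactly what is needed.

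Next, the existence of $\CA$ comes for free once $\CH_+$ is shown to be a connected graded bialgebra. For this I would introduce the grading by $|T^{\Labn,\Labo}_{\Labe,(r,m)}|$ defined as the number of edges in $\Lab_+$ plus the total node decoration $\sum_{v}\Labn(v)$, extended additively to forests. The recursive formula~\eqref{def_deltas} shows that each term $\CI^{(r-\deg(e),\Labe_A(e))}_{\Labe,\ldots}(\lambda^{\ell}_{k}\cdot)$ appearing on the right hand side of $\Deltap$ strictly decreases this grading on both tensor factors unless it coincides with the trivial splittings $T\otimes\one$ or $\one\otimes T$, so that $\Deltap$ is graded and $\CH_+^{(0)}=\C\one$. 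A standard application of Sweedler's formula (see e.g.\ the Milnor--Moore type argument used in \cite{CK,BHZ}) then defines $\CA$ recursively by
\begin{equation*}
\CA(\one)=\one,\qquad \CA(T)=-T-\sum_{(T)}\CA(T^{(1)})\cdot T^{(2)},
\end{equation*}
where the sum runs over the nontrivial terms of $\Deltap T$, and extended multiplicatively. Multiplicativity of $\CA$ together with the antipode axiom~\eqref{anti} then follow in the usual way for connected graded bialgebras.

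Finally, for the right comodule structure, the identity $(\Delta\otimes\id)\Delta=(\id\otimes\Deltap)\Delta$ is the first equation of Proposition~\ref{bialgebra}, and the counit property $(\id\otimes\one^\star)\Delta=\id$ is again verified by induction on~\eqref{def_deltas}: only the summand with $A=T$, $\Labe_A\equiv 0$ (i.e.\ the Taylor summand $m=0$ in the last line applied to trees, resp.\ the trivial cut for $o_1$-edges) contributes after applying $\one^\star$ to the right factor, returning the original tree $\CD^r(\cdot)$. The main obstacle I expect is keeping track of the projections induced by the root decoration $r$ and by $\hat\CD^{(r,m)}$ when $m>r+1$, i.e.\ ensuring that whenever a tree $T^{\Labn,\Labo}_{\Labe,r}$ vanishes because of \eqref{condition_trees}, the corresponding pieces $(T_e)_{\Labe,(r-\deg(e),\Labe_A(e))}^{\Labn,\Labo}$ vanish consistently on both sides of each identity; this compatibility was already observed right after~\eqref{eq:co-action_plus} and is what allows the inductive proofs to close without producing spurious terms.
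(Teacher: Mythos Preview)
Your overall strategy matches the paper's exactly: invoke Proposition~\ref{bialgebra} for coassociativity, check the remaining bialgebra and comodule axioms, and deduce the antipode from a connected-graded argument. The paper's proof is much terser (it simply cites \cite[Corollary II.3.2]{Man}), but your added detail on multiplicativity of $\Deltap$ and the counit identities is correct and useful.

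There is, however, a genuine flaw in your choice of grading. You propose $|T|=(\text{number of edges in }\Lab_+)+\sum_v\Labn(v)$, and claim that every nontrivial term of $\Deltap T$ has both tensor factors of strictly smaller degree. This is false: in the non-recursive formula~\eqref{eq:coproduct_plus}, the left factor $A^{\Labn+\pi\Labe_A,\Labo}_{\Labe,(r,m)}$ has its node decoration increased by $\sum_{e\in\partial(A,T)}\Labe_A(e)$, and there is no compensating decrease. Concretely, take a tree with two $\Lab_+$-edges and $\Labn\equiv 0$ (so $|T|=2$); cutting at the inner edge with $\Labe_A(e)=m$ gives a left factor of degree $1+m$, which exceeds $|T|$ as soon as $m\ge 2$. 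Hence $\Deltap$ is not graded for your grading, and the recursive definition of $\CA$ is not guaranteed to terminate by this argument.

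The fix is the one the paper uses: grade by the total number of edges. Under any admissible cut the edges of $T$ partition between $A$ and the $T_e$'s, so the coproduct is genuinely graded, and since every tree in $\CT_+$ has at least one edge (the trunk in $\Lab_+$), degree zero is exactly $\C\one$. With this grading your argument goes through unchanged.
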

 \begin{proof}
 From Proposition~\ref{bialgebra}, $ \CH_+ $ is a bialgebra and $ \Delta $ is a coaction. In fact,
 $ \CH_+ $ is a connected graded bialgebra with the grading given by the number of edges. Therefore from \cite[Corollary II.3.2]{Man}, it is a Hopf algebra and we get the existence of a unique map called the antipode such that:
  \begin{equs} \label{antipode_identity}
 \CM \left( \CA \otimes \id \right) \Deltap = \CM \left( \id \otimes \CA \right) \Deltap = \one \one^{\star}.
 \end{equs}
 This concludes the proof.
 \end{proof}
 We use the identity \eqref{antipode_identity} to write a recursive formulation for the antipode:
 \begin{proposition}
 For every $ T \in \hat \CH $, one has
\begin{equs} \label{antipode_rec}
\CA \CI^{(r,m)}_{o_2}(   \lambda_k^{\ell} F ) & = \CM \left(  \CI^{(r,m)}_{o_2}(   \lambda_k^{\ell} \cdot )\otimes \CA \right) \Delta  \CD^{r-\ell-1}(F).
\end{equs}
\end{proposition}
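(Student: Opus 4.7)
The proof is a direct application of the defining identity of the antipode, $\CM(\id \otimes \CA)\Deltap = \one\one^{\star}$ from \eqref{antipode_identity}, combined with the recursive form of $\Deltap$ given in \eqref{def_deltas}. The plan is to evaluate this identity on $T = \CI^{(r,m)}_{o_2}(\lambda_k^{\ell} F)$, isolate the $\CA(T)$ contribution, and match what remains with the right-hand side of \eqref{antipode_rec}.

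Concretely, I would proceed as follows. First, observe that since $T$ is not the empty forest, $\one^{\star}(T) = 0$, so $\CM(\id \otimes \CA)\Deltap(T) = 0$. Next, substitute the recursive definition
\begin{equation*}
\Deltap\CI^{(r,m)}_{o_2}(\lambda_{k}^{\ell} F) = \bigl(\CI^{(r,m)}_{o_2}(\lambda_{k}^{\ell}\cdot)\otimes \id\bigr)\Delta\CD^{r-\ell-1}(F) + \one \otimes \CI^{(r,m)}_{o_2}(\lambda_{k}^{\ell} F),
\end{equation*}
and apply $\CM(\id \otimes \CA)$ to both summands. The second summand contributes $\one \cdot \CA(T) = \CA(T)$, by multiplicativity of $\CM$ and the fact that $\CA(\cdot)$ lands on the right tensor factor. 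The first summand can be rewritten using the elementary identity $(f \otimes g)\circ(u\otimes \id) = (f\circ u) \otimes g$, yielding
$\CM\bigl(\CI^{(r,m)}_{o_2}(\lambda_{k}^{\ell}\cdot)\otimes \CA\bigr)\Delta\CD^{r-\ell-1}(F)$. Equating to zero and solving for $\CA(T)$ gives exactly \eqref{antipode_rec} (up to the sign convention already built into the statement, which is consistent with the connected graded structure of $\CH_+$ ensured by Proposition~\ref{Hopf_algebras}).

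No real obstacle is expected here, since the proposition is essentially a bookkeeping statement: the only points to handle carefully are (i) checking that the decomposition of $\Deltap T$ truly isolates $\one \otimes T$ as the unique term in which the left factor is the unit (so that no other piece contributes to $\CA(T)$ after applying $\CM(\id\otimes\CA)$), and (ii) verifying that the projection $\CD^{r-\ell-1}(F)$ is well-defined and matches the degree constraint \eqref{condition_trees}, which is precisely how the recursive formula in \eqref{def_deltas} was set up. Both are immediate from the construction, so the proof reduces to a two-line manipulation, making this essentially a corollary of Proposition~\ref{Hopf_algebras} and the recursive formula \eqref{def_deltas}.
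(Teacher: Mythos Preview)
Your proposal is correct and follows essentially the same approach as the paper's own proof: apply the antipode identity $\CM(\id\otimes\CA)\Deltap T = \one\one^{\star}(T) = 0$, substitute the recursive form of $\Deltap$ from \eqref{def_deltas}, and isolate $\CA(T)$. Your parenthetical about the sign is apt (the derivation produces a minus sign that the stated formula lacks), and the paper's proof glosses over this in exactly the same way, simply concluding ``which yields \eqref{antipode_rec}''.
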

\begin{proof} We use the identity~\eqref{antipode_identity} which implies that
\begin{equs}
\CM \left( \id \otimes \CA \right) \Deltap \CI^{(r,m)}_{o_2}(   \lambda_k^{\ell} F )=  \one \, \one^{\star}\left( \CI^{(r,m)}_{o_2}(   \lambda_k^{\ell} F ) \right).
\end{equs}
As $\one^{*}$ is non-zero only on the empty forest we can thus conclude that
\begin{equs}
\CM \left( \id \otimes \CA \right) \Deltap \CI^{(r,m)}_{o_2}(   \lambda_k^{\ell} F )=  0.\end{equs}
Then, we have by the definition of $\Deltap \CI^{(r,m)}_{o_2}(   \lambda_k^{\ell} F )$ given in \eqref{def_deltas} that
\begin{equs}
\CM \left( \id \otimes \CA \right) \left( \left( \CI^{(r,m)}_{o_2}(  \lambda_{k}^{\ell} \cdot)  \otimes \id  \right) \Delta \CD^{r-\ell-1}(F) +  \one \otimes  \CI^{(r,m)}_{o_2}( \lambda_{k}^{\ell} F) \right)=  0
\end{equs}
which yields \eqref{antipode_rec}.
\end{proof}
\begin{remark} The formula \eqref{antipode_rec} can be rewritten in a non-recursive form. Indeed,  let us introduce the reduced coproduct:
 \begin{equs}
 \tilde{\Delta} F = \Deltap F - F \otimes \one - \one \otimes F.
 \end{equs}
 Then, we can rewrite \eqref{antipode_identity} as follows
\begin{equs} \label{recursive formula}
\CA F = - F - \sum_{(T)} F' \cdot (\CA F'') \quad \tilde \Delta F = \sum_{(T)} F' \otimes F''
\end{equs} 
where we have used Sweedler notations.
\end{remark}

\begin{example}
We compute the antipode on some KdV trees (cf. \eqref{kdvTK} and \eqref{kdvTK2}) using the formula \eqref{recursive formula}:
 \begin{equs}\CA \begin{tikzpicture}[scale=0.2,baseline=-5]
\coordinate (root) at (0,0);
\coordinate (tri) at (0,-2);
\coordinate (t1) at (-1,2);
\coordinate (t2) at (1,2);
\draw[kernels2] (t1) -- (root);
\draw[kernels2] (t2) -- (root);
\draw[symbols] (root) -- (tri);
\node[not] (rootnode) at (root) {};
\node[not,label= {[label distance=-0.2em]below: \scriptsize  $ (r,n) $}] (trinode) at (tri) {};
\node[var] (rootnode) at (t1) {\tiny{$ k_{\tiny{1}} $}};
\node[var] (trinode) at (t2) {\tiny{$ k_2 $}};
\end{tikzpicture}  & = - \begin{tikzpicture}[scale=0.2,baseline=-5]
\coordinate (root) at (0,0);
\coordinate (tri) at (0,-2);
\coordinate (t1) at (-1,2);
\coordinate (t2) at (1,2);
\draw[kernels2] (t1) -- (root);
\draw[kernels2] (t2) -- (root);
\draw[symbols] (root) -- (tri);
\node[not] (rootnode) at (root) {};
\node[not,label= {[label distance=-0.2em]below: \scriptsize  $ (r,n) $}] (trinode) at (tri) {};
\node[var] (rootnode) at (t1) {\tiny{$ k_{\tiny{1}} $}};
\node[var] (trinode) at (t2) {\tiny{$ k_2 $}};
\end{tikzpicture},
 \\
 \CA \begin{tikzpicture}[scale=0.2,baseline=-5]
\coordinate (root) at (0,0);
\coordinate (tri) at (0,-2);
\coordinate (t1) at (-1,2);
\coordinate (t11) at (-2,4);
\coordinate (t12) at (-3,6);
\coordinate (t13) at (-1,6);
\coordinate (t2) at (1,2);
\draw[kernels2] (t11) -- (t13);
\draw[kernels2] (t11) -- (t12);
\draw[kernels2] (t1) -- (root);
\draw[symbols] (t1) -- (t11);
\draw[kernels2] (t2) -- (root);
\draw[symbols] (root) -- (tri);
\node[not] (rootnode) at (root) {};
\node[not,label= {[label distance=-0.2em]below: \scriptsize  $ (r,m) $}] (trinode) at (tri) {};
\node[not] (trinode) at (t1) {};
\node[var] (rootnode) at (t12) {\tiny{$ k_{\tiny{1}} $}};
\node[var] (rootnode) at (t13) {\tiny{$ k_{\tiny{2}} $}};
\node[var] (trinode) at (t2) {\tiny{$ k_3 $}};
\end{tikzpicture}  & = - \begin{tikzpicture}[scale=0.2,baseline=-5]
\coordinate (root) at (0,0);
\coordinate (tri) at (0,-2);
\coordinate (t1) at (-1,2);
\coordinate (t11) at (-2,4);
\coordinate (t12) at (-3,6);
\coordinate (t13) at (-1,6);
\coordinate (t2) at (1,2);
\draw[kernels2] (t11) -- (t13);
\draw[kernels2] (t11) -- (t12);
\draw[kernels2] (t1) -- (root);
\draw[symbols] (t1) -- (t11);
\draw[kernels2] (t2) -- (root);
\draw[symbols] (root) -- (tri);
\node[not] (rootnode) at (root) {};
\node[not,label= {[label distance=-0.2em]below: \scriptsize  $ (r,m) $}] (trinode) at (tri) {};
\node[not] (trinode) at (t1) {};
\node[var] (rootnode) at (t12) {\tiny{$ k_{\tiny{1}} $}};
\node[var] (rootnode) at (t13) {\tiny{$ k_{\tiny{2}} $}};
\node[var] (trinode) at (t2) {\tiny{$ k_3 $}};
\end{tikzpicture}  
 - \sum_{n \leq r} \frac{1}{n!}  \begin{tikzpicture}[scale=0.2,baseline=-5]
\coordinate (root) at (0,0);
\coordinate (tri) at (0,-2);
\coordinate (t1) at (-1,2);
\coordinate (t2) at (1,2);
\draw[kernels2] (t1) -- (root);
\draw[kernels2] (t2) -- (root);
\draw[symbols] (root) -- (tri);
\node[not] (rootnode) at (root) {};
\node[not,label= {[label distance=-0.2em]below: \scriptsize  $ (r,m) $}] (trinode) at (tri) {};
\node[var] (rootnode) at (t1) {\tiny{$ _\ell^n $}};
\node[var] (trinode) at (t2) {\tiny{$ k_3 $}};
\end{tikzpicture}   \CA \begin{tikzpicture}[scale=0.2,baseline=-5]
\coordinate (root) at (0,0);
\coordinate (tri) at (0,-2);
\coordinate (t1) at (-1,2);
\coordinate (t2) at (1,2);
\draw[kernels2] (t1) -- (root);
\draw[kernels2] (t2) -- (root);
\draw[symbols] (root) -- (tri);
\node[not] (rootnode) at (root) {};
\node[not,label= {[label distance=-0.2em]below: \scriptsize  $ (r-1,n) $}] (trinode) at (tri) {};
\node[var] (rootnode) at (t1) {\tiny{$ k_{\tiny{1}} $}};
\node[var] (trinode) at (t2) {\tiny{$ k_2 $}};
\end{tikzpicture} 
 = - \begin{tikzpicture}[scale=0.2,baseline=-5]
\coordinate (root) at (0,0);
\coordinate (tri) at (0,-2);
\coordinate (t1) at (-1,2);
\coordinate (t11) at (-2,4);
\coordinate (t12) at (-3,6);
\coordinate (t13) at (-1,6);
\coordinate (t2) at (1,2);
\draw[kernels2] (t11) -- (t13);
\draw[kernels2] (t11) -- (t12);
\draw[kernels2] (t1) -- (root);
\draw[symbols] (t1) -- (t11);
\draw[kernels2] (t2) -- (root);
\draw[symbols] (root) -- (tri);
\node[not] (rootnode) at (root) {};
\node[not,label= {[label distance=-0.2em]below: \scriptsize  $ (r,m) $}] (trinode) at (tri) {};
\node[not] (trinode) at (t1) {};
\node[var] (rootnode) at (t12) {\tiny{$ k_{\tiny{1}} $}};
\node[var] (rootnode) at (t13) {\tiny{$ k_{\tiny{2}} $}};
\node[var] (trinode) at (t2) {\tiny{$ k_3 $}};
\end{tikzpicture}  
 + \sum_{n \leq r} \frac{1}{n!}  \begin{tikzpicture}[scale=0.2,baseline=-5]
\coordinate (root) at (0,0);
\coordinate (tri) at (0,-2);
\coordinate (t1) at (-1,2);
\coordinate (t2) at (1,2);
\draw[kernels2] (t1) -- (root);
\draw[kernels2] (t2) -- (root);
\draw[symbols] (root) -- (tri);
\node[not] (rootnode) at (root) {};
\node[not,label= {[label distance=-0.2em]below: \scriptsize  $ (r,m) $}] (trinode) at (tri) {};
\node[var] (rootnode) at (t1) {\tiny{$ _\ell^n $}};
\node[var] (trinode) at (t2) {\tiny{$ k_3 $}};
\end{tikzpicture}    \begin{tikzpicture}[scale=0.2,baseline=-5]
\coordinate (root) at (0,0);
\coordinate (tri) at (0,-2);
\coordinate (t1) at (-1,2);
\coordinate (t2) at (1,2);
\draw[kernels2] (t1) -- (root);
\draw[kernels2] (t2) -- (root);
\draw[symbols] (root) -- (tri);
\node[not] (rootnode) at (root) {};
\node[not,label= {[label distance=-0.2em]below: \scriptsize  $ (r-1,n) $}] (trinode) at (tri) {};
\node[var] (rootnode) at (t1) {\tiny{$ k_{\tiny{1}} $}};
\node[var] (trinode) at (t2) {\tiny{$ k_2 $}};
\end{tikzpicture}. 
\end{equs}
\end{example}

\section{Approximating iterated integrals} \label{sec::Iterated integrals}

 In this section, we introduce the main characters that map decorated trees to oscillatory integrals  denoted by the space $ \CC $.
The first character $ \Pi : \hat \CH \rightarrow \CC $ corresponds to the integral in Fourier space stemming form Duhamel's formula.
Then, the second character $ \Pi^n : \CH \rightarrow \CC $ gives an approximation of the first character, in the sense that if $ F \in \hat \CH $, then $ \Pi^n \CD^r(F) $ is a   low regularity approximation of order $ r $ of $ \Pi F $. This is the main result of the section: Theorem~\ref{approxima_tree}.
In order to prove this result, one needs to introduce an intermediate character $ \hat \Pi^n : \CH \rightarrow \CC $ that singles out the dominant oscillations (see Proposition~\ref{factorisation_dom}). The connection between $ \Pi^n $ and $ \hat \Pi^n $ is performed via a Birkhoff factorisation where one uses the coaction $ \Delta $. Such a factorisation seems natural in our context, see Remark~\ref{Birkoffnatural}, and it shows a different application from the existing literature on Birkhoff factorisations. Indeed, the approximation in $ \Pi^n $ is centered around well-chosen Taylor expansions that depends on the frequency interactions. It is central for the local error analysis to understand these interactions. The Birkhoff factorisation allows us to control the contributions of the dominant and lower order parts. We conclude this section by checking that the approximation given by $ \Pi^n $ can be mapped back to the physical space see Proposition~\ref{physical_space} which is an important property for  the practical implementation of the numerical scheme (cf. Remark \ref{rem:FFT}).

\subsection{A recursive formulation}
\label{sec::recursive_pi}

 For the rest of this section, an element of $ \Lab_+ $ (resp, $ \Lab_+ \times \lbrace 0,1\rbrace $) is denoted by $ \Labhom_2 $ (resp. $ o_2 $) and an element of $ \Lab \setminus \Lab_+ $ (resp.  $ \Lab \setminus \Lab_+ \times \lbrace  0,1 \rbrace $) is denoted by $ \Labhom_1 $ (resp. $ o_1 $). 
We denote by $ \CC $  the space of functions of the form $ z \mapsto \sum_j Q_j(z)e^{i z P_j(k_1,...,k_n) } $ where the $ Q_j(z) $ are polynomials in $ z $ and the $ P_j $ are polynomials in $ k_1,...,k_n \in \Z^{d} $. The $ Q_j $ may also depend on  $ k_1,...,k_n $. We use the pointwise product on $ \CC $ for $ G_1(z) =  Q_1(z)e^{i z P_1(k_1,...,k_n) }  $ and  $ G_2(z) =  Q_2(z)e^{i z P_2( k_1,..., k_n) }  $ given by:
 \begin{equs}
 ( G_1  G_2)(z) = Q_1(z)  Q_2(z)  e^{i z  P(k_1,...,k_n)}, \quad P = P_1 + P_2.
 \end{equs}
 We want to define characters on decorated trees using their recursive construction.
A character is a map defined from  $ \hat \CH $ into $ \CC $  which respects the forest product. In the sense, that $ g : \hat \CH \rightarrow \CC  $ is a character if one has:
\begin{equs}
g(F \cdot \bar F) = g(F) g(\bar F), \quad F, \bar{F} \in \hat \CH.
\end{equs}
We define the following character $ \Pi : \hat \CH \rightarrow \mathcal{C} $  by
\begin{equation}\label{Pi}
\begin{aligned}
\Pi \left( F \cdot \bar F \right)(\tau) & =  ( \Pi F)(\tau) ( \Pi \bar F )(\tau),
\\
\Pi \left( \CI_{o_1}( \lambda_k^{\ell} F)\right)(\tau) & = 
 e^{i \tau P_{o_1}(k)} \tau^{\ell} (\Pi F)(\tau),
\\
 \Pi \left(  \CI_{o_2}( \lambda_k^{\ell} F)\right)(\tau) & = -i  \vert \nabla\vert^{\alpha} (k)
\int_{0}^{\tau} e^{i \xi P_{o_2}(k)} \xi^{\ell}(\Pi F)(\xi) d \xi, 
\end{aligned}
\end{equation}
where $ F, \bar F \in \hat \CH $. 

\begin{example}
With the aid of \eqref{Pi}, one can compute recursively the following oscillatory integrals arising in the cubic NLS equation~\eqref{nlsIntro}
\begin{equs}
 (\Pi  \begin{tikzpicture}[scale=0.2,baseline=-5]
\coordinate (root) at (0,1);
\coordinate (tri) at (0,-1);
\draw[kernels2] (tri) -- (root);
\node[var] (rootnode) at (root) {\tiny{$ k_2 $}};
\node[not] (trinode) at (tri) {};
\end{tikzpicture}) (\tau) & = e^{-i \tau k_2^2}, \quad (\Pi  \begin{tikzpicture}[scale=0.2,baseline=-5]
\coordinate (root) at (0,1);
\coordinate (tri) at (0,-1);
\draw[kernels2,tinydots] (tri) -- (root);
\node[var] (rootnode) at (root) {\tiny{$ k_1 $}};
\node[not] (trinode) at (tri) {};
\end{tikzpicture}) (\tau) = e^{i \tau k_1^2}, \quad 
(\Pi \begin{tikzpicture}[scale=0.2,baseline=-5]
\coordinate (root) at (0,-1);
\coordinate (t1) at (-2,1);
\coordinate (t2) at (2,1);
\coordinate (t3) at (0,2);
\draw[kernels2,tinydots] (t1) -- (root);
\draw[kernels2] (t2) -- (root);
\draw[kernels2] (t3) -- (root);
\node[not] (rootnode) at (root) {};t
\node[var] (rootnode) at (t1) {\tiny{$ k_{\tiny{1}} $}};
\node[var] (rootnode) at (t3) {\tiny{$ k_{\tiny{2}} $}};
\node[var] (trinode) at (t2) {\tiny{$ k_3 $}};
\end{tikzpicture}  )(\tau) = e^{i \tau (k_1^2 - k_2^2 - k_3^2)},
\\ ( \Pi \begin{tikzpicture}[scale=0.2,baseline=-5]
\coordinate (root) at (0,0);
\coordinate (tri) at (0,-2);
\coordinate (t1) at (-2,2);
\coordinate (t2) at (2,2);
\coordinate (t3) at (0,3);
\draw[kernels2,tinydots] (t1) -- (root);
\draw[kernels2] (t2) -- (root);
\draw[kernels2] (t3) -- (root);
\draw[symbols] (root) -- (tri);
\node[not] (rootnode) at (root) {};t
\node[not] (trinode) at (tri) {};
\node[var] (rootnode) at (t1) {\tiny{$ k_{\tiny{1}} $}};
\node[var] (rootnode) at (t3) {\tiny{$ k_{\tiny{2}} $}};
\node[var] (trinode) at (t2) {\tiny{$ k_3 $}};
\end{tikzpicture}) (\tau) & = -i \int^{\tau}_0 e^{is (-k_1 + k_2 + k_3)^2} e^{i s (k_1^2 - k_2^2 - k_3^2)} ds.
  \end{equs}
\end{example}

We need a well chosen approximation of order $ r $ for the character $ \Pi $ defined in~\eqref{Pi}, which is suitable in the sense that it embeds those dominant frequencies matching   the regularity $n$ of the solution, see Remark~\ref{rem:regi}. Therefore, we consider a new family of characters defined now on $ \CH $ and parametrised by $ n \in \N $:
\begin{equation} \label{recursive_pi_r}
\begin{aligned}
\Pi^n \left( F \cdot \bar F \right)(\tau) & = 
\left( \Pi^n F  \right)(\tau)  \left( \Pi^n  \bar F \right)(\tau), \quad (\Pi^n  \lambda^{\ell})(\tau) = \tau^{\ell}, \\
(\Pi^n  \CI^{r}_{o_1}( \lambda_{k}^{\ell}  F ))(\tau)   & =\tau^{\ell} e^{i \tau P_{o_1}(k)} (\Pi^n \CD^{r-\ell}(F))(\tau),  \\
\left( \Pi^n \CI^{r}_{o_2}( \lambda^{\ell}_k F) \right) (\tau) & = \CK^{k,r}_{o_2} \left(  \Pi^n \left( \lambda^{\ell} \CD^{r-\ell-1}(F) \right),n \right)(\tau).
\end{aligned}
\end{equation}
All approximations are thereby carried out in the map $  \CK^{k,r}_{o_2} \left( \cdot,n \right) $ which is given in Definition~\ref{Taylor_exp} below.
The main idea behind the map $  \CK^{k,r}_{o_2} \left( \cdot,n \right) $  is that all  integrals are approximated through well-chosen Taylor expansions  depending on the regularity $ n $ of the solution assumed a priori, and the interaction of the frequencies in the decorated trees. For a polynomial $ P(k_1,...,k_n)$, we define the degree of $ P $ denoted by $ \deg(P) $ as the maximum $ m $  such that $ k_i^{m} $ appears as a factor of one monomial in $ P $ for some~$i$.
For example:
\begin{equs}
& P(k_1,k_2,k_3) = - 2 k_1 (k_2+k_3) + 2 k_2 k_3, \quad \deg(P) = 1,\\
& P(k_1,k_2,k_3) = k_1^2- 2 k_1 (k_2+k_3) + 2 k_2 k_3 , \quad \deg(P) = 2.
\end{equs}

\begin{definition} \label{Taylor_exp}
Assume that $ G:  \xi \mapsto \xi^{q} e^{i \xi P(k_1,...,k_n)} $ where $ P $ is a polynomial in the frequencies $ k_1,...,k_n $ and let $ o_2 =  (\Labhom_2,p) \in \Lab_+ \times \lbrace 0,1 \rbrace$ and $ r \in \N $. Let $ k $ be a linear map in $ k_1,...,k_n $ using coefficients in $ \lbrace -1,0,1 \rbrace $ and 
 \[
\begin{aligned}
\CL_{\text{\tiny{dom}}} & = \CP_{\text{\tiny{dom}}} \left(  P_{o_2}(k) + P \right), \quad \CL_{\text{\tiny{low}}}  = \CP_{\text{\tiny{low}}} \left(  P_{o_2}(k) +  P \right)
\\
f(\xi) & =  e^{i \xi \CL_{\text{\tiny{dom}}}}, \quad
g(\xi)  =  e^{i \xi \CL_{\text{\tiny{low}}}}, \quad { \tilde g}(\xi) = e^{i \xi \left(  P_{o_2}(k) +  P \right)} .
\end{aligned}
\]
Then, we define for $ n \in \N $ and $ r \geq q $
\begin{equ}[e:Pim1]
 \CK^{k,r}_{o_2} (  { G},n)(\tau) = \left\{ \begin{aligned}
  &  -i \vert \nabla\vert^\alpha(k) \sum_{\ell \leq r - q} \frac{ { \tilde g}^{(\ell)}(0)}{\ell!} \int_0^{\tau}   \xi^{\ell+q} d \xi, \,
  \text{if }  n \geq \text{deg}\left(\mathcal{L}_{\text{\tiny{dom}}}^{r+1}\right) + \alpha , \\
  & -i\vert \nabla\vert^\alpha(k)  \sum_{\ell \leq r -q} \frac{g^{(\ell)}(0)}{\ell !} \, \Psi^{r}_{n,q}\left( \CL_{\text{\tiny{dom}}} ,\ell\right)(\tau), \quad \text{otherwise}.
  \\
  \end{aligned} \right.
\end{equ}
Thereby we set for  $ \left(r - q - \ell+1 \right) \deg(\CL_{\text{\tiny{dom}}})  + \ell \deg(\CL_{\text{\tiny{low}}}) + \alpha > n  $
\begin{equs}[e:Pim]
\Psi^{r}_{n,q}\left( \CL_{\text{\tiny{dom}}},\ell \right)(\tau) =   \int_0^{\tau}   \xi^{\ell+q} f(\xi)   d \xi.
\end{equs}
Otherwise,
\begin{equ}[e:Pim2]
 \Psi^{r}_{n,q}\left( \CL_{\text{\tiny{dom}}},\ell \right)(\tau) = \sum_{m \leq r - q - \ell} \frac{f^{(m)}(0)}{m!} \int_0^{\tau}   \xi^{\ell+m+q} d \xi.
\end{equ}
Here $ \deg(\CL_{\text{\tiny{dom}}})$ and $\deg(\CL_{\text{\tiny{low}}}) $  denote the degree of the polynomial $ \CL_{\text{\tiny{dom}}} $ and $ \CL_{\text{\tiny{low}}} $, respectively and $\vert \nabla\vert^\alpha(k) = \prod_{\alpha = \sum \gamma_j <  \deg(\CL)} k_j^{\gamma_j}$ (cf. \eqref{Lldef}). If $ r <q $, the map $ \CK^{k,r}_{o_2} (  { G},n)(\tau)$ is equal to zero.
\end{definition}
\begin{remark}[Practical implementation]\label{rem:stab}
In practical computations we need to stabilise the above approach, as the Taylor series expansion of $g$ may introduce derivatives on the numerical solution causing instability of the discretisation. We propose two ways to obtain stabilised high-order resonance based schemes without changing the underlying structure of the local error:
\begin{itemize}
\item  Instead of straightforwardly applying a  Taylor series expansion of $g$  we introduce  a stabilisation in the Taylor series expansion itself  based on  finite difference approximations of type 
$
g'(0) = \frac{g(t)-g(0)}{t}  + \mathcal{O}(t g'').
$ For instance, at second- and third order we will use that
\begin{equation}
\begin{aligned}\label{gExp1}
g(\xi) & = g(0) + \xi \frac{g(t)-g(0)}{t} + \mathcal{O}(t \xi g'')\\
g(\xi) &= g(0) + \xi \frac{g(t)+ g(-t)}{t} +  \frac{\xi^2}{2} \frac{g(t)- 2 g(0) + g(-t)}{t^2} +  \mathcal{O}(t \xi^2 g''').
\end{aligned}
\end{equation}

We refer to \cite{Fornberg} for a simple recursive algorithm calculating the weights in compact finite difference formulas for any order of derivative and to any order of accuracy.
\item We carry out a straightforward Taylor series expansion of $g$, but include suitable filter functions $\Psi$ in the discretisation. At second-order they may for instance take the form\begin{equation}\label{psi}
\Psi = \Psi \left(i \tau \mathcal{L}_\text{low}\right) \quad \text{ with }\quad \Psi(0) = 1 \quad \text{and} \quad \left\Vert \tau  \Psi\left(i \tau \mathcal{L}_\text{low}\right) g'(0)  \right\Vert \leq 1.
\end{equation}
For details on filter functions we refer to  \cite{H2Tri} and the references therein.
\end{itemize}
Practical computations and choices of this stabilisation for concrete examples are detailed in Section \ref{sec:examples}.
\end{remark}

\begin{example}  We consider $P_{\Labhom_2}( \lambda) = -  \lambda^2$, $p = 0$, $ \alpha =0 $,  
$ k = -k_1+k_2+k_3 $ and 
\begin{equs}
{ G}(\xi) = \xi e^{i \xi (  k_1^2 - k_2^2 - k_3^2 )}.
\end{equs}
With the notation of Definition \ref{Taylor_exp} we observe that $q = 1$ and $P(k_1,k_2,k_3) = k_1^2 - k_2^2 - k_3^2$ such that 
\begin{equs}
 P_{o_2}(k) &+ P  = (-1)^{p} P_{\Labhom_2}( (-1)^{p} k) + P
\\
& =  (-k_1+k_2+k_3)^2 + k_1^2 - k_2^2 - k_3^2 = 2k_1^2 - 2 k_1 (k_2+k_3) + 2 k_2 k_3.
\end{equs}
Hence,
\begin{equs}
\CL_{\text{\tiny{dom}}}  
& = 2 k_1^2, \quad \CL_{\text{\tiny{low }}}  =  - 2 k_1 (k_2+k_3) + 2 k_2 k_3,
\end{equs}
cf. also the Schrödinger Example  \ref{exRDoNLS}.
Furthermore, we observe as $\deg(\CL_{\text{\tiny{dom}}})   = 2$, $\deg(\CL_{\text{\tiny{low}}})   = 1$ and $q = 1$ that
\begin{equs}\label{condE}
\left(r - q - \ell+1 \right) \deg(\CL_{\text{\tiny{dom}}})  + \ell \deg(\CL_{\text{\tiny{low}}}) > n \quad \text{ if }\quad 2r - n > \ell.
\end{equs}
In the following we will also exploit that $f(0) = g(0) = 1$.
\begin{itemize}
\item {\bf Case $r = 0$:} As $ r< q = 1$ we have for all $n$  that
\[
\CK^{k,0}_{o_2} (  { G},n)(\tau)  = 0.
\]
\item {\bf Case $r = 1$:}  For $n = 1$ we obtain 
\[
\CK^{k,1}_{o_2} ( { G},n)(\tau)  = -i  \Psi^{1}_{n,1}\left( \CL_{\text{\tiny{dom}}},0 \right) 
= \int_0^\tau \xi f(\xi) d\xi = \frac{1}{2i k_1^2}\left( \tau  e^ {2i \tau k_1^2}-  \frac{ e^{2 i \tau k_1^2}-1}{2 i k_1^2}\right)
\]
as condition \eqref{condE} takes  for $\ell = 0$ the form $2-n> 0$.

 On the other hand, for $n \geq  2$ we have that $n \geq \deg(\CL_{\text{\tiny{dom}}})$ such that 
\[
\CK^{k,1}_{o_2} (  { G},n)(\tau)  = -i  {  \tilde g}(0) \int_0^\tau \xi d\xi =- i  \frac{\tau^2}{2}.
\]

\item \noindent {\bf Case $r = 2$:} If $ n \geq 4$ we obtain
\[
\CK^{k,2}_{\alpha} (  { G},n)(\tau) = - i \left( \frac{\tau^2}{2} + \frac{ {  \tilde g}(\tau) - 1}{\tau} \frac{\tau^3}{2}\right).
\]

Let $n \leq 3$. We  have that
 \begin{equs}
\CK^{k,2}_{\alpha} (  { G},n)(\tau)  & =  -i \left( \Psi^{2}_{n,1}\left( \CL_{\text{\tiny{dom}}},0 \right)  +
\frac{g(\tau)-1}{\tau}  \Psi^{2}_{n,1}\left( \CL_{\text{\tiny{dom}}},1 \right) \right) \\
& =   -i \left(\Psi^{2}_{n,1}\left( \CL_{\text{\tiny{dom}}},0 \right)  +
\frac{g(\tau)-1}{\tau}  \Psi^{2}_{n,1}\left( \CL_{\text{\tiny{dom}}},1 \right)\right)
\end{equs}
and condition \eqref{condE} takes  the form $4-n> \ell$.

If $\ell = 1$ we thus obtain for $n =1,2$ 
\begin{equs}
 \Psi^{2}_{n \leq 2 ,1}\left( \CL_{\text{\tiny{dom}}},1 \right)  = 
 \int_0^\tau \xi^2 f(\xi) d\xi 
 = \frac{ \tau^2 }{2 i k_2^2} \left(e^{2i \tau k_1^2}
 - 2  \Psi^{1}_{1,1}\left( \CL_{\text{\tiny{dom}}},0 \right) 
 \right)
\end{equs}
and
for $n = 3$
\begin{equs}
\Psi^{2}_{n>2,1}\left( \CL_{\text{\tiny{dom}}},1 \right) = f(0)\int_0^\tau \xi^{2}d\xi = \frac{\tau^3}{3}.
\end{equs}

If $\ell = 0$, on the other hand,  condition \eqref{condE} holds for $n = 1,2,3$. 
Henceforth,  we have that
\[
 \Psi^{2}_{n \leq 3,1}\left( \CL_{\text{\tiny{dom}}},0 \right) 
 = \int_0^\tau \xi f(\xi)d\xi =  \Psi^{1}_{1,1}\left( \CL_{\text{\tiny{dom}}},0 \right) .
\]
\end{itemize}
\end{example}

\begin{lemma} \label{Taylor_bound}  We keep the notations of Definition~\ref{Taylor_exp}. We suppose that $   q \leq r$ then one has
\begin{equation}
- i \vert \nabla\vert^{\alpha} (k) \int_{0}^{\tau} \xi^{q} { e^{i \xi \left (\CL_{\text{\tiny{dom}}} + \CL_{\text{\tiny{low}}}\right)}}  d\xi -\CK^{k,r}_{o_2} (  { G},n)(\tau) = \CO(\tau^{r+2} k^{\bar n})
\end{equation} 
where $ \bar n = \max(n, \deg(\CL_{\text{\tiny{low}}}^{r-q +1}) + \alpha) $.
\end{lemma}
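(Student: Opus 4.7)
The plan is to split the analysis according to the two regimes of Definition~\ref{Taylor_exp} and, within each, to estimate the difference
$$\mathcal{E}(\tau) := -i \vert \nabla\vert^{\alpha}(k) \int_0^{\tau} \xi^q \tilde g(\xi)\, d\xi - \CK^{k,r}_{o_2}(G,n)(\tau), \qquad \tilde g(\xi)=f(\xi)g(\xi),$$
by means of Taylor's theorem with integral remainder applied to the exponentials $\tilde g$, $g$ and $f$. Throughout I will use that $\deg(\CL_{\text{dom}})\geq \deg(\CL_{\text{low}})$ (so that $\deg(\CL_{\text{dom}}+\CL_{\text{low}})=\deg(\CL_{\text{dom}})$), that the hypothesis $q\leq r$ ensures all Taylor orders appearing below are nonnegative, and that the $m$-th derivative of $e^{i\xi Q}$ at any point is bounded pointwise by $\vert Q\vert^m$, whose growth in the frequencies is controlled by $k^{m\deg(Q)}$ in the sense of the paper's notion of degree.

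\emph{Case 1: $n \geq \deg(\CL_{\text{dom}}^{r+1})+\alpha$.} Expanding $\tilde g$ at $0$ at order $r-q$, Taylor's remainder is bounded by $\xi^{r-q+1}\vert \CL_{\text{dom}}+\CL_{\text{low}}\vert^{r-q+1}$. Multiplying by $\xi^q$, integrating over $[0,\tau]$ and pulling out the $\vert \nabla\vert^{\alpha}(k)$ prefactor gives
$$\mathcal{E}(\tau) = \CO\!\left(\tau^{r+2}\,k^{(r-q+1)\deg(\CL_{\text{dom}})+\alpha}\right),$$
and $(r-q+1)\deg(\CL_{\text{dom}})+\alpha \leq (r+1)\deg(\CL_{\text{dom}})+\alpha = \deg(\CL_{\text{dom}}^{r+1})+\alpha \leq n\leq \bar n$.

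\emph{Case 2: $n < \deg(\CL_{\text{dom}}^{r+1})+\alpha$.} Here I proceed in two nested steps. First, Taylor-expanding only the lower-order factor $g$ at order $r-q$ gives
$$\int_0^\tau \xi^q \tilde g(\xi)\, d\xi = \sum_{\ell \leq r-q} \frac{g^{(\ell)}(0)}{\ell!}\int_0^\tau \xi^{\ell+q}f(\xi)\, d\xi + \int_0^\tau \xi^q f(\xi) R_g(\xi)\, d\xi,$$
with $\vert R_g(\xi)\vert \lesssim \xi^{r-q+1}\vert \CL_{\text{low}}\vert^{r-q+1}$; after multiplication by $\vert \nabla\vert^{\alpha}(k)$ the remainder integral contributes $\CO(\tau^{r+2}\,k^{\deg(\CL_{\text{low}}^{r-q+1})+\alpha})$, which is absorbed by the very definition of $\bar n$. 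Second, for each $\ell\leq r-q$ I compare $\int_0^\tau \xi^{\ell+q}f(\xi)\,d\xi$ with $\Psi^{r}_{n,q}(\CL_{\text{dom}},\ell)(\tau)$: in the regime \eqref{e:Pim} the two coincide, while in the regime \eqref{e:Pim2} a further Taylor expansion of $f$ at order $r-q-\ell$ produces a remainder of order $\tau^{r+2}\,k^{(r-q-\ell+1)\deg(\CL_{\text{dom}})}$. Combining this with the prefactor $\vert g^{(\ell)}(0)/\ell!\vert\lesssim k^{\ell\deg(\CL_{\text{low}})}$ and the $\vert \nabla\vert^{\alpha}(k)$ factor yields an error whose degree in $k$ is exactly $(r-q-\ell+1)\deg(\CL_{\text{dom}})+\ell\deg(\CL_{\text{low}})+\alpha$, which by the threshold separating the two subcases of $\Psi^{r}_{n,q}$ in \eqref{e:Pim2} is $\leq n \leq \bar n$.

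The main obstacle will be the bookkeeping in Case 2, where the two cascaded Taylor expansions (first of $g$, then possibly of $f$ inside $\Psi^{r}_{n,q}$) have to produce a common error bound $\tau^{r+2}k^{\bar n}$ uniformly in $\ell\leq r-q$ and uniformly across the two subcases of $\Psi^{r}_{n,q}$. This reduces to checking that the thresholds built into \eqref{e:Pim1}--\eqref{e:Pim2} match exactly the degrees in $k$ produced by each partial expansion, which is a direct verification once the pointwise derivative bounds for $f$, $g$ and $\tilde g$ are in place.
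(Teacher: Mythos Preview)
Your proof is correct and follows essentially the same approach as the paper: split into the two regimes of Definition~\ref{Taylor_exp}, apply Taylor's theorem to $\tilde g$ in the first regime and to $g$ then $f$ in the second, and then verify in each case that the resulting power of $k$ is bounded by $\bar n$ using the thresholds built into the definition. Your presentation is in fact slightly more explicit than the paper's in tracking how the exponent $(r-q+1)\deg(\CL_{\text{dom}})+\alpha$ in Case~1 is dominated by $n$ via $q\geq 0$, and in invoking the threshold for \eqref{e:Pim2} to close Case~2.
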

\begin{proof}

Recall the notation of Definition~\ref{Taylor_exp} which implies that
$$
 \int_{0}^{\tau} \xi^{q} { e^{i \xi \left (\CL_{\text{\tiny{dom}}} + \CL_{\text{\tiny{low}}}\right)}}  d\xi 
 =  \int_{0}^{\tau}  \xi^{q} f(\xi) g(\xi) d\xi 
$$
with $f(\xi)  =  e^{i \xi \CL_{\text{\tiny{dom}}}}$ and $
g(\xi)  =  e^{i \xi \CL_{\text{\tiny{low}}}}$.
 It is just a consequence of Taylor expanding the functions $ g,  {  \tilde g}$ and $ f $. If $  n \geq \text{deg}\left(\mathcal{L}_{\text{\tiny{dom}}}^{r}\right) + \alpha $ we have 
\begin{equs}
-i \vert \nabla\vert^{\alpha} (k) & \int_{0}^{\tau}  \xi^{q} f(\xi) g(\xi) d\xi +i \vert \nabla\vert^{\alpha} (k) \sum_{\ell \leq r - q} \frac{ {  \tilde g}^{(\ell)}(0)}{\ell!} \int_0^{\tau}   \xi^{\ell+q} d \xi \\ &  =
\CO( \tau^{r+2} \vert \nabla\vert^{\alpha} (k) \mathcal{L}_{\text{\tiny{dom}}}^{r+1}) 
\\ & = \CO(\tau^{r+2} k^{\bar n}).
\end{equs} 
Else, we get 
\begin{equs}
- i \vert \nabla\vert^{\alpha} (k) & \int_{0}^{\tau}  \xi^{q} f(\xi) g(\xi) d\xi + i \vert \nabla\vert^{\alpha} (k) \sum_{\ell \leq r - q} \frac{g^{(\ell)}(0)}{\ell!} \int_0^{\tau}   \xi^{\ell+q} f(\xi)  d \xi   \\ & = \CO(\tau^{r+2} \vert \nabla\vert^{\alpha} (k) g^{(r-q+1)})
 \\& = \CO(\tau^{r+2} \vert \nabla\vert^{\alpha} (k) \CL_{\text{\tiny{low}}}^{r-q+1})
\end{equs}
where the latter follows from the observation that  $ g^{(\ell)}(\xi) = (i \CL_{\text{\tiny{low}}})^{\ell} e^{i \xi \CL_{\text{\tiny{low}}}} $.

If  on the other hand $  \left(r- q - \ell +1 \right) \deg(\CL_{\text{\tiny{dom}}})  + \ell \deg(\CL_{\text{\tiny{low}}}) + \alpha \leq n $ then
\begin{equs}
& - i \vert \nabla\vert^{\alpha} (k) \frac{g^{(\ell)}(0)}{\ell!} \int_0^{\tau}   \xi^{\ell+q} f(\xi)  d \xi +i \vert \nabla\vert^{\alpha} (k)\frac{g^{(\ell)}(0)}{\ell!} \sum_{m \leq r - q - \ell} \frac{f^{(m)}(0)}{m!}  \int_0^{\tau}    \xi^{\ell+m+q}  d \xi 
\\ &   = \frac{g^{(\ell)}(0)}{\ell!} \int_0^{\tau}   \xi^{\ell+q} \CO \left(\xi^{r-q-\ell+1} \vert \nabla\vert^{\alpha} (k)\CL_{\text{\tiny{dom}}}^{r-q-\ell+1} \right)  d \xi
\\ & = \CO(\tau^{r+2} \vert \nabla\vert^{\alpha} (k) \CL_{\text{\tiny{low}}}^{\ell} \CL_{\text{\tiny{dom}}}^{r-q-\ell+1} )
\\ & = \CO(\tau^{r+2} k^n )
\end{equs}
which allows us to conclude.
\end{proof}

\begin{remark} In the proof of Lemma~\ref{Taylor_bound}, one has 
\begin{equs}
\deg \left( \CL_{\text{\tiny{low}}}^{\ell} \CL_{\text{\tiny{dom}}}^{r-q-\ell+1} \right) \geq\deg \left( \CL_{\text{\tiny{low}}}^{r-q+1} \right). 
 \end{equs}
 If $ n = \deg \left( \CL_{\text{\tiny{low}}}^{r-q+1} \right) + \alpha $ we cannot carry out  a Taylor series expansion of $ f $ and we have to perform the integration exactly. This will give a more complicate numerical scheme. If on the other hand,  $ n $ is larger, part of the Taylor expansions of $ f $ will be possible. In fact, $ n $ corresponds to the regularity of the solution we assume a priori, see Remark \ref{rem:regi}.
\end{remark}

\begin{remark} \label{better error}
In Lemma~\ref{Taylor_bound} we express the  approximation error in terms of  powers of $ k $. This will be enough for conducting the local error analysis for the general scheme. One can be more precise and keep the full structure by replacing $ k $ by monomials in $ \CL_{\text{\tiny{dom}}} $ and $ \CL_{\text{\tiny{low}}} $. This could be certainly useful when one wants to perform the global error analysis and needs to keep track of the full error structure.
\end{remark}

\subsection{A Birkhoff type factorisation}
\label{sec::Brikhoff}
The character $ \Pi^n : \CH \rightarrow \CC  $ is quite complex since one needs to  compute  several nonlinear interactions (oscillations) at the same time.
Indeed, most of the time the operator $  \CK^{k,r}_{o_2} \left( \cdot,n \right) $ is applied to a linear combination of monomials of the form $ e^{i \xi P_j(k)} $. We want to single out every oscillation through a factorisation of this character. 
We start by introducing a splitting with a projection $ \CQ $:
\begin{equs}
\CC = \CC_- \oplus \CC_+, \quad \CQ : \CC \rightarrow \CC_-
\end{equs}
where $ \CC_- $ is the space of polynomials $ Q(\xi) $ and $ \CC_+ $ is the subspace of  functions of the form $ z \mapsto \sum_j Q_j(z)e^{i z P_j(k_1,...,k_n) } $ with $ P_j \neq 0 $. 

\begin{remark} \label{Birkoffnatural}  In the classical Birkhoff factorisation for  Laurent series, we consider $A = \C[[t,t^{-1}]$, with finite pole-part. In this context, the splitting reads $ A = A_- \oplus A_+  $ where $A_-  =  t^{-1}\C[t^{-1}]$ and $A_+ =  \C[[t]]$, such that $ \CQ $ keeps only the pole part of a series: 
\begin{equs}
	\CQ\Big( \sum_{n} a_n t^{n} \Big)  = \sum_{ n< 0} a_n t^{n} \in A_-.
\end{equs}
The idea is to remove the divergent part of the series, i.e.,  its pole part. In our context the structure of the factorisation is quite different, as we are interested in singling out the oscillations. Let suppose that we start with
a term of the form $ z \mapsto e^{i z P(k_1,...,k_n) }$, where $P$ corresponds to the dominant part of some  differential operator. Then, the integral over this term  yields two contributions:
\begin{align*}
\int_{0}^{t} e^{i \xi P(k_1,...,k_n)} d\xi = 
\frac{ e^{i t  P(k_1,...,k_n)} - 1}{i \small{P(k_1,...,k_n)}}.
\end{align*}
One is the oscillation $e^{i z P(k_1,...,k_n) }$ we start with evaluated at time $z = t$ and the other one is the constant term $ -1 $. These terms are obtained by applying recursively the projection $ \CQ $. This approach seems new from the literature and it is quite different in spirit from what has been observed for singular SPDEs.
\end{remark}

We set 
\begin{equs}\label{K}
\CK^{k,r}_{o_2,-}:= \CQ \circ \CK^{k,r}_{o_2} , \quad \CK^{k,r}_{o_2,+}:=\left( \id - \CQ \right) \circ \CK^{k,r}_{o_2} .
\end{equs}
One has 
\[
\CK^{k,r}_{o_2} = \CK^{k,r}_{o_2,-} + \CK^{k,r}_{o_2,+}.
\]
We define a character $ A^n : \CH_+ \rightarrow \C  $ by
\begin{equation}
\begin{aligned}\label{An}
 A^n( \CI^{(r,m)}_{o_2}(  \lambda_{k}^{\ell} F)) & = \left( \CQ \circ  \partial^{m} \Pi^{n} \CI^{r}_{o_2}(  \lambda_{k}^{\ell} F) \right)(0).
\end{aligned}
\end{equation}
 where  $\partial^{m} \Pi^{n} \CI^{r}_{o_2}(  \lambda_{k}^{\ell} F)$ is the $ m $th derivative of the function $ t \mapsto  (\Pi^{n} \CI^{r}_{o_2}(  \lambda_{k}^{\ell} F))(t) $.
The character $ A^n $ applied to $ \CI^{(r,m)}_{o_2}(  \lambda_{k}^{\ell} F) $ is extracting the coefficient of $ \tau^{m} $ multiplied by $ m! $  in $ \Pi^n \CI^{r}_{o_2}(  \lambda_{k}^{\ell} F) $. If we extend $ \Pi^n $ to $ \CH_+ $ by setting
\begin{equs}
 \Pi^n( \CI^{(r,m)}_{o_2}(  \lambda_{k}^{\ell} F))  =
  \partial^{m} \Pi^{n} \CI^{r}_{o_2}(  \lambda_{k}^{\ell} F)
\end{equs} 
then we have a new expression for $ A^n $
\begin{equs} \label{nice expressiion An}
A^n = \left( \CQ \circ \Pi^n \cdot \right)(0).
\end{equs}

We define a character $ \hat \Pi^n : \CH \rightarrow \CC $ which computes only one interaction by applying repeatedly the projection $ \id - \CQ $
\begin{equation} \label{def_A_B}
\begin{aligned}
\hat{\Pi}^n \left( F \cdot \bar F \right)(\tau) & = 
\left( \hat{\Pi}^n F  \right)(\tau)  \left( \hat{\Pi}^n  \bar F \right)(\tau), \quad (\hat{\Pi}^n  \lambda^{\ell})(\tau) = \tau^{\ell}, \\
\hat{\Pi}^n(\CI^{r}_{o_1}( \lambda_{k}^{\ell}  F ))(\tau)  & = \tau^{\ell }e^{i \tau P_{o_1}(k)} \hat{\Pi}^n (\CD^{r-\ell}(F))(\tau),  \\
 \hat \Pi^n(\CI^{r}_{o_2}( \lambda_{k}^{\ell}  F ))   & = \CK_{o_2,+}^{k,r}( \hat \Pi^n(  \lambda^{\ell} \CD^{r-\ell-1}(F)),n   )
  \end{aligned}
\end{equation} 
{for $ F, \bar{F} \in \CH $. One can notice that $ \hat \Pi^n $ takes value in $ \CC_+ $ on elements of the form $\CI^{r}_{o_2}( \lambda_{k}^{\ell}  F ) $.}
The character $ \hat{\Pi}^n $ is central for deriving the local error analysis for the numerical scheme. It has nice properties outlined in Proposition~\ref{factorisation_dom} which are crucial for proving Theorem~\ref{approxima_tree}. We provide an identity for the approximation $ \Pi^n $:
\begin{equation} \label{antipode_def}
\begin{aligned}
\Pi^n = \left( \hat  \Pi^n \otimes A^n \right) \Delta.
\end{aligned} 
\end{equation}
 In the identity above, we do not need a multiplication because $ A^n(F) \in \C $ for every $ F \in \CH_+ $ and $ \CC$ is a $ \C $-vector space. Therefore, we use the identification $ \mathcal{C} \otimes \C \cong \mathcal{C} $.

\begin{proposition} \label{Prop 3.7}
The two definitions \eqref{recursive_pi_r} and \eqref{antipode_def} coincide.
\end{proposition}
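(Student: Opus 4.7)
The plan is to prove the identity by induction on the number of edges in the decorated forest, reducing to trees by multiplicativity and then inspecting each generator of $\CH$ against the recursive formula \eqref{def_deltas} for the coaction. Since $\hat\Pi^n$ and $A^n$ are characters, and $\Delta$ is multiplicative modulo the forest product on the left leg, both sides of \eqref{antipode_def} agree on forest products whenever they agree on each individual tree. The base cases $\one$ and $\lambda^\ell$ are immediate: $\Delta \lambda^\ell = \lambda^\ell \otimes \one$ together with $A^n(\one)=1$ and $\hat\Pi^n(\lambda^\ell)(\tau)=\tau^\ell$ reproduces $\Pi^n(\lambda^\ell)$. The case $\sigma = \CI^r_{o_1}(\lambda^\ell_k F)$ with $o_1 \notin \Lab_+$ is equally straightforward: by \eqref{def_deltas} the coaction commutes with $\CI^r_{o_1}(\lambda^\ell_k \cdot)$, so pulling it through and invoking the inductive hypothesis on $\CD^{r-\ell}(F)$ together with the definition of $\hat\Pi^n$ on such edges yields $\tau^\ell e^{i\tau P_{o_1}(k)}\Pi^n(\CD^{r-\ell}(F))(\tau)$, matching \eqref{recursive_pi_r}.

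The substantive case is $\sigma = \CI^r_{o_2}(\lambda^\ell_k F)$ with $o_2 \in \Lab_+$, where \eqref{def_deltas} splits
$$\Delta \sigma = \bigl(\CI^r_{o_2}(\lambda^\ell_k \cdot) \otimes \id\bigr)\Delta \CD^{r-\ell-1}(F) + \sum_{m \leq r+1} \frac{\lambda^m}{m!} \otimes \CI^{(r,m)}_{o_2}(\lambda^\ell_k F).$$
Applying $\hat\Pi^n \otimes A^n$, the first summand becomes, by the recursive definition \eqref{def_A_B} of $\hat\Pi^n$ and the multiplicativity $\hat\Pi^n(\lambda^\ell F_1) = \tau^\ell \hat\Pi^n(F_1)$,
$$\CK^{k,r}_{o_2,+}\!\bigl(\tau^\ell (\hat\Pi^n \otimes A^n)\Delta \CD^{r-\ell-1}(F),\, n\bigr) = \CK^{k,r}_{o_2,+}\!\bigl(\Pi^n(\lambda^\ell \CD^{r-\ell-1}(F)),\, n\bigr),$$
where we used the inductive hypothesis on $\CD^{r-\ell-1}(F)$ and the fact that $\CK^{k,r}_{o_2,+}$ is linear in its first slot when evaluated on finite sums of monomials of the form $\xi^q e^{i\xi P(k_1,\ldots,k_n)}$ (the only shape produced by $\Pi^n$).

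For the second summand, the definition \eqref{An} of $A^n$ combined with \eqref{nice expressiion An} gives $A^n(\CI^{(r,m)}_{o_2}(\lambda^\ell_k F)) = \partial^m\bigl(\CQ \circ \Pi^n \CI^r_{o_2}(\lambda^\ell_k F)\bigr)(0)$, and by the recursive definition \eqref{recursive_pi_r} together with \eqref{K} this coincides with $\partial^m \CK^{k,r}_{o_2,-}(\Pi^n(\lambda^\ell \CD^{r-\ell-1}(F)), n)(0)$. The crucial observation is that this function of $\tau$ is a polynomial of degree at most $r+1$: inspecting the explicit formulas \eqref{e:Pim1}--\eqref{e:Pim2}, the contributions to $\CC_-$ come exclusively from integrals $\int_0^\tau \xi^{\ell+m+q}d\xi$ with $\ell+m+q\leq r$, and from the non-oscillatory residuals of repeated integration by parts against $e^{i\xi\CL_{\mathrm{dom}}}$, all of which are polynomials in $\tau$ of degree $\leq r+1$. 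Consequently its Taylor series around $0$ truncates exactly at $m \leq r+1$ and reconstructs the function:
$$\sum_{m \leq r+1} \frac{\tau^m}{m!}\, A^n(\CI^{(r,m)}_{o_2}(\lambda^\ell_k F)) = \CK^{k,r}_{o_2,-}\!\bigl(\Pi^n(\lambda^\ell \CD^{r-\ell-1}(F)),\, n\bigr).$$
Adding the two contributions and using $\CK^{k,r}_{o_2} = \CK^{k,r}_{o_2,+} + \CK^{k,r}_{o_2,-}$ recovers \eqref{recursive_pi_r}.

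The main obstacle is the bookkeeping around the polynomial degree bound on $\CQ\circ \Pi^n\CI^r_{o_2}(\lambda^\ell_k F)$, which is what guarantees that the \emph{finite} Taylor sum $\sum_{m\leq r+1}$ appearing in $\Delta$ is exactly the Taylor series of the polynomial; this in turn is what makes the truncation parameter $r$ on the approximated trees compatible with the cutting rule encoded by $\Delta$. Aside from this verification and the mild technicality of promoting the definition of $\CK^{k,r}_{o_2,\pm}$ by linearity from single monomials to the finite $\C$-linear combinations output by $\Pi^n$, the remainder of the argument is a purely formal unwinding of the recursive definitions.
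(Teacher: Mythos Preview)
Your proof is correct and follows essentially the same inductive strategy as the paper: multiplicativity reduces to trees, the $o_1$ case is immediate from \eqref{def_deltas}, and in the $o_2$ case the two legs of $\Delta$ reproduce $\CK^{k,r}_{o_2,+}$ and $\CK^{k,r}_{o_2,-}$ respectively. The one place where you add genuine content is the verification that $\CK^{k,r}_{o_2,-}\bigl(\Pi^n(\lambda^\ell\CD^{r-\ell-1}(F)),n\bigr)$ is a polynomial of degree at most $r+1$, so that the truncated Taylor sum $\sum_{m\le r+1}$ coming from the coaction reconstructs it exactly; the paper simply asserts this identity without comment, so your explicit check against the formulas in Definition~\ref{Taylor_exp} is a useful elaboration rather than a deviation. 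One cosmetic point: when you write $A^n(\CI^{(r,m)}_{o_2}(\lambda^\ell_k F)) = \partial^m\bigl(\CQ\circ\Pi^n\CI^r_{o_2}(\lambda^\ell_k F)\bigr)(0)$ you have silently commuted $\CQ$ with $\partial^m$ relative to \eqref{An}; this is harmless since differentiation preserves both $\CC_-$ and $\CC_+$, but it is worth stating.
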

\begin{proof} We prove this identity by induction on the number of edges of a forest.
We first consider a tree of the form $ \CI^{r}_{o_1}(  \lambda_k^{\ell} F)  $ then we get:
\[
\begin{aligned}
 \left( \hat \Pi^n(\cdot)(\tau) \otimes A^n  \right)
\Delta \CI^{r}_{o_1}(   \lambda_k^{\ell} F )
 & = \left(\hat \Pi^n \left( \CI^{r}_{o_1}(  \lambda_k^{\ell} \cdot  )  \right)(\tau) \otimes A^n \right) \Delta \CD^{r - \ell}(F) 
\\ & = \tau^{\ell}  e^{i \tau P_{o_1}(k)} \left( \hat \Pi^n \left( \cdot \right)(\tau) \otimes A^n  \right) \Delta \CD^{r - \ell}(F)
\\ & = \tau^{\ell} e^{i \tau P_{o_1}(k)} (\Pi^n \CD^{r - \ell}(F))(\tau)
\\ & = \left( \Pi^n  \CI^{r}_{o_1}(   \lambda_k^{\ell} F ) \right)(\tau),
\end{aligned}
\]
where we have used our inductive hypothesis.
We look now at a tree of the form  $   \CI^{r}_{o_2}( \lambda_{k}^{\ell} F)  $:
\[
\begin{aligned}
& \left( \hat  \Pi^n \otimes A^n  \right)
\Delta \CI^{r}_{o_2}( \lambda_{k}^{\ell} F)   = 
\left( \hat \Pi^n(\CI^{r}_{o_2}(   \lambda_k^{\ell} \cdot)) \otimes A^n \right)
\Delta \CD^{r-\ell-1}(F) \\&  + \sum_{m \leq r + 1} \frac{1}{m!}\hat{\Pi}^n( \lambda^{m}) A^n( \CI^{(r,m)}_{o_2}( \lambda_{k}^{\ell} F) ) 
\\ & = \CK_{o_2,+}^{r,k}
\left( \hat \Pi^n( \lambda^{\ell} \cdot) \otimes A^n, n  \right)
\Delta \CD^{r-\ell-1}(F)  + \CK_{o_2,-}^{r,k}(\Pi^n  \lambda^{\ell} \CD^{r -\ell-1}(F),n)
\\ 
& = \CK_{o_2,+}^{r,k}(\Pi^n  \lambda^{\ell} \CD^{r-\ell-1}(F),n) +  \CK_{o_2,-}^{r,k}(\Pi^n  \lambda^{\ell} \CD^{r-\ell-1}(F),n)
\\ & =  \Pi^n \left(\CI^{r}_{o_2}( \lambda^{\ell}_{k} F) \right)
\end{aligned}
\]
where we used the following identification
\begin{equs}
(\hat \Pi^n \otimes A^n) (F_1 \otimes F_2) = ( \hat \Pi^n F_1 \otimes A^n F_2 ) = \hat \Pi^n(F_1) A^n(F_2)
\end{equs}
and that
\begin{equs}
& \sum_{m \leq r+1}  \frac{1}{m!}\hat{\Pi}^n( \lambda^{m}) \, A^n( \CI^{(r,m)}_{o_2}( \lambda_{k}^{\ell} F) ) \\ & = \sum_{m \leq r + 1} \frac{1}{m!}\hat{\Pi}^n( \lambda^{m}) \,\left( \CQ \circ  \partial^{m} \Pi^n \CI^{r}_{o_2}(  \lambda_{k}^{\ell} F) \right)(0) 
 \\ & =  \CK_{o_2,-}^{r,k}(\Pi^n  \lambda^{\ell} \CD^{r - 1-\ell}(F),n).
\end{equs}
\end{proof}
 The interest of the decomposition given
by Proposition~\ref{Prop 3.7} comes from Proposition~\ref{factorisation_dom}: $\hat{\Pi}^n$ only involves one oscillation.

\begin{example} \label{AnBn}
We compute $ A^n $ and $ \hat \Pi^n $ for the following decorated trees that appear  for the  cubic NLS equation, see also  \eqref{nlsTK}:
\begin{equs}
  T_1  = \begin{tikzpicture}[scale=0.2,baseline=-5]
\coordinate (root) at (0,0);
\coordinate (tri) at (0,-2);
\coordinate (t1) at (-2,2);
\coordinate (t2) at (2,2);
\coordinate (t3) at (0,3);
\draw[kernels2,tinydots] (t1) -- (root);
\draw[kernels2] (t2) -- (root);
\draw[kernels2] (t3) -- (root);
\draw[symbols] (root) -- (tri);
\node[not] (rootnode) at (root) {};t
\node[not] (trinode) at (tri) {};
\node[var] (rootnode) at (t1) {\tiny{$ k_{\tiny{1}} $}};
\node[var] (rootnode) at (t3) {\tiny{$ k_{\tiny{2}} $}};
\node[var] (trinode) at (t2) {\tiny{$ k_3 $}};
\end{tikzpicture}, \quad  T_2  = \begin{tikzpicture}[scale=0.2,baseline=-5]
\coordinate (root) at (0,0);
\coordinate (tri) at (0,-2);
\coordinate (t1) at (-2,2);
\coordinate (t2) at (2,2);
\coordinate (t3) at (0,3);
\draw[kernels2,tinydots] (t1) -- (root);
\draw[kernels2] (t2) -- (root);
\draw[kernels2] (t3) -- (root);
\draw[symbols] (root) -- (tri);
\node[not] (rootnode) at (root) {};t
\node[not] (trinode) at (tri) {};
\node[var] (rootnode) at (t1) {\tiny{$ k_{\tiny{4}} $}};
\node[var] (rootnode) at (t3) {\tiny{$ \ell $}};
\node[var] (trinode) at (t2) {\tiny{$ k_5 $}};
\end{tikzpicture}, \quad T_3  = \begin{tikzpicture}[scale=0.2,baseline=-5]
\coordinate (root) at (0,0);
\coordinate (tri) at (0,-2);
\coordinate (t1) at (-2,2);
\coordinate (t2) at (2,2);
\coordinate (t3) at (0,2);
\coordinate (t4) at (0,4);
\coordinate (t41) at (-2,6);
\coordinate (t42) at (2,6);
\coordinate (t43) at (0,8);
\draw[kernels2,tinydots] (t1) -- (root);
\draw[kernels2] (t2) -- (root);
\draw[kernels2] (t3) -- (root);
\draw[symbols] (root) -- (tri);
\draw[symbols] (t3) -- (t4);
\draw[kernels2,tinydots] (t4) -- (t41);
\draw[kernels2] (t4) -- (t42);
\draw[kernels2] (t4) -- (t43);
\node[not] (rootnode) at (root) {};
\node[not] (rootnode) at (t4) {};
\node[not] (rootnode) at (t3) {};
\node[not] (trinode) at (tri) {};
\node[var] (rootnode) at (t1) {\tiny{$ k_{\tiny{4}} $}};
\node[var] (rootnode) at (t41) {\tiny{$ k_{\tiny{1}} $}};
\node[var] (rootnode) at (t42) {\tiny{$ k_{\tiny{3}} $}};
\node[var] (rootnode) at (t43) {\tiny{$ k_{\tiny{2}} $}};
\node[var] (trinode) at (t2) {\tiny{$ k_5 $}};
\end{tikzpicture}
\end{equs}
where $ \ell = -k_1 + k_2 + k_3 $. Then, when $ r = 0  $ and $ n < 2 $, one has from \eqref{Psi0}
\begin{equs}
 (\Pi^n \CD^r(T_1))(\tau) = -i \frac{e^{2 i \tau k_1^2}-1}{2 i k_1^2}.
\end{equs}
On the other hand, 
\begin{equs}
 \Delta \CD^{0}(T_1) = \CD^{0}(T_1) \otimes \one + \one \otimes \hat \CD^{(0,0)}(T_1)
 \end{equs}
and
 \begin{equs} \hat \Pi^{n}(\CD^{0}(T_1)) = -\frac{e^{2 i \tau k_1^2}}{2  k_1^2}, \quad A^n(\hat \CD^{(0,0)}(T_1)) =  \frac{1}{2  k_1^2}.
\end{equs}
When $ n=2 $, one gets
\begin{equs}
(\Pi^n \CD^0(T_1))(\tau) = \tau,  \quad \hat \Pi^{n}(\CD^{0}(T_1)) = 0, \quad A^n(\hat \CD^{(0,0)}(T_1)) =  1.
\end{equs}
Now, we consider the tree $ T_3 $ and we assume that $ r=1 $ and $ n =2 $. We calculate that (for details see  \eqref{computescheme2} in Section \ref{sec:examples}) 
\begin{equs}
 (\Pi^n \CD^1(T_3))(\tau) =  - \frac{\tau^2}{2}.
\end{equs}
Then, 
\begin{equs}
\Delta & \begin{tikzpicture}[scale=0.2,baseline=-5]
\coordinate (root) at (0,0);
\coordinate (tri) at (0,-2);
\coordinate (t1) at (-2,2);
\coordinate (t2) at (2,2);
\coordinate (t3) at (0,2);
\coordinate (t4) at (0,4);
\coordinate (t41) at (-2,6);
\coordinate (t42) at (2,6);
\coordinate (t43) at (0,8);
\draw[kernels2,tinydots] (t1) -- (root);
\draw[kernels2] (t2) -- (root);
\draw[kernels2] (t3) -- (root);
\draw[symbols] (root) -- (tri);
\draw[symbols] (t3) -- (t4);
\draw[kernels2,tinydots] (t4) -- (t41);
\draw[kernels2] (t4) -- (t42);
\draw[kernels2] (t4) -- (t43);
\node[not] (rootnode) at (root) {};
\node[not] (rootnode) at (t4) {};
\node[not] (rootnode) at (t3) {};
\node[not,label= {[label distance=-0.2em]below: \scriptsize  $ 1 $}] (trinode) at (tri) {};
\node[var] (rootnode) at (t1) {\tiny{$ k_{\tiny{4}} $}};
\node[var] (rootnode) at (t41) {\tiny{$ k_{\tiny{1}} $}};
\node[var] (rootnode) at (t42) {\tiny{$ k_{\tiny{3}} $}};
\node[var] (rootnode) at (t43) {\tiny{$ k_{\tiny{2}} $}};
\node[var] (trinode) at (t2) {\tiny{$ k_5 $}};
\end{tikzpicture} =
\begin{tikzpicture}[scale=0.2,baseline=-5]
\coordinate (root) at (0,0);
\coordinate (tri) at (0,-2) ;
\coordinate (t1) at (-2,2);
\coordinate (t2) at (2,2);
\coordinate (t3) at (0,2);
\coordinate (t4) at (0,4);
\coordinate (t41) at (-2,6);
\coordinate (t42) at (2,6);
\coordinate (t43) at (0,8);
\draw[kernels2,tinydots] (t1) -- (root);
\draw[kernels2] (t2) -- (root);
\draw[kernels2] (t3) -- (root);
\draw[symbols] (root) -- (tri);
\draw[symbols] (t3) -- (t4);
\draw[kernels2,tinydots] (t4) -- (t41);
\draw[kernels2] (t4) -- (t42);
\draw[kernels2] (t4) -- (t43);
\node[not] (rootnode) at (root) {};
\node[not] (rootnode) at (t4) {};
\node[not] (rootnode) at (t3) {};
\node[not,label= {[label distance=-0.2em]below: \scriptsize  $ 1 $} ] (trinode) at (tri) {};
\node[var] (rootnode) at (t1) {\tiny{$ k_{\tiny{4}} $}};
\node[var] (rootnode) at (t41) {\tiny{$ k_{\tiny{1}} $}};
\node[var] (rootnode) at (t42) {\tiny{$ k_{\tiny{3}} $}};
\node[var] (rootnode) at (t43) {\tiny{$ k_{\tiny{2}} $}};
\node[var] (trinode) at (t2) {\tiny{$ k_5 $}};
\end{tikzpicture} \otimes \one 
+ \one \otimes \begin{tikzpicture}[scale=0.2,baseline=-5]
\coordinate (root) at (0,0);
\coordinate (tri) at (0,-2);
\coordinate (t1) at (-2,2);
\coordinate (t2) at (2,2);
\coordinate (t3) at (0,2);
\coordinate (t4) at (0,4);
\coordinate (t41) at (-2,6);
\coordinate (t42) at (2,6);
\coordinate (t43) at (0,8);
\draw[kernels2,tinydots] (t1) -- (root);
\draw[kernels2] (t2) -- (root);
\draw[kernels2] (t3) -- (root);
\draw[symbols] (root) -- (tri);
\draw[symbols] (t3) -- (t4);
\draw[kernels2,tinydots] (t4) -- (t41);
\draw[kernels2] (t4) -- (t42);
\draw[kernels2] (t4) -- (t43);
\node[not] (rootnode) at (root) {};
\node[not] (rootnode) at (t4) {};
\node[not] (rootnode) at (t3) {};
\node[not,label= {[label distance=-0.2em]below: \scriptsize  $ (1,0) $}] (trinode) at (tri) {};
\node[var] (rootnode) at (t1) {\tiny{$ k_{\tiny{4}} $}};
\node[var] (rootnode) at (t41) {\tiny{$ k_{\tiny{1}} $}};
\node[var] (rootnode) at (t42) {\tiny{$ k_{\tiny{3}} $}};
\node[var] (rootnode) at (t43) {\tiny{$ k_{\tiny{2}} $}};
\node[var] (trinode) at (t2) {\tiny{$ k_5 $}};
\end{tikzpicture} +  \lambda \otimes \begin{tikzpicture}[scale=0.2,baseline=-5]
\coordinate (root) at (0,0);
\coordinate (tri) at (0,-2);
\coordinate (t1) at (-2,2);
\coordinate (t2) at (2,2);
\coordinate (t3) at (0,2);
\coordinate (t4) at (0,4);
\coordinate (t41) at (-2,6);
\coordinate (t42) at (2,6);
\coordinate (t43) at (0,8);
\draw[kernels2,tinydots] (t1) -- (root);
\draw[kernels2] (t2) -- (root);
\draw[kernels2] (t3) -- (root);
\draw[symbols] (root) -- (tri);
\draw[symbols] (t3) -- (t4);
\draw[kernels2,tinydots] (t4) -- (t41);
\draw[kernels2] (t4) -- (t42);
\draw[kernels2] (t4) -- (t43);
\node[not] (rootnode) at (root) {};
\node[not] (rootnode) at (t4) {};
\node[not] (rootnode) at (t3) {};
\node[not,label= {[label distance=-0.2em]below: \scriptsize  $ (1,1) $}] (trinode) at (tri) {};
\node[var] (rootnode) at (t1) {\tiny{$ k_{\tiny{4}} $}};
\node[var] (rootnode) at (t41) {\tiny{$ k_{\tiny{1}} $}};
\node[var] (rootnode) at (t42) {\tiny{$ k_{\tiny{3}} $}};
\node[var] (rootnode) at (t43) {\tiny{$ k_{\tiny{2}} $}};
\node[var] (trinode) at (t2) {\tiny{$ k_5 $}};
\end{tikzpicture}  + \frac{ \lambda^2}{2} \otimes \begin{tikzpicture}[scale=0.2,baseline=-5]
\coordinate (root) at (0,0);
\coordinate (tri) at (0,-2);
\coordinate (t1) at (-2,2);
\coordinate (t2) at (2,2);
\coordinate (t3) at (0,2);
\coordinate (t4) at (0,4);
\coordinate (t41) at (-2,6);
\coordinate (t42) at (2,6);
\coordinate (t43) at (0,8);
\draw[kernels2,tinydots] (t1) -- (root);
\draw[kernels2] (t2) -- (root);
\draw[kernels2] (t3) -- (root);
\draw[symbols] (root) -- (tri);
\draw[symbols] (t3) -- (t4);
\draw[kernels2,tinydots] (t4) -- (t41);
\draw[kernels2] (t4) -- (t42);
\draw[kernels2] (t4) -- (t43);
\node[not] (rootnode) at (root) {};
\node[not] (rootnode) at (t4) {};
\node[not] (rootnode) at (t3) {};
\node[not,label= {[label distance=-0.2em]below: \scriptsize  $ (1,2) $}] (trinode) at (tri) {};
\node[var] (rootnode) at (t1) {\tiny{$ k_{\tiny{4}} $}};
\node[var] (rootnode) at (t41) {\tiny{$ k_{\tiny{1}} $}};
\node[var] (rootnode) at (t42) {\tiny{$ k_{\tiny{3}} $}};
\node[var] (rootnode) at (t43) {\tiny{$ k_{\tiny{2}} $}};
\node[var] (trinode) at (t2) {\tiny{$ k_5 $}};
\end{tikzpicture}
\\ & +
\begin{tikzpicture}[scale=0.2,baseline=-5]
\coordinate (root) at (0,0);
\coordinate (tri) at (0,-2);
\coordinate (t1) at (-2,2);
\coordinate (t2) at (2,2);
\coordinate (t3) at (0,3);
\draw[kernels2,tinydots] (t1) -- (root);
\draw[kernels2] (t2) -- (root);
\draw[kernels2] (t3) -- (root);
\draw[symbols] (root) -- (tri);
\node[not] (rootnode) at (root) {};t
\node[not,label= {[label distance=-0.2em]below: \scriptsize  $ 1 $}] (trinode) at (tri) {};
\node[var] (rootnode) at (t1) {\tiny{$ k_{\tiny{4}} $}};
\node[var] (rootnode) at (t3) {\tiny{$ \ell $}};
\node[var] (trinode) at (t2) {\tiny{$ k_5 $}};
\end{tikzpicture} \otimes \begin{tikzpicture}[scale=0.2,baseline=-5]
\coordinate (root) at (0,0);
\coordinate (tri) at (0,-2);
\coordinate (t1) at (-2,2);
\coordinate (t2) at (2,2);
\coordinate (t3) at (0,3);
\draw[kernels2,tinydots] (t1) -- (root);
\draw[kernels2] (t2) -- (root);
\draw[kernels2] (t3) -- (root);
\draw[symbols] (root) -- (tri);
\node[not] (rootnode) at (root) {};t
\node[not,label= {[label distance=-0.2em]below: \scriptsize  $ (0,0) $}] (trinode) at (tri) {};
\node[var] (rootnode) at (t1) {\tiny{$ k_{\tiny{1}} $}};
\node[var] (rootnode) at (t3) {\tiny{$ k_{\tiny{2}} $}};
\node[var] (trinode) at (t2) {\tiny{$ k_3 $}};
\end{tikzpicture}  +  \begin{tikzpicture}[scale=0.2,baseline=-5]
\coordinate (root) at (0,0);
\coordinate (tri) at (0,-2);
\coordinate (t1) at (-2,2);
\coordinate (t2) at (2,2);
\coordinate (t3) at (0,3);
\draw[kernels2,tinydots] (t1) -- (root);
\draw[kernels2] (t2) -- (root);
\draw[kernels2] (t3) -- (root);
\draw[symbols] (root) -- (tri);
\node[not] (rootnode) at (root) {};t
\node[not,label= {[label distance=-0.2em]below: \scriptsize  $ 1 $}] (trinode) at (tri) {};
\node[var] (rootnode) at (t1) {\tiny{$ k_{\tiny{4}} $}};
\node[var] (rootnode) at (t3) {\tiny{$ ^1_\ell $}};
\node[var] (trinode) at (t2) {\tiny{$ k_5 $}};
\end{tikzpicture} \otimes \begin{tikzpicture}[scale=0.2,baseline=-5]
\coordinate (root) at (0,0);
\coordinate (tri) at (0,-2);
\coordinate (t1) at (-2,2);
\coordinate (t2) at (2,2);
\coordinate (t3) at (0,3);
\draw[kernels2,tinydots] (t1) -- (root);
\draw[kernels2] (t2) -- (root);
\draw[kernels2] (t3) -- (root);
\draw[symbols] (root) -- (tri);
\node[not] (rootnode) at (root) {};t
\node[not,label= {[label distance=-0.2em]below: \scriptsize  $ (0,1) $}] (trinode) at (tri) {};
\node[var] (rootnode) at (t1) {\tiny{$ k_{\tiny{1}} $}};
\node[var] (rootnode) at (t3) {\tiny{$ k_{\tiny{2}} $}};
\node[var] (trinode) at (t2) {\tiny{$ k_3 $}};
\end{tikzpicture}.
\end{equs}
 When, one applies $ (\hat \Pi^n \otimes A^n) $, the only non-zero contribution is given by the third term from the previous computation:
\begin{equs}
 (\hat \Pi^n  \lambda^2)(\tau) = \tau^2, \quad A^n ( \hat \CD^{(1,2)}(T_3)) = -1.
\end{equs}
In order to see a non-trivial interaction in the previous term, one has to consider a higher order approximation such as for $ r = 2  $ and $ n=3 $. In this case, one has
\begin{equs}\label{KD}
(\Pi^{n} \CD^{1}(T_1) )(\tau) & = - i \int_{0}^{\tau} e^{2i s k_1^2}    ds + \frac{\tau^2}{2} \mathscr{F}_{\text{\tiny{low}}} \left( T_1 \right) 
\\ & = -  \frac{e^{2i \tau k_1^2} }{2  k_1^2} + \frac{1}{2  k_1^2}  + \frac{\tau^2}{2} \mathscr{F}_{\text{\tiny{low}}} \left( T_1 \right) .
\end{equs}
Then, 
\begin{equs}
 A^n(\hat \CD^{(1,0)}(T_1)) =  \frac{1}{2k_1^2}, \quad A^n(\hat \CD^{(1,1)}(T_1)) = 0, \quad A^n(\hat \CD^{(1,2)}(T_1)) =  \mathscr{F}_{\text{\tiny{low}}} \left( T_1 \right) .
\end{equs}
Next one has to approximate 
\begin{equs} 
 - i \int_0^\tau  \mathrm{e}^{i s k^2} \Big( \mathrm{e}^{i s(   k_4^2 - k_5^2 - k_{123}^2)} (\Pi^n \CD^1(T_1))(s)  \Big)  d s
\end{equs}
where $ k = -k_1 + k_2 + k_3 - k_4 + k_5 $ and $ k_{123} = -k_1 + k_2 + k_3 $.   Thanks to the structure of $(\Pi^n \CD^1(T_1))(s)$, see \eqref{KD}, it remains to control the following two  oscillations 
\begin{equs}
k^2  + k_4^2 - k_5^2 - k_{123}^2 & =\mathscr{F}_{\text{\tiny{dom}}}\left( T_2 \right) +  \mathscr{F}_{\text{\tiny{low}}} \left( T_2 \right) \\
k^2  + k_4^2 - k_5^2 - k_{123}^2 + 2k_1^2 & = \mathscr{F}_{\text{\tiny{dom}}}\left( T_3 \right) +  \mathscr{F}_{\text{\tiny{low}}} \left( T_3 \right).
 \end{equs}
Then
\begin{equs}
 (\Pi^{n} \CD^2(T_3))(\tau) & = - \int_{0}^{\tau} \frac{e^{i s \mathscr{F}_{\text{\tiny{dom}}}\left( T_3 \right) }}{2ik_1^2} \left( 1+ i \mathscr{F}_{\text{\tiny{low}}} \left( T_3 \right) s - \mathscr{F}_{\text{\tiny{low}}} \left( T_3 \right)^2 \frac{s^2}{2}\right)     ds   \\ & +
 \int_{0}^{\tau} \frac{e^{i s\mathscr{F}_{\text{\tiny{dom}}}\left( T_2 \right)}}{2ik_1^2} \left( 1+ i \mathscr{F}_{\text{\tiny{low}}} \left( T_2 \right) s - \mathscr{F}_{\text{\tiny{low}}} \left( T_2 \right)^2 \frac{s^2}{2}\right)     ds - i \frac{\tau^3}{3 !} \mathscr{F}_{\text{\tiny{low}}} \left( T_1 \right) 
\end{equs}
One has the following identities:
\begin{equs}
 (\hat \Pi^n \CD^2(T_3) ) (\tau) & = -  \left( \id - \CQ \right) \int_{0}^{\tau} \frac{e^{i s \mathscr{F}_{\text{\tiny{dom}}}\left( T_3 \right) }}{2ik_1^2} \left( 1+ i \mathscr{F}_{\text{\tiny{low}}} \left( T_3 \right) s - \mathscr{F}_{\text{\tiny{low}}} \left( T_3 \right)^2 \frac{s^2}{2}\right)     ds, \\
 A^n ( \hat \CD^{(2,0)}(T_3) ) & =  \CQ (\Pi^{n} \CD^2(T_3))(\tau), \quad
  A^n ( \hat \CD^{(2,1)}(T_3) )  = A^n ( \hat \CD^{(2,2)}(T_3) ) = 0, \\
  A^n ( \hat \CD^{(2,3)}(T_3) )  & = -i \mathscr{F}_{\text{\tiny{low}}} \left( T_1 \right), \quad  \Big(\hat \Pi_n  \begin{tikzpicture}[scale=0.2,baseline=-5]
\coordinate (root) at (0,0);
\coordinate (tri) at (0,-2);
\coordinate (t1) at (-2,2);
\coordinate (t2) at (2,2);
\coordinate (t3) at (0,3);
\draw[kernels2,tinydots] (t1) -- (root);
\draw[kernels2] (t2) -- (root);
\draw[kernels2] (t3) -- (root);
\draw[symbols] (root) -- (tri);
\node[not] (rootnode) at (root) {};t
\node[not,label= {[label distance=-0.2em]below: \scriptsize  $ 2 $}] (trinode) at (tri) {};
\node[var] (rootnode) at (t1) {\tiny{$ k_{\tiny{4}} $}};
\node[var] (rootnode) at (t3) {\tiny{$ ^1_\ell $}};
\node[var] (trinode) at (t2) {\tiny{$ k_5 $}};
\end{tikzpicture} \Big)(\tau) = \Big(\hat \Pi^n  \begin{tikzpicture}[scale=0.2,baseline=-5]
\coordinate (root) at (0,0);
\coordinate (tri) at (0,-2);
\coordinate (t1) at (-2,2);
\coordinate (t2) at (2,2);
\coordinate (t3) at (0,3);
\draw[kernels2,tinydots] (t1) -- (root);
\draw[kernels2] (t2) -- (root);
\draw[kernels2] (t3) -- (root);
\draw[symbols] (root) -- (tri);
\node[not] (rootnode) at (root) {};t
\node[not,label= {[label distance=-0.2em]below: \scriptsize  $ 2 $}] (trinode) at (tri) {};
\node[var] (rootnode) at (t1) {\tiny{$ k_{\tiny{4}} $}};
\node[var] (rootnode) at (t3) {\tiny{$ ^2_\ell $}};
\node[var] (trinode) at (t2) {\tiny{$ k_5 $}};
\end{tikzpicture} \Big)(\tau) =0, 
  \\ \hat \Pi^n(  \CD^{2}(T_2)  ) & = (\id - \CQ)  \int_{0}^{\tau} e^{i s\mathscr{F}_{\text{\tiny{dom}}}\left( T_2 \right)} \left( 1+ i \mathscr{F}_{\text{\tiny{low}}} \left( T_2 \right) s - \mathscr{F}_{\text{\tiny{low}}} \left( T_2 \right)^2 \frac{s^2}{2}\right)     ds.
\end{equs}
\end{example}

In the next proposition, we write a Birkhoff type factorisation for the character $ \hat \Pi^n $ defined from $ \Pi^n $ and the antipode. Such an identity was also obtained in the context of SPDEs (see \cite{BHZ}) but with a twisted antipode. Our formulation is slightly simpler due to the simplifications observed at the level of the algebra (see Remark~\ref{comparisonalgebra}). The Proposition~\ref{Birkhoff} is not used  in the sequel but it gives an inductive way to compute 
$ \hat \Pi^n  $ in terms of $ \Pi^n $. It can be seen as a rewriting of identity~\eqref{antipode_def}.
\begin{proposition} \label{Birkhoff}
One has 
\begin{equs} \label{factorisation}
\hat \Pi^n = \left( \Pi^n \otimes (\CQ \circ \Pi^n \CA \cdot)(0) \right) \Delta.
\end{equs}
\end{proposition}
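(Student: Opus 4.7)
The plan is to deduce the identity by inverting the factorisation $\Pi^n = (\hat \Pi^n \otimes A^n)\Delta$ of Proposition \ref{Prop 3.7} inside the convolution group of characters on $\CH_+$, using the antipode $\CA$ to represent inverses. First I would set up the convolution framework: for any linear map $f \colon \CH \to \CC$ and any character $g \colon \CH_+ \to \C$, define $f \star g = (f \otimes g) \Delta \colon \CH \to \CC$ (using $\CC \otimes \C \cong \CC$). Coassociativity of the coaction, i.e.\ $(\Delta \otimes \id)\Delta = (\id \otimes \Deltap)\Delta$ from Proposition \ref{bialgebra}, makes $\star$ compatible with the convolution product $\Deltap$ on characters, and the counit axiom $(\id \otimes \one^{\star})\Delta = \id$ from Proposition \ref{Hopf_algebras} shows that $\one^{\star}$ is a right identity for $\star$.

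Next I would observe that $A^n$ is a character on $\CH_+$ (it is multiplicative by construction, as it is defined on the generators $\CI^{(r,m)}_{o_2}(\lambda_k^{\ell}F)$ and extended to forests via the forest product), with $A^n(\one)=1$. Therefore $A^n$ has an inverse in the convolution group of characters given by $A^n \circ \CA$. This is a direct computation using the antipode identity \eqref{antipode_identity}: for any $F \in \CH_+$,
\begin{equs}
(A^n \star (A^n \CA))(F) = (A^n \otimes A^n\CA)\Deltap F = A^n\bigl(\CM(\id \otimes \CA)\Deltap F\bigr) = A^n(\one)\one^{\star}(F) = \one^{\star}(F),
\end{equs}
where multiplicativity of $A^n$ is used to pull the two copies of $A^n$ inside $\CM$.

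Then I would simply combine these two ingredients. Starting from Proposition \ref{Prop 3.7}, I rewrite $\Pi^n = \hat \Pi^n \star A^n$, convolve both sides on the right with $A^n \CA$, and use associativity of $\star$ together with the previous step:
\begin{equs}
\Pi^n \star (A^n \CA) = \hat \Pi^n \star (A^n \star A^n\CA) = \hat \Pi^n \star \one^{\star} = \hat \Pi^n.
\end{equs}
Finally, unfolding the left-hand side via the definition of $\star$ and inserting the explicit expression $A^n = (\CQ \circ \Pi^n \,\cdot\,)(0)$ from \eqref{nice expressiion An} yields
\begin{equs}
\hat \Pi^n = (\Pi^n \otimes A^n \CA)\Delta = \bigl(\Pi^n \otimes (\CQ \circ \Pi^n \CA \,\cdot\,)(0)\bigr)\Delta,
\end{equs}
which is exactly the claimed Birkhoff-type factorisation.

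The only subtle point — and the place I would spend most care — is the verification that $A^n$ is a genuine character on $\CH_+$, so that the group-theoretic inversion via $\CA$ is legitimate. Everything else is formal manipulation inside a connected Hopf algebra and its comodule, so once multiplicativity of $A^n$ is secured the argument collapses to a two-line convolution calculation. No induction over tree structure is required, which is the reason this Birkhoff formulation is so compact compared to the recursive definitions \eqref{recursive_pi_r} and \eqref{def_A_B} of $\Pi^n$ and $\hat \Pi^n$.
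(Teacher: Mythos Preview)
Your proposal is correct and follows essentially the same argument as the paper: start from the already-established factorisation $\Pi^n = (\hat \Pi^n \otimes A^n)\Delta$, convolve on the right with the antipodal inverse of $A^n$, and use coassociativity together with the antipode identity $\CM(\id \otimes \CA)\Deltap = \one\one^{\star}$ to isolate $\hat \Pi^n$; then substitute $A^n = (\CQ \circ \Pi^n \cdot)(0)$. The paper's proof is slightly terser but structurally identical, and your explicit remark that multiplicativity of $A^n$ is the only point requiring care is well placed.
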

\begin{proof} From \eqref{antipode_def}, one gets:
\begin{equs} \label{identityA}
\Pi^n = \left( \hat \Pi^n \otimes (\CQ \circ \Pi^n  \cdot)(0) \right) \Delta =  \hat \Pi^n * (\CQ \circ \Pi^n  \cdot)(0) 
\end{equs}
where  the product $ * $ is defined from the coaction $ \Delta $. Then, if we multiply the identity \eqref{identityA} by the inverse $ (\CQ \circ \Pi^n \CA  \cdot)(0) $, we get
\begin{equs}
\Pi^n * (\CQ \circ \Pi^n \CA  \cdot)(0) & =\left( \hat \Pi^n * (\CQ \circ \Pi^n  \cdot)(0) \right) * (\CQ \circ \Pi^n \CA  \cdot)(0)  \\
& = \left( \left( \hat \Pi^n \otimes (\CQ \circ \Pi^n  \cdot)(0) \right) \Delta \otimes (\CQ \circ \Pi^n \CA  \cdot)(0)  \right) \Delta
\\ & = \left( \hat \Pi^n \otimes \left( (\CQ \circ \Pi^n \cdot)(0) \otimes (\CQ \circ \Pi^n \CA  \cdot)(0) \right) \Deltap  \right) \Delta
\\ & = \hat \Pi^n  
\end{equs}
where we have used
\begin{equs}
\left( \Delta \otimes \id \right) \Delta = \left( \id \otimes \Deltap \right) \Delta, \quad
\CM \left( \id \otimes \CA \right) \Deltap = \one \one^{\star}.
\end{equs}
This concludes the proof.
\end{proof}

\subsection{Local error analysis}
\label{local error analysis}
In this section, we explore the properties of the character $ \hat \Pi^n $ which allow us  to conduct the local error analysis of the approximation given by $ \Pi^n $. Proposition \ref{factorisation_dom}  below shows that only one oscillation is treated through $ \hat \Pi^n $.

\begin{proposition} \label{factorisation_dom}
For every forest  $  F \in \hat \CH $, there exists a polynomial 
$ B^n\left( \CD^r(F) \right){ (\xi)} $  such that
\begin{equs} \label{simple_formula}
\hat \Pi^n\left( \CD^r(F) \right)(\xi) =  B^n\left( \CD^r(F) \right)(\xi) e^{i \xi\mathscr{F}_{\text{\tiny{dom}}}( F)}
\end{equs}
where $ \mathscr{F}_{\text{\tiny{dom}}}(F)  $ is given in Definition~\ref{dom_freq}.  Moreover, $ B^n(\CD^r(F))(\xi) $ is given by:
\begin{equs} \label{nicefactorisation}
B^n(\CD^r(F))(\xi) =  \frac{P(\xi)}{Q}, \quad  Q = \prod_{\bar T \in A} \left(\mathscr{F}_{\text{\tiny{dom}}}(\bar T) \right)^{m_{\bar T}}
\end{equs}
where $ P(\xi) $ is a polynomial in $ \xi $ and the $k_i$, $ A $ is a set of decorated subtrees of $ F $ satisfying the same property as in Corollary~\ref{physical_map}.
\end{proposition}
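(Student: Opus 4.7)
The plan is to proceed by structural induction on $F \in \hat{\CH}$, following the recursive definitions \eqref{def_A_B} of $\hat{\Pi}^n$ and the recursion for $\mathscr{F}_{\text{\tiny{dom}}}$ in Definition~\ref{dom_freq}. The base cases $F = \one$ and $F = \lambda^\ell$ are immediate: $\hat{\Pi}^n$ returns $1$ and $\xi^\ell$ respectively, $\mathscr{F}_{\text{\tiny{dom}}}$ vanishes, and one takes $Q = 1$, $A = \emptyset$. For a forest product $F = F_1 \cdot F_2$, both $\hat{\Pi}^n$ and $\mathscr{F}_{\text{\tiny{dom}}}$ split multiplicatively/additively, so the inductive form is preserved with denominator $Q_1 Q_2$ and subtree set $A_1 \cup A_2$.

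For a tree $\CI^r_{o_1}(\lambda_k^\ell F)$ with $\Labhom_1 \notin \Lab_+$, the recursion gives $\hat{\Pi}^n(\CI^r_{o_1}(\lambda_k^\ell F))(\xi) = \xi^\ell e^{i\xi P_{o_1}(k)} \hat{\Pi}^n(\CD^{r-\ell}(F))(\xi)$. Plugging in the inductive hypothesis for $F$ and using that $\mathscr{F}_{\text{\tiny{dom}}}(\CI_{o_1}(\lambda_k F)) = P_{o_1}(k) + \mathscr{F}_{\text{\tiny{dom}}}(F)$ yields \eqref{simple_formula} directly, without enlarging $A$ or $Q$.

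The substantive case is a tree $\CI^r_{o_2}(\lambda_k^\ell F)$ with $\Labhom_2 \in \Lab_+$. By the induction hypothesis $\hat{\Pi}^n(\lambda^\ell \CD^{r-\ell-1}(F))(\xi)$ is a finite sum of terms of the form $c_j \xi^{q_j} e^{i\xi \mathscr{F}_{\text{\tiny{dom}}}(F)}$, with each coefficient $c_j$ rational in the frequencies of the product shape \eqref{nicefactorisation}. Applying $\CK^{k,r}_{o_2,+} = (\id - \CQ) \circ \CK^{k,r}_{o_2}$ term by term, the key observation is that the quantity $\CL_{\text{\tiny{dom}}}$ appearing in Definition~\ref{Taylor_exp} is precisely
\begin{equs}
\CL_{\text{\tiny{dom}}} = \CP_{\text{\tiny{dom}}}(P_{o_2}(k) + \mathscr{F}_{\text{\tiny{dom}}}(F)) = \mathscr{F}_{\text{\tiny{dom}}}(\CI_{o_2}(\lambda_k F)).
\end{equs}
The branch \eqref{e:Pim1} and the polynomial branch \eqref{e:Pim2} in Definition~\ref{Taylor_exp} contribute only polynomials in $\tau$ and are annihilated by $\id - \CQ$; the oscillatory branch \eqref{e:Pim} produces integrals of the form $\int_0^\tau \xi^{m} e^{i\xi \CL_{\text{\tiny{dom}}}} d\xi$, whose non-polynomial part (after integration by parts) is a polynomial in $\tau$ divided by a power of $\CL_{\text{\tiny{dom}}}$, multiplied by $e^{i\tau \CL_{\text{\tiny{dom}}}}$. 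Combining these contributions gives \eqref{simple_formula} with the desired exponential factor, and the denominator is enlarged by powers of $\mathscr{F}_{\text{\tiny{dom}}}(\CI_{o_2}(\lambda_k F))$, i.e.\ $A$ is updated to $A \cup \{\CI_{o_2}(\lambda_k F)\}$.

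The main bookkeeping obstacle is verifying that the enlarged set $A$ still meets the hypothesis of Corollary~\ref{physical_map}, i.e.\ the sub-tree inclusion/disjointness structure carried by Proposition~\ref{nasty_trees}. This follows because the new subtree $\CI_{o_2}(\lambda_k F)$ sits strictly above every subtree already collected from $F$ in the tree order, so it is either disjoint from or contains each previously recorded $\bar T \in A$, with the sign-compatibility of frequencies at shared leaves guaranteed by the constraint \eqref{innerdecoration} on inner node decorations. A secondary care point is that, since the inductive hypothesis delivers a \emph{sum} of monomials rather than a single one, $\CK^{k,r}_{o_2}$ must be applied linearly and each resulting oscillatory contribution carries the same frequency $\CL_{\text{\tiny{dom}}}$, which is exactly what makes the single-exponential factorisation \eqref{simple_formula} survive the induction.
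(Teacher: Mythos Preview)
Your proposal is correct and follows essentially the same inductive route as the paper's proof: both split according to the recursive definition \eqref{def_A_B}, treat the forest product and the $o_1$ case trivially, and in the $o_2$ case identify $\CL_{\text{\tiny{dom}}}$ from Definition~\ref{Taylor_exp} with $\mathscr{F}_{\text{\tiny{dom}}}(\CI_{o_2}(\lambda_k F))$ to extract the single oscillation, with the new denominator factor $\mathscr{F}_{\text{\tiny{dom}}}(T)^{m_T}$ arising from integrating $\int_0^\tau \xi^m e^{i\xi\CL_{\text{\tiny{dom}}}}\,d\xi$. Your treatment is in fact slightly more explicit than the paper's in two respects: you spell out that the purely polynomial branches \eqref{e:Pim1} (first case) and \eqref{e:Pim2} are annihilated by $\id-\CQ$, and you verify the subtree-inclusion hypothesis needed for Corollary~\ref{physical_map}, which the paper leaves implicit.
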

\begin{proof} We proceed by induction. We get 
\begin{equs}
\hat{\Pi}^n\left( \CD^r(F \cdot \bar F) \right)(\xi) & = \hat{\Pi}^n\left( \CD^r(F ) \right)(\xi)   \hat{\Pi}^n\left( \CD^r( \bar F) \right)(\xi) \\
& =  B^n\left( \CD^r(F) \right)(\xi) e^{i \xi\mathscr{F}_{\text{\tiny{dom}}}( F)}  B^n\left( \CD^r(\bar F) \right)(\xi) e^{i \xi\mathscr{F}_{\text{\tiny{dom}}}( \bar F)} \\
& =   B^n\left( \CD^r(F) \right)(\xi)  B^n\left( \CD^r(\bar F) \right)(\xi) e^{i \xi(\mathscr{F}_{\text{\tiny{dom}}}(  F) +\mathscr{F}_{\text{\tiny{dom}}}( \bar F) )}
\\ & =   B^n\left( \CD^r(F \cdot \bar F) \right)(\xi) e^{i \xi\mathscr{F}_{\text{\tiny{dom}}}( F \cdot \bar F)}.
\end{equs}
The pointwise product preserves the structure given by \eqref{nicefactorisation}.

Then for  $ T = \CI_{o_1}( \lambda_{k}^{\ell}F) $, one gets by the definition of $\hat{\Pi}^n$ given in \eqref{def_A_B} that
\begin{equs}
\hat{\Pi}^n\left( \CD^r(T) \right)(\xi) & =
 \xi^{\ell} e^{i \xi  P_{o_1}(k) } \hat{\Pi}^n\left( \CD^{r-\ell}(F) \right)(\xi)
 \\ & = \xi^{\ell} e^{i \xi  P_{o_1}(k) +i \xi\mathscr{F}_{\text{\tiny{dom}}}( F) } B^n\left( \CD^r( F) \right)(\xi) 
 \\ & =  B^n\left( \CD^r(T) \right)(\xi) e^{i \xi\mathscr{F}_{\text{\tiny{dom}}}( T)}.
\end{equs}
One gets $ B^n\left( \CD^r(T) \right)(\xi) = \xi^{\ell} B^n\left( \CD^r(F) \right)(\xi) $ and can conclude on the preservation of the factorisation \eqref{nicefactorisation}.
We end with the decorated tree $ T = \CI_{o_2}( \lambda_{k}^{\ell} F) $. By  the definition of $\hat{\Pi}^n$ given in~\eqref{def_A_B}  we obtain that
\begin{equs}
\hat \Pi^n\left( T \right)(\xi) = \CK_{o_2,+}^{k,r}( \hat \Pi^n(  \lambda^{\ell} \CD^{r-\ell-1}(F)),n   )(\xi).
\end{equs}
Now, we apply the induction hypothesis on $ F $ which yields
\begin{equs}
\hat \Pi^n( \CD^{r-\ell-1}(F))(\xi)  =  B^n( \CD^{r-\ell-1}( F))(\xi) e^{i \xi\mathscr{F}_{\text{\tiny{dom}}}(F)}
\end{equs}
and we conclude by applying Definition~\ref{Taylor_exp}. Indeed, by applying $  \CK_{o_2,+}^{k,r} $  to $ \xi^{\ell} e^{i \xi\mathscr{F}_{\text{\tiny{dom}}}( F)} $, we can get in \eqref{e:Pim} extras terms $ \frac{1}{Q} $ coming from expressions of the form
\begin{equs}
 \int_0^{\tau}   \xi^{\ell+q}    e^{i \xi\mathscr{F}_{\text{\tiny{dom}}}(T)} d \xi.
\end{equs}
Then, by computing this integral, we obtain coefficients of the form 
\begin{equs}
 \frac{1}{(i\mathscr{F}_{\text{\tiny{dom}}}(T))^{m_T}}
\end{equs}
if  $\mathscr{F}_{\text{\tiny{dom}}}(T)^{m_T} \neq 0 $. This term will be multiplied by $ B^n( \CD^{r-\ell-1}( F))(\xi) e^{i \xi\mathscr{F}_{\text{\tiny{dom}}}(F)} $ and preserves the structure given in \eqref{nicefactorisation}.
 When performing the Taylor series expansions by applying  $  \CK_{o_2,+}^{k,r} $ (cf. \eqref{e:Pim1} and \eqref{e:Pim2}) we can also get  some extra polynomials in the $ k_i $. This leads to  the factor $P(\xi)$ in \eqref{nicefactorisation} and concludes the proof.
\end{proof}
 \begin{remark}
The identity \eqref{simple_formula} shows that the character $ \hat \Pi^n $ has selected the oscillation $ e^{i \xi\mathscr{F}_{\text{\tiny{dom}}}( F)} $ for the decorated forest $F$. The complexity of this character is hidden behind the polynomial $ B^n\left( \CD^r(F) \right)(\xi)  $. It depends on the parameters $ n $ and $ r $. The explicit formula \eqref{simple_formula}  is strong enough for conducting the local error analysis. 
\end{remark}


The next recursive definition introduces a systematic way to compute the local error from the structure of the decorated tree and the coaction $ \Delta $.

\begin{definition}\label{def:Llow}
Let $ n \in \N $, $ r \in \Z $. We recursively define $ \mathcal{L}^{r}_{\text{\tiny{low}}}(\cdot,n)$ as
\begin{equs}
\mathcal{L}^{r}_{\text{\tiny{low}}}(F,n) = 1, \quad r < 0.
\end{equs}
Else
\begin{equs}
\mathcal{L}^{r}_{\text{\tiny{low}}}(\one,n) = 1, \quad
\mathcal{L}^{r}_{\text{\tiny{low}}}(F \cdot \bar F,n)  = 
\mathcal{L}^{r}_{\text{\tiny{low}}}(F,n ) + \mathcal{L}^{r}_{\text{\tiny{low}}}( \bar F,n) \\
\mathcal{L}^{r}_{\text{\tiny{low}}}(\CI_{o_1}( \lambda_{k}^{\ell}  F ),n)  = \mathcal{L}^{r-\ell}_{\text{\tiny{low}}}(  F,n ) \\
\mathcal{L}^{r}_{\text{\tiny{low}}}(\CI_{o_2}( \lambda^{\ell}_{k}  F ),n)  =   k^{\alpha} \mathcal{L}^{r-\ell-1}_{\text{\tiny{low}}}(  F,n ) + \one_{\lbrace r-\ell \geq 0 \rbrace} \sum_j k^{\bar n_j}
\end{equs}
where 
\begin{equs}
 \bar n_j   =  \max_{ m}\left(n,\deg\left( P_{(F^{(1)}_j, F^{(2)}_j,m)}  \mathscr{F}_{\text{\tiny{low}}} (\CI_{(\Labhom_2,p)}( \lambda^{\ell}_{k}  F^{(1)}_j ))^{r-\ell +1- m} + \alpha \right) \right)
\end{equs}
with
\begin{equs}
\Delta \CD^{r-\ell-1}(F) & = \sum_{j} F^{(1)}_j \otimes F^{(2)}_j, \\ \quad A^n(F^{(2)}_j)  B^n\left( F^{(1)}_j \right)(\xi)  & = \sum_{m \leq r-\ell -1} \frac{P_{(F^{(1)}_j, F^{(2)}_j,m)}}{Q_{(F^{(1)}_j, F^{(2)}_j,m)}}\xi^m
\end{equs}
and $ \mathscr{F}_{\text{\tiny{low}}} $ is defined in Definition \ref{dom_freq}. 
\end{definition}

 \begin{remark}
For a tree $ T $ with $ n $ leaves, the quantity $ \mathcal{L}^{r}_{\text{\tiny{low}}}(T,n) $ is a polynomial in the frequencies $ k_1,...,k_n $  attached to its leaves. The recursive definition of $ \mathcal{L}^{r}_{\text{\tiny{low}}}(\cdot,n) $ follows exactly the mechanism involved in the proof of Theorem~\ref{approxima_tree}.
\end{remark}
  
\begin{remark}
One can observe that the local error strongly depends  on the Birkhoff factorisation  which provides a systematic way to get all the potential contributions. Indeed, $ \bar n $ depends on $ A^n, B^n $ applied to decorated forests coming from the coaction $ \Delta $.
\end{remark}

\begin{remark}\label{rem:simpL} In Section~\ref{sec:examples}  we derive the low regularity resonance based schemes on concrete examples up to order two. In the discussed examples, one does not get any contribution from $ B^n $ and $ A^n $  (see also Example~\ref{AnBn}). Thus, one can work with a simplify definition of $\bar n_j $ given by:
\begin{equs}
\bar n_j & = \max(n, \deg( \mathscr{F}_{\text{\tiny{low}}} (\CI_{(\Labhom_2,p)}( \lambda^{\ell}_{k}  F_j ))^{r-\ell +1} + \alpha ), \end{equs}
where
\begin{equs}
  \sum_j F_j & =  \CM_{(1)} \Delta \CD^{r-\ell-1}(F), \quad F_j \in H , \quad \CM_{(1)} \left( F_1 \otimes F_2 \right) = F_1.
  \end{equs}
\end{remark}

\begin{remark} For $ n=0 $, one obtains an optimal scheme in terms of regularity of the solution which is equal to $n(T,0) = \mathcal{L}^{r}_{\text{\tiny{low}}}(T,0)$. Then, one may wish to use this information to simplify the scheme, i.e., carry out additional Taylor series expansions of the dominant parts if the regularity allows for it. This can be achieved by introducing the new decoration $  n_0  =\max_{T} n(T,0) $.
In the examples in  Section~\ref{sec:examples}, one can observe that for $  n \geq  n_0 $ one has: $ n = \mathcal{L}^{r}_{\text{\tiny{low}}}(T, n)$. In order to guarantee that this holds true in general, one can extend naturally the algebraic structure by introducing the regularity $ n $ as a decoration at the root of a decorated tree. We list below what will be the potential changes:

\begin{itemize}
    \item First in Definition~\ref{Taylor_exp}, we take into account only monomials $ \xi^\ell $ that will give the length of the Taylor approximation. We could potentially insert some polynomials in the frequency that will refine the analysis with  respect to $ n $ in case we already used some regularity coming from previous approximations.
     One can introduce an extended decoration by having another component on the monomials $  \lambda^{\ell}, \ell \in \N^{2} $ where the second decorations will stand for this regularity already used.
    \item The trees will carry at the root a decoration in $ \Z^2 $ of the form $ (r,n) \in \Z^2 $. The decoration will have the same behaviour as $ r $, it will decrease for each edge in $ \mathfrak{L}_+ $ according to the derivative $ |\nabla|^{\alpha} $ that appears in Duhamel's formula. The recursive formula \eqref{def_deltas} will have two Taylor expansions one determined by $ r $, the other one determined by $ n $ for $  n \geq  n_0 $. The Birkhoff factorisation will remain the same based on these two Taylor expansions.
\end{itemize}
This potential extension for optimising the scheme shows that the algebraic structure chosen in this work is robust and can encode various behaviours such as the order of the scheme as well as its regularity. 
\end{remark}

\begin{remark} As in Remark~\ref{better error}, we use only powers of $ k $ in Definition~\ref{def:Llow} but more structures can be preserved if one wants to conduct a  global error analysis. 
\end{remark}
Now we are in the position to state the  approximation error of $\Pi^{n,r}$ to $\Pi$ (cf. \eqref{eq:loci}).
\begin{theorem} \label{approxima_tree}
For every $ T \in \CT $ one has,
\begin{equs}
\left(\Pi T - \Pi^{n,r} T \right)(\tau)  = \mathcal{O}\left( \tau^{r+2} \mathcal{L}^{r}_{\text{\tiny{low}}}(T,n) \right)
\end{equs}
where $\Pi $ is defined in \eqref{Pi},  $\Pi^n$ is given in \eqref{recursive_pi_r} and $\Pi^{n,r} = \Pi^n  \CD^r$.
\end{theorem}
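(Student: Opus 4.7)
The plan is to proceed by induction on the size of the decorated tree $T$, matching the recursive definitions of $\Pi$, $\Pi^{n}$ and $\mathcal{L}^r_{\text{\tiny{low}}}$ step by step. The base cases $T=\one$ and $T=\lambda^\ell$ are trivial since $\Pi$ and $\Pi^n$ agree on them, and for a forest product $T = F\cdot \bar F$ the multiplicativity of both characters together with the additive definition of $\mathcal{L}^r_{\text{\tiny{low}}}$ reduces the bound to the two inductive estimates. The case $T = \CI_{o_1}(\lambda_k^\ell F)$ with $\Labhom_1\notin \Lab_+$ is also almost immediate: both $\Pi$ and $\Pi^{n}$ multiply the inner quantity by the deterministic factor $\tau^\ell e^{i\tau P_{o_1}(k)}$, which is bounded by $\tau^\ell$, so the bound $\mathcal{O}(\tau^{r+2}\mathcal{L}^{r-\ell}_{\text{\tiny{low}}}(F,n))$ from the induction hypothesis yields the required estimate via the recursive definition of $\mathcal{L}^r_{\text{\tiny{low}}}$ on $\CI_{o_1}$.

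The substantive case is $T = \CI_{o_2}(\lambda_k^\ell F)$ with $\Labhom_2\in \Lab_+$. Here I would split the error into two parts:
\begin{equs}
(\Pi T - \Pi^{n,r} T)(\tau) &= -i|\nabla|^\alpha(k)\int_0^\tau e^{i\xi P_{o_2}(k)}\xi^\ell\bigl[(\Pi F)(\xi) - (\Pi^{n}\CD^{r-\ell-1}(F))(\xi)\bigr]d\xi \\
&\quad + \Bigl[-i|\nabla|^\alpha(k)\int_0^\tau e^{i\xi P_{o_2}(k)}\xi^\ell(\Pi^n \CD^{r-\ell-1}(F))(\xi)d\xi \\
&\quad\quad - \CK^{k,r}_{o_2}\bigl(\Pi^n(\lambda^\ell\CD^{r-\ell-1}(F)),n\bigr)(\tau)\Bigr].
\end{equs}
The first bracket is controlled by the induction hypothesis applied to $F$ with order $r-\ell-1$: integrating the bound $\mathcal{O}(\xi^{r-\ell+1}\mathcal{L}^{r-\ell-1}_{\text{\tiny{low}}}(F,n))$ from $0$ to $\tau$ against $\xi^\ell$ gives exactly $\mathcal{O}(\tau^{r+2}|\nabla|^\alpha(k)\mathcal{L}^{r-\ell-1}_{\text{\tiny{low}}}(F,n))$, which matches the first summand $k^\alpha \mathcal{L}^{r-\ell-1}_{\text{\tiny{low}}}(F,n)$ in the definition of $\mathcal{L}^r_{\text{\tiny{low}}}(T,n)$.

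The main obstacle is the second bracket, since $\Pi^n(\lambda^\ell\CD^{r-\ell-1}(F))$ is in general a sum of terms of the form $\xi^\ell Q_j(\xi)e^{i\xi P_j(k_1,\ldots)}$ rather than a single monomial, so Lemma \ref{Taylor_bound} cannot be applied directly. The key idea is to use the Birkhoff factorisation \eqref{antipode_def}, $\Pi^n = (\hat\Pi^n\otimes A^n)\Delta$, together with Proposition~\ref{factorisation_dom}. Writing $\Delta\CD^{r-\ell-1}(F) = \sum_j F_j^{(1)}\otimes F_j^{(2)}$, one obtains
\begin{equs}
\Pi^n(\lambda^\ell\CD^{r-\ell-1}(F))(\xi) = \sum_j \xi^\ell B^n(F_j^{(1)})(\xi) A^n(F_j^{(2)})\, e^{i\xi\mathscr{F}_{\text{\tiny{dom}}}(F_j^{(1)})},
\end{equs}
where each summand is now exactly of the form covered by Lemma~\ref{Taylor_bound} with $q = \ell + \deg_\xi B^n(F_j^{(1)})$ and $P = \mathscr{F}_{\text{\tiny{dom}}}(F_j^{(1)})$. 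Crucially, the combination $P_{o_2}(k) + \mathscr{F}_{\text{\tiny{dom}}}(F_j^{(1)})$ produces precisely the dominant/low splitting used in the definition of $\CK^{k,r}_{o_2}$, so $\mathcal{L}_{\text{\tiny{dom}}} = \CP_{\text{\tiny{dom}}}(P_{o_2}(k)+\mathscr{F}_{\text{\tiny{dom}}}(F_j^{(1)}))$ and $\mathcal{L}_{\text{\tiny{low}}} = \mathscr{F}_{\text{\tiny{low}}}(\CI_{o_2}(\lambda^\ell_k F_j^{(1)}))$ in the notation of Definition~\ref{dom_freq}. Since $\CK^{k,r}_{o_2}$ is linear in its first argument, applying Lemma~\ref{Taylor_bound} termwise yields the bound $\mathcal{O}(\tau^{r+2}k^{\bar n_j})$ with $\bar n_j$ matching the second summand in Definition~\ref{def:Llow}; the monomial factor $P_{(F^{(1)}_j,F^{(2)}_j,m)}/Q_{(F^{(1)}_j,F^{(2)}_j,m)}$ in that definition is exactly what is captured by Proposition~\ref{factorisation_dom}. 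Summing over $j$ and combining with the first bracket gives the claimed $\mathcal{O}(\tau^{r+2}\mathcal{L}^r_{\text{\tiny{low}}}(T,n))$. The bookkeeping between the recursive definition of $\mathcal{L}^r_{\text{\tiny{low}}}$ and the indices produced by the Birkhoff decomposition is the delicate point, but it is purely combinatorial once the factorisation structure is in place.
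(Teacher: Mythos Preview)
Your proposal is correct and follows essentially the same approach as the paper: induction on the size of the forest, the identical two-term splitting for $\CI_{o_2}(\lambda_k^\ell F)$, and the use of the Birkhoff factorisation $\Pi^n = (\hat\Pi^n\otimes A^n)\Delta$ together with Proposition~\ref{factorisation_dom} to reduce the second bracket to a sum of single-oscillation terms to which Lemma~\ref{Taylor_bound} applies termwise. The only cosmetic difference is that the paper keeps the monomial degree index $m$ of $B^n(F_j^{(1)})(\xi)$ explicit throughout (hence the $P_{(F_j^{(1)},F_j^{(2)},m)}$ in Definition~\ref{def:Llow}), whereas you bundle it into $q$; the content is the same.
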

\begin{proof}  We proceed by induction on the size of a forest by using the recursive definition \eqref{recursive_pi_r} of $ \Pi^n $ and we prove a more general version of the theorem for forests. In fact, only the version for trees is needed for the local error analysis. 
First, one gets:
\begin{equs}
\left(\Pi - \Pi^{n,r} \right)(\one)(\tau)  = 0 = \mathcal{O}\left( \tau^{r+2} \mathcal{L}^{r}_{\text{\tiny{low}}}(\one,n) \right).
\end{equs}
One also has 
\begin{equs}
\left(\Pi -  \Pi^{n,r} \right)(\CI_{o_1}( \lambda_{k}^{\ell}  F ))(\tau) & = \tau^{\ell} e^{i \tau P_{o_1}(k)} (\Pi - \Pi^{n,r-\ell})(F)(\tau) \\
 & = \mathcal{O}\left( \tau^{r+2} \mathcal{L}^{r-\ell}_{\text{\tiny{low}}}(F,n) \right).
\end{equs}
Then, one gets again by  \eqref{Pi} and   \eqref{recursive_pi_r} that
\begin{equs}
\left(\Pi - \Pi^{n,r} \right)(F \cdot \bar F)(\tau) & = \left(\Pi - \Pi^{n,r} \right)(F)(\tau)  ( \Pi^{n,r} \bar F)(\tau) + (\Pi F)(\tau)  \left(\Pi -  \Pi^{n,r} \right)(\bar F)(\tau) \\
& =  \mathcal{O}\left( \tau^{r+2} \mathcal{L}^{r}_{\text{\tiny{low}}}(F,n) \right) + \mathcal{O}\left( \tau^{r+2} \mathcal{L}^{r}_{\text{\tiny{low}}}(\bar F,n) \right) \\
& = \CO \left( \tau^{r+2} \mathcal{L}^{r}_{\text{\tiny{low}}}(F \cdot \bar F,n) \right) 
\end{equs}
where we use Definition \ref{def:Llow}.
At the end, by \eqref{Pi} and   \eqref{recursive_pi_r}  and inserting zero in terms of
\[
\pm \Pi^{n,r-\ell - 1}( F)(\xi) 
\]
as well as using that $ \Pi^{n,r-\ell - 1}( F) = \Pi^n  \CD^{r-\ell-1}(F) $   we obtain 
\begin{equs}
&\left( \Pi- \Pi^{n,r} \right) \left( \CI_{o_2}( \lambda_k^{\ell} F) \right) (\tau)  = - i \vert \nabla\vert^{\alpha} (k) \int_{0}^{\tau} \xi^{\ell} e^{i \xi P_{o_2}(k)} (\Pi - \Pi^{n,r-\ell - 1})( F)(\xi) d \xi 
\\ &  - i \vert \nabla\vert^{\alpha} (k)\int_{0}^{\tau} e^{i \xi P_{o_2}(k)}  (\Pi^{n} {  \lambda^{\ell} }  \CD^{r-\ell-1}(F) )(\xi) d \xi -\CK^{k,r}_{o_2} (  \Pi^{n}( \lambda^\ell\CD^{r-\ell-1} (F)),n )(\tau)
 \\ & = \int_{0}^{\tau} \CO \left( \xi^{r+1} k^{\alpha} \mathcal{L}^{r-\ell-1}_{\text{\tiny{low}}}(F,n) \right)  d\xi + \one_{\lbrace r-\ell \geq 1 \rbrace} \sum_{j} \CO(\tau^{r+2} k^{\bar n_j}) \\
& =   \CO \left( \tau^{r+2} \mathcal{L}^{r}_{\text{\tiny{low}}}(F,n) \right) .
\end{equs}
Note that in the above calculation we have used the following decomposition
\begin{equs}
\Pi^n \left( \CD^{r-\ell-1} (F)\right) =
\left( \hat \Pi^n \otimes A^n \right) \Delta \CD^{r-\ell-1} (F)
\end{equs}
 which by Proposition~\ref{factorisation_dom} implies that one has
\begin{equs} \label{Bnu}
 \Pi^n \left( \CD^{r-\ell-1} (F)\right)(\xi) =
 \sum_{j} A^n(F^{(2)}_j)  B^n\left( F^{(1)}_j \right)(\xi) e^{i \xi\mathscr{F}_{\text{\tiny{dom}}}( F^{(1)}_j)}
\end{equs}
where we have used Sweedler notations for the coaction   $ \Delta $. Then, one has
\begin{equs}
A^n(F^{(2)}_j)  B^n\left( F^{(1)}_j \right)(\xi)  = \sum_{m \leq r-\ell -1} \frac{P_{(F^{(1)}_j, F^{(2)}_j,m)}}{Q_{(F^{(1)}_j, F^{(2)}_j,m)}}\xi^m
 \end{equs}
 where $ P_{(F^{(1)}_j, F^{(2)}_j,m)} $ and $  Q_{(F^{(1)}_j, F^{(2)}_j,m)}$ are polynomials in the frequencies $ k_1,..,k_n $.

Thus by applying Lemma~\ref{Taylor_bound}, we get an error for every term in the sum \eqref{Bnu} which is at most of the form:
\begin{equs}
\sum_j \CO(\tau^{r+2}  k^{\bar n_j} )
\end{equs}
where
\begin{equs}
 \bar n_j   =  \max_{ m}\left(n,\deg\left( P_{(F^{(1)}_j, F^{(2)}_j,m)}  \mathscr{F}_{\text{\tiny{low}}} (\CI_{(\Labhom_2,p)}( \lambda^{\ell}_{k}  F^{(1)}_j ))^{r-\ell +1- m} + \alpha \right) \right).
\end{equs}
This concludes the proof.
\end{proof}

\begin{proposition} \label{physical_space} For every decorated tree $ T = \CI^{r}_{(\Labhom,p)}( \lambda^{\ell}_{k}  F) $ in $ \CT  $  with disjoint leaves decorations  being a subset of the $k_i$ as in Assumption~\ref{assumption_physical_space}, one can map $ \Pi^n T $ back  into physical  space which means that for functions indexed by the leaves of $ T $, $ (v_u)_{u \in L_T} $, the term
\begin{equs} \label{inverse Fourier}
\mathcal{F}^{-1} \left( \sum_{k = \sum_{u \in L_T} a_u k_u}(\Pi^n T)(\xi) \right) (v_{u,a_u}, u \in L_T)
\end{equs}
can be expressed by applying classical differential operators $ \nabla^{\ell} , e^{i\xi \nabla^{m}}$, $ m,\ell \in \Z  $ to  $ v_{u,a_u} $ which are defined by $ v_{u,1} = v_u $ and $ v_{u,-1} = \overline{v_u} $. 
\end{proposition}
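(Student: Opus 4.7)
The plan is to argue by induction on the number of edges of $T$, using the Birkhoff factorisation
\begin{equs}
\Pi^n = \left(\hat \Pi^n \otimes A^n\right)\Delta
\end{equs}
together with the structural formula from Proposition~\ref{factorisation_dom}
\begin{equs}
\hat\Pi^n(\CD^r(F))(\xi) = B^n(\CD^r(F))(\xi)\, e^{i\xi\mathscr{F}_{\text{\tiny{dom}}}(F)},\qquad B^n(\CD^r(F))(\xi) = \frac{P(\xi)}{\prod_{\bar T\in A}\mathscr{F}_{\text{\tiny{dom}}}(\bar T)^{m_{\bar T}}},
\end{equs}
so that the denominators appearing in $\Pi^n T$ are, by Proposition~\ref{factorisation_dom}, precisely of the shape that Proposition~\ref{nasty_trees} (under Assumption~\ref{assumption_physical_space}) guarantees can be inverted by Proposition~\ref{Fourierproduct} and Corollary~\ref{physical_map}.

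More concretely, first I would treat the two edge types separately using the recursive identity~\eqref{recursive_pi_r}. For an edge $o_1\notin\Lab_+$, one has $(\Pi^n \CI^r_{o_1}(\lambda_k^\ell F))(\tau) = \tau^\ell e^{i\tau P_{o_1}(k)}(\Pi^n\CD^{r-\ell}(F))(\tau)$; the prefactor $\tau^\ell e^{i\tau P_{o_1}(k)}$ translates, via the disjoint-leaves structure of $k=\sum_u a_u k_u$ provided by Assumption~\ref{assumption_physical_space}, into a power of $\tau$ times a propagator $e^{i\tau\,P_{o_1}(-i\nabla)}$ acting on the product of physical-space functions produced by the inductive hypothesis applied to $F$. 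For an edge $o_2\in\Lab_+$, one has $(\Pi^n \CI^r_{o_2}(\lambda_k^\ell F))(\tau) = \CK^{k,r}_{o_2}(\Pi^n(\lambda^\ell\CD^{r-\ell-1}(F)),n)(\tau)$, and by Proposition~\ref{factorisation_dom} the argument $\Pi^n(\lambda^\ell\CD^{r-\ell-1}(F))$ is a finite sum of terms of the form $Q(\xi)\,\xi^q\,e^{i\xi\mathscr{F}_{\text{\tiny{dom}}}(\bar F)}$ where $Q(\xi)$ is a polynomial in $\xi$ with coefficients rational in the $k_i$ and whose denominator is a product of $\mathscr{F}_{\text{\tiny{dom}}}(\bar T)^{m_{\bar T}}$ for subtrees $\bar T$.

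Next I would inspect what $\CK^{k,r}_{o_2}$ does to such terms (Definition~\ref{Taylor_exp}): the two Taylor cases both produce linear combinations of pure monomials $\int_0^\tau \xi^s d\xi = \tau^{s+1}/(s+1)$ (from expanding $\tilde g$ or $f$), plus, when the full integral $\int_0^\tau \xi^s f(\xi)d\xi$ with $f(\xi)=e^{i\xi\mathcal{L}_{\text{\tiny{dom}}}}$ is kept, integration by parts yields expressions of the form
\begin{equs}
\sum_{j\ge 0}\frac{\tau^{s-j}\,e^{i\tau\mathcal{L}_{\text{\tiny{dom}}}} - c_{s,j}}{(i\mathcal{L}_{\text{\tiny{dom}}})^{j+1}}\,,
\end{equs}
where $\mathcal{L}_{\text{\tiny{dom}}} = \mathscr{F}_{\text{\tiny{dom}}}(\CI_{o_2}(\lambda_k^\ell\bar F))$. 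Thus $\CK^{k,r}_{o_2}$ preserves the form in \eqref{nicefactorisation}: the new denominator factor $\mathcal{L}_{\text{\tiny{dom}}}$ is itself of the shape $a(\sum_{u\in V}a_u k_u)^m$ by Proposition~\ref{nasty_trees}, and thanks to Assumption~\ref{assumption_physical_space} the new set $V$ either contains or is disjoint from every index set attached to the pre-existing denominators from $\bar F$ (again Proposition~\ref{nasty_trees}). This is exactly the compatibility hypothesis of Proposition~\ref{Fourierproduct}, so the inverse Fourier transform~\eqref{inverse Fourier} collapses to a product of operators $(-\Delta)^{-m/2}_V$ acting on the functions $v_{u,a_u}$, multiplied by an outer propagator $e^{i\tau\,\mathcal{L}_{\text{\tiny{dom}}}(-i\nabla)}$ and powers of $\nabla$ coming from the $|\nabla|^\alpha(k)$ factor.

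The main obstacle is the bookkeeping in the inductive step: one must check that the new dominant factor $\mathcal{L}_{\text{\tiny{dom}}}$ produced by $\CK^{k,r}_{o_2}$ is compatible, in the sense of Proposition~\ref{Fourierproduct} (nested or disjoint index sets, consistent signs $(-1)^{p_{i,j}}$), with the dominant factors already present in the denominator of $B^n(\CD^{r-\ell-1}(F))$. This is exactly what is ensured by the second statement of Proposition~\ref{nasty_trees}, applied to the subtree $\CI_{o_2}(\lambda_k^\ell\bar F)$ and each of its subtrees $\bar T_e$: the inclusion/disjointness of the sets $V$ and $\bar V$ and the sign compatibility $a_u = (-1)^{\bar p}b_u$ on $\bar V\subset V$ are precisely what Proposition~\ref{Fourierproduct} requires. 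With this compatibility in hand, the induction closes and the physical-space representation of $\Pi^n T$ is expressed entirely through classical operators $\nabla^\ell$, $(-\Delta)^{-m/2}_V$, and $e^{i\tau\nabla^m}$ acting on the leaf functions $v_{u,a_u}$.
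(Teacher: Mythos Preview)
Your approach is essentially the same as the paper's: both use the Birkhoff factorisation $\Pi^n=(\hat\Pi^n\otimes A^n)\Delta$, Proposition~\ref{factorisation_dom} for the structure of the denominators, and Proposition~\ref{nasty_trees}/Corollary~\ref{physical_map}/Proposition~\ref{Fourierproduct} for the physical-space inversion. The only difference is organisational. The paper applies Birkhoff directly at the level of $T$, writing $\Pi^n T=\sum_j \hat\Pi^n(T_j^{(1)})A^n(T_j^{(2)})$; then $\hat\Pi^n(T_j^{(1)})$ is handled \emph{without} induction by Proposition~\ref{factorisation_dom} and Proposition~\ref{Fourierproduct}, while the induction hypothesis is applied only to the factors $A^n(T_e)$ (each $T_e$ a strictly smaller cut subtree, whose root frequency $k_e$ sits as a leaf of $T_j^{(1)}$ and is therefore nested-or-disjoint with the index sets in $Q$). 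You instead peel off the top edge via the recursion~\eqref{recursive_pi_r} and then invoke the structure of $\Pi^n(\CD^{r-\ell-1}(F))$; your claimed denominator structure for this quantity is correct but does not follow from Proposition~\ref{factorisation_dom} alone (that proposition is about $\hat\Pi^n$, not $\Pi^n$) --- it requires Birkhoff on $F$ together with the induction hypothesis on the $A^n$ pieces, which is exactly the step the paper makes explicit. With that clarification your sketch closes in the same way.
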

\begin{proof} We proceed by induction using the identity \eqref{antipode_def}. The latter implies that
\begin{equs}
\Pi^n T & = \left( \hat  \Pi^n \otimes A^n \right) \Delta T 
 = \sum_j \hat \Pi^n(T_j^{(1)}) A^n(T_j^{(2)})
 \quad \Delta T  = \sum_{j} T_j^{(1)} \otimes T_{j}^{(2)}.
\end{equs} 
Then, for every $ \hat \Pi^n T_j^{(1)} $, we apply Proposition~\ref{factorisation_dom} and we get:
\begin{equs}
\hat \Pi^n\left( T_j^{(1)} \right)(\xi) =  B^n\left( T_j^{(1)} \right)(\xi) e^{i \xi\mathscr{F}_{\text{\tiny{dom}}}( T_j^{(1)})}.
\end{equs}
 Moreover, $ B^n(T_j^{(1)})(\xi) $ is given by:
\begin{equs} \label{nicefactorisationb}
B^n(T_j^{(1)})(\xi) =  \frac{P(\xi)}{Q}, \quad  Q = \prod_{\bar T \in A({T_j^{(1)}})} \left(\mathscr{F}_{\text{\tiny{dom}}}(\bar T) \right)^{m_{\bar T}}
\end{equs} 
with the notations defined in Proposition~\ref{factorisation_dom} and $ A(T_j^{(1)}) $ are some decorated subtrees of $ T_j^{(1)} $. The term $ \hat \Pi^n\left( T_j^{(1)} \right)(\xi) $ can be mapped back to physical space using Proposition~\ref{Fourierproduct}. The polynomial $ P(\xi) $ will produce derivatives  of type $ \nabla^{\ell} $ and the term $  e^{i \xi\mathscr{F}_{\text{\tiny{dom}}}( T_j^{(1)})}$ is of the form $ e^{i\xi \nabla^{m}} $.
For the terms $ A^n(T_j^{(2)}) $, we use the non-recursive definition of the map $ \Delta $. Indeed, $ T_j^{(2)} $ is a product of trees of the form $ T_e $ where $ e $ is an edge in $ T $ which was cut. The map $ A^n$ is defined from $ \Pi^n $ in \eqref{An}. Then we can apply the induction hypothesis on $ A^n(T_e) $. For each $ T_e = \CI_{(\Labhom,p)}( \lambda_{k_e}^{\ell_e} \bar T_e) $, $ k_e $ appears as a decoration at a leaf of $ T_j^{(1)} $. Then, it is either included in  $ Q $ or disjoint. This allows us to apply the inverse Fourier Transform concluding the proof.
 \end{proof}

\section{A general numerical scheme}\label{sec:genScheme}

Recall the mild solution of  \eqref{dis} given by Duhamel's formula
\begin{equation}\label{duhLin_it}
u(t) = e^{ it  \mathcal{L}\left(\nabla, \frac{1}{\varepsilon}\right)} v  - i\vert \nabla \vert^\alpha e^{ it  \mathcal{L}\left(\nabla, \frac{1}{\varepsilon}\right)}  \int_0^t e^{ -i\xi  \mathcal{L}\left(\nabla, \frac{1}{\varepsilon}\right)} p(u(t),\bar u(t)) d\xi .
\end{equation}
 For simplicity we restrict our attention to  nonlinearities of type
\begin{equs}\label{poly}
p(u, \bar u) = u^N \bar u^M
\end{equs}
which includes all examples in Section~\ref{sec:examples}.
The analysis which follows can straightforwardly be generalised to polynomials and coupled systems.

In order to describe our general numerical scheme, we first describe the  iterated integrals produced by the (high order) iteration of Duhamel's formula \eqref{duhLin_it} through a class of suitable decorated trees.

\subsection{Decorated trees generated by Duhamel's formula}

With the aid of the Fourier series expansion $u(x) = \sum_{k\in \Z^d} \hat u_k(t) e^{i k x}$ we first rewrite Duhamel's formula \eqref{duhLin_it} at the level of the Fourier coefficients:
\begin{equation}\label{duhLin_it_Fourier}
\hat u_k(t) = e^{ it  P(k)} \hat v_k  - i  \vert \nabla\vert^{\alpha} (k)e^{ it  P(k)} \int_0^t e^{ -i\xi  P(k) } p_k(u(\xi),\bar u(\xi)) d\xi 
\end{equation}
where $P(k)$ denotes the differential operator $\mathcal{L}$ in Fourier space, i.e., 
\begin{equs}
P(k) = \mathcal{L}\left(\nabla,\frac{1}{\varepsilon}\right)(k)
\end{equs} (cf. \eqref{Lldef} and \eqref{Leps}, respectively) and
\begin{equs}
 p_k(u(t),\bar u(t)) {\, :=} \sum_{k =\sum_{i} k_i - \sum_j \bar k_j} \prod_{i=1}^N \hat u_{k_i}(t) \prod_{j=1}^M \bar{\hat{u}}_{\bar k_j}(t).
\end{equs}
This equation is given in an abstract way by
\begin{equs} \label{recursion_tree_f}
U_k = \CI_{(\Labhom_1,0)}( \lambda_k) + \CI_{(\Labhom_1,0)}( \lambda_k \CI_{(\Labhom_2,0)}( \lambda_k  p_k(U, \bar U ) )  )
\end{equs}
where
\begin{equs}
 p_k(U,\bar U) {\, :=} \sum_{k =\sum_i k_i - \sum_j \bar k_j} \prod_{i=1}^N  U_{k_i} \prod_{j=1}^M  \bar{U}_{\bar k_j}
\end{equs}
and $\Lab=\{\Labhom_1,\Labhom_2\}$,  $P_{\Labhom_1}( \lambda) = P( \lambda)$ and $P_{\Labhom_2}( \lambda) = - P( \lambda)$.
\begin{remark} The two systems \eqref{duhLin_it_Fourier} and \eqref{recursion_tree_f} are equivalent. Indeed,
we can define a map $ \psi $ such that:
\begin{equs}
\psi(U_k)(u,v,t) & = \hat u_k(t) , \quad \psi(\bar U_k)(u,v,t)  = \bar{\hat{u}}_k(t)
\\ \psi\left( \CI_{o_1}( \lambda_k)  \right)(v,u,t)  & =  e^{ it   P_{o_1}(k)} \hat v_k(t) \\
\psi\left( \CI_{o_1}( \lambda_k T)  \right)(v,u,t) & =  e^{ it  P_{o_1}(k)} \psi\left( T  \right)(v,u,t) \\
\psi\left( \CI_{o_2}( \lambda_k T)  \right)(v,u,t) & =  - i \vert \nabla \vert^\alpha(k) \int_{0}^{t} e^{ i \xi  P_{o_2}(k)} \psi\left( T  \right)(v,u,\xi) d\xi.
\end{equs}
In this notation \eqref{duhLin_it_Fourier} takes the form 
\begin{equs}
\psi(U_k )= \psi\left(\CI_{(\Labhom_1,0)}( \lambda_k) \right)(v,u,0)+ \psi\left(\CI_{(\Labhom_1,0)}( \lambda_k \CI_{(\Labhom_2,1)}( \lambda_k  p_k(U, \bar U ) )  )\right).
\end{equs}
\end{remark}

 We define the notion of a rule in the same spirit as in \cite{BHZ}.  A rule is then a map $R$ assigning to each element of $\Lab \times \lbrace 0,1 \rbrace$ a non-empty collection
of tuples in $\Lab \times \lbrace 0,1 \rbrace$. 
The relevant rule describing the class of equations \eqref{recursion_tree_f}
is given by 
\begin{equs}
R(  (\Labhom_1,p) ) & = \{(), (   (\Labhom_2,p) )  \} \\
R(  (\Labhom_2,p) ) & = \{ ( (\Labhom_1,p)^{N}, (\Labhom_1,p+1)^{M})   \}
\end{equs}
where $N$ and $M$ depend on the polynomial nonlinearity \eqref{poly} and  the notation $ (\Labhom_1,p+1)^{M} $ means that $ (\Labhom_1,p+1) $ is repeated $ M $ times and the sum $ p+1 $ is performed modulo $2$. Using graphical notation, one gets:
\begin{equation}\label{e:ru}
\begin{aligned}
R\bigg( \begin{tikzpicture}[scale=0.2,baseline=0.32cm]
          \node at (0,0) [dot] (k) {}; 
           \node at (0,5)  (l) {}; 
    \draw[kernel1] (l) -- node [rect1] {\tiny$\Labhom_1, p $}   (k)  ;
\end{tikzpicture}  
\bigg)
& =\bigg\{ (), \ 
\bigg( \begin{tikzpicture}[scale=0.2,baseline=0.32cm]
          \node at (0,0) [dot] (k) {}; 
           \node at (0,5)  (l) {}; 
    \draw[kernel1] (l) -- node [rect1] {\tiny$\Labhom_2, p $}   (k)  ;
\end{tikzpicture}  
\bigg)
\bigg\} \\
R\bigg( \begin{tikzpicture}[scale=0.2,baseline=0.32cm]
          \node at (0,0) [dot] (k) {}; 
           \node at (0,5)  (l) {}; 
    \draw[kernel1] (l) -- node [rect1] {\tiny$\Labhom_2, p $}   (k)  ;
\end{tikzpicture}  
\bigg)
& =\bigg\{ 
\bigg( \bigg( \begin{tikzpicture}[scale=0.2,baseline=0.32cm]
          \node at (0,0) [dot] (k) {}; 
           \node at (0,5)  (l) {}; 
    \draw[kernel1] (l) -- node [rect1] {\tiny$\Labhom_1, p $}   (k)  ;
\end{tikzpicture}  
\bigg)^{N}, \ \bigg( \begin{tikzpicture}[scale=0.2,baseline=0.32cm]
          \node at (0,0) [dot] (k) {}; 
           \node at (0,5)  (l) {}; 
    \draw[kernel1] (l) -- node [rect1] {\tiny$\Labhom_1, p +1 $}   (k)  ;
\end{tikzpicture}  
\bigg)^{M} \bigg)
\bigg\}
\end{aligned}
\end{equation}

\begin{definition} \label{rules}
A decorated tree $ T_{\Labe}^{\Labo} $ in $ \hat \CT_0 $ is generated by $ R $ if for every node $ u $ in $ N_T $, one has 
\begin{equs}
\cup_{e \in E_u}(\Labhom(e),\Labp(e)) \in  R(e_u) 
\end{equs}
where $ E_u \subset E_T $ are the edges of the form $ (u,v) $ and $ e_u $ is the edge of the form
$ (w,u) $. The set of decorated trees generated by $ R $ is denoted by $ \hat \CT_0(R)  $ and for $ r \in \Z $, $ r \geq -1 $, we set:
\begin{equs}
 \hat \CT_0^{r}(R) = \lbrace  T_{\Labe}^{\Labo} \in \hat \CT_0{ (R)}   \,  , \deg(T_{\Labe}^{\Labo}) \leq r +1  \rbrace. 
\end{equs}
\end{definition}

Given a decorated tree $ T_{\Labe} = (T,\Labe)$ where we just have the edge decoration, the symmetry 
factor $S(T_{\Labe})$ is defined inductively by setting $S(\one)\,  { =} 1$, while if 
$T$ is of the form
\begin{equs}  
\prod_{i,j}  \mathcal{I}_{(\Labhom_{t_i},p_i)}\left( T_{i,j}\right)^{\beta_{i,j}}  
\end{equs}
with $T_{i,j} \neq T_{i,\ell}$ for $j \neq \ell$, then 
\begin{align}\label{S}
S(T)
\,  { :=} 
\Big(
\prod_{i,j}
S(T_{i,j})^{\beta_{i,j}}
\beta_{i,j}!
\Big)\;.
\end{align}
We extend this definition to any tree $ T_{\Labe}^{\Labn,\Labo} $ in $ \CT $ by setting:
\begin{equs}
S(T_{\Labe}^{\Labn,\Labo} )\,  { :=}  S(T_{\Labe} ).
\end{equs}

Then, we define the map $ \Upsilon^{p}(T)(v) $ for 
\begin{equs}
 T  = 
\CI_{(\Labhom_2,0)}( \lambda_k   \prod_{i=1}^N \CI_{(\Labhom_1,0)}( \lambda_{k_i} T_i) \prod_{j=1}^M \CI_{(\Labhom_1,1)}( \lambda_{\tilde k_j} \tilde T_j)  )  
\end{equs}
by
\begin{equs}\label{upsi}
\Upsilon^{p}(T)(v)& \,  { :=}  \partial_v^{N} \partial_{\bar v}^{M} p(v,\bar v) \prod_{i=1}^N  \Upsilon^p( \lambda_{k_i}  T_i)(v) \prod_{j=1}^M \overline{\Upsilon^p( \lambda_{\tilde k_j}\tilde T_j)(v)}\\
& = N! M! \prod_{i=1}^N  \Upsilon^p( \lambda_{k_i} T_i)(v) \prod_{j=1}^M \overline{\Upsilon^p( \lambda_{\tilde k_j}\tilde T_j)(v)}
\end{equs}
and 
\begin{equs}
\Upsilon^{p}(\CI_{(\Labhom_1,0)}( \lambda_{k})  )(v)  &\,  { :=}  \hat v_k, \quad \Upsilon^{p}(\CI_{(\Labhom_1,0)}( \lambda_{k} \tilde T ))(v)   \,  { :=}  \Upsilon^{p}( \lambda_k \tilde T)  (v), \\
\Upsilon^{p}(\CI_{(\Labhom_1,1)}( \lambda_{k})  )(v)  &\,  { :=} \bar{\hat{v}}_k,
 \quad\Upsilon^{p}(\CI_{(\Labhom_1,1)}( \lambda_{k} \tilde T)  )(v) \,  { :=} \overline{\Upsilon^{p}( \lambda_k \tilde T)  (v)}, \quad  \tilde T \neq \one .
\end{equs}

\begin{example}\label{ex:SUpsNLS1}
Assume that we have the tree 
 \begin{equs}
 T  = 
\CI_{(\Labhom_2,0)}( \lambda_k   \CI_{(\Labhom_1,1)}( \lambda_{k_1} ) \CI_{(\Labhom_1,0)}( \lambda_{k_2}) \CI_{(\Labhom_1,0)}( \lambda_{k_3})  )  
\end{equs}
then
\begin{equs}
\Upsilon^{p}\left(T\right) (v) & = 2\Upsilon^{p}\left( \CI_{(\Labhom_1,1)}( \lambda_{k_1} ) \right) (v)
 \Upsilon^{p}\left( \CI_{(\Labhom_1,0)}( \lambda_{k_2} ) \right) (v)  \Upsilon^{p}\left( \CI_{(\Labhom_1,0)}( \lambda_{k_3} ) \right) (v) \\& =2  \overline{\hat{v}}_{k_1} \hat{v}_{k_2} \hat{v}_{k_3}
\end{equs}
and
$
S(T) = 2.
$
\end{example}

If we want to find a solution $ U $ to \eqref{recursion_tree_f} as a linear combination of decorated trees, then we need to give a meaning to the conjugate of $ U $ that is $ \bar U $. We define this operation on $ \hat \CT $ recursively as:
\begin{equs}\label{bar}
\overline{ \CI_{(\Labhom,p)}( \lambda_k T)} =  \CI_{(\Labhom,p+1)}( \lambda_{k} \overline{T}), \quad \overline{T_1 \cdot T_2 } =   \overline{T}_1 \cdot \overline{T}_2.
\end{equs}
This map is well-defined from $ \hat \CH $ into itself and preserves the identity \eqref{innerdecoration}. We want to find maps $ V^{r}_k $, $ r \in \Z $, $ r \geq -1 $ such that
 \begin{equs} \label{equa_recurs}
 V^{r+1}_k = \CI_{(\Labhom_1,0)}( \lambda_k ) +  \CI_{(\Labhom_1,0)}( \lambda_k \CI_{(\Labhom_2,0)}( \lambda_k  p_k(V^{r}, \overline{V^{r}} ) )  ) .
 \end{equs}
 An explicit expression is given in the next proposition
 
 \begin{proposition}
The solution in $ \hat \CH$ of \eqref{equa_recurs} is given by the trees generated by the rule $ R $
\begin{equs}
V^{r}_k (v) = \sum_{T \in \hat \CT_0^{r}(R)} \frac{\Upsilon^{p}( \lambda_k T)(v)}{S(T)} \CI_{(\Labhom_1,0)}( \lambda_k T).
\end{equs}
\end{proposition}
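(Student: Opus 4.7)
The proof would proceed by induction on $r \geq -1$. For the base case $r = -1$, the set $\hat \CT_0^{-1}(R)$ contains only the empty forest $\one$: every non-trivial tree generated by $R$ contains at least one edge of type $(\Labhom_2, p)$ (since the rule $R((\Labhom_1, p))$ allows only the empty child-list or a single $(\Labhom_2, p)$-child), and such an edge contributes $1$ to the degree via the formula $\deg(\CI_{(\Labhom,p)}(\lambda^\ell_k T)) = \ell + \one_{\{\Labhom \in \CL_+\}} + \deg(T)$. The formula therefore reduces to the single term $\hat v_k\, \CI_{(\Labhom_1,0)}(\lambda_k)$, consistent with the initial datum in Duhamel's formula.

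For the inductive step, I would substitute the formula at level $r$ into the recursion \eqref{equa_recurs}. Expanding the nonlinearity
\[
p_k(V^r,\overline{V^r}) = \sum_{k=\sum_i k_i - \sum_j \bar k_j} \prod_{i=1}^{N} V^r_{k_i} \prod_{j=1}^{M} \overline{V^r_{\bar k_j}}
\]
via the inductive hypothesis and the conjugation rule \eqref{bar} (which converts each $\CI_{(\Labhom_1,0)}(\lambda_{\bar k_j}\tilde T_j)$ into $\CI_{(\Labhom_1,1)}(\lambda_{\bar k_j}\overline{\tilde T_j})$) yields a sum over ordered tuples $(T_1,\ldots,T_N,\tilde T_1,\ldots,\tilde T_M) \in (\hat \CT_0^{r}(R))^{N+M}$ together with leaf frequencies. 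Grafting by $\CI_{(\Labhom_1,0)}(\lambda_k\, \CI_{(\Labhom_2,0)}(\lambda_k\, \cdot\,))$ produces exactly the trees of $\hat\CT_0^{r+1}(R)$ whose trunk is followed by an $(\Labhom_2,0)$-edge: the recursive degree bound gives $1 + \max_{i,j}(\deg T_i, \deg\tilde T_j) \leq r+2$, the rule \eqref{e:ru} is satisfied at every inner node by construction, and conversely any such tree arises in this way by reading off the subtrees above the $(\Labhom_1,\cdot)$-edges. The constraint $k = \sum_i k_i - \sum_j \bar k_j$ appearing in $p_k$ coincides with \eqref{innerdecoration} at the node where the $(\Labhom_2,0)$-edge meets its children, so leaf frequencies indeed determine all inner decorations consistently.

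The heart of the argument is the matching of combinatorial factors. For a fixed target tree whose $(\Labhom_2,0)$-subtree has multiplicities $\beta_i$ (resp.\ $\gamma_j$) of identical planted children of type $\CI_{(\Labhom_1,0)}(\lambda_{k_i}T_i)$ (resp.\ $\CI_{(\Labhom_1,1)}(\lambda_{\bar k_j}\overline{\tilde T_j})$), the number of ordered $(N+M)$-tuples producing this target equals $\tfrac{N!}{\prod_i \beta_i!}\tfrac{M!}{\prod_j \gamma_j!}$. Combining with the recursive definition \eqref{S} of the symmetry factor, which yields $S(T) = \prod_i S(T_i)^{\beta_i}\beta_i!\, \prod_j S(\tilde T_j)^{\gamma_j}\gamma_j!$, and the multiplicative formula \eqref{upsi} for $\Upsilon^p$, namely $\Upsilon^p(\lambda_k T)(v) = N!M!\, \prod_i \Upsilon^p(\lambda_{k_i}T_i)(v)^{\beta_i}\prod_j \overline{\Upsilon^p(\lambda_{\bar k_j}\tilde T_j)(v)}^{\gamma_j}$, the $N!M!$ multinomial factors cancel against the symmetry denominator, and the coefficient of $\CI_{(\Labhom_1,0)}(\lambda_k T)$ collapses precisely to $\Upsilon^p(\lambda_k T)(v)/S(T)$. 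Adjoining the contribution from the first summand in \eqref{equa_recurs} (corresponding to $T = \one$) reproduces the sum formula at level $r+1$.

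The main obstacle will be the combinatorial bookkeeping: verifying that the map from ordered tuples (modulo permutations of identical sub-factors) to non-planar decorated trees in $\hat\CT_0^{r+1}(R)$ is a bijection, and that no extra factors appear when transitioning between the ordered product in $p_k$ and the symmetry-reduced sum over non-planar trees. The algebraic identity is essentially a decorated B-series composition formula, but the multi-layered decoration (edge type, conjugation flag, frequency, polynomial weight) makes careful notation essential; once the $N!M!$ factors are aligned against \eqref{S} and \eqref{upsi}, the rest is routine.
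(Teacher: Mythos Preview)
Your proof is correct and follows essentially the same approach as the paper: both proceed by induction on $r$, substitute the inductive hypothesis into the right-hand side of \eqref{equa_recurs}, and reduce the coefficient of each non-planar tree to $\Upsilon^p(\lambda_k T)/S(T)$ via the multinomial count $\tfrac{N!}{\prod \beta_i!}\tfrac{M!}{\prod \gamma_j!}$ combined with the recursive definitions \eqref{S} and \eqref{upsi}. You are somewhat more explicit than the paper about the base case $r=-1$ and about checking that the grafting lands in $\hat\CT_0^{r+1}(R)$ via the degree formula, but the combinatorial core is identical.
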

\begin{proof}We will prove this by induction. We need to expand 
\begin{equs} \label{recursion_tree_f_t}
Z_k = \CI_{(\Labhom_1,0)}( \lambda_k ) +  \CI_{(\Labhom_1,0)}( \lambda_k \CI_{(\Labhom_2,0)}( \lambda_k  p_k(V^{r}, \overline{V^{r}} ) )  ) .
\end{equs}
One has
\begin{equs}
Z_k & =  \CI_{(\Labhom_1,0)}( \lambda_k ) + \sum_{T_i, \tilde T_j \in \hat \CT_0^{r}(R)}  \sum_{k = \sum_i k_i - \sum_j \tilde k_j } \prod_{i,j} \frac{\Upsilon^{p}(T_i)}{S(T_i)} \frac{\overline{\Upsilon^{p}(\tilde T_j)}}{S(\tilde T_j)} 
\\ & \cdot \CI_{(\Labhom_1,0)}( \lambda_k \CI_{(\Labhom_2,0)}( \lambda_k\left( \prod_i   \CI_{(\Labhom_1,0)}( \lambda_{k_i} T_i) \prod_j \CI_{(\Labhom_1,1)}( \lambda_{\tilde k_j} \tilde T_j) \right))).  
\end{equs}
Then, we fix the products $ \prod_i   \CI_{(\Labhom_1,0)}( \lambda_{k_i} T_i)  $ and $ \prod_j   \CI_{(\Labhom_1,0)}( \lambda_{\tilde k_j} \tilde T_j)  $ which can be rewritten as follows
\begin{equs}
\prod_i   \CI_{(\Labhom_1,0)}( \lambda_{k_i} T_i) = \prod_{\ell}   \CI_{(\Labhom_1,0)}( \lambda_{m_\ell} F_{\ell})^{\beta_{\ell}} 
\\ 
\prod_j   \CI_{(\Labhom_1,1)}( \lambda_{\tilde k_j} \tilde T_j) = \prod_{\ell}   \CI_{(\Labhom_1,1)}( \lambda_{\tilde{m}_\ell} \tilde F_{\ell})^{\alpha_{\ell}} 
\end{equs}
where the $ F_{\ell} $ (resp. $ \tilde F_{\ell} $) are disjoints.
The number of times the same term appears when we sum over the $ F_{\ell} $ (resp. $ \tilde F_{\ell} $) and the $ k_i $ (resp. $ \tilde k_j $) is equal to $ \frac{N!}{\prod_{\ell} \beta_{\ell}  !} $ (resp. $ \frac{M!}{\prod_{\ell} \alpha_{\ell}  !} $). 
With the  identity
\begin{equs}
\prod_{i,j}   \frac{\Upsilon^{p}(T_i)}{S(T_i)} \frac{\overline{\Upsilon^{p}(\tilde T_j)}}{S(\tilde T_j)}  \frac{N!}{\prod_{\ell} \beta_{\ell}  !} \frac{M!}{\prod_{\ell} \alpha_{\ell}  !}  = \frac{\Upsilon^{p}( \lambda_k T)}{S(T)}
\end{equs}
 we thus get
\begin{equs}
Z_k = \sum_{T \in \hat \CT_0^{r+1}(R)} \frac{\Upsilon^{p}( \lambda_k T)}{S(T)} \CI_{(\Labhom_1,0)}( \lambda_k T) = V^{r+1}_{k}
\end{equs}
which concludes the proof.
\end{proof}
Before describing our numerical scheme, we need to remove the trees which are already of size $\CO(\tau^{r+2})$. Indeed, one has 
\begin{equs}
(\Pi T)( \tau) = \CO(\tau^{n_+(T)})
\end{equs} 
where  $n_+(T) $ is the number of edges of type in 
$ \Lab_+ $  corresponding to the number of integration in the definition of $ \Pi T $. Therefore, we define the space of trees $  \CT^{r}_{0}(R) $ as 
\begin{equs} \label{space_scheme}
\CT^{r}_{0}(R) = \lbrace  T \in \hat \CT^{r}_{0}(R), \, n_+(T)  \leq r +1 \rbrace. 
\end{equs}

\subsection{Numerical scheme and local error analysis}

Now, we are able to describe the general numerical scheme:
\begin{definition}[The general numerical scheme] \label{scheme} For fixed $ n, r \in \N $, we define the general numerical scheme in Fourier space as:
\begin{equs}\label{genscheme}
U_{k}^{n,r}(\tau, v) = \sum_{T \in \CT^{r+2}_{0}(R)} \frac{\Upsilon^{p}( \lambda_kT)(v)}{S(T)} \Pi^n \left( \CD^r(\CI_{(\Labhom_1,0)}( \lambda_k T)) \right)(\tau).
\end{equs}
\end{definition}

 \begin{remark}
The sum appearing in the expression~\eqref{genscheme} runs over infinitely many trees
(finitely many shapes, but infinitely many ways of splitting up the frequency $ k $ among the branches). One can notice that each leaf decorated by $ k_i $ is associated with $ v_{k_i} $ coming from the term $\Upsilon^{p}( \lambda_kT)(v)  $.  Therefore with an appropriate analytical assumption on the initial value  $ v $, i.e., if $v$ belongs to a sufficiently smooth Sobolev space, this sum converges in a suitable norm, see also Remark~\ref{sobolev_remark} below and the regularity assumptions detailed in Section \ref{sec:examples}. 
\end{remark}

\begin{remark} We can always map the term $ U_{k}^{n,r}(\tau, v) $ back to physical using classical operators. Indeed, from Proposition~\ref{physical_space} this holds true for each term  $ \Pi^n \left( \CD^r(\CI_{(\Labhom_1,0)}( \lambda_k T)) \right)(\tau) $. In practical applications this will allow us to carry out the multiplication of functions in physical space, using the Fast Fourier Transform (cf. Remark \ref{rem:FFT}). Details for concrete applications are given in Setion \ref{sec:examples}.
\end{remark}

\begin{remark} The spaces $ \CV_{k}^{r} $ given in \eqref{decoratedV1} and \eqref{decoratedV2} are  defined by:
\begin{equs}
 \CV_{k}^{r} = \lbrace \CI_{(\Labhom_1,0)}( \lambda_k T), \, T \in \CT^{r+2}_{0}(R) \rbrace.
\end{equs}
\end{remark}
The numerical scheme \eqref{genscheme}  approximates  the exact solution locally up to order $r+2$. More precisely, the following Theorem holds,
\begin{theorem}[Local error]\label{thm:genloc} 
The numerical scheme \eqref{genscheme}  with initial value $v = u(0)$ approximates the exact solution $U_{k}(\tau,v) $ up to a  local error of type
\begin{equs}
U_{k}^{n,r}(\tau,v) - U_{k}(\tau,v) = \sum_{T \in \CT^{r+2}_{0}(R)} \CO\left(\tau^{r+2} \CL^{r}_{\text{\tiny{low}}}(T,n) \Upsilon^{p}( \lambda_kT)(v) \right)
\end{equs}
where the operator $\CL^{r}_{\text{\tiny{low}}}(T,n)$, given in Definition \ref{def:Llow}, embeds the necessary regularity of the solution.
\end{theorem}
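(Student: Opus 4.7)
The plan is to decompose the difference $U_k^{n,r}(\tau,v) - U_k(\tau,v)$ into two contributions that are each controlled separately: a truncation error coming from iterating Duhamel's formula only finitely many times, and an approximation error coming from replacing the exact oscillatory character $\Pi$ by its low regularity counterpart $\Pi^{n,r} = \Pi^n \CD^r$.

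First I would expand the exact Fourier coefficient $U_k(\tau,v)$ by iterating Duhamel's formula \eqref{recursion_tree_f}. Iterating $r+2$ times and using the same combinatorial bookkeeping as in the proof that $V^r_k(v) = \sum_{T \in \hat\CT_0^r(R)} \frac{\Upsilon^p(\lambda_k T)(v)}{S(T)} \CI_{(\Labhom_1,0)}(\lambda_k T)$ solves \eqref{equa_recurs}, one obtains the identity
\begin{equs}
U_k(\tau,v) = \sum_{T \in \CT^{r+2}_0(R)} \frac{\Upsilon^p(\lambda_k T)(v)}{S(T)} \big(\Pi\, \CI_{(\Labhom_1,0)}(\lambda_k T)\big)(\tau) + \mathcal{R}_k(\tau,v),
\end{equs}
where the remainder $\mathcal{R}_k(\tau,v)$ collects all trees in $\hat\CT^{r+2}_0(R) \setminus \CT^{r+2}_0(R)$, i.e.\ those with $n_+(T) \geq r+2$. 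Since each edge of type in $\Lab_+$ contributes a time integral, a tree with $n_+(T) \geq r+2$ gives rise, via $\Pi$, to an iterated integral of size $\mathcal{O}(\tau^{r+2})$, so $\mathcal{R}_k(\tau,v) = \mathcal{O}(\tau^{r+2} \cdot q(v))$ for a suitable polynomial $q$ in $v$. This is exactly of the desired form (with $\CL^r_{\text{low}} = 1$ for the corresponding truncation-tree contributions).

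Next, for each $T \in \CT^{r+2}_0(R)$, I would subtract the corresponding term in $U_k^{n,r}(\tau,v)$ and use Theorem \ref{approxima_tree}, applied to the decorated tree $\CI_{(\Labhom_1,0)}(\lambda_k T) \in \CT$, to obtain
\begin{equs}
\big(\Pi - \Pi^{n,r}\big)\big(\CI_{(\Labhom_1,0)}(\lambda_k T)\big)(\tau) = \mathcal{O}\!\left(\tau^{r+2}\, \CL^r_{\text{low}}(\CI_{(\Labhom_1,0)}(\lambda_k T),n)\right).
\end{equs}
Since $\CI_{(\Labhom_1,0)}$ is of type $\Labhom_1 \notin \Lab_+$, the recursive definition of $\CL^r_{\text{low}}$ gives $\CL^r_{\text{low}}(\CI_{(\Labhom_1,0)}(\lambda_k T),n) = \CL^r_{\text{low}}(T,n)$, so the per-tree contribution is precisely $\mathcal{O}(\tau^{r+2} \CL^r_{\text{low}}(T,n))$. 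Multiplying by the scalar $\Upsilon^p(\lambda_k T)(v)/S(T)$ and summing over $T$ then yields
\begin{equs}
U_k^{n,r}(\tau,v) - U_k(\tau,v) = \sum_{T \in \CT^{r+2}_0(R)} \mathcal{O}\!\left(\tau^{r+2}\, \CL^r_{\text{low}}(T,n)\, \Upsilon^p(\lambda_k T)(v)\right) + \mathcal{R}_k(\tau,v),
\end{equs}
and the remainder $\mathcal{R}_k$ can be absorbed into the sum by interpreting its contributions as coming from trees of size $r+2$ with trivial $\CL^r_{\text{low}}$.

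The main obstacle is purely bookkeeping: making sure that the combinatorial derivation of the Fourier-side expansion via iterated Duhamel really produces coefficients $\Upsilon^p(\lambda_k T)(v)/S(T)$ exactly matching those in \eqref{genscheme}. This is already the content of the proposition immediately preceding \eqref{equa_recurs}, applied inductively with a controlled remainder after $r+2$ iterations. The analytic core of the theorem is Theorem \ref{approxima_tree}, which has already been proved; what remains here is to assemble its per-tree estimate into a global one, and to verify that trees omitted by the truncation $\CT^{r+2}_0(R)$ (as opposed to $\hat\CT_0(R)$) contribute to the same order.
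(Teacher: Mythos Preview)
Your proposal is correct and follows essentially the same route as the paper: introduce the truncated exact solution $U_k^r(\tau,v)=\sum_{T\in\CT^{r+2}_0(R)}\frac{\Upsilon^p(\lambda_kT)(v)}{S(T)}\,(\Pi\,\CI_{(\Labhom_1,0)}(\lambda_kT))(\tau)$, bound the truncation remainder $U_k^r-U_k$ by $\CO(\tau^{r+2})$ (the paper writes this as $\CO(\tau^{r+2}|\nabla|^{\alpha(r+2)}\tilde p(u))$), and control $U_k^{n,r}-U_k^r$ tree-by-tree via Theorem~\ref{approxima_tree}, observing that the second error dominates. Your remark that $\CL^r_{\text{\tiny low}}(\CI_{(\Labhom_1,0)}(\lambda_kT),n)=\CL^r_{\text{\tiny low}}(T,n)$ makes explicit a step the paper leaves implicit.
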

\begin{proof}
First we define the exact solution $u^r$ up to order $ r $ in Fourier space by
\begin{equs}
U_{k}^{r}(\tau,v) = \sum_{T \in \CT^{r+2}_{0}(R)} \frac{\Upsilon^{p}( \lambda_kT)(v)}{S(T)} \Pi \left( \CI_{(\Labhom_1,0)}( \lambda_k T) \right)(\tau)
\end{equs}
which satisfies
\begin{equs}\label{app1}
 u(\tau) - u^r(\tau)  = 
 \CO\left( \tau^{r+2} \vert \nabla \vert^{\alpha(r+2)} \tilde p (u(t))\right)
\end{equs}
 for some polynomial $\tilde p$ and $0 \leq t \leq \tau$. Thanks to  Proposition~\ref{approxima_tree} we furthermore obtain that
\begin{equs}\label{app2}
U_{k}^{n,r} & (\tau,v) - U_{k}^{r}(\tau,v) 
\\ & = \sum_{T \in \CT^{r+2}_{0}(R)} \frac{\Upsilon^{p}( \lambda_kT)}{S(T)}(v) \left( \Pi -\Pi^{n,r} \right) \left( \CI_{(\Labhom_1,0)}( \lambda_k T)\right)(\tau) \\
& = \sum_{T \in \CT^{r+2}_{0}(R)} \CO\left(\tau^{r+2} \CL^{r}_{\text{\tiny{low}}}(T,n)  \Upsilon^{p}( \lambda_kT)(v)\right).
\end{equs}
Next we write
\begin{equs}
U_{k}^{n,r}(\tau,v) - U_{k}(\tau,v)  & = U_{k}^{n,r}(\tau,v) - U_k^r(\tau,v)  + U_k^r(\tau,v) - U_{k}(\tau,v)
\end{equs}
where by the definition of $\CL^{r}_{\text{\tiny{low}}}(T,n)$ we easily see that the approximation error~\eqref{app2} is dominant compared to \eqref{app1}.
\end{proof}
\begin{remark}\label{rem:glob}
Theorem \ref{thm:genloc}  allows us to  state the order of consistency of the general scheme \eqref{genscheme} as well as the necessary regularity requirements on the exact solution to meet the error bound.
In  Section~\ref{sec:examples} we detail the particular form of the general scheme \eqref{genscheme} on concrete examples and explicitly determine the required regularity of the solution in the local error imposed by the operator $\CL^{r}_{\text{\tiny{low}}}(T,n)$.
\end{remark}
 \begin{remark} \label{sobolev_remark}
Theorem \ref{thm:genloc} provides a local error estimate (order of consistency) for the new resonance based schemes \eqref{genscheme}. With the aid of stability one can easily obtain a global error estimate  with the aid of Lady Windamere's fan argument \cite{H2Tri}. However,  the necessary stability estimates in general rely on the algebraic structure of the underlying space. In the stability analysis of dispersive PDEs set in Sobolev spaces $H^r$ one classically exploits bilinear estimates of type 
 \[
 \Vert v w \Vert_r \leq c_{r,d} \Vert v \Vert_r \Vert w \Vert_r.
 \]
The latter only hold for $r>d/2$ and thus restricts the analysis to sufficiently smooth Sobolev spaces $H^r$ with $r>d/2$. To obtain (sharp) $L^2$ global error estimates one needs to exploit discrete Strichartz estimates and discrete  Bourgain spaces in the periodic setting, see, e.g., \cite{IZ09,ORS19,ORS20}. This is out of the scope of this paper.
 \end{remark}

 \begin{proposition}\label{Tay} For $ n $ sufficiently large the scheme \eqref{genscheme}  coincides with a classical numerical discretisation based on Taylor series expansions of the full operator $\mathcal{L}$.
 \end{proposition}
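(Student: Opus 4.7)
The plan is to exploit the two-case structure of the operator $\CK^{k,r}_{o_2}(\cdot,n)$ in Definition~\ref{Taylor_exp}. When $n$ exceeds the threshold $\deg(\mathcal{L}_{\text{\tiny{dom}}}^{r+1}) + \alpha$ the map falls into its first branch \eqref{e:Pim1}, which is nothing but the degree-$(r-q)$ Taylor expansion at $0$ of the full oscillatory phase $\tilde g(\xi)=e^{i\xi(P_{o_2}(k)+P)}$ followed by term-by-term integration. In particular, no distinction between dominant and lower-order frequencies remains: the whole symbol $P_{o_2}(k)+P$, i.e.\ the action of the full operator $\mathcal{L}$, is being power-expanded. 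This is precisely the mechanism of a classical exponential integrator of order $r$, obtained by Taylor-expanding the phase $e^{\pm i\xi\mathcal{L}}$ under Duhamel's integral (cf.\ the bottom plateau $\Pi^r_{\text{\tiny{full}}}T$ of Remark~\ref{rem:regi} and the high-regularity error \eqref{highT}).

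First I would fix $r$ and observe that the sum in \eqref{genscheme} ranges over $\CT^{r+2}_0(R)$, which has only finitely many distinct tree shapes (only the leaf frequencies vary). For each shape one can read off, following the recursive rules \eqref{recursive_pi_r} and Definition~\ref{Taylor_exp}, the finite collection of numerical thresholds $\deg(\mathcal{L}_{\text{\tiny{dom}}}^{r+1})+\alpha$ that arise when $\CK$ is applied at each inner edge of type $\Lab_+$. Taking $n$ larger than the maximum of these thresholds over all shapes and over all recursive calls ensures that at every step the first branch of \eqref{e:Pim1} is selected. Writing $n^{r}_{\text{\tiny{full}}}(T)$ as in Remark~\ref{rem:regi} and setting $n\ge \max_{T\in\CT^{r+2}_0(R)} n^{r}_{\text{\tiny{full}}}(T)$ suffices.

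Under this choice I would proceed by induction on the structure of the tree, using the recursive definition \eqref{recursive_pi_r}. At the base, $\Pi^n\lambda^{\ell}(\tau)=\tau^\ell$ agrees with the Taylor truncation of the constant function $1$. For an inner edge $o_1\notin\Lab_+$ the multiplication by $\tau^\ell e^{i\tau P_{o_1}(k)}$ is untouched (no approximation is made, exactly as in a classical exponential integrator which keeps the outermost free propagator exact). For an inner edge $o_2\in\Lab_+$ the large-$n$ branch of $\CK^{k,r}_{o_2}$ replaces
$
-i|\nabla|^{\alpha}(k)\int_0^\tau \xi^{\ell}(\Pi^n\CD^{r-\ell-1}F)(\xi)e^{i\xi P_{o_2}(k)}\,d\xi
$
by the degree-$r$ Taylor polynomial in $\tau$ of this very integral. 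Composing such replacements recursively identifies $\Pi^n\CD^r\CI_{(\Labhom_1,0)}(\lambda_k T)$ with the degree-$(r+1)$ truncated Taylor expansion in $\tau$ of $\Pi\CI_{(\Labhom_1,0)}(\lambda_k T)$. Summing over trees and comparing with the iteration of Duhamel's formula \eqref{Itpi} — where the classical scheme is obtained by Taylor-expanding every propagator $e^{\pm i\xi_j\mathcal{L}}$ appearing under the nested integrals — gives the claimed coincidence with a classical exponential integrator type discretisation.

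The main obstacle is bookkeeping rather than analysis: one must check that the threshold imposed by Definition~\ref{Taylor_exp} at each recursive call is uniformly bounded on $\CT^{r+2}_0(R)$, so that a single finite value of $n$ triggers the full-Taylor regime throughout the whole expansion. This follows from the fact that $n^{r}_{\text{\tiny{full}}}(T)$ depends only on the (finitely many) tree shapes of size $\le r+2$ and on the fixed operator $\mathcal{L}$, hence admits a finite maximum; any such $n$ realises the statement.
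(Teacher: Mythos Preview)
Your proposal is essentially correct and follows the same route as the paper: both arguments proceed by induction on the tree, observing that for $n$ above the relevant threshold the first branch of \eqref{e:Pim1} is always triggered, so each application of $\CK^{k,r}_{o_2}$ returns a polynomial in $\tau$ and the recursion closes. The paper makes the inductive invariant a bit more explicit---it states directly that $\Pi^n(\lambda^\ell\CD^{r-\ell-1}(F))(\xi)$ is a polynomial in $\xi$, hence $P=0$ and $\tilde g(\xi)=e^{i\xi P_{o_2}(k)}$ at every step, which gives the concrete threshold $n\ge\deg(P_{\Labhom_2}^{r})$---and it also tracks the error via Lemma~\ref{Taylor_bound} to recover the classical remainder $\CO(\tau^{r+2}\partial_t^{r}v)$, whereas you phrase the threshold more abstractly through $n^r_{\text{\tiny full}}(T)$ and focus on the identification with the classical scheme rather than on the error term.
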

 \begin{proof}
  From Theorem~\ref{thm:genloc}, one has
 \begin{equs}
U_{k}^{n,r}(\tau,v) - U_{k}^{r}(\tau,v) = \sum_{T \in \CT^{r+2}_{0}(R)} \CO\left(\tau^{r+2} \CL^{r}_{\text{\tiny{low}}}(T,n) \Upsilon^{p}( \lambda_kT)(v)\right).
\end{equs}
We need to show that for $ n $ bigger than $ \deg(P_{\Labhom_2}^r) $, $ U_{k}^{n,r}(\tau,v) $ is a polynomial in $ \tau $ and that $ \CL^{r}_{\text{\tiny{low}}}(T,n) = \CO(P_{\Labhom_2}^{r}(k)) $.
Then by mapping it back to the physical space, we get:
\begin{equs}
U^{n,r}(\tau,v) - U^{r}(\tau,v) =  \CO(\tau^{r+2} \partial_t^{r} v)
\end{equs}
where $ U^{n,r}(\tau,v) $ is a polynomial in $ \tau $. The two statements can be proven by induction. One needs to see how these properties are preserved by applying the map $ \CK^{k,r}_{o_2} ( \cdot,n) $. Indeed, one has 
\begin{equs}
\left( \Pi^n \CI^{r}_{o_2}( \lambda^{\ell}_k F) \right) (\tau) & = \CK^{k,r}_{o_2} \left(  \Pi^n \left( \lambda^{\ell} \CD^{r-\ell-1}(F) \right),n \right)(\tau).
\end{equs}
Then by the induction hypothesis, we can assume that $ \Pi^n \left( \lambda^{\ell} \CD^{r-\ell-1}(F) \right)(\xi) $ is a polynomial in $ \xi $ where the coefficients are polynomials in $ k $. Then by applying Definition~\ref{Taylor_exp}, one has:
\begin{equs}
\tilde{g}(\xi) = e^{i \xi P_{o_2}(k)}.
\end{equs}
One can see that if $ n \geq \deg(P_{\Labhom_2}^{r}) $ then one Taylor expands $ \tilde{g} $ which yields a polynomial in $ \tau $ with polynomial coefficients in $ k $. Lemma~\ref{Taylor_bound} implies an error of order 
$ \CO(k^{n}) $. This concludes the proof.
 \end{proof}
 
 \begin{remark}\label{rem:Tay} 
 Proposition \ref{Tay} implies that we indeed recover classical numerical discretisations with our general framework   for smooth solutions. More precisely, one could check that depending on the particular choice of filter functions $\Psi$ (cf. Remark~\ref{rem:stab}) we recover exponential Runge--Kutta  methods and exponential integrators, respectively. For details on the latter we refer to \cite{Berland,ButcherExp,HochOst10} and the references therein. 
  \end{remark}
  
 In the examples in Section \ref{sec:examples} we will also state the local error in physical space. For this purpose we introduce the notation $\varphi^\tau$ and $\Phi^\tau$ which will denote the exact and numerical solution at time $t = \tau$,  i.e., $\varphi^\tau(v) = u(\tau)$ and $\Phi^\tau(v) = u^1\approx u(\tau)$. We write
\begin{equs}\label{lo}
\varphi^\tau (v) - \Phi^\tau(v) = \mathcal{O}_{\Vert \cdot \Vert} \left(\tau^\gamma \tilde{\CL} v\right)
\end{equs}
 if in a suitable norm $\Vert \cdot \Vert$ (e.g., Sobolev norm) it holds that
\begin{equs}
\Vert \varphi^\tau (v) - \Phi^\tau(v)\Vert \leq C(T,d,r) \tau^\gamma \sup_{0 \leq t \leq \tau}  \Vert  q\left(\tilde{\CL} \varphi^t (v)\right)\Vert
\end{equs}
for some polynomial $q$, differential operator $\tilde{\CL}$ and constant $C$ independent of $\tau$. If~\eqref{lo} holds we say that the numerical solution $u^1$ approximates the exact solution $u(t)$ at time $t = \tau$ with a local error of order $\mathcal{O}\left(\tau^\gamma \tilde{\CL} v\right)
$.


\section{Applications}\label{sec:examples}

We illustrate the general framework presented in Section \ref{sec:genScheme} on three concrete examples.  First we consider the nonlinear 
Schrödinger and the Korteweg-de Vries equation, for which we find a new class of second-order resonance based schemes.   For an extensive overview on classical, non-resonance based discretisations we thereby refer to \cite{BeDe02,CanG15,CCO08,CoGa12,Duj09,Faou12,GauLu,HochOst10,HLRS10,HLR12,HKRT12,HKR99,IZ09,Klein06,Lubich08,YMQ88,T74,Ta12} and the references therein. In addition, we illustrate the general framework on a  highly oscillatory system: The Klein--Gordon equation in the so-called non-relativistic limit regime, where the speed of light   formally tends to infinity, see, e.g.,  \cite{BaoKGZUA,BaoZ,BD,BS19,BFS17,ChC}. 
 
\subsection{Nonlinear Schrödinger} \label{sec:nls}

We consider the cubic nonlinear Schrödinger equation
\begin{equation}\label{nls}
i \partial_t u + \Delta u  = \vert u\vert^2 u
\end{equation}
with mild solution given by Duhamel's formula
\begin{equation}\label{DuhNLS}
u(\tau) = e^{i \tau \Delta} u(0) - i e^{i \tau \Delta} \int_0^\tau e^{-i \xi \Delta}
\left(\vert u(\xi)\vert^2 u(\xi)\right)d\xi.
\end{equation}
The Schrödinger equation \eqref{nls} fits into the general framework \eqref{dis} with
\begin{equation*}
\begin{aligned}
\mathcal{L}\left(\nabla, \frac{1}{\varepsilon}\right)  =\Delta, \quad \alpha = 0   \quad \text{and}\quad  p(u,\overline u) = u^2 \overline u.
 \end{aligned}
\end{equation*} 

Here $ \CL = \lbrace \Labhom_1, \Labhom_2 \rbrace $, $ P_{\Labhom_1} = - \lambda^2 $ and $ P_{\Labhom_2} =  \lambda^2 $ (cf \eqref{Palpha}). Then, we denote by~$ \<thick> $ an edge decorated by $ (\Labhom_1,0) $, $ \<thick2> $ an edge denoted by $ (\Labhom_1,1) $ by $\<thin>$ an edge decorated by $ (\Labhom_2,0) $ and by $\<thin2>$ an edge decorated by $ (\Labhom_2,1) $.
The rules that generate the trees obtained by iterating  Duhamel's formulation are given by:
\begin{equ}
R(\<thin>) = \{(\<thick>,\<thick>,\<thick2>) \}\;, \quad  R(\<thin2>) =  \{(\<thick2>,\<thick2>,\<thick>) \}\;,\quad
R(\<thick>) = \{(\<thin>) , ()\}\;,
\quad  R(\<thick2>) = \{(\<thin2>) , ()\}\;.
\end{equ}

\subsubsection{First-order schemes}
The general framework~\eqref{genscheme} derived in Section \ref{sec:genScheme}  builds the foundation of the  first-order resonance based schemes presented below for the nonlinear Schrödinger equation~\eqref{nls}. The structure of the schemes depends  on the regularity of the solution.
\begin{corollary}\label{corNLS1} For the nonlinear Schrödinger equation \eqref{nls} the general scheme~ \eqref{genscheme} takes at first order the form
\begin{align} 
\label{nls1low}
u^{\ell+1}& = e^{i \tau \Delta} u^{\ell}- i \tau e^{i \tau \Delta} \left( (u^\ell)^2 \varphi_1(-2 i \tau \Delta) \overline{u^\ell}\right) 
\end{align}
with a local error of order $\mathcal{O}(\tau^2 \vert \nabla \vert u)$ and  filter function $\varphi_1(\sigma) = \frac{e^\sigma-1}{\sigma}$. 

In case of regular solutions the general scheme~ \eqref{genscheme} takes the simplified form
\begin{align}
\label{nls1class}
u^{\ell+1} & = e^{i \tau \Delta} u^{\ell} - i \tau e^{i \tau \Delta} \left( \vert u^\ell\vert^2 u^\ell \right)  
\end{align}
with a local error of order $\mathcal{O}(\tau^2 \Delta u)$ .

\end{corollary}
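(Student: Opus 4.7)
The plan is to specialise the general scheme of Definition \ref{scheme} with $r=0$ to the cubic NLS, which amounts to enumerating $\CT^{2}_{0}(R)$, computing $\Upsilon^p$, $S$, and $\Pi^n\CD^0$ on each tree, summing everything up, and transferring the result to physical space via Proposition \ref{physical_space}. The condition $n_+(T)\leq 1$ together with the rules $R$ leaves essentially two contributions under the outer trunk $\CI_{(\Labhom_1,0)}(\lambda_k\,\cdot\,)$: the bare linear term (giving $e^{i\tau\Delta}v$) and the single cubic iterated integral corresponding to the tree $T_1$ of Example \ref{exRDoNLS}, for which Example \ref{ex:SUpsNLS1} gives $\Upsilon^p(\lambda_k T_1)(v)=2\,\overline{\hat v}_{k_1}\hat v_{k_2}\hat v_{k_3}$ and $S(T_1)=2$, with frequency constraint $k=-k_1+k_2+k_3$.

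The next step is to evaluate $\Pi^n\CD^0(T_1)$ using the recursive definition \eqref{recursive_pi_r} with the splitting $\mathscr{F}_{\text{dom}}(T_1)=2k_1^2$, $\mathscr{F}_{\text{low}}(T_1)=-2k_1(k_2+k_3)+2k_2k_3$ from Example \ref{exRDoNLS}, so that $\deg\mathscr{F}_{\text{dom}}=2$ and $\deg\mathscr{F}_{\text{low}}=1$, with $\alpha=0$. Applied with $q=0$, $\ell=0$ and $r=0$, the map $\CK^{k,0}_{o_2}$ of Definition \ref{Taylor_exp} falls into two regimes. For $n=1$ we land in the second branch of \eqref{e:Pim1} and in the integral regime \eqref{e:Pim} of $\Psi^{0}_{1,0}$: the oscillation $f(\xi)=e^{2i\xi k_1^2}$ is kept intact while $g(\xi)=e^{i\xi\mathscr{F}_{\text{low}}}$ is replaced by $g(0)=1$, producing $\int_0^\tau e^{2i\xi k_1^2}d\xi=\tau\varphi_1(2i\tau k_1^2)$. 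For $n\geq 2=\deg\mathscr{F}_{\text{dom}}$ the first branch of \eqref{e:Pim1} applies and the result collapses to $\tilde g(0)\int_0^\tau d\xi=\tau$.

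To recover the physical-space scheme I would invoke Proposition \ref{Fourierproduct} (equivalently Corollary \ref{physical_map}), whose hypotheses are satisfied since the leaves carry the disjoint decorations $k_1,k_2,k_3$ of $k=-k_1+k_2+k_3$ and $2k_1^2$ is realised, in accordance with the sign conventions of \eqref{Palpha}, by the operator $-2\Delta$ acting on the conjugated factor. After summing over all frequency triples with $k=-k_1+k_2+k_3$, the factor $1/S(T_1)=1/2$ cancels the combinatorial constant $2$ in $\Upsilon^p$, the factor $\tau\varphi_1(2i\tau k_1^2)$ transfers to $\tau\varphi_1(-2i\tau\Delta)\overline v$, and the two remaining $\hat v$ legs assemble into $v^2$; multiplying by the prefactor $-ie^{i\tau\Delta}$ of the outer trunk yields \eqref{nls1low}, while the higher-regularity regime instead produces $-i\tau e^{i\tau\Delta}(|v|^2 v)$, that is \eqref{nls1class}.

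Finally, the local error statements are a direct reading of Theorem \ref{thm:genloc} applied to the two trees, the trunk contributing nothing: using Definition \ref{def:Llow} and the simplified form described in Remark \ref{rem:simpL} (the coaction $\Delta\CD^{-1}(F)$ is trivial here, so no $B^n$/$A^n$ contribution arises at this order), one finds $\CL^{0}_{\text{low}}(T_1,1)$ of degree one and $\CL^{0}_{\text{low}}(T_1,n\geq 2)$ of degree two, giving respectively local errors $\mathcal O(\tau^2|\nabla| u)$ and $\mathcal O(\tau^2\Delta u)$. The main delicate point is bookkeeping in the second step, namely checking that $\CK^{k,0}_{o_2}$ produces exactly the filter $\varphi_1$ with no spurious derivative on the test data, together with confirming that the sign convention in \eqref{Palpha} correctly sends $2k_1^2$ to $-2\Delta$ on the conjugate slot; everything else is a routine specialisation of the general framework.
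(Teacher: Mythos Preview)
Your proposal is correct and follows essentially the same approach as the paper's proof: enumerate $\CT_0^2(R)=\{T_0,T_1\}$, compute $\Upsilon^p/S$ via Example~\ref{ex:SUpsNLS1}, evaluate $\Pi^n\CD^0$ through the recursion \eqref{recursive_pi_r} and Definition~\ref{Taylor_exp} with the dominant/lower splitting of Example~\ref{exRDoNLS}, transfer to physical space, and read off the local error from Theorem~\ref{thm:genloc} via Definition~\ref{def:Llow} and Remark~\ref{rem:simpL}. One minor bookkeeping point: the factor $-i$ is part of $\CK^{k,0}_{o_2}$ (cf.\ \eqref{e:Pim1} with $\alpha=0$), not of the outer trunk $\CI_{(\Labhom_1,0)}(\lambda_k\,\cdot\,)$, which only contributes $e^{-i\tau k^2}$; the final scheme is unaffected.
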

\begin{remark}
With the general framework introduced in Section \ref{sec:genScheme} we exactly recover the first-order resonance based  low regularity scheme \eqref{nls1low} proposed in \cite{OS18}. In addition, for smooth solution we recover a classical first-order approximation~\eqref{nls1class}, i.e., the exponential Euler method,  with a classical local error $\mathcal{O}\left(\tau^2 \Delta u\right)$. The low regularity scheme \eqref{nls1low} allows us to solve a larger class of solution due to its favorable error behavior at low regularity.\end{remark}

\begin{proof}

{\bf Construction of the schemes.}
For the first-order scheme we need a local error of order $\mathcal{O}(\tau^2)$. Therefore, we need to choose $r = 0$ in Definition \ref{scheme}  and the corresponding trees accordingly. From Definition \eqref{scheme}, the scheme is given by
\begin{equs}\label{s1}
U_{k}^{n,0}(\tau,v) = \sum_{T \in \CT_0^2(R)} \frac{\Upsilon^{p}( \lambda_k T)(v)}{S(T)} \Pi^n \left( \CD^0(\CI_{(\Labhom_1,0)}( \lambda_k T)) \right)(\tau)
\end{equs}
where one has
\begin{equs}
\CT_0^2(R) = \lbrace  T_0, T_1, \, k_i \in \Z^{d} \rbrace, \quad 
T_0 = \one \quad \text{and} \quad T_1 = \begin{tikzpicture}[scale=0.2,baseline=-5]
\coordinate (root) at (0,0);
\coordinate (tri) at (0,-2);
\coordinate (t1) at (-2,2);
\coordinate (t2) at (2,2);
\coordinate (t3) at (0,3);
\draw[kernels2,tinydots] (t1) -- (root);
\draw[kernels2] (t2) -- (root);
\draw[kernels2] (t3) -- (root);
\draw[symbols] (root) -- (tri);
\node[not] (rootnode) at (root) {};t
\node[not] (trinode) at (tri) {};
\node[var] (rootnode) at (t1) {\tiny{$ k_{\tiny{1}} $}};
\node[var] (rootnode) at (t3) {\tiny{$ k_{\tiny{2}} $}};
\node[var] (trinode) at (t2) {\tiny{$ k_3 $}};
\end{tikzpicture}
\end{equs}

with $T_1$  associated to the first order iterated integral
\begin{equation}\label{I1}
\begin{aligned}
 \CI_1(v^2,\overline v, \xi ) & = \int_0^\xi  e^{-i \xi_1 \Delta}
\left[\left(e^{i \xi_1 \Delta} v\right)^2  \left(e^{-i \xi_1 \Delta} \overline v\right) \right]d\xi_1.
 \end{aligned}
\end{equation}
Hence, our first-order scheme \eqref{s1} takes the form
\begin{equs}\label{nlsorder1}
U_{k}^{n,0}(\tau, v) & =  \frac{\Upsilon^{p}( \lambda_k)(v)}{S(\one)} \Pi^n \left( \CD^0(\CI_{(\Labhom_1,0)}( \lambda_k)) \right)(\tau) \\& +\sum_{\substack{k_1,k_2,k_3\in \Z^d\\-k_1+k_2+k_3 = k}}  \frac{\Upsilon^{p}( \lambda_k T_1)(v)}{S(T_1)} \Pi^n \left( \CD^0(\CI_{(\Labhom_1,0)}( \lambda_k T_1)) \right)(\tau).
\end{equs}
In order to write down the scheme \eqref{nlsorder1} explicitly, we need to compute 
\begin{equs}
&  \Upsilon^{p}( \lambda_k)(v), \quad S(\one) \quad\text{and} \quad \Pi^n \left( \CD^0(\CI_{(\Labhom_1,0)}( \lambda_k ))\right)(\tau)
\\
& \Upsilon^{p}( \lambda_k T_1)(v), \quad S(T_1) \quad\text{and} \quad \Pi^n \left( \CD^0(\CI_{(\Labhom_1,0)}( \lambda_k T_1))\right)(\tau).
\end{equs}
  Note that if we use the symbolic notation, one gets:
\begin{equs}\label{tnls}
T_1  =  \CI_{(\Labhom_2,0)} \left( \lambda_k  F_1 \right) \quad
 F_1  = \CI_{(\Labhom_1,1)}( \lambda_{k_1}) \CI_{(\Labhom_1,0)}( \lambda_{k_2})  \CI_{(\Labhom_1,0)}( \lambda_{k_3}),\\ \quad k = -k_1+k_2+k_3.
\end{equs}
Thanks to the definition of $ \Upsilon^{p}, S$ and Example \ref{ex:SUpsNLS1} we already know that
\begin{equs}
 & \Upsilon^{p}( \lambda_k)(v) = \hat v_k , \quad S(\one) =1,\quad  \Upsilon^{p}( \lambda_k T_1)(v) = 2 \overline{\hat{v}}_{k_1}\hat{v}_{k_2}\hat{v}_{k_3}, \quad S(T) = 2.
\end{equs}
Hence, the scheme \eqref{nlsorder1} takes the form
\begin{equs}\label{nlsorder1a}
U_{k}^{n,0}(\tau, v) & = \hat{v}_k \Pi^n \left( \CD^0(\CI_{(\Labhom_1,0)}( \lambda_k)) \right)(\tau) \\& +  \sum_{\substack{k_1,k_2,k_3\in \Z^d\\-k_1+k_2+k_3 = k}}\overline{\hat{v}}_{k_1}\hat{v}_{k_2}\hat{v}_{k_3} \Pi^n \left( \CD^0(\CI_{(\Labhom_1,0)}( \lambda_k T_1)) \right)(\tau).
\end{equs}
It remains to compute $\Pi^n \left( \CD^0(\CI_{(\Labhom_1,0)}( \lambda_kT_j)) \right)(\tau)$ for $j=0,1$  with $T_0 = \one$ and $T_1$ given  in  \eqref{tnls}.\\
1. {Computation of $\Pi^n \left( \CD^0(\CI_{(\Labhom_1,0)}( \lambda_k)) \right)(\tau)$.}
The second line in \eqref{recursive_pi_r} together with $P_{\Labhom_1}(k) = - k^2$ implies that
\[
\Pi^n \left( \CD^0(\CI_{(\Labhom_1,0)}( \lambda_k)) \right)(\tau)  = \Pi^n( \CI^0_{(\Labhom_1,0)}( \lambda_k))(\tau)
= e^{i \tau P_{\Labhom_1}(k)} = e^{-i \tau k^2}.
\]
2. {Computation of $\Pi^n \left( \CD^0(\CI_{(\Labhom_1,0)}( \lambda_kT_1)) \right)(\tau)$.}
The definition of the tree $T_1$ in \eqref{tnls} furthermore  implies that
\begin{equation}\label{a1}
\begin{aligned}
\Pi^n \left( \CD^0(\CI_{(\Labhom_1,0)}( \lambda_k T_1))\right)(\tau) & = (\Pi^n  \CI^{0}_{(\Labhom_1,0)}( \lambda_{k} T_1  ))(\tau)    =e^{i \tau P_{\Labhom_1}(k)} (\Pi^n \CD^{0}(T_1))(\tau) 
\\&   = e^{- i \tau k^2} (\Pi^n
\CI^0_{(\Labhom_2,0)} \left( \lambda_k  F_1 \right) 
)(\tau).
\end{aligned}
\end{equation}
Furthermore, by the third line in \eqref{recursive_pi_r} we have
\begin{equs}
\left( \Pi^n \CI^{0}_{(\Labhom_2,0)}( \lambda_k F_1) \right) (\tau) & = \CK^{k,0}_{(\Labhom_2,0)} \left(  \Pi^n  \CD^{-1}(F_1) ,n \right)(\tau) .
\end{equs}
By the multiplicativity of $\Pi^n$ (see \eqref{recursive_pi_r}) we furthermore obtain
\begin{equs}
 \Pi^n   \CD^{-1}(F_1) (\xi) & =  \left(\Pi^n \CI^{-1}_{(\Labhom_1,1)}( \lambda_{k_1}) \right)(\xi) \left(\Pi^n \CI^{-1}_{(\Labhom_1,0)}( \lambda_{k_2}) \right)(\xi) \left(\Pi^n \CI^{-1}_{(\Labhom_1,0)}( \lambda_{k_3})\right)(\xi) \\
 & = e^{(-1)i \xi P_{\Labhom_1}(-k_1)} \left(\Pi^n \one \right)(\xi)  e^{i \xi P_{\Labhom_1}(k_2)} \left(\Pi^n \one \right)(\xi)  e^{i \xi P_{\Labhom_1}(k_3)} \left(\Pi^n \one \right)(\xi)  \\
 & = e^{ i \xi (k_1^2- k_2^2-k_3^2)}
\end{equs}
where we used again that $P_{\Labhom_1}(k_j) = - k_j^2$. Collecting the results and plugging them into \eqref{a1} yields that
\begin{equs}\label{a2}
(\Pi^n  \CI^{0}_{(\Labhom_1,0)}( \lambda_{k} T_1  ))(\tau)  = e^{-i \tau k^2}
\CK^{k,0}_{(\Labhom_2,0)} \left( e^{ i \xi (k_1^2- k_2^2-k_3^2)},n\right)(\tau).
\end{equs}
Next we use Definition \ref{Taylor_exp}. Observe that (see Example  \ref{exRDoNLS}),
\begin{equs}\label{lowdom1}
& \mathcal{L}_{\text{\tiny{dom}}} = 2 k_1^2, \quad \mathcal{L}_{\text{\tiny{low}}}  = - 2 k_1 (k_2+k_3) + 2 k_2 k_3\\
& f(\xi) = e^{i \xi\mathcal{L}_{\text{\tiny{dom}}}}, \quad g(\xi) = e^{i \xi\mathcal{L}_{\text{\tiny{low}} }}
\end{equs}
and thus as $g(0) = 1$ we have 
\begin{equation*}\label{K0}
\CK^{k,0}_{(\Labhom_2,0)} \left( e^{ i \xi (k_1^2- k_2^2-k_3^2)},n\right)(\tau)
= \left\{ \begin{aligned}
& - i \tau,\,\quad \text{if } n\geq 2\\
& -i\Psi_{n,0}^0 \left(\mathcal{L}_\text{dom}, 0\right)(\tau),\,  \quad \text{if } n < 2 
  \\
  \end{aligned} \right.
\end{equation*}
with
\begin{equ}\label{Psi0}
 \Psi^{0}_{n,0}\left( \CL_{\text{\tiny{dom}}},0 \right)(\tau) =
     \int_0^{\tau}   f(\xi)   d \xi = \frac{e^{2 i \tau k_1^2}-1}{2 i k_1^2} = \tau \varphi_1(2i \tau k_1^2), 
 \quad  \text{if } n < 2. 
\end{equ}
Plugging this into \eqref{a2}  yields together with \eqref{a1} and \eqref{nlsorder1a} that
\begin{equs}\label{Fscheme1}
\begin{aligned}
U_{k}^{n= 1,0}(\tau, v) & = e^{-i \tau k^2}  \hat{v}_k -i \tau \sum_{\substack{k_1,k_2,k_3\in \Z^d\\-k_1+k_2+k_3 = k}}\overline{\hat{v}}_{k_1}\hat{v}_{k_2}\hat{v}_{k_3} e^{-i \tau k^2} \varphi_1(2i \tau k_1^2) \\
U_{k}^{n > 1,0}(\tau, v) & =e^{-i \tau k^2}  \hat{v}_k - i \tau \sum_{\substack{k_1,k_2,k_3\in \Z^d\\-k_1+k_2+k_3 = k}}   \overline{\hat{v}}_{k_1}\hat{v}_{k_2}\hat{v}_{k_3} e^{-i \tau k^2}.
\end{aligned}
\end{equs}
Note that the above Fourier based resonance schemes can easily be transformed back to physical space yielding 
the    two first-order iterative schemes  \eqref{nls1low} and \eqref{nls1class}  which depend on the smoothness $n$ of the exact solution.

\noindent {\bf Local error analysis.} 
It remains to show that   the general local error bound
stated in Theorem \ref{thm:genloc} implies the  local error 
\begin{equs}\label{loc1}
\mathcal{O}\left(\tau^2 \nabla^n\right), \quad n = 1,2
\end{equs}
for the schemes \eqref{nls1low} (with $n=1$) and \eqref{nls1class} (with $n=2$), respectively. Theorem~\ref{thm:genloc} implies that
\begin{equs}
U_{k}^{n,0}(\tau,v) - U_{k}^{0}(\tau,v) = \sum_{ T \in \CT_0^2(R)} \CO\left(\tau^{2} \CL^{0}_{\text{\tiny{low}}}(T,n)  \Upsilon^{p}( \lambda_k T)(v) \right).
\end{equs}
 By Definition \ref{def:Llow}  and Remark \ref{rem:simpL} we have that  $ \mathcal{L}^{0}_{\text{\tiny{low}}}(T_0) = 1$ and 
\begin{equs} 
& \mathcal{L}^{0}_{\text{\tiny{low}}}(T_1) =
\mathcal{L}^{0}_{\text{\tiny{low}}}(\CI_{(\Labhom_2,0)}( \lambda_{k}  F_1 ),n) 
= \mathcal{L}^{-1}_{\text{\tiny{low}}}(F_1,n) + \sum_j k^{\overline n_j}\\
& \overline{n}_j  =  \max(n, \deg( \mathscr{F}_{\text{\tiny{low}}} (\CI_{(\Labhom_2,0)}( \lambda^{\ell}_{k}  F_j ))))
\end{equs} 
where $\sum_j F_j  =  \CM_{(1)} \Delta F_1$ with $\CM_{(1)} \left( F_1 \otimes F_2 \right) = F_1$. Note that by \eqref{CoF1} we have that
$
\Delta F_1 = F_1 \otimes \one
$ such that  $\sum_j F_j = F_1$. Hence,
\begin{equs}
\overline{n}_j = \overline{n}_1  =  \max(n, \deg( \mathscr{F}_{\text{\tiny{low}}} (\CI_{(\Labhom_2,0)}( \lambda_{k}  F_1)))).
\end{equs}
By Definition \ref{dom_freq} and Example \ref{exRDoNLS} we obtain that 
\begin{equs}
 \mathscr{F}_{\text{\tiny{low}}} (\CI_{(\Labhom_2,0)}( \lambda_{k}  F_1)
=  - 2k_1(k_2+k_3) + 2 k_2 k_3.
\end{equs}
Hence, $\overline n_1 = \max(n, 1)$ and 
\begin{equs}
 \mathcal{L}^{0}_{\text{\tiny{low}}}(T_1)
& = \mathcal{L}^{-1}_{\text{\tiny{low}}}(F_1,n) + k^{\max(n, 1)} \\
& = \mathcal{L}^{-1}_{\text{\tiny{low}}}\left(\CI_{(\Labhom_1,1)}( \lambda_{k_1}) ,n\right) +
\mathcal{L}^{-1}_{\text{\tiny{low}}}\left(\CI_{(\Labhom_1,0)}( \lambda_{k_2}) ,n\right)\\ & +
\mathcal{L}^{-1}_{\text{\tiny{low}}}\left(\CI_{(\Labhom_1,0)}( \lambda_{k_3}) , n\right) + k^{\max(n, 1)}\\
& = \mathcal{L}^{-1}_{\text{\tiny{low}}}\left(\one,n\right) +
\mathcal{L}^{-1}_{\text{\tiny{low}}}\left(\one,n \right)
+
\mathcal{L}^{-1}_{\text{\tiny{low}}}\left(\one,n \right) + k^{\max(n, 1)}.
\end{equs}
Using that $\mathcal{L}^{-1}_{\text{\tiny{low}}}\left(\one,n\right) =1 $ we finally obtain that
$
 \mathcal{L}^{0}_{\text{\tiny{low}}}(T_1) = \CO( k^{\max(n, 1)}).
$
Therefore,
we recover \eqref{loc1}.
\end{proof}

\subsubsection{Second-order approximation}\label{sec:2NLS}
The general framework~\eqref{genscheme} derived in Section \ref{sec:genScheme}  builds the foundation of the  second-order resonance based schemes presented below for the nonlinear Schrödinger equation~\eqref{nls}. The structure of the schemes depends  on the regularity of the solution.
\begin{corollary}\label{corNLS2} For the nonlinear Schrödinger equation \eqref{nls} the general scheme~ \eqref{genscheme} takes at second order the form
\begin{equs}\label{nls2low}
 u^{\ell+1} & = e^{i \tau \Delta} u^\ell   - i \tau e^{i \tau \Delta}
\Big(
(u^\ell)^2 \left(\varphi_1(-2i \tau \Delta) - \varphi_2(-2i \tau \Delta)\right) \overline{u^\ell}{ \Big)} \\&  - i \tau  
\left(e^{i \tau \Delta} u^\ell\right)^2 \varphi_2 (-2i \tau \Delta)e^{i \tau \Delta} \overline{u^\ell}
 -\frac{\tau^2}{2} e^{i \tau \Delta} \left( \vert u^\ell\vert^4 u^\ell\right)
\end{equs}
with a local error of order $\mathcal{O}(\tau^3 \Delta u)$ and filter function $\varphi_2(\sigma) = \frac{e^\sigma-\varphi_1(\sigma)}{\sigma}$.  

In case of more regular solutions the general scheme \eqref{genscheme} takes the simplified form
\begin{equation}\label{nls2mid}
\begin{aligned}
& u^{\ell+1} = e^{i \tau \Delta} u^\ell \\& - i \tau e^{i \tau \Delta}
\left(
(u^\ell)^2 \left(\varphi_1 (-2i \tau \Delta)-\frac{1}{2}\right) \overline{u^\ell} + 
\frac{1}{2} \left(e^{i \tau \Delta} u^\ell\right)^2e^{i \tau \Delta} \overline{u^\ell}
\right)\\
& -\frac{\tau^2}{2} e^{i \tau \Delta} \left( \vert u^\ell\vert^4 u^\ell\right)
\end{aligned}
\end{equation}
with a local error of order $\mathcal{O}(\tau^3 \nabla^3 u)$, and for smooth solutions
\begin{equs}\label{nls2smooth}
 u^{\ell+1} 
  & = e^{i \tau \Delta} u^\ell  - i \tau e^{i \tau \Delta} \left(\Psi_1 - i \Psi_2 \frac12 \tau \Delta \right) 
 \vert u^\ell\vert^2 u^\ell 
  \\ &  +  \frac{\tau^2}{2} e^{i \tau \Delta}\Psi_3 \left(
 - (u^\ell)^2 \Delta  \overline{u^\ell} + 2 \vert u^\ell\vert^2 \Delta u^\ell-  \vert u^\ell\vert^4 u^\ell  \right)
\end{equs}
with a local error of order $\mathcal{O}(\tau^3 \Delta^2 u)$ and   suitable filter functions $\Psi_{1,2,3}$ satisfying
\begin{equs}
\Psi_{1,2,3}= \Psi_{1,2,3}\left(i \tau \Delta\right), \quad \Psi_{1,2,3}(0) = 1, \quad \Vert \tau   \Psi_{1,2,3}\left(i \tau \Delta\right) \Delta \Vert \leq 1.
\end{equs}

\end{corollary}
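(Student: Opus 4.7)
The plan is to instantiate the general scheme of Definition~\ref{scheme} with $r=1$, carry out the recursion in~\eqref{recursive_pi_r}, and then translate the result back to physical space via Proposition~\ref{physical_space}. The local error will follow from Theorem~\ref{thm:genloc} by computing $\CL^{1}_{\text{\tiny{low}}}(T,n)$ via Definition~\ref{def:Llow} on each relevant tree.

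First I would enumerate the contributing trees in $\CT^{3}_{0}(R)$. As in the first-order proof we have the linear trees $T_0$ and $T_1$ from~\eqref{tnls}, and in addition the two second-iterate trees $T_2,T_3$ displayed in the introduction (see \eqref{treeK} and \eqref{nlsTK}): $T_2$ grafts a copy of $T_1$ on the $\CI_{(\Labhom_1,0)}$ (non-conjugate) branch, and $T_3$ grafts it on the $\CI_{(\Labhom_1,1)}$ (conjugate) branch. Using \eqref{upsi} and \eqref{S} one reads off $S(T_2)=1$, $S(T_3)=2$ and
\[
\frac{\Upsilon^{p}(\lambda_k T_2)(v)}{S(T_2)}=2\,\overline{\hat v}_{k_1}\hat v_{k_2}\hat v_{k_3}\overline{\hat v}_{k_4}\hat v_{k_5},\qquad
\frac{\Upsilon^{p}(\lambda_k T_3)(v)}{S(T_3)}=\hat v_{k_1}\overline{\hat v}_{k_2}\overline{\hat v}_{k_3}\hat v_{k_4}\hat v_{k_5},
\]
paralleling the calculation in Example~\ref{ex:introExNLS trees}. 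The contribution of $T_0$ and $T_1$ repeats what was done in the proof of Corollary~\ref{corNLS1}, but now with $r=1$: for $T_1$ one applies $\CK^{k,1}_{(\Labhom_2,0)}$ with $\CL_{\text{\tiny{dom}}}=2k_1^2$, $\CL_{\text{\tiny{low}}}= -2k_1(k_2+k_3)+2k_2k_3$ as in \eqref{lowdom1}, the extra Taylor order $\ell=1$ giving rise to the $\varphi_2$ correction.

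Next I would compute $\Pi^n\CD^1(T_j)$ for $j=2,3$. Here one uses the recursion \eqref{recursive_pi_r}: the outermost integral produces $\CK^{k,1}_{(\Labhom_2,0)}$ applied to $\Pi^n\CD^{0}(F_j)$, where $F_j$ itself contains a factor $\Pi^n \CD^{0}(T_1)$ already computed at first order in \eqref{Fscheme1}. By the Birkhoff factorisation \eqref{antipode_def}--\eqref{factorisation} and Proposition~\ref{factorisation_dom}, this inner expression decomposes as a sum of two exponentials in $\xi$ — the one with vanishing resonance and the genuine oscillation $e^{i\xi\mathscr{F}_{\text{\tiny{dom}}}(T_1)}$ — multiplied by the rational prefactors $A^n$ collected in Example~\ref{AnBn}. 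Plugging these into $\CK^{k,1}_{(\Labhom_2,0)}$ yields a linear combination of $\varphi_1,\varphi_2$ evaluated at $-2i\tau\Delta$, together with a term in which two copies of the propagator are integrated producing the factor $\tau^2/2$ times the quintic nonlinearity $|u|^4 u$. Reassembling the $T_0,\dots,T_3$ contributions and mapping back to physical space — the operator $(-\Delta)^{-1}$ appearing in $1/(2k_1^2)=1/\mathscr{F}_{\text{\tiny{dom}}}(T_1)$ being absorbed into $\varphi_1,\varphi_2$ by Corollary~\ref{physical_map} — gives exactly the scheme \eqref{nls2low} in the low-regularity regime.

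Finally, the three schemes \eqref{nls2low}, \eqref{nls2mid}, \eqref{nls2smooth} correspond to the three plateaus of Remark~\ref{rem:regi}: one selects in \eqref{e:Pim1}--\eqref{e:Pim2} whether to Taylor-expand only $g$ (low regularity), to start expanding $f$ partially (intermediate), or to expand the full exponential $\tilde g$ (smooth regime, producing the filter functions $\Psi_{1,2,3}$ as in the stabilisation of Remark~\ref{rem:stab} and the commutator expansion that classically turns $\Delta(|u|^2 u)$ into $-u^2\Delta\overline u+2|u|^2\Delta u - |u|^4 u$). The local error in each regime is then obtained by evaluating $\CL^{1}_{\text{\tiny{low}}}(T_j,n)$ for $j=1,2,3$ from Definition~\ref{def:Llow} in the simplified form of Remark~\ref{rem:simpL}: for $T_2,T_3$ the recursion produces a contribution $k^{\bar n}$ with $\bar n=\max(n,\deg(\mathscr{F}_{\text{\tiny{low}}}^{2-\ell}))$, and Theorem~\ref{thm:genloc} then yields $\CO(\tau^3 \nabla u)$, $\CO(\tau^3\nabla^3 u)$, and $\CO(\tau^3\Delta^2 u)$ respectively. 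The main technical obstacle is the bookkeeping in this last step: one has to combine the $T_1,T_2,T_3$ contributions so that the symbolic output in physical space genuinely reassembles into $\varphi_2(-2i\tau\Delta)$ and the quintic $|u|^4u$ correction with the correct coefficients dictated by $\Upsilon^{p}/S$, while simultaneously checking that no derivative higher than $\nabla$, $\nabla^3$, $\Delta^2$ respectively appears in the residual — this is precisely where Proposition~\ref{nasty_trees} and Corollary~\ref{physical_map} are needed to certify that the frequency fractions produced by the Birkhoff factorisation can indeed be implemented as bounded operators on the relevant Sobolev scale.
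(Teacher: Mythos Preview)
Your overall plan (instantiate Definition~\ref{scheme} with $r=1$, enumerate $T_0,\ldots,T_3$, apply Theorem~\ref{thm:genloc}) is correct, and your identification of the $\varphi_2$ correction as coming from the $\ell=1$ Taylor term in $\CK^{k,1}_{(\Labhom_2,0)}$ applied to $T_1$ is right. But the treatment of $T_2,T_3$ is not.

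The key point you miss is that for the second-order schemes one always has $n\ge 2$, and at this regularity the \emph{inner} approximation $\Pi^n\CD^{0}(T_1)$ is simply $-i\tau$ (the second line of \eqref{Fscheme1}), a pure monomial with no oscillatory part. Hence $\Pi^n\CD^{0}(F_2)(\xi)$ is $-i\xi\,e^{i\xi(k_4^2-k_5^2-(-k_1+k_2+k_3)^2)}$, and applying $\CK^{k,1}_{(\Labhom_2,0)}$ to a function already carrying a factor $\xi$ (so $q=1$, $r-q=0$) leaves only the $\ell=0$ term; since $n\ge 2$ one Taylor-expands $\tilde g$ and gets $-e^{-i\tau k^2}\tau^2/2$, and similarly for $T_3$. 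There is no Birkhoff decomposition into ``two exponentials'' at this stage, and $T_2,T_3$ contribute \emph{only} the quintic correction $-\tfrac{\tau^2}{2}e^{i\tau\Delta}(|u^\ell|^4 u^\ell)$; they produce no $\varphi_1,\varphi_2$ terms. All of the $\varphi_1,\varphi_2$ structure in \eqref{nls2low} and \eqref{nls2mid} comes from the $T_1$ contribution alone, via the three cases for $\Pi^n\CD^1(\CI_{(\Labhom_1,0)}(\lambda_k T_1))$ at $n=2,3,\ge 4$.

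Two smaller corrections: the symmetry factors are $S(T_2)=2$ and $S(T_3)=4$ (your quotients $\Upsilon^p/S$ happen to come out right regardless), and the low-regularity error in \eqref{nls2low} is $\CO(\tau^3\Delta u)$, not $\CO(\tau^3\nabla u)$: with $r=1$ the contribution $\mathscr{F}_{\text{\tiny low}}(T_1)^{r+1}=\mathscr{F}_{\text{\tiny low}}(T_1)^2$ has degree $2$, so $\CL^1_{\text{\tiny low}}(T_j,n)=\CO(k^{\max(n,2)})$ for $j=1,2,3$.
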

\begin{remark}\label{rem:nls2}
With the general framework we had recovered at first-order exactly the resonance based first-order scheme \eqref{nls1low} derived in \cite{OS18}. The second-order schemes \eqref{nls2low} and  \eqref{nls2mid} are new and allow us to improve the classical local error structure $\mathcal{O}\left(
\tau^3 \Delta^4 u
\right)$. We refer to \cite{Lubich08,CoGa12} for the error analysis of classical splitting and exponential integrators for the Schrödinger equation.

 The new second-order low regularity scheme \eqref{nls2low} moreover allows us to improve  the recently introduced scheme \cite{KOS19}, which  only allows for a local error of order $\mathcal{O}\left(\tau^{2+1/2} \Delta u \right)$. Thus we break the order barrier of $3/2$ previously assumed for resonance based approximations for Schrödinger equations.
  
With the new framework we in addition recover for smooth solutions  classical second-order Schrödinger approximations obeying the  classical local error structure $\mathcal{O}\left(\tau^2 \Delta^2 u\right)$: Depending on the choice of filter functions $\Psi_{1,2,3}$ the second-order scheme \eqref{nls2smooth} coincides with second-order exponential Runge--Kutta or exponential integrator  methods (\cite{Berland,ButcherExp,HochOst10}), see  also Remark \ref{rem:Tay}.  The favorable error behavior of the new schemes for non-smooth solutions is underlined by numerical experiments, see Figure~\ref{fig:NLS}.

\end{remark}

\begin{proof}
{\bf Construction of the schemes.} 
For the second-order scheme we need a local error of order $\mathcal{O}(\tau^3)$. Therefore, we need to choose $r = 1$ in Definition \ref{scheme} and the corresponding trees accordingly.  This yields that
\begin{equs}\label{nlsorder2}
U_{k}^{n,1}(\tau, v) & =  \frac{\Upsilon^{p}( \lambda_k)(v)}{S( \lambda_k)} \Pi^n \left( \CD^1(\CI_{(\Labhom_1,0)}( \lambda_k)) \right)(\tau) \\& +\sum_{\substack{k_1,k_2,k_3\in \Z^d\\-k_1+k_2+k_3 = k}} \frac{\Upsilon^{p}( \lambda_k T_1)(v)}{S(T_1)} \Pi^n \left( \CD^1(\CI_{(\Labhom_1,0)}( \lambda_k T_1)) \right)(\tau)
\\& +\sum_{\substack{k_1,k_2,k_3,k_4,k_5\in \Z^d\\-k_1+k_2+k_3 -k_4+k_5= k}} \frac{\Upsilon^{p}( \lambda_k T_2)(v)}{S(T_2)} \Pi^n \left( \CD^1(\CI_{(\Labhom_1,0)}( \lambda_k T_2)) \right)(\tau)
\\& +\sum_{\substack{k_1,k_2,k_3,k_4,k_5\in \Z^d\\k_1-k_2-k_3 +k_4+k_5= k}} \frac{\Upsilon^{p}( \lambda_k T_3)(v)}{S(T_3)} \Pi^n \left( \CD^1(\CI_{(\Labhom_1,0)}( \lambda_k T_3)) \right)(\tau)
\end{equs}
with $T_1$ defined in \eqref{tnls} and
\begin{equs}
\CT_0^3(R) = \lbrace  T_0, T_1, T_2, T_3, \, k_i \in \Z^{d} \rbrace, \quad
T_2 = \begin{tikzpicture}[scale=0.2,baseline=-5]
\coordinate (root) at (0,0);
\coordinate (tri) at (0,-2);
\coordinate (t1) at (-2,2);
\coordinate (t2) at (2,2);
\coordinate (t3) at (0,2);
\coordinate (t4) at (0,4);
\coordinate (t41) at (-2,6);
\coordinate (t42) at (2,6);
\coordinate (t43) at (0,8);
\draw[kernels2,tinydots] (t1) -- (root);
\draw[kernels2] (t2) -- (root);
\draw[kernels2] (t3) -- (root);
\draw[symbols] (root) -- (tri);
\draw[symbols] (t3) -- (t4);
\draw[kernels2,tinydots] (t4) -- (t41);
\draw[kernels2] (t4) -- (t42);
\draw[kernels2] (t4) -- (t43);
\node[not] (rootnode) at (root) {};
\node[not] (rootnode) at (t4) {};
\node[not] (rootnode) at (t3) {};
\node[not] (trinode) at (tri) {};
\node[var] (rootnode) at (t1) {\tiny{$ k_{\tiny{4}} $}};
\node[var] (rootnode) at (t41) {\tiny{$ k_{\tiny{1}} $}};
\node[var] (rootnode) at (t42) {\tiny{$ k_{\tiny{3}} $}};
\node[var] (rootnode) at (t43) {\tiny{$ k_{\tiny{2}} $}};
\node[var] (trinode) at (t2) {\tiny{$ k_5 $}};
\end{tikzpicture}, \quad
T_3 = \begin{tikzpicture}[scale=0.2,baseline=-5]
\coordinate (root) at (0,0);
\coordinate (tri) at (0,-2);
\coordinate (t1) at (-2,2);
\coordinate (t2) at (2,2);
\coordinate (t3) at (0,2);
\coordinate (t4) at (0,4);
\coordinate (t41) at (-2,6);
\coordinate (t42) at (2,6);
\coordinate (t43) at (0,8);
\draw[kernels2] (t1) -- (root);
\draw[kernels2] (t2) -- (root);
\draw[kernels2,tinydots] (t3) -- (root);
\draw[symbols] (root) -- (tri);
\draw[symbols,tinydots] (t3) -- (t4);
\draw[kernels2] (t4) -- (t41);
\draw[kernels2,tinydots] (t4) -- (t42);
\draw[kernels2,tinydots] (t4) -- (t43);
\node[not] (rootnode) at (root) {};
\node[not] (rootnode) at (t4) {};
\node[not] (rootnode) at (t3) {};
\node[not] (trinode) at (tri) {};
\node[var] (rootnode) at (t1) {\tiny{$ k_{\tiny{4}} $}};
\node[var] (rootnode) at (t41) {\tiny{$ k_{\tiny{1}} $}};
\node[var] (rootnode) at (t42) {\tiny{$ k_{\tiny{3}} $}};
\node[var] (rootnode) at (t43) {\tiny{$ k_{\tiny{2}} $}};
\node[var] (trinode) at (t2) {\tiny{$ k_5 $}};
\end{tikzpicture}.
\end{equs}
In symbolic notation one gets
\begin{equs}\label{nlst23}
\begin{aligned}
& T_2  =  \CI_{(\Labhom_2,0)} \left( \lambda_k  F_2 \right), \quad k = -k_1+k_2+k_3-k_4+k_5
\\ &   F_2  = \CI_{(\Labhom_1,1)}( \lambda_{k_4}) \CI_{(\Labhom_1,0)} \left( \lambda_{-k_1+k_2+k_3} T_1 \right)\CI_{(\Labhom_1,0)}( \lambda_{k_5}), 
\\
 &   T_3 =   \CI_{(\Labhom_2,0)} \left( \lambda_k  F_3 \right), \quad k = k_1-k_2-k_3+k_4+k_5
\\ &   F_3  = \CI_{(\Labhom_1,0)}( \lambda_{k_4}) \overline{\CI_{(\Labhom_1,0)}\left(  \lambda_{-k_1+k_2+k_3} T_1 \right)}\CI_{(\Labhom_1,0)}( \lambda_{k_5}), 
\end{aligned}
\end{equs}
where thanks to \eqref{bar} 
\begin{equs}
\overline{\CI_{(\Labhom_1,0)}\left(  \lambda_{-k_1+k_2+k_3} T_1 \right)}\ = \CI_{(\Labhom_1,1)}\left(  \lambda_{-k_1+k_2+k_3} \overline{T_1} \right).
\end{equs}

Note that the trees $T_2,T_3$ correspond to the next iterated integrals
\begin{equation}\label{I2}
\begin{aligned}
 \CI_2( v^3,\overline v^2, \xi ) & =  \int_0^\xi  e^{-i \xi_1 \Delta} \left[\left(e^{i \xi_1 \Delta} v\right) \left(e^{-i \xi_1 \Delta} \overline v\right)  \left( e^{i \xi_1 \Delta}  \CI_1( v^2,\overline v, \xi_1 ) \right) \right]  d\xi_1 \\
 \CI_3( v^3,\overline v^2, \xi ) & =  \int_0^\xi  e^{-i \xi_1 \Delta} \left[\left(e^{i \xi_1 \Delta} v\right)^{2}   \left( e^{-i \xi_1 \Delta}  \overline{\CI_1( v^2,\overline v, \xi_1 )} \right) \right]  d\xi_1.
 \end{aligned}
\end{equation}

The definitions \eqref{upsi} imply  that
\begin{equs}
 \Upsilon^{p}( \lambda_k T_2)(v)  & = 2\Upsilon^{p}\left( \CI_{(\Labhom_1,1)}( \lambda_{k_4} ) \right) (v)
 \Upsilon^{p}\left( \CI_{(\Labhom_1,0)}( \lambda_{k} T_1) \right) (v)  \Upsilon^{p}\left( \CI_{(\Labhom_1,0)}( \lambda_{k_5} ) \right) (v)\\
 & = 2\overline{\hat{v}}_{k_4} \left( 2 \overline{\hat{v}}_{k_1}\hat{v}_{k_2}\hat{v}_{k_3}\right) 
{\hat{v}}_{k_5} \\
 \Upsilon^{p}( \lambda_k T_3)(v) & = 2 \Upsilon^{p}\left( \CI_{(\Labhom_1,0)}( \lambda_{k_4} ) \right) (v) \Upsilon^{p}\left(  \lambda_{k} \overline{T_1} \right)(v)\Upsilon^{p}\left( \CI_{(\Labhom_1,0)}( \lambda_{k_5} ) \right) (v)\\
 & = 2 \hat v_{k_4}\overline{ \left( 2 \overline{\hat{v}}_{k_1}\hat{v}_{k_2}\hat{v}_{k_3}\right) }  \hat v_{k_5} = 2  \hat v_{k_4} \left(2 \hat v_{k_1} \overline{\hat{v}}_{k_2}  \overline{\hat{v}}_{k_3}\right) \hat v_{k_5} 
\end{equs}
and by  \eqref{S} we obtain
\begin{equs}
S(T_2) = 1 \cdot 2 = 2, \quad S(T_3) = 2 \cdot 2 = 4.
\end{equs}
Next we have to compute $ \Pi^n \left( \CD^1(\CI_{(\Labhom_1,0)}( \lambda_k T_j)) \right)$ for $j=1, 2,3$.\\
\noindent 1. {Computation of $ \Pi^n \left( \CD^1(\CI_{(\Labhom_1,0)}( \lambda_k T_1)) \right)$:} Here $k= - k_1+k_2+k_3$. Similarly to \eqref{a2} we obtain that
\begin{equ}\label{a3}
\Pi^n \left( \CD^1(\CI_{(\Labhom_1,0)}( \lambda_k T_1)) \right)(\tau)  = e^{-i \tau k^2}
\CK^{k,1}_{(\Labhom_2,1)} \left( e^{ i \xi (k_1^2- k_2^2-k_3^2)},n\right)(\tau),
\end{equ}
where by \eqref{lowdom1} we have that
if $ n\geq 4$
\begin{equs}
 \CK^{k,1}_{(\Labhom_2,1)} \left( e^{ i \xi (k_1^2- k_2^2-k_3^2)},n\right)(\tau)  & = 
 -i \tau  + \left(k^2 + k_1^2 - k_2^2-k_3^2\right) \frac{\tau^2}{2} .
  \end{equs}
If $ n < 4 $ 
  \begin{equs}
 \CK^{k,1}_{(\Labhom_2,1)} \left( e^{ i \xi (k_1^2- k_2^2-k_3^2)},n\right)(\tau)  = 
&-i  \Psi_{n,0}^1(\mathcal{L}_{\text{\tiny{dom}}},0)(\tau) -i  \frac{g(\tau)-1}{\tau} \Psi_{n,0}^1(\mathcal{L}_{\text{\tiny{dom}}},1)(\tau) ,
  \end{equs} 
with
\begin{equ} \Psi^{1}_{n,0}\left( \CL_{\text{\tiny{dom}}},\ell \right)(\tau) = \left\{ \begin{aligned}
  &   \int_0^{\tau}   \xi^{\ell} f(\xi)   d \xi, \,
  \quad \text{if } 4 -  \ell > n \text{ and } n < 4 , \\
  & \sum_{m \leq 1-\ell} \frac{f^{(m)}(0)}{m!} \int_0^{\tau}   \xi^{\ell+m} d \xi, \quad \text{if } 4 -  \ell \leq n \text{ and } n < 4.
  \\
  \end{aligned} \right.
\end{equ}
Hence,\begin{equation}\label{simpNLS}
\begin{aligned}
\Pi^{n=2} \left( \CD^1(\CI_{(\Labhom_1,0)}( \lambda_k T_1)) \right)(\tau)  &=-i \tau  e^{-i \tau k^2}
\Big(
\varphi_1(2i \tau k_1^2) + \left(g(\tau)-1\right) \varphi_2(2i \tau k_1^2) 
\Big)\\
  \Pi^{n= 3} \left( \CD^1(\CI_{(\Labhom_1,0)}( \lambda_k T_1)) \right)(\tau)  & = -i \tau e^{-i \tau k^2}
\Big( \varphi_1(2i \tau k_1^2) + \frac{1}{2}\left(g(\tau)-1\right)\Big)\\
  \Pi^{n \geq 4} \left( \CD^1(\CI_{(\Labhom_1,0)}( \lambda_k T_1)) \right)(\tau) 
& = e^{-i \tau k^2}
\Big(-i \tau  + \left(k^2 + k_1^2 - k_2^2-k_3^2\right) \frac{\tau^2}{2} \Big)
\end{aligned}
\end{equation}
where $g(\tau) = e^{i \tau (k^2 - k_1^2-k_2^2-k_3^2)}$ and $\mathcal{L}(k) = \mathcal{L}_\text{dom}(k) + \mathcal{L}_\text{low}(k)$.

\noindent 2. {Computation of $ \Pi^n \left( \CD^1(\CI_{(\Labhom_1,0)}( \lambda_k T_2)) \right)$:} Here $k = -k_1+k_2+k_3-k_4+k_5$. By \eqref{recursive_pi_r} and $ P_{\Labhom_1}(k) = -k^2$ we have
\begin{equs}
 \Pi^n \left( \CD^1(\CI_{(\Labhom_1,0)}( \lambda_k T_2)) \right)(\tau) & =
 e^{-i \tau k^2 } \left( \Pi^n \CD^{1}(T_2)\right)(\tau)
 =  e^{-i \tau k^2 } \left( \Pi^n \CI^1_{(\Labhom_2,0)} \left( \lambda_k  F_2 \right) \right)(\tau)\\
 & = e^{-i \tau k^2 } \CK_{(\Labhom_2,0)}^{k,1}\left(\Pi^n\left(\CD^{0}(F_2)\right),n\right)(\tau).
\end{equs}
Furthermore, by the mulitiplicativity of $\Pi^n$ we obtain with the aid of \eqref{a2} and~\eqref{K0}
\begin{align*}
\Pi^n\left(\CD^{0}(F_2)\right) (\tau)& =  \left(  \Pi^n  \CI^0_{(\Labhom_1,1)}( \lambda_{k_4}) \right)(\tau) \left(\Pi^n \CI^0_{(\Labhom_1,0)} \left( \lambda_{k}T_1 \right)\right)(\tau) \left(\Pi^n \CI^0_{(\Labhom_1,0)}( \lambda_{k_5})\right) (\tau)\\
&= e^{-i \tau k_4^2} e^{-i \tau (-k_1+k_2+k_3)^2}
\CK^{-k_1+k_2+k_3,0}_{(\Labhom_2,0)} \left( e^{ i \xi (k_1^2- k_2^2-k_3^2)},n\right)(\tau) e^{-i \tau k_5^2} \\
&=  - i e^{-i \tau (k_4^2+k_5^2)} e^{-i \tau (-k_1+k_2+k_3)^2}  \Psi_{n,0}^0 \left(\mathcal{L}_{\text{\tiny{dom}}}, 0\right)(\tau)
\end{align*}
where by \eqref{Psi0} and the fact that $n \geq 2$ we have that
\begin{equs}
 \Psi_{n,0}^0 \left(\mathcal{L}_{\text{\tiny{dom}}}, 0\right)(\tau) = \tau.
\end{equs}
Hence,
\begin{align} \label{computescheme2}
\begin{aligned}
\Pi^n \left( \CI^{1}_{(\Labhom_1,0)}( \lambda_k T_2) \right)(\tau)  & = -i 
e^{-i \tau k^2 }  \CK_{(\Labhom_2,0)}^{k,1}\left(\xi e^{-i \xi (k_4^2+k_5^2)} e^{-i \xi (-k_1+k_2+k_3)^2}
,n\right)(\tau)\\
& =-  e^{-i \tau k^2 }  \frac{\tau^2}{2}
\end{aligned}
\end{align}
where we used again that $n \geq 2$.

\noindent 3. {Computation of $ \Pi^n \left( \CD^1(\CI_{(\Labhom_1,0)}( \lambda_k T_3)) \right)$:} Here $k = k_1-k_2-k_3+k_4+k_5$. Similarly we can show that 
\begin{align*}
\Pi^n \left( \CD^1(\CI_{(\Labhom_1,0)}( \lambda_k T_3)) \right)(\tau) & = +   e^{-i \tau k^2 }  \frac{\tau^2}{2}.
\end{align*}

Plugging the results from Computations 1--3 into \eqref{nlsorder2} yields that
\begin{equs}\label{Fscheme2}
U_k^{n,1}(\tau,v) & = e^{-i \tau k^2} \hat v_k
\\& -i \tau \sum_{-k_1+k_2+k_3 = k}e^{-i \tau k^2} \frac{1}{\tau} \Pi^{n} \left( \CI^{1}_{(\Labhom_1,0)}( \lambda_k T_1) \right) (\tau) \, \overline{\hat{v}}_{k_1} \hat v_{k_2} \hat v_{k_2} \\
& - \frac{\tau^2}{2} \sum_{-k_1+k_2+k_3-k_4+k_5 = k} e^{-i \tau k^2}\overline{\hat{v}}_{k_1}  \hat v_{k_2} \hat v_{k_2}
\overline{\hat{v}}_{k_4}  \hat v_{k_5}
\end{equs}
with $\Pi^{n} \left( \CD^1(\CI_{(\Labhom_1,0)}( \lambda_k T_1)) \right)$ given in \eqref{simpNLS} and we have used that the last two sums in \eqref{nlsorder2} can be merged into one.
The  Fourier based resonance schemes \eqref{Fscheme2} can easily be transformed back to physical space yielding the   three  low-to-high regularity second-order iterative schemes  \eqref{nls2low} -- \eqref{nls2smooth}  which depend on the smoothness $n$ of the exact solution.

{\bf Local error analysis.} It remains to show that  the general local error bound
given in Theorem \ref{thm:genloc} implies the local error
\begin{equs}\label{loc2}
\mathcal{O}\left( \tau^3 \nabla^n\right), \quad n = 2,3,4
\end{equs}
of the schemes \eqref{nls2low} -- \eqref{nls2smooth}.  Theorem \ref{thm:genloc} implies that
\begin{equs}\label{locoErr2}
U_{k}^{n,1}(\tau,v) - U_{k}^{1}(\tau,v) = \sum_{T \in \CT_0^3(R)  } \CO\left(\tau^{3} \CL^{1}_{\text{\tiny{low}}}(T,n)  \Upsilon^{p}( \lambda_k T)(v) \right)
\end{equs}
where $\CT_0^3(R)   = \{ T_0, T_1, T_2, T_3\}$ with $T_0 = \one$, $T_1$ given  in  \eqref{tnls} and $T_2, T_3$ defined in \eqref{nlst23}.

 1) Computation of $\CL^{1}_{\text{\tiny{low}}}(T_0, n)$ and $\CL^{1}_{\text{\tiny{low}}}(T_1, n)$:\\ By Definition \ref{def:Llow}  and Remark  \ref{rem:simpL} we obtain similarly to the first-order scheme (here with  $r=1$) that 
\begin{equs}
\mathcal{L}^1_{\text{\tiny{low}}}\left(T_0,n\right) = 1, \quad \mathcal{L}^1_{\text{\tiny{low}}}\left(T_1,n\right) = k^{\text{\tiny{max}}(n,2)}= k^{\text{\tiny{max}}(n,2)}.
\end{equs}

 2) Computation of $\CL^{1}_{\text{\tiny{low}}}(T_2, n)$:\\
Next we calculate ( using again Definition \ref{def:Llow}  and Remark  \ref{rem:simpL}) that
\begin{equs}\label{L1low}
\mathcal{L}^1_{\text{\tiny{low}}}\left(T_2,n\right) & = 
\mathcal{L}^1_{\text{\tiny{low}}}\left(\CI_{(\Labhom_2,0)}( \lambda_k F_2),n\right) 
= \mathcal{L}^0_{\text{\tiny{low}}}\left(F_2,n\right) + \sum_{j} k^{\bar n_j}
\end{equs}
with
\[
 \bar n_j  = \max(n, \deg( \mathscr{F}_{\text{\tiny{low}}} (\CI_{(\Labhom_2,0)}( \lambda_{k}  F^j_2 ))^{2}))
\]
where $ \sum_j F_2^j  =  \CM_{(1)} \Delta \CD^{r-1} (F_2)$ with $r=1$.  Hence, we have to calculate $\Delta \CD^{0} (F_2)$. By 
the multiplicativity of the coproduct, its recursive definition~\eqref{def_deltas} and the calculation of $\CD^r(T_1)$ given in \eqref{comps} we obtain that
\begin{equs}
\Delta \CD^r (F_2) & =\Delta \CI^r_{(\Labhom_1,1)}\left( \lambda_{k_4}\right) 
\Delta \CI^r_{(\Labhom_1,0)}\left( \lambda_{-k_1+k_2+k_3}T_1\right)\Delta \CI^r_{(\Labhom_1,0)}\left( \lambda_{k_5}\right)\\
&= \left(\CI^r_{(\Labhom_1,1)}\left( \lambda_{k_4}\right) \otimes \one \right)\left(  \CI^{r}_{(\Labhom_1,0)}( \lambda_{-k_1+k_2+k_3}\cdot)  \otimes \id  \right) \\& \qquad\Delta \CD^{r}(T_1))
 \left(\CI^r_{(\Labhom_1,0)}\left( \lambda_{k_5}\right) \otimes \one \right)\\
& =   \CD^r(F_2) \otimes \one 
\\&+  \CI^r_{(\Labhom_1,1)}\left( \lambda_{k_4}\right) \CI^r_{(\Labhom_1,0)}\left( \lambda_{k_5}\right) \CI^{r}_{(\Labhom_1,0)}\left(\sum_{m \leq r +1 } \frac{ \lambda_{-k_1+k_2+k_3}^m}{m!} \right) \otimes  \CD^{(r,m)}( T_1)  .
\end{equs}
Hence, as $r = 1$ we obtain
\begin{equs}
 \sum_{j=1}^4 F^j_2  & = \CD^0(F_2) + 
\sum_{m \leq r +1 }  \frac{1}{m!}  \CI^0_{(\Labhom_1,1)}\left( \lambda_{k_4}\right) \CI^0_{(\Labhom_1,0)}\left( \lambda_{k_5}\right)   \CI^{0}_{(\Labhom_1,0)}\left(  \lambda^{m}_{-k_1+k_2+k_3} \right) .
\end{equs}
Now we are in the position to compute $\bar n_1$: By Definition \ref{dom_freq} we have
\begin{equs}\label{RD1}
 \mathscr{F}_{\text{\tiny{dom}}}\left(\CI_{(\Labhom_2,0)}\left(  \lambda_k F_2\right)\right)& 
= \CP_{\text{\tiny{dom}}}\left( P_{(\Labhom_2,0)}(k) +\mathscr{F}_{\text{\tiny{dom}}}(F_2) \right)\\& 
=  \CP_{\text{\tiny{dom}}}\left(k^2 +\mathscr{F}_{\text{\tiny{dom}}}( F_2)\right)
 \end{equs}
 where we used that $P_{\Labhom_2}(k) = + k^2$ as well as \eqref{Palpha}. Furthermore, we have that
\begin{equs}
 \mathscr{F}_{\text{\tiny{dom}}}\left(F_2\right)&  = \mathscr{F}_{\text{\tiny{dom}}}\left( \CI_{(\Labhom_1,1)}( \lambda_{k_4}) \right) +\mathscr{F}_{\text{\tiny{dom}}}\left(\CI_{(\Labhom_1,0)} \left( \lambda_{-k_1+k_2+k_3} T_1 \right)\right) \\ & +\mathscr{F}_{\text{\tiny{dom}}}\left(\CI_{(\Labhom_1,0)}( \lambda_{k_5})\right)
\end{equs}
with
\begin{equs}
 \mathscr{F}_{\text{\tiny{dom}}}\left( \CI_{(\Labhom_1,1)}( \lambda_{k_4}) \right)& = P_{(\Labhom_1,1)}(k_4) +\mathscr{F}_{\text{\tiny{dom}}}(\one)  = - P_{\Labhom_1}(-k_4) 
= +k_4^2 
\end{equs}
where we used that $P_{\Labhom_1}(k) = - k^2$ as well as \eqref{Palpha}. Similarly we obtain that
\begin{equs}
 \mathscr{F}_{\text{\tiny{dom}}}\left( \CI_{(\Labhom_1,0)}( \lambda_{k_5}) \right) = - k_5^2  .
\end{equs}
Furthermore, we have that
\begin{equs}
\mathscr{F}_{\text{\tiny{dom}}}\left(\CI_{(\Labhom_1,0)} \left( \lambda_{-k_1+k_2+k_3} T_1 \right)\right)
 & = - (-k_1+k_2+k_3)^2 + \mathscr{F}_{\text{\tiny{dom}}}\left(T_1 \right)\\
 & =  - (-k_1+k_2+k_3)^2 + 2 k_1^2
\end{equs}
where we used Example \ref{exRDoNLS}. Hence,
\begin{equs}
 \mathscr{F}_{\text{\tiny{dom}}}\left(F_2\right)& = - (-k_1+k_2+k_3)^2 + 2 k_1^2 + k_4^2  - k_5^2.
\end{equs}
Plugging this into \eqref{RD1} yields
\begin{equs}
 \mathscr{F}_{\text{\tiny{dom}}}\left(\CI_{(\Labhom_2,0)}\left (  \lambda_k F_2 \right)\right) 
& = \CP_{\text{\tiny{dom}}}\left(k^2   - (-k_1+k_2+k_3)^2 + 2 k_1^2 + k_4^2  - k_5^2\right).
\end{equs}
Therefore, $ \mathscr{F}_{\text{low}}\left(\CI_{(\Labhom_2,0)}\left (  \lambda_kF_2\right)\right) = k$ such that
\begin{equs}
\overline n_1 = \text{max}(n,\text{deg}(k^{2})) = \text{max}(n,2).
\end{equs} 
Next we compute $\overline n_2$:
\begin{equs}
 \mathscr{F}_{\text{\tiny{dom}}}\left(\CI_{(\Labhom_2,0)}\left ( \lambda_k F_2^2 \right)\right)
= \CP_{\text{\tiny{dom}}}\left(k^2  +\mathscr{F}_{\text{\tiny{dom}}}\left(F_2^2\right)\right)
\end{equs}
with 
\begin{equs}
 \mathscr{F}_{\text{\tiny{dom}}}\left(F_2^2\right)
&  = \mathscr{F}_{\text{\tiny{dom}}}\left(  \CI_{(\Labhom_1,1)}\left( \lambda_{k_4}\right)\right)+\mathscr{F}_{\text{\tiny{dom}}}\left( \CI_{(\Labhom_1,0)}\left( \lambda_{k_5}\right)  \right)\\&+ \mathscr{F}_{\text{\tiny{dom}}}\left( \CI_{(\Labhom_1,0)}\left(  \lambda_{-k_1+k_2+k_3} \right) \right).
\end{equs}

Note that
\begin{equs}
 \mathscr{F}_{\text{\tiny{dom}}}\left( \CI_{(\Labhom_1,0)}\left({ \lambda_{-k_1+k_2+k_3}^m}  \right)\right)
 & = \mathscr{F}_{\text{\tiny{dom}}}\left( \CI_{(\Labhom_1,0)}\left( \lambda_{-k_1+k_2+k_3} \right)\right)\\
 & = - (-k_1+k_2+k_3)^2.
\end{equs}
Together with the previous computations we can thus conclude that
\begin{equs}
 \mathscr{F}_{\text{\tiny{dom}}}\left(\CI_{(\Labhom_2,0)}\left ( \lambda_k F_2^2 \right)\right)
& =  \CP_{\text{\tiny{dom}}}\left(k^2
 - (-k_1+k_2+k_3)^2 + k_4^2 - k_5^2
\right).
\end{equs}
Therefore, $ \mathscr{F}_{\text{low}}\left(\CI_{(\Labhom_2,0)}\left ( \lambda_k F_2^2 \right)\right) = k$ and 
\begin{equs}
\bar n_2 = \text{max}(n,2).
\end{equs}

Hence
\begin{equs}
\bar n_1 = \bar n_2 = \bar n_3 = \bar n_4 = \text{max}(n,2).
\end{equs}
Furthermore, by Definition  \ref{def:Llow} we have 
\begin{equs}
\mathcal{L}^1_{\text{\tiny{low}}}\left( F_2,n\right)  &  =
\mathcal{L}^1_{\text{\tiny{low}}} \left( \CI_{(\Labhom_1,1)}\left( \lambda_{k_4}\right) ,n\right)
+ \mathcal{L}^1_{\text{\tiny{low}}} \left( \CI_{(\Labhom_1,0)}\left( \lambda_{-k_1+k_2+k_3} T_1\right) ,n\right) \\& + \mathcal{L}^1_{\text{\tiny{low}}} \left( \CI_{(\Labhom_1,0)}\left( \lambda_{k_5}\right) ,n\right)\\
& = \mathcal{L}^1_{\text{\tiny{low}}}(\one) + 
 \mathcal{L}^1_{\text{\tiny{low}}}\left(T_1 ,n\right)
+ \mathcal{L}^1_{\text{\tiny{low}}}(\one) \\
& = 2 +  k^{\text{\tiny{max}}(n,2)}.
\end{equs}
Plugging this into \eqref{L1low} yields that
\begin{equs}
\mathcal{L}^1_{\text{\tiny{low}}}\left(T_2,n\right) & = \mathcal{O}\left( k^{\text{\tiny{max}}(n,2)}\right).
\end{equs}

 \noindent 3) Computation of $\CL^{1}_{\text{\tiny{low}}}(T_3, n)$:\\
 Similarly we obtain that
 \begin{equs}
\mathcal{L}^1_{\text{\tiny{low}}}\left(T_3,n\right) & = \mathcal{O}\left( k^{\text{\tiny{max}}(n,2)}\right).
\end{equs}
 
Plugging the computations 1-3 into \eqref{locoErr2} we 
recover the local error structure \eqref{loc2}.

\end{proof}

 \subsection{Korteweg--de Vries}\label{sec:kdv}
 We consider the Korteweg--de Vries (KdV) equation
\begin{equs}\label{kdv}
\partial_t u + \partial_x^{3} u = \frac12 \partial_x u^2
\end{equs}
with mild solution given by Duhamel's formula
\begin{equs}
u(\tau) = e^{-\tau \partial_x^{3}} v + \frac12 e^{-\tau \partial_x^{3}} \int_{0}^{\tau} e^{ \xi \partial_x^{3}} \partial_x u^2(\xi) d\xi.
\end{equs}
The KdV equation \eqref{kdv} fits into the general framework \eqref{dis} with
\begin{equation*}\label{kgrDo}
\begin{aligned}
 \mathcal{L}\left(\nabla, \frac{1}{\varepsilon}\right)  = i \partial_x^3, \quad \alpha = 1   \quad \text{and}\quad   p(u,\overline u) =  p(u) = i  \frac12 u^2.
 \end{aligned}
\end{equation*} 

Here $ \CL = \lbrace \Labhom_1, \Labhom_2 \rbrace $, $ P_{\Labhom_1} = - \lambda^3 $ and $ P_{\Labhom_2} =  \lambda^3 $ . Then, we denoted by $ \<thick> $ an edge decorated by $ (\Labhom_1,0) $ and by $\<thin>$ an edge decorated by $ (\Labhom_2,0) $.
Following the formalism given in \cite{BHZ}, one can provide the rules that generate the trees obtained by iterating the Duhamel formulation:
\begin{equ}
R(\<thin>) = \{(\<thick>,\<thick>) \}\;, \quad
R(\<thick>) = \{(\<thin>), ()\}\;.
\end{equ}

The general framework~\eqref{genscheme} derived in Section \ref{sec:genScheme} builds the foundation of the  first- and second-order resonances based schemes presented below for the KdV equation~\eqref{kdv}. The structure of the schemes depends  on the regularity of the solution.

\begin{corollary}\label{corKdV} For the KdV equation \eqref{kdv} the general scheme~ \eqref{genscheme} takes at first order the form
\begin{equation}
\begin{aligned}\label{schemeKdV1}
u^{\ell+1} &=  e^{-\tau \partial_x^3} u^\ell + \frac16 \left(e^{-\tau\partial_x^3 }\partial_x^{-1} u^\ell\right)^2 - \frac16 e^{-\tau\partial_x^3} \left(\partial_x^{-1} u^\ell\right)^2\end{aligned}
\end{equation}
with a local error  of order $\mathcal{O}\Big(
\tau^2 \partial_x^2 u
\Big)$
and at  second-order 
\begin{equation}
\begin{aligned}\label{schemeKdV}
u^{\ell+1} &=  e^{-\tau \partial_x^3} u^\ell + \frac16 \left(e^{-\tau\partial_x^3 }\partial_x^{-1} u^\ell\right)^2 - \frac16 e^{-\tau\partial_x^3} \left(\partial_x^{-1} u^\ell\right)^2\\& +\frac{\tau^2}{4} e^{- \tau \partial_x^3}\Psi\big(i \tau \partial_x^2\big)  \Big(\partial_x \Big(u^\ell \partial_x (u^\ell u^\ell)\Big)\Big)
\end{aligned}
\end{equation}
with a local error  of order $\mathcal{O}\Big(
\tau^3 \partial_x^4 u
\Big)$ and  a suitable filter function $\Psi$ satisfying
\begin{equs}
\Psi= \Psi\left(i \tau \partial_x^2 \right), \quad \Psi(0) = 1, \quad \Vert \tau   \Psi \left(i \tau \partial_x^2\right) \partial_x^2 \Vert_r \leq 1.
\end{equs}
\end{corollary}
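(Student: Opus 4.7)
The plan is to follow the template established in the proof of Corollary~\ref{corNLS2} for NLS, adapted to the KdV setting where $\mathcal{L}(\nabla) = i\partial_x^3$, $\alpha = 1$, $p(u) = \tfrac{i}{2} u^2$ (no conjugates, so only edge decorations with $p=0$ occur and the bar operator never enters). The first step is to enumerate $\mathcal{T}_0^{r+2}(R)$. For $r=0$ this gives only $T_0 = \one$ and the first-order tree $T_1$ from \eqref{kdvTK}, while for $r=1$ one adds the second-iteration trees obtained by grafting $T_1$ onto one of the two thick edges of $T_1$. Using \eqref{upsi} and \eqref{S} one computes $\Upsilon^p(\lambda_k T_1)(v) = \hat v_{k_1}\hat v_{k_2}$, $S(T_1)=2$, and analogous formulas (with an extra factor of $2$ coming from the derivative of the quadratic nonlinearity) for the second-order trees.

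Next I would apply the recursive definition \eqref{recursive_pi_r} of $\Pi^n$ together with Definition~\ref{Taylor_exp} of $\mathcal{K}^{k,r}_{o_2}$. The crucial input from Example~\ref{tex:kdv} is that
\[
\mathscr{F}_{\text{\tiny{dom}}}(T_1) = 0, \qquad \mathscr{F}_{\text{\tiny{low}}}(T_1) = 3 k_1 k_2 (k_1+k_2),
\]
so the dominant part of $\mathcal{K}^{k,0}_{(\Labhom_2,0)}$ is trivial: at first order the integral $\int_0^\tau e^{i\xi \mathscr{F}(k)}\,d\xi = (e^{i\tau\mathscr{F}(k)}-1)/(i\mathscr{F}(k))$ is treated \emph{exactly}, producing the two terms
$\tfrac16(e^{-\tau\partial_x^3}\partial_x^{-1}u^\ell)^2$ and $-\tfrac16 e^{-\tau\partial_x^3}(\partial_x^{-1}u^\ell)^2$
after applying Proposition~\ref{Fourierproduct} (with $V_j = \{1,2\}$ and $m_j = 1$) to convert the factor $\vert \nabla\vert^\alpha(k)/\mathscr{F}(k) = k/(3k_1k_2(k_1+k_2))$ back to physical space. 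This gives~\eqref{schemeKdV1}. For $r=1$ one additionally has to handle the two second-iteration trees: the dominant frequency at the outer integral is again zero (by Proposition~\ref{nasty_trees} and the KdV identity $(k_1+k_2+k_3)^3 - k_1^3 - (k_2+k_3)^3 + k_2^3 + k_3^3$ still has no pure-power term after applying $\mathcal{P}_{\text{\tiny{dom}}}$), so one Taylor-expands the low-frequency exponential once and integrates the remaining polynomial exactly; this produces the $\tfrac{\tau^2}{4} e^{-\tau\partial_x^3}\partial_x(u\,\partial_x u^2)$ correction in~\eqref{schemeKdV}, where the filter $\Psi$ is inserted as in Remark~\ref{rem:stab} to avoid the $\partial_x^2$ derivatives that a naive Taylor expansion would generate while preserving the local error structure.

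Finally, the local error follows from Theorem~\ref{thm:genloc} by computing $\mathcal{L}^r_{\text{\tiny{low}}}(T,n)$ for each tree using Definition~\ref{def:Llow}. For $r=0$ and $T_1$, since $\mathscr{F}_{\text{\tiny{low}}}(T_1)$ is of degree $2$ and $\alpha = 1$, one gets $\mathcal{L}^0_{\text{\tiny{low}}}(T_1,n) = \mathcal{O}(k^{\max(n,2)})$, which combined with the nonlinearity $\hat v_{k_1}\hat v_{k_2}$ yields the advertised $\mathcal{O}(\tau^2 \partial_x^2 u)$ bound. For $r=1$, the Birkhoff factorization of $\Pi^n$ restricted to the second-iteration trees (via \eqref{antipode_def}, exactly as in Example~\ref{AnBn}) together with the coaction computations of~\eqref{def_deltas} shows $\mathcal{L}^1_{\text{\tiny{low}}}(T,n) = \mathcal{O}(k^{\max(n,3)})$, giving the $\mathcal{O}(\tau^3 \partial_x^4 u)$ error (one gains one power of $k$ from $\alpha = 1$ on the outer integral).

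The main obstacle, compared to NLS, is verifying that the exact evaluation of the inner integral is legitimate at the level of physical-space maps: one must check Assumption~\ref{assumption_physical_space} for the decorated subtrees appearing in the Birkhoff factorization of the second-order trees, and then appeal to Corollary~\ref{physical_map} to rewrite the $1/\mathscr{F}_{\text{\tiny{low}}}(T_1)$ singularities as the $\partial_x^{-1}$ operators appearing in~\eqref{schemeKdV1} and~\eqref{schemeKdV}. This also requires the zero-mean-type condition on $u^\ell$ implicit in the $\partial_x^{-1}$ notation; on $\mathbb{T}$ this is handled as in \cite{HS16} by working on the subspace of mean-zero functions, a property preserved by the scheme. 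All remaining steps are mechanical applications of the recursive formulae of Sections~\ref{sec::recursive_pi}--\ref{local error analysis}.
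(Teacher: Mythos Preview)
Your proposal follows essentially the same route as the paper and arrives at the correct schemes and error bounds. One conceptual point should be sharpened, however. You write that because $\mathscr{F}_{\text{\tiny dom}}(T_1)=0$ ``the dominant part of $\mathcal{K}^{k,0}_{(\Labhom_2,0)}$ is trivial: at first order the integral $\int_0^\tau e^{i\xi\mathscr{F}(k)}\,d\xi$ is treated \emph{exactly}.'' Taken literally within Definition~\ref{Taylor_exp}, $\mathcal{L}_{\text{\tiny dom}}=0$ forces $f\equiv 1$ and the recipe would \emph{Taylor-expand} $g(\xi)=e^{i\xi\mathscr{F}_{\text{\tiny low}}}$, not integrate it; at order $r=0$ this gives only $-ik\tau$ and misses the scheme~\eqref{schemeKdV1} entirely. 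The paper's argument is rather the converse: because the full resonance $k^3-k_1^3-k_2^3=3k_1k_2(k_1+k_2)$ factorises so that its reciprocal can be realised in physical space via $\partial_x^{-1}$ operators (your appeal to Proposition~\ref{Fourierproduct} is indeed the right tool), one \emph{overrides} the output of $\mathcal{P}_{\text{\tiny dom}}$ and declares $\mathcal{L}_{\text{\tiny dom}}=3k_1k_2(k_1+k_2)$, integrating all frequencies exactly. This is an optimisation specific to the KdV resonance, flagged in the remark following the corollary; it does not follow from $\mathscr{F}_{\text{\tiny dom}}(T_1)=0$.

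On the local error for $r=1$: the paper records $\mathcal{L}^1_{\text{\tiny low}}(T_1,\cdot)=\mathcal{L}^1_{\text{\tiny low}}(T_2,\cdot)=k^{2(1+\alpha)}=k^4$ directly, with the two factors of $k^\alpha$ arising from the two nested $\CI_{o_2}$ integrals in the recursion of Definition~\ref{def:Llow}. Your accounting ``$k^{\max(n,3)}$ plus one power of $k$ from $\alpha$ on the outer integral'' reaches the same exponent but the $\alpha$ contributions are already built into the recursion, so the split is slightly off. This is cosmetic; the substance of your argument is correct.
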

\begin{remark}
Note that the first-order scheme \eqref{schemeKdV1} which was originally derived in  \cite{HS16}  is optimised as the resonance structure factorises in such a way that all frequencies can be integrated exactly (details are given in the proof). This is in general true, for equations in one dimension with quadratic nonlinearities up to first order.  However, this trick can not be applied to derive second-order methods. The second-order scheme is new and allows us to improve the local error structure $\mathcal{O}\left(
\tau^3 \partial_x^5 u
\right)$ introduced by  the classical Strang splitting scheme \cite{HLR12}. Due to the stability constrain induced by  Burger's nonlinearity it is  preferable   to embed the  resonance structure into the numerical discretisation even for smooth solutions. In Figure~\ref{fig:KdV} we numerically observe the favourable error behaviour of the new resonance based scheme \eqref{schemeKdV} for $\mathcal{C}^\infty$ solutions.
\end{remark}
\begin{proof}
The proof follows the line of argumentation to the analysis for the Schrödinger equation. The construction of the schemes is again based on the general framework~\eqref{genscheme}. Hence, we have to consider for $r = 0,1$
\begin{equs}\label{kdvU}
U_{k}^{n,r}(\tau, v) = \sum_{T \in \CT^{r+2}_{0}(R)} \frac{\Upsilon^{p}( \lambda_kT)(v)}{S(T)} \Pi^n \left( \CD^r(\CI_{(\Labhom_1,0)}( \lambda_k T)) \right)(\tau).
\end{equs} 
Thereby, for the first-order scheme the trees of interests are 
\begin{equs}
\CT_0^2(R) = \lbrace  T_0, T_1, \, k_i \in \Z^{d} \rbrace, \quad T_0 = \one\quad\text{and}\quad T_1 =   \begin{tikzpicture}[scale=0.2,baseline=-5]
\coordinate (root) at (0,0);
\coordinate (tri) at (0,-2);
\coordinate (t1) at (-1,2);
\coordinate (t2) at (1,2);
\draw[kernels2] (t1) -- (root);
\draw[kernels2] (t2) -- (root);
\draw[symbols] (root) -- (tri);
\node[not] (rootnode) at (root) {};
\node[not] (trinode) at (tri) {};
\node[var] (rootnode) at (t1) {\tiny{$ k_{\tiny{1}} $}};
\node[var] (trinode) at (t2) {\tiny{$ k_2 $}};
\end{tikzpicture}  
\end{equs}
where $T_1$ is associated to the first-order iterated integral
\begin{equs}
\CI_1(v^2,s) =   \int_{0}^{s} e^{ s_1 \partial_x^{3}} \partial_x (e^{-s_1 \partial_x^{3}}v)^2 ds_1
\end{equs}
and in symbolic notation  takes the form
\begin{equs}
T_1 = \CI_{(\Labhom_2,0)}\left ( \lambda_{k} F_1\right) \quad F_1 = \CI_{(\Labhom_1,0)}(  \lambda_{k_1})  \CI_{(\Labhom_1,0) }( \lambda_{k_2}) \quad \text{with } k = k_1+k_2.
\end{equs}
For the first-order scheme we set $r=0$  in \eqref{kdvU} such that
\begin{equs}\label{kdvU}
U_{k}^{n,0}(\tau, v) & = \frac{\Upsilon^{p}( \lambda_kT_0 )(v)}{S(T_0)} \Pi^n \left( \CD^0(\CI_{(\Labhom_1,0)}( \lambda_k T_0)) \right)(\tau)\\
& +  \sum_{k = k_1 + k_2} \frac{\Upsilon^{p}( \lambda_kT_1 )(v)}{S(T_1)} \Pi^n \left( \CD^0(\CI_{(\Labhom_1,0)}( \lambda_k T_1)) \right)(\tau).
\end{equs}

For the first term we readily obtain that
\begin{equs}
 \frac{\Upsilon^{p}( \lambda_kT_0)(v)}{S(T_0)} \Pi^n \left( \CD^0(\CI_{(\Labhom_1,0)}( \lambda_k T_0)) \right)(\tau)
 = e^{-i \tau k^3} \hat{v}_k.
\end{equs}
It remains to compute the second term.  Note that thanks to \eqref{recursive_pi_r} we have that
\begin{equs}\label{kdvST}
 \Pi^n \left( \CD^0(\CI_{(\Labhom_1,0)}( \lambda_k T_1)) \right)(\tau) &= e^{i \tau P_{\Labhom_1}(k)}\Pi^n (\CD^0(T_1))(\tau)  \\& = e^{-i \tau k^3} \Pi^n (\CI_{(\Labhom_2,0)}^0( \lambda_k F_1) )(\tau)
 \\ & =  e^{-i \tau k^3} \mathcal{K}_{(\Labhom_2,0)}^{k,0} \left( \Pi^n (\CD^{-1}(F_1))\right)(\tau).
\end{equs}
By the product formula we furthermore obtain that
\begin{equs}
\Pi^n (\CD^{-1}(F_1))(\tau)&  = \Pi^n \left( \CI^{-1}_{(\Labhom_1,0)} ( \lambda_{k_1})\right) (\tau) \Pi^n \left( \CI^{-1}_{(\Labhom_1,0)}(  \lambda_{k_2})\right) (\tau)
 \\ & = e^{-i \tau k_1^3} e^{-i \tau k_2^3}.
\end{equs}
Plugging the above relation into \eqref{kdvST} yields that
\begin{equs}
 \Pi^n \left( \CD^0(\CI_{(\Labhom_1,0)}( \lambda_k T_1)) \right)(\tau)
 =  e^{-i \tau k^3} \mathcal{K}_{(\Labhom_2,0)}^{k,0} \left( e^{i \xi (-k_1^3 - k_2^3)} \right)(\tau).
\end{equs}
Next  we observe that
\begin{equs}
 P_{(\Labhom_2,0)}(k) - k_1^3-k_2^3  =  k^3 - k_1^3 - k_2^3 = 3 k_1 k_2 (k_1+k_2)
\end{equs}
such that
\begin{equs}
\frac{1}{ P_{(\Labhom_2,0)}(k) - k_1^3-k_2^3  } 
\end{equs}
can be mapped back to physical space. Therefore, we set
\begin{equs}
\mathcal{L}_{\tiny\text{dom}} =  P_{(\Labhom_2,0)}(k) - k_1^3-k_2^3  = 3 k_1 k_2 (k_1+k_2)
\end{equs}
and integrate all frequencies exactly. This implies
\begin{equs}
 \Pi^n \left( \CD^0(\CI_{(\Labhom_1,0)}( \lambda_k T_1)) \right)(\tau)
 & = e^{-i \tau k^3} \frac{i (k_1+k_2) }{3i  k_1 k_2 (k_1+k_2)} \left(e^{i \tau (k^3 - k_1^3 -k_2^3)}-1\right)\\
 & =\frac{1 }{3  k_1 k_2 } \left(e^{- i \tau ( k_1^3 + k_2^3)}- e^{-i \tau k^3} \right).
 \end{equs}
Together with \eqref{kdvU} this yields the scheme \eqref{schemeKdV1}.

For the second-order scheme we need to take into account the following trees
\begin{equs}\CT_0^3(R) = \lbrace  T_0, T_1, T_2, \, k_i \in \Z^{d} \rbrace, \quad 
T_2 = \begin{tikzpicture}[scale=0.2,baseline=-5]
\coordinate (root) at (0,0);
\coordinate (tri) at (0,-2);
\coordinate (t1) at (-1,2);
\coordinate (t11) at (-2,4);
\coordinate (t12) at (-3,6);
\coordinate (t13) at (-1,6);
\coordinate (t2) at (1,2);
\draw[kernels2] (t11) -- (t13);
\draw[kernels2] (t11) -- (t12);
\draw[kernels2] (t1) -- (root);
\draw[symbols] (t1) -- (t11);
\draw[kernels2] (t2) -- (root);
\draw[symbols] (root) -- (tri);
\node[not] (rootnode) at (root) {};
\node[not] (trinode) at (tri) {};
\node[not] (trinode) at (t1) {};
\node[var] (rootnode) at (t12) {\tiny{$ k_{\tiny{1}} $}};
\node[var] (rootnode) at (t13) {\tiny{$ k_{\tiny{2}} $}};
\node[var] (trinode) at (t2) {\tiny{$ k_3 $}};
\end{tikzpicture} 
\end{equs}
which is associated to the second-order iterated integral
 \begin{equs}
\CI_2(v^3,s) =  \int_{0}^{s} e^{ s_1 \partial_x^{3}} \partial_x \left( (e^{-s_1 \partial_x^{3}}v)  e^{-s_1 \partial_x^{3}} \int_0^{s_1} e^{s_2 \partial_x^{3}}\partial_x (e^{-s_2 \partial_x^{3}}v)^2   ds_2 \right) ds_1.
\end{equs}
Then one can proceed as in the second-order schemes for the Schrödinger equation. We omit the details here.
The local error analysis is then given by Theorem \ref{thm:genloc} noting that $\alpha = 1$ and 
\begin{equs}
\mathcal{L}_{\tiny\text{low}}^0(T_1,\cdot) = k^{1+\alpha}, \quad 
\mathcal{L}_{\tiny\text{low}}^1(T_1,\cdot) = \mathcal{L}_{\tiny\text{low}}^1(T_2,\cdot) = k^{2(1+\alpha)}.
\end{equs}
\end{proof}
\subsection{Klein--Gordon}\label{sec:kgr}
In this section we apply the general  framework \eqref{genscheme} to the  Klein--Gordon equation
\begin{equation}\label{kgo}
\partial_t^2 z - \Delta z + \frac{1}{\varepsilon^2} z = \vert z\vert^2 z, \quad z(0,x) = \gamma(x),\quad \partial_t z(0,x) = \frac{1}{\varepsilon^2} \delta(x).
\end{equation}
Here we are in particular interested in resolving the highly oscillatory, so-called non-relativistic, structure of the PDE when the speed of light $c = \frac{1}{\varepsilon}$ formally tends to infinity.

Via  the transformation  $u = z - i \varepsilon \nab^{-1} \partial_t z$ we express \eqref{kgo}   in its first-order form
\begin{align}\label{kgr}
& i \partial_t u = -\frac{1}{\varepsilon}\nab u + \frac{1}{\varepsilon}\nab^{-1} \textstyle \frac18 (u + \overline u)^3, \qquad \frac{1}{\varepsilon}\nab = \frac{1}{\varepsilon}\sqrt{\frac{1}{\varepsilon^2}-\Delta}.  
\end{align}
  The first-order form  \eqref{kgr} casts into the general form \eqref{dis} with
\begin{equation*}\label{kgrDo}
\begin{aligned}
\mathcal{L}\left(\nabla, \frac{1}{\varepsilon}\right)  =  \frac{1}{\varepsilon}\nab, \quad \alpha = 0  \quad \text{and}\quad  p(u,\overline u) =  \frac{1}{\varepsilon}\nab^{-1} \textstyle \frac18 (u + \overline u)^3.
 \end{aligned}
\end{equation*} 
The leading operator $\mathcal{L}\left(\nabla, \frac{1}{\varepsilon}\right) $ thereby triggers oscillations of type
\begin{equation*}\label{kgosc}
\sum_{\ell \in \Z } e^{ i t \ell \frac{1}{\varepsilon^2}}
\end{equation*} which can be formally seen by the Taylor series expansion 
$$
\mathcal{L}\left(\nabla, \frac{1}{\varepsilon}\right)  =   \frac{1}{\varepsilon} \nab = \frac{1}{\varepsilon^2} - \frac12 \Delta + \mathcal{O}\left(\frac{\Delta^2}{\varepsilon^2}\right).
$$
 In oder to determine these dominant oscillations we define the non-oscillatory operators
 \begin{equs}\label{Bdelta}
&  \mathcal{B}_\Delta = \frac{1}{\varepsilon} \nab - \frac{1}{\varepsilon^2}, \qquad
  \mathcal{B}_\Delta (k) = \frac{1}{\varepsilon^2} \sqrt{1 + \frac{k^2}{\varepsilon^2}} - \frac{1}{\varepsilon^2} \\
&    \mathcal{C}_\Delta =  \frac{1}{\varepsilon}\nab^{-1}, \qquad 
  \mathcal{C}_\Delta(k)  = \frac{1}{\sqrt{1+ \frac{k^2}{\varepsilon^2}}}
 \end{equs}
 which both can be uniformly bounded  in $\varepsilon$ thanks to the estimates $\Vert \mathcal{B}_\Delta w \Vert \leq \frac12 \Vert \Delta w\Vert$, $\frac{1}{1+x^2} \leq 1$. The latter motivates us to rewrite the oscillatory equation \eqref{kgr} in the following form 
\begin{align}\label{kgrB}
& i \partial_t u = - \left( \frac{1}{\varepsilon^2} +\mathcal{B}_\Delta\right) u + \mathcal{C}_\Delta \textstyle \frac18 (u + \overline u)^3.
 \end{align}

 Here $ \CL = \lbrace \Labhom_1, \Labhom_2 \rbrace $, $ P_{\Labhom_1} = -\left(  \frac{1}{\varepsilon^2} + \mathcal{B}_\Delta( \lambda) \right) $ and $ P_{\Labhom_2} =  \frac{1}{\varepsilon^2} + \mathcal{B}_\Delta( \lambda) $ . Then, we denoted by $ \<thick> $ an edge decorated by $ (\Labhom_1,0) $, $ \<thick2> $ an edge denoted by $ (\Labhom_1,1) $ by $\<thin>$ an edge decorated by $ (\Labhom_2,0) $ and by $\<thin2>$ an edge decorated by $ (\Labhom_2,1) $.
The rules that generate the trees obtained by iterating the Duhamel formulation are given by:
\begin{equs}
R(\<thin>)  = R(\<thin2>) = \{ (\<thick>,\<thick>,\<thick>), (\<thick>,\<thick>,\<thick2>), (\<thick>,\<thick2>,\<thick2>), 
(\<thick2>,\<thick2>,\<thick2>) \}\;, 
\quad
R(\<thick>)   = \{(\<thin>) , ()\}\;,
\quad  R(\<thick2>) = \{(\<thin2>) , ()\}\;.
\end{equs}

The general framework~\eqref{genscheme} derived in Section \ref{sec:genScheme}  builds the foundation of the  first-order resonance based schemes presented below for the Klein--Gordon equation equation~\eqref{kgr}. 

\begin{corollary}\label{corKGR} For the Klein--Gordon equation \eqref{kgr} the general scheme~ \eqref{genscheme} takes at first order the form
\begin{equs}\label{kgr1}
 u^{\ell+1} &  = e^{i \tau \left(\frac{1}{\varepsilon^2} + \mathcal{B}_\Delta\right)}  u^\ell - \tau \frac{3i}{8}  e^{i \tau \left(\frac{1}{\varepsilon^2} + \mathcal{B}_\Delta\right)}\mathcal{C}_\Delta  \vert u^\ell\vert^2 u^\ell\\
&  - \tau \frac{i}{8}   e^{i \tau \left(\frac{1}{\varepsilon^2} + \mathcal{B}_\Delta\right)} \mathcal{C}_\Delta \Big(
\varphi_1\left(2i \frac{1}{\varepsilon^2} \tau\right) \left(u^\ell \right)^3+ 3 \varphi_1\left(-2i \frac{1}{\varepsilon^2} \tau\right) \left \vert u^\ell\right\vert^2 u^\ell \\& + \varphi_1\left(-4 i \frac{1}{\varepsilon^2} \tau\right) \left(\overline{u^\ell}\right)^3
\Big)
\end{equs}
with a local error $\mathcal{O}\left( \tau^2 \Delta u\right)$ and the filter function $\varphi_1(\sigma) = \frac{e^\sigma-1}{\sigma}$.

 If we allow  step size restrictions $\tau = \tau\left(\frac{1}{\varepsilon^2}\right)$ the general scheme~ \eqref{genscheme} takes the simplified form 

\begin{equs}\label{kgrSimp}
 u^{\ell+1}   = e^{i \tau \left(\frac{1}{\varepsilon^2} + \mathcal{B}_\Delta\right)}  u^\ell   - \tau \frac{i}{8}   e^{i \tau \left(\frac{1}{\varepsilon^2} + \mathcal{B}_\Delta\right)} \mathcal{C}_\Delta \left(u^\ell + \overline{u^\ell}\right)^3
\end{equs}
with a local error of order $ \mathcal{O}\left( \frac{\tau^2}{\varepsilon^{2}} u \right) +  \mathcal{O}(\tau^2 \Delta u )$.
\end{corollary}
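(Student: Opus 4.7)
\textbf{Proof proposal for Corollary \ref{corKGR}.}
The plan is to follow the same methodology as in the proofs of Corollary \ref{corNLS1} and Corollary \ref{corKdV}: instantiate the general scheme \eqref{genscheme} at order $r=0$, enumerate the relevant trees in $\CT_0^2(R)$, compute the symmetry factors and elementary differentials $\Upsilon^p$, evaluate the character $\Pi^n$ recursively via \eqref{recursive_pi_r}, and then invoke Theorem \ref{thm:genloc} to read off the local error. The twist compared to NLS/KdV is that the role of the dominant operator has to be played by the $\varepsilon$-large part $1/\varepsilon^2$ rather than by a polynomial in the spatial frequencies; accordingly I will use the variant of $\CP_\text{\tiny dom}$ described in Remark \ref{rem:epsi}, so that $\CP_\text{\tiny dom}\big(P_{(\Labhom_j,p)}(k)\big) = (-1)^{p+1}/\varepsilon^2$ for $j=2$ (resp.\ $p/\varepsilon^2$ with the appropriate sign for $j=1$), and the uniformly bounded remainder $\mathcal{B}_\Delta(k)$ lands in $\CL_\text{\tiny low}$.

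First I would expand the cubic nonlinearity $\tfrac18(u+\bar u)^3 = \tfrac18(u^3 + 3 u^2\bar u + 3 u\bar u^2 + \bar u^3)$, which yields four trees $T_1^{(3,0)},T_1^{(2,1)},T_1^{(1,2)},T_1^{(0,3)}$ in addition to $T_0=\one$, where $T_1^{(N,M)}$ has $N$ plain and $M$ conjugated leaves:
\begin{equs}
T_1^{(N,M)} = \CI_{(\Labhom_2,0)}\!\Bigl(\lambda_k\,\prod_{i=1}^{N}\CI_{(\Labhom_1,0)}(\lambda_{k_i})\prod_{j=1}^{M}\CI_{(\Labhom_1,1)}(\lambda_{\tilde k_j})\Bigr),\qquad k=\sum_i k_i-\sum_j\tilde k_j.
\end{equs}
Using \eqref{upsi} and \eqref{S} I would compute $\Upsilon^p(\lambda_k T_1^{(N,M)})(v)$ as $N!M!\cdot\tfrac18\cdot 3$ (for the mixed terms) multiplied by the appropriate product of Fourier coefficients of $v$ and $\bar v$, and $S(T_1^{(N,M)})=N!M!$, so the combinatorial $N!M!$ factors cancel and the three binomial weights $1,3,3,1$ survive.

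Next I would evaluate $\Pi^n\bigl(\CD^0(\CI_{(\Labhom_1,0)}(\lambda_k T_1^{(N,M)}))\bigr)(\tau)$. By \eqref{recursive_pi_r} this amounts to the outer phase $e^{i\tau P_{(\Labhom_1,0)}(k)} = e^{i\tau(1/\varepsilon^2+\mathcal{B}_\Delta(k))}$ multiplied by the inner multiplier $\mathcal{C}_\Delta(k)\cdot\CK^{k,0}_{(\Labhom_2,0)}\!\bigl(\Pi^n\CD^{-1}(F),n\bigr)(\tau)$ where the product of leaf characters collapses to $e^{-i\xi(1/\varepsilon^2+\mathcal{B}_\Delta)(\cdots)}$ telescoped over the $N$ plain and $M$ conjugated leaves. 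The key computation is then that of $\mathcal{L}_\text{\tiny dom}$: using the convention of Remark \ref{rem:epsi} the contribution of $\mathcal{B}_\Delta$ falls into $\mathcal{L}_\text{\tiny low}$, and the $1/\varepsilon^2$ terms from the edge outgoing the root and from each of the $N+M$ leaves combine to give
\begin{equs}
\mathcal{L}_\text{\tiny dom}^{(N,M)} = \tfrac{1}{\varepsilon^2}(1-N+M)+ \tfrac{1}{\varepsilon^2}(N-M-1) = 2(M-N+1)\tfrac{1}{\varepsilon^2}\,/\,\cdots
\end{equs}
so that Definition \ref{Taylor_exp} (second branch) yields $\CK^{k,0}_{(\Labhom_2,0)} = -i\mathcal{C}_\Delta(k)\,\tau\,\varphi_1\!\bigl(i\tau\mathcal{L}_\text{\tiny dom}^{(N,M)}\bigr)$. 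Concretely, for $(N,M)=(2,1)$ one finds $\mathcal{L}_\text{\tiny dom}=0$ (so $\varphi_1=1$, giving the $\tfrac{3i}{8}\mathcal{C}_\Delta|u|^2u$ contribution), for $(N,M)=(3,0)$ one gets $\varphi_1(2i\tau/\varepsilon^2)$, for $(N,M)=(1,2)$ one gets $\varphi_1(-2i\tau/\varepsilon^2)$, and for $(N,M)=(0,3)$ one gets $\varphi_1(-4i\tau/\varepsilon^2)$. Assembling these four contributions plus the linear term via \eqref{genscheme} and transforming back to physical space produces exactly \eqref{kgr1}.

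For the local error I would apply Theorem \ref{thm:genloc}. Using Definition \ref{def:Llow} together with Remark \ref{rem:simpL}, and the fact that under the $\varepsilon$-convention $\mathscr{F}_\text{\tiny low}$ of each $T_1^{(N,M)}$ reduces to $\mathcal{B}_\Delta$-type terms bounded by $|\nabla|^2$, one gets $\CL^{0}_\text{\tiny low}(T_1^{(N,M)},n) = \mathcal{O}(|\nabla|^2)$ uniformly in $\varepsilon$, hence the local error $\mathcal{O}(\tau^2\Delta u)$ in \eqref{kgr1}. The simplified scheme \eqref{kgrSimp} is then obtained by a further Taylor expansion of each $\varphi_1(i\tau \mathcal{L}_\text{\tiny dom}^{(N,M)})$ at zero; since $\varphi_1(0)=1$ the leading terms collapse to give $\tfrac18(u+\bar u)^3$, while the remainder $\mathcal{O}(\tau\cdot \tau/\varepsilon^2)$ contributes the additional local error $\mathcal{O}(\tau^2/\varepsilon^2\,u)$ announced in the statement.

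The main obstacle I anticipate is bookkeeping the four trees simultaneously and correctly applying the modified $\CP_\text{\tiny dom}$ from Remark \ref{rem:epsi} so that $\mathcal{B}_\Delta$ is treated as lower order despite being of comparable size to $1/\varepsilon^2$ for highly oscillatory modes; in particular, checking that the error estimate in Lemma \ref{Taylor_bound} indeed produces $\mathcal{O}(\tau^2 \CL_\text{\tiny low})=\mathcal{O}(\tau^2\Delta u)$ uniformly in $\varepsilon$ (rather than blowing up like $1/\varepsilon^2$) is the crucial uniformity check that distinguishes this application from the NLS and KdV ones.
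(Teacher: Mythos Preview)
Your approach matches the paper's: both instantiate \eqref{genscheme} at $r=0$, enumerate the trees $\one$ and $T_1^{(N,M)}$ for $(N,M)\in\{(3,0),(2,1),(1,2),(0,3)\}$, invoke the $\varepsilon$-variant of $\CP_{\text{\tiny dom}}$ from Remark~\ref{rem:epsi} to isolate the $1/\varepsilon^2$ contributions, and read off the error via Theorem~\ref{thm:genloc}; the paper only spells out the case $(N,M)=(3,0)$ and leaves the remaining trees to the reader, exactly as you propose. Two cosmetic points: your displayed expression for $\mathcal{L}_{\text{\tiny dom}}^{(N,M)}$ is garbled (the two summands you wrote cancel) --- tracking the signs gives simply $\mathcal{L}_{\text{\tiny dom}}^{(N,M)} = (2-2M)/\varepsilon^2$, which indeed reproduces the four values you list --- and for the simplified scheme the paper phrases the step as selecting the first branch in Definition~\ref{Taylor_exp} (Taylor-expand the full integrand $\tilde g$) rather than Taylor-expanding the $\varphi_1$'s after the fact, though at $r=0$ these coincide.
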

\begin{remark}
With the general framework we recover at first-order exactly the resonance based first-order scheme \eqref{kgr1} derived in \cite{BFS17}. If we allow for step size restrictions, we recover 
a classical approximation  the classical local error structure $\mathcal{O}\left( \frac{\tau^2}{\varepsilon^2}\right)$ introduced by   classical Strang splitting or Gautschi-type schemes (\cite{BD}).
\end{remark}

\begin{proof}
From the general framework~\eqref{genscheme} (with $r=0$) we obtain that
\begin{equs}\label{kdvU}
U_{k}^{n,0}(\tau, v) = \sum_{T \in \CT^{2}_{0}(R)} \frac{\Upsilon^{p}( \lambda_kT)(v)}{S(T)} \Pi^n \left( \CD^0(\CI_{(\Labhom_1,0)}( \lambda_k T)) \right)(\tau)
\end{equs}
with
\begin{equs}
\CT_0^2(R) & = \lbrace  T_0, T_1, T_2, T_3, T_4,  \, k_i \in \Z^{d} \rbrace, \quad T_0 = \one \\ 
 T_1 & = \begin{tikzpicture}[scale=0.2,baseline=-5]
\coordinate (root) at (0,0);
\coordinate (tri) at (0,-2);
\coordinate (t1) at (-2,2);
\coordinate (t2) at (2,2);
\coordinate (t3) at (0,3);
\draw[kernels2] (t1) -- (root);
\draw[kernels2] (t2) -- (root);
\draw[kernels2] (t3) -- (root);
\draw[symbols] (root) -- (tri);
\node[not] (rootnode) at (root) {};t
\node[not] (trinode) at (tri) {};
\node[var] (rootnode) at (t1) {\tiny{$ k_{\tiny{1}} $}};
\node[var] (rootnode) at (t3) {\tiny{$ k_{\tiny{2}} $}};
\node[var] (trinode) at (t2) {\tiny{$ k_3 $}};
\end{tikzpicture}, \quad T_2 = \begin{tikzpicture}[scale=0.2,baseline=-5]
\coordinate (root) at (0,0);
\coordinate (tri) at (0,-2);
\coordinate (t1) at (-2,2);
\coordinate (t2) at (2,2);
\coordinate (t3) at (0,3);
\draw[kernels2,tinydots] (t1) -- (root);
\draw[kernels2] (t2) -- (root);
\draw[kernels2] (t3) -- (root);
\draw[symbols] (root) -- (tri);
\node[not] (rootnode) at (root) {};t
\node[not] (trinode) at (tri) {};
\node[var] (rootnode) at (t1) {\tiny{$ k_{\tiny{1}} $}};
\node[var] (rootnode) at (t3) {\tiny{$ k_{\tiny{2}} $}};
\node[var] (trinode) at (t2) {\tiny{$ k_3 $}};
\end{tikzpicture},
\quad T_3 = \begin{tikzpicture}[scale=0.2,baseline=-5]
\coordinate (root) at (0,0);
\coordinate (tri) at (0,-2);
\coordinate (t1) at (-2,2);
\coordinate (t2) at (2,2);
\coordinate (t3) at (0,3);
\draw[kernels2,tinydots] (t1) -- (root);
\draw[kernels2] (t2) -- (root);
\draw[kernels2,tinydots] (t3) -- (root);
\draw[symbols] (root) -- (tri);
\node[not] (rootnode) at (root) {};t
\node[not] (trinode) at (tri) {};
\node[var] (rootnode) at (t1) {\tiny{$ k_{\tiny{1}} $}};
\node[var] (rootnode) at (t3) {\tiny{$ k_{\tiny{2}} $}};
\node[var] (trinode) at (t2) {\tiny{$ k_3 $}};
\end{tikzpicture}, \quad T_4 = \begin{tikzpicture}[scale=0.2,baseline=-5]
\coordinate (root) at (0,0);
\coordinate (tri) at (0,-2);
\coordinate (t1) at (-2,2);
\coordinate (t2) at (2,2);
\coordinate (t3) at (0,3);
\draw[kernels2,tinydots] (t1) -- (root);
\draw[kernels2,tinydots] (t2) -- (root);
\draw[kernels2,tinydots] (t3) -- (root);
\draw[symbols] (root) -- (tri);
\node[not] (rootnode) at (root) {};t
\node[not] (trinode) at (tri) {};
\node[var] (rootnode) at (t1) {\tiny{$ k_{\tiny{1}} $}};
\node[var] (rootnode) at (t3) {\tiny{$ k_{\tiny{2}} $}};
\node[var] (trinode) at (t2) {\tiny{$ k_3 $}};
\end{tikzpicture}. 
\end{equs}
Let us carry out the computation for the tree $T_1$ which in symbolic notation takes the form
\begin{equs}
T_1 & = \CI_{(\Labhom_2,0)} ( \lambda_k F_1),  \quad  k = k_1+k_2+k_3
\\
F_1 & = \CI_{(\Labhom_1,0)} ( \lambda_{k_1}) \CI_{(\Labhom_1,0)}(  \lambda_{k_2}) \CI_{(\Labhom_1,0)} ( \lambda_{k_3}).
\end{equs}
Thanks to  \eqref{recursive_pi_r} and the fact that
$
P_{(\Labhom_1,0)}\left(k,\frac{1}{\varepsilon^2}\right)
=  \frac{1}{\varepsilon^2} + \mathcal{B}_\Delta(k) 
$ 
we obtain
\begin{equs}
 \Pi^n \left( \CD^r(\CI_{(\Labhom_1,0)}( \lambda_k T_1)) \right)(\tau)
 & = e^{i \tau P_{(\Labhom_1,0)}(k,\frac{1}{\varepsilon})} (\Pi^n \CD^0 (T_1))(\tau) 
 \\& = e^{i \tau \left( \frac{1}{\varepsilon^2} + \mathcal{B}_\Delta(k)\right ) }  (\Pi^n  \CI^0_{(\Labhom_2,0)} ( \lambda_k F_1))(\tau)
\\&  = e^{i \tau \left( \frac{1}{\varepsilon^2} + \mathcal{B}_\Delta(k)\right ) } 
\mathcal{K}_{(\Labhom_2,0)}^{k,0}\left( \Pi^n \CD^{-1}(F_1) \right)(\tau).
\end{equs}
By the product rule we furthermore have that 
\begin{equs}
\Pi^n \CD^{-1}(F_1)(\xi) &  = \left( \Pi^n \CI^{-1}_{(t_1,0)}  \lambda_{k_1}\right)(\xi)  \left( \Pi^n \CI^{-1}_{(t_1,0)}  \lambda_{k_2}\right)(\xi)  \left(  \Pi^n \CI^{-1}_{(t_1,0)}  \lambda_{k_3} \right)(\xi)   \\& = e^{i \xi  \left(3 \frac{1}{\varepsilon^2} + \mathcal{B}_\Delta(k_1)+ \mathcal{B}_\Delta(k_2)+ \mathcal{B}_\Delta(k_3)\right ) }
\end{equs}
such that
\begin{equs}\label{KGC1}
 \Pi^n & \left( \CD^0(\CI_{(\Labhom_1,0)}( \lambda_k T_1)) \right)(\tau)
 \\ & = e^{i \tau \left( \frac{1}{\varepsilon^2} + \mathcal{B}_\Delta(k)\right ) } 
\mathcal{K}_{(\Labhom_2,0)}^{k,0}\left(  e^{i \xi  \left(3 \frac{1}{\varepsilon^2} + \mathcal{B}_\Delta(k_1)+ \mathcal{B}_\Delta(k_2)+ \mathcal{B}_\Delta(k_3)\right ) } \right)(\tau).
 \end{equs}
 Definition \ref{Taylor_exp} together with Remark \ref{rem:epsi} and the observation
$
P_{(\Labhom_2,0)}\left(k,\frac{1}{\varepsilon^2}\right)
= - \left(  \frac{1}{\varepsilon^2} + \mathcal{B}_\Delta(k) \right)
$ 
implies that
 \begin{equs}\label{domKg}
 \mathcal{L}_{\tiny \text{dom}} & = \mathcal{P} _{\tiny \text{dom}}
 \left( 
 P_{(\Labhom_2,0)}\left(k,\frac{1}{\varepsilon^2}\right) + P
 \right)
 \\& =  \mathcal{P} _{\tiny \text{dom}}
 \left( 
2 \frac{1}{\varepsilon^2} - \mathcal{B}_\Delta(k) + \mathcal{B}_\Delta(k_1)+ \mathcal{B}_\Delta(k_2)+ \mathcal{B}_\Delta(k_3 )
\right)
 = 2 \frac{1}{\varepsilon^2} .
 \end{equs}
 Hence,
 \begin{equs}
 \mathcal{K}_{(\Labhom_2,0)}^{k,0}\left(  e^{i \xi  \left(3 \frac{1}{\varepsilon^2} + \mathcal{B}_\Delta(k_1)+ \mathcal{B}_\Delta(k_2)+ \mathcal{B}_\Delta(k_3)\right ) } \right)(\tau)
 & = -i  \frac{1}{8} \mathcal{C}_\Delta(k)    \frac{e^{2 i \tau   \frac{1}{\varepsilon^2}} -1}{2 i  \frac{1}{\varepsilon^2}}.
 \end{equs}
Plugging this into \eqref{KGC1} yields that 
\begin{equs}
 \Pi^n  \left( \CD^0(\CI_{(\Labhom_1,0)}( \lambda_k T_1)) \right)(\tau)
  =  - i \tau   \frac{1}{8} \mathcal{C}_\Delta(k)    \varphi_1\left(2 i \tau   \frac{1}{\varepsilon^2}\right)  e^{i \tau \left( \frac{1}{\varepsilon^2} + \mathcal{B}_\Delta(k)\right ) }.
 \end{equs}
 Together with the observation that
 \begin{equs}
 \frac{\Upsilon^{p}( \lambda_kT_1)(v)}{S(T_1)} =  \hat{v}_{k_1} \hat v_{k_2} \hat v_{k_3}  \end{equs}
 we obtain for the first tree $T_1$ in Fourier space that
 \begin{equs}
 \frac{\Upsilon^{p}( \lambda_kT)(v)}{S(T)} &  \Pi^n  \left( \CD^0(\CI_{(\Labhom_1,0)}( \lambda_k T_1)) \right)(\tau) \\& =  - i\tau  \frac{1}{8} \mathcal{C}_\Delta(k)     \hat{v}_{k_1} \hat v_{k_2} \hat v_{k_3} \varphi_1\left(2 i \tau   \frac{1}{\varepsilon^2}\right)  e^{i \tau \left( \frac{1}{\varepsilon^2} + \mathcal{B}_\Delta(k)\right ) }.
 \end{equs}
In physical space  the latter takes the form
 \begin{equs}\label{termi}
\mathcal{F}^{-1} \left(\frac{\Upsilon^{p}( \lambda_kT)(v)}{S(T)}   \Pi^n  \left( \CD^0(\CI_{(\Labhom_1,0)}( \lambda_k T_1)) \right)(\tau) \right)\\ = -i  \frac{1}{8}    \tau  e^{i \tau \left( \frac{1}{\varepsilon^2} + \mathcal{B}_\Delta \right ) }  \mathcal{C}_\Delta\varphi_1\left(2 i \tau   \frac{1}{\varepsilon^2}\right) v^3
 \end{equs}
 with $ \mathcal{B}_\Delta $ and $ \mathcal{C}_\Delta  $ defined in \eqref{Bdelta}. The term \eqref{termi} is exactly the third term (corresponding to $(u^\ell)^3$) in the first-order scheme \eqref{kgr1}. The other terms in the scheme can be computed in a similar way with the aid of the trees $T_j$, $j = 1,2,3,4$.  The local error is then given by Theorem \ref{thm:genloc}. For instance, the local error   introduced by the approximation of $T_1$ reads
 \begin{equs}
 \mathcal{O}\left(\tau^2 \left(- \mathcal{B}_\Delta(k) + \mathcal{B}_\Delta(k_1)+ \mathcal{B}_\Delta(k_2)+ \mathcal{B}_\Delta(k_3 )\right) \hat v_{k_1} \hat v_{k_2} \hat v_{k_3}\right) = \mathcal{O}(\tau^2 \Delta v^3).
 \end{equs}
 The other approximations obey a similar error structure.  
 
The simplified scheme \eqref{kgrSimp} on the other hand is constructed by carrying out a Taylor series expansion of the full operator $\mathcal{L}_{\tiny\text{dom}} + \mathcal{L}_{\tiny\text{low}} $. For instance, when calculating $\mathcal{K}$, instead of integrating the dominant part \eqref{domKg}  for the first tree $T_1$, exactly, we Taylor expand the full operator 
\begin{equs}
P_{(\Labhom_2,0)}\left(k,\frac{1}{\varepsilon^2}\right) + P
= 
2 \frac{1}{\varepsilon^2} - \mathcal{B}_\Delta(k) + \mathcal{B}_\Delta(k_1)+ \mathcal{B}_\Delta(k_2)+ \mathcal{B}_\Delta(k_3 ).
\end{equs}
This implies a local error structure of type
\begin{equs}
 \mathcal{O}\left( \frac{\tau^2}{\varepsilon^{2}} u \right) +  \mathcal{O}(\tau^2 \Delta u ).
 \end{equs}
We omit further details here.

\end{proof}
\begin{remark}
Our framework \eqref{genscheme} allows us to also derive  second- and higher-order schemes for the Klein--Gordon equation \eqref{kgr}. The trees have the same shape as for the cubic Schrödinger equation \eqref{nls}, but many more trees $T\in \CT^{3}_{0}(R)$  are needed for the description. With our framework we can in particular recover the first- and second-order uniformly accurate method proposed in \cite{BFS17}.

\end{remark}

\subsection{Numerical experiments}\label{sec:num}
We underline the favorable error behavior  of the new resonance based schemes compared to  classical approximation techniques in case of non-smooth solutions. We choose $M=2^{8}$ spatial grid points and carry out the simulations up to $T = 0.1$. 
\begin{example}[Schrödinger]
In Figure \ref{fig:NLS} we compare the convergence of the new resonance based approach with classical splitting and exponential integration schemes in case  of the Schrödinger equation \eqref{nls} with smooth and non-smooth solutions. The numerical experiments underline the favorable error behavior of the resonance based schemes presented in Corollary \ref{corNLS2} in case of non-smooth solutions.  While the second-order Strang splitting faces high oscillations in the error causing severe order reduction, the second-order resonance based scheme maintains its second-order convergence for less regular solutions.
\end{example}
\begin{example}[Kortweg--de Vries]
Figure \ref{fig:KdV} underlines the  preferable  choice of embedding the resonance structure into the numerical discretisation even for smooth solutions of the KdV equation \eqref{kdv}. While the second-order classical exponential integrator suffers from spikes in the error when hitting certain (resonant) time steps, the second-order resonance based scheme presented in Corollary \ref{corKdV} allows for full-order convergence without any oscillations.
\end{example}
\begin{figure}[h!]
\begin{subfigure}[c]{0.48\textwidth}
\includegraphics[width=1\textwidth]{NewH2.eps}
\subcaption{ $H^2$ data}
\end{subfigure}
\begin{subfigure}[c]{0.48\textwidth}
\includegraphics[width=1\textwidth]{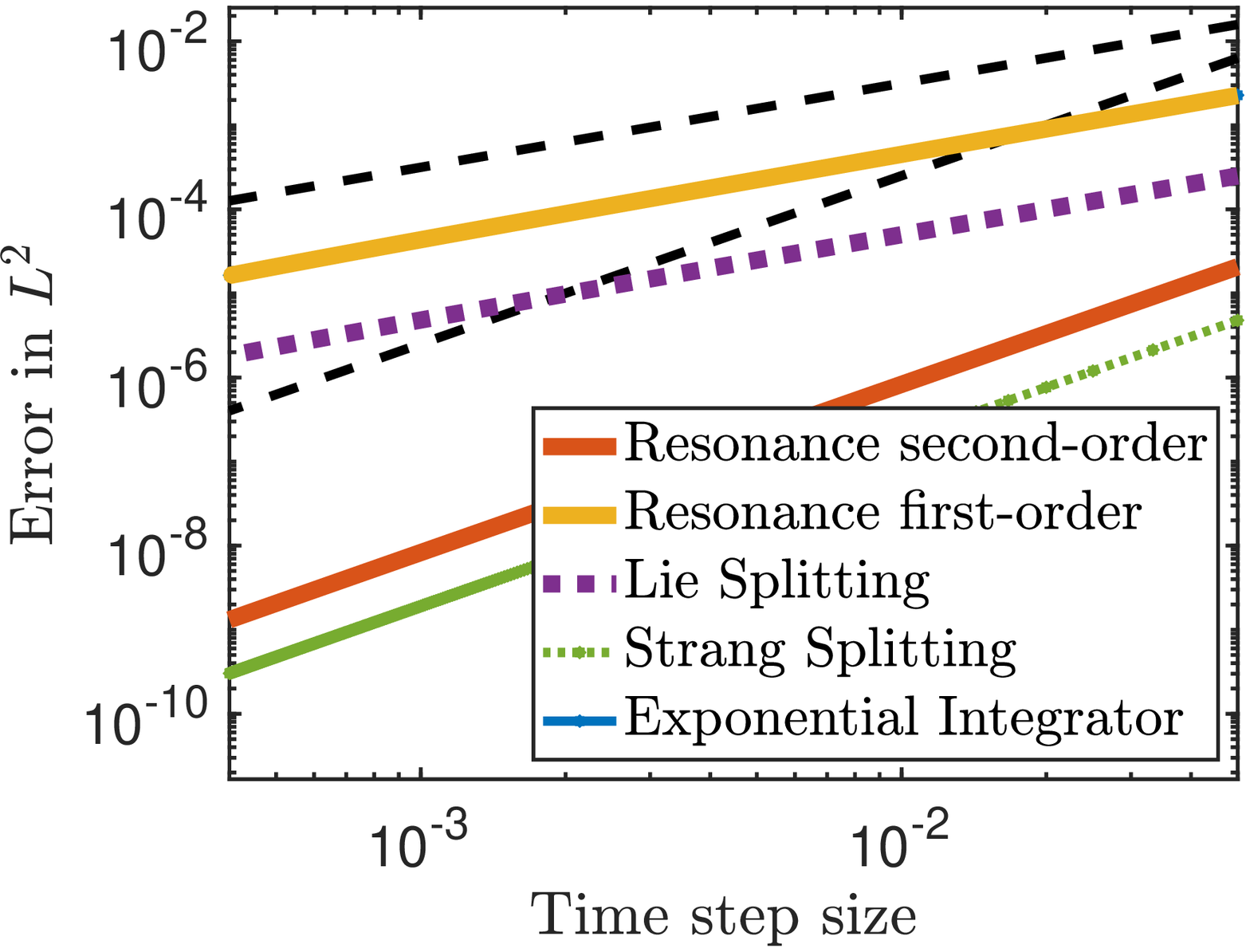}
\subcaption{$\mathcal{C}^\infty$ data}
\end{subfigure}
\caption{Error versus step size (double logarithmic plot). Comparison of classical and resonance based schemes for the Schrödinger equation for smooth (right picture) and non-smooth (left picture) solutions.}\label{fig:NLS}
\end{figure}

\begin{figure}[h!]

\centering \includegraphics[width=0.48\textwidth]{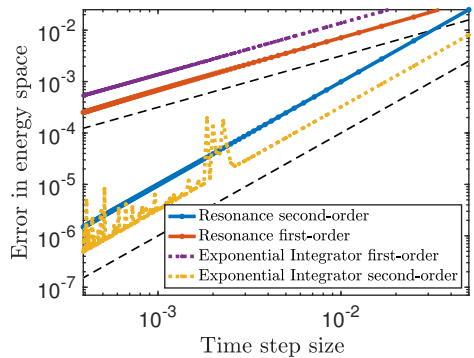}

\caption{Error versus step size (double logarithmic plot). Comparison of classical and resonance based schemes for the KdV equation with smooth data in $\mathcal{C}^\infty$.}\label{fig:KdV}
\end{figure}

\newpage

\end{document}